\documentclass[11pt]{amsart}
\usepackage{tensor}
\usepackage[english]{babel}

\usepackage[%pagebackref,
pdftex,hyperfootnotes]{hyperref}
\hypersetup{
 colorlinks=true,
 linkcolor=NavyBlue, 
 urlcolor=RoyalPurple,
 citecolor=OliveGreen,
 pdftitle={Hypergeometric multiple orthogonal polynomials and random walks},
 bookmarks=true,
 pdfpagemode=FullScreen,
}
\usepackage[usenames,dvipsnames,svgnames,table,x11names]{xcolor}
\usepackage{pgfplots}
\usepackage{tikz}
\usepackage{tikz-3dplot}
\usetikzlibrary{automata,quotes, chains,matrix,calc,shadows,shapes.callouts,shapes.geometric,shapes.misc,positioning,patterns,decorations.shapes,
decorations.pathmorphing,decorations.markings,decorations.fractals,decorations.pathreplacing,shadings,fadings,arrows.meta,bending}
%\pgfplotsset{compat=1.14}

\usepackage{nicematrix,booktabs}
\usepackage[utf8]{inputenc}
\usepackage{comment}
\usepackage[textwidth=18cm,textheight=24cm,
%22cm,
height=%24cm,
25cm,
width=18.5cm]{geometry}

\usepackage{rotating,multirow,pdflscape}
\usepackage{amssymb,latexsym,amsmath,amsthm,bm}
\usepackage{mathrsfs}
\usepackage{mathtools,arydshln,mathdots}
\usepackage{bigints}
\makeatletter
\renewcommand*\env@matrix[1][*\c@MaxMatrixCols c]{%
\hskip -\NiceArraycolsep
\let\@ifnextchar\new@ifnextchar
\NiceArray{#1}}
\makeatother

\mathtoolsset{centercolon}
\usepackage[table]{xcolor}
\usepackage[toc,page]{appendix}

\usepackage{tocvsec2}
%%%%%%%%%%%% Para compilar con MathTimes 2 Professional y BaskerVille

%\usepackage[complete,mtpscr,mtpbbd,mtpfrak,subscriptcorrection,uprightGreek]{mtpro2}
\usepackage{Baskervaldx}
\usepackage{newtxmath }
%
%\makeatletter %%% Times Smallcaps for Baskerville
%\let\scshape\relax % to avoid a warning
%\DeclareRobustCommand\scshape{%
%\not@math@alphabet\scshape\relax
%\ifnum\pdf@strcmp{\f@family}{\familydefault}=\z@
%\fontfamily{ptm}%
%\fi
%\fontshape\scdefault\selectfont}
%\makeatother

% % Euler for math | Palatino for rm | Helvetica for ss | Courier for tt
%\usepackage[T1]{fontenc}
%\renewcommand{\rmdefault}{ppl} % rm
%\linespread{1.05} % Palatino needs more leading
%\usepackage[scaled]{helvet} % ss
%\usepackage{courier} % tt
%%\usepackage{euler} % math
%\usepackage{eulervm} % a better implementation of the euler package (not in gwTeX)
%\normalfont

%\usepackage {graphicx}
%\usepackage{color}
%\usepackage{amsfonts}
%\usepackage{amsmath}
%\usepackage{latexsym}
%\usepackage{dsfont}
%\usepackage{refcheck}
%\usepackage{pxfonts,bbm}
%\usepackage{yhmath}% http://ctan.org/pkg/yhmath

%\usepackage{bigstrut}
%\usepackage{tikz}
%
%\usetikzlibrary{calc,shadows,shapes.callouts,shapes.geometric,shapes.misc}
%\usepackage{upgreek,txfonts}
%\numberwithin{equation}{section} % Enumerar las ecuaciones seg\'{u}n secci\'{o}n
%\allowdisplaybreaks % Para cambiar de p\'{a}gina en alineaci\'{o}n de ecuaciones y no pasarla toda a una s\'{o} la p\'{a}gina

%\usepackage{blindtext}
\newtheorem{coro}{Corollary}

\newtheorem{teo}{Theorem}

\newtheorem{pro}{Proposition}
\newtheorem{lemma}{Lemma}

\newtheorem{rem}{Remark}

\newcommand{\ii}{\operatorname{i}}

\renewcommand{\d}{\operatorname{d}}
\newcommand{\Exp}[1]{\operatorname{e}^{#1}}

\renewcommand{\Re}{\operatorname{Re}}
\newcommand{\diag}{\operatorname{diag}}

\newcommand{\C}{\mathbb{C}}

\newcommand{\N}{\mathbb{N}}
\newcommand{\R}{\mathbb{R}}
\newcommand{\Z}{\mathbb{Z}}

\newcommand{\p}{\displaystyle}

\newcommand{\1}{\text{\fontseries{bx}\selectfont \textup 1}}

\makeatletter
\def\@settitle{\begin{center}%
\baselineskip14\p@\relax
\bfseries
\uppercasenonmath\@title
\@title
\ifx\@subtitle\@empty\else
\\begin{align}1ex]\uppercasenonmath\@subtitle
\footnotesize\mdseries\@subtitle
\fi
\end{center}%
}
\def\subtitle#1{\gdef\@subtitle{#1}}
\def\@subtitle{}
\makeatother

\usepackage{longtable}
\usepackage{enumerate}
\allowdisplaybreaks

%\DeclareRobustCommand{\rchi}{{\mathpalette\irchi\relax}}
%\newcommand{\irchi}[2]{\raisebox{\depth}{$#1\chi$}} % inner command, used by \rchi
\newcommand{\rchi}{ X}

\title[Hypergeometric Multiple Orthogonal Polynomials and Random Walks]{Hypergeometric Multiple Orthogonal Polynomials \\ and Random Walks}

%\subtitle{A
%Riemann--Hilbert Problem perspective}
%

\author[A Branquinho]{Amílcar Branquinho$^1$}
\address{$^1$Departamento de Matemática,
Universidade de Coimbra, 3001-454 Coimbra, Portugal}
\email{$^1$ajplb@mat.uc.pt}

\author[JE Fernández-Díaz]{Juan E. Fernández-Díaz$^2$}
%\address{$^5$Departamento de Física Teórica, Universidad Complutense de Madrid, Plaza Ciencias 1, 28040-Madrid, Spain}
\email{$^2$juanen01@ucm.es}

\author[A Foulquié]{Ana Foulquié-Moreno$^3$}
\address{$^3$Departamento de Matemática, Universidade de Aveiro, 3810-193 Aveiro, Portugal}
\email{$^3$foulquie@ua.pt}

\author[M Mañas]{Manuel Mañas$^4$}
\address{$^4$Departamento de Física Teórica, Universidad Complutense de Madrid, Plaza Ciencias 1, 28040-Madrid, Spain \&
Instituto de Ciencias Matematicas (ICMAT), Campus de Cantoblanco UAM, 28049-Madrid, Spain}
\email{$^4$manuel.manas@ucm.es}
%\dedicatory{Corresponding author e-mail : manuel.manas@ucm.es}

%\author[C Álvarez-Fernández]{Carlos Álvarez-Fernández$^4$}
%\address{$^4$Departamento de Métodos Cuantitativos, Universidad Pontificia Comillas \\
%28015-Madrid,~Spain}
%\email{$^4$calvarez@cee.upcomillas.es}

\thanks{$^1$Acknowledges Centro de Matemática da Universidade de Coimbra (CMUC) -- UID/MAT/00324/2019, funded by the Portuguese Government through FCT/MEC and co-funded by the European Regional Development Fund through the Partnership Agreement PT2020}

\thanks{$^3$Acknowledges CIDMA Center for Research and Development in Mathematics and Applications (University of Aveiro) and the Portuguese Foundation for Science and Technology (FCT) within project UIDB/MAT/UID/04106/2020 and UIDP/MAT/04106/2020}

\thanks{$^4$Thanks financial support from the Spanish ``Agencia Estatal de Investigación'' research project [PGC2018-096504-B-C33], \emph{Ortogonalidad y Aproximación: Teoría y Aplicaciones en Física Matemática}.}

\keywords{Multiple orthogonal polynomials, nonnegative bounded Jacobi matrices, Gauss hypergeometric functions, generalized hypergeometric functions, summations formulas at unity, Christoffel--Darboux formula, random walks, Markov chains, stochastic matrices, Karlin--McGregor representation formula, recurrent states, first-passage times, asympotic ratio Poincaré's theorem for linear recurrences, hypergeometric multiple orthogonal polynomials, stochastic factorization, uniform Jacobi matrices, uniform stochastic matrices}

\subjclass{42C05,33C45,33C47,60J10,60Gxx}

\begin{document}

\maketitle

\begin{abstract}
The recently found hypergeometric multiple orthogonal polynomials on the step-line by Lima and Loureiro are shown to be
random walk polynomials. It is proven that the corresponding Jacobi matrix and its transpose, which are nonnegative matrices and describe higher recurrence relations, can be normalized to two stochastic matrices, dual to each other. Using the Christoffel--Darboux formula on the step-line and the Poincaré theory for non-homogeneous recurrence relations it is proven that both stochastic matrices are related by transposition in the large~$n$~limit. 
These random walks are beyond birth and death, as they describe a chain in where transitions to the two previous states are allowed, or in the dual to the two next states.
The corresponding Karlin--McGregor representation formula is given for these new Markov chains. The regions of hypergeometric parameters where the Markov chains are recurrent or transient are given. 
Stochastic factorizations, in terms of pure birth and of pure death factors, for the corresponding Markov matrices of types I and~II, are provided.
Twelve uniform Jacobi matrices and the corresponding random walks, related to a Jacobi matrix of Toeplitz type, and theirs stochastic or semi-stochastic matrices (with sinks and sources), that describe Markov chains beyond birth and death, are found and studied. One of these uniform stochastic cases, which is a recurrent random walk, is the only hypergeometric multiple random walk having a uniform stochastic factorization. The corresponding weights, Jacobi and Markov transition matrices and sequences of type~II multiple orthogonal polynomials are provided. Chain of Christoffel transformations connecting the stochastic uniform tuples between them, and the semi-stochastic uniform tuples, between them, are presented. As byproduct, summations formulas at unity and three and four terms contiguous relations for the generalized hypergeometric function $\tensor[_3]{F}{_2}$ are found. Moreover, using the uniform recurrence relation and generating functions explicit expressions are found for the type I multiple orthogonal polynomials. 
Finally, the recent Karp--Prilepkina summation formulas, extension of the Karlsson--Minton formulas, are applied to find the corresponding summations % and large $n$ asymptotics 
for the generalized moments of the type II multiple orthogonal polynomials related to the remainder in the corresponding type II Hermite--Padé approximation problem.
%An appendix is included with some specific numerical examples worked out.
\end{abstract}

\thispagestyle{empty}

%\clearpage

\tableofcontents

\section{Introduction}

In this paper we continue our investigations on random walk orthogonal polynomials of multiple type. In~our previous paper \cite{bfmaf} we found that the ideas of Karlin and McGregor \cite{KmcG}, see also \cite{KmcG1957-1,KmcG1957-2}, can be extended from standard orthogonality to the multiple orthogonality scenario, whenever the Jacobi matrix is nonnegative and bounded. Now, instead of birth and death Markov chains, in which the non zero transition probabilities could only happen for near neighbors, we have that transitions to the $N$-th previous states are permitted. The dual Markov chain has the $N$-th next states reachable in one transition only. This allowed, as in \cite{KmcG} to give a representation formula for the iterated transition probabilities in terms of integrals of the multiple orthogonal polynomials of type II and the corresponding linear forms of type I.
Also provided possible stationary states and gave a characterization of the recurrent or transient character of the random walk in terms of the divergence or convergence of a certain integral.

In \cite{bfmaf} we presented the Jacobi--Piñeiro multiple orthogonal polynomials as a case study, and several properties were given. In particular, we showed the region where the Jacobi--Piñeiro random walks were recurrent or transient in terms of the Jacobi--Piñeiro parameters.
%$\alpha,\beta,\gamma$ as $|\alpha-\beta|<1$.
The large $n$ limit of the corresponding Jacobi matrix leads to a particular interesting case with a Toeplitz matrix describing the transitions. At the time we finished the writing of \cite{bfmaf} a new paper appeared \cite{lima_loureiro}, in where new multiple orthogonal polynomials based on Gauss hypergeometric function weights were given. The corresponding Jacobi matrix is bounded and nonnegative. Therefore, we immediately realized that, as for the Jacobi--Piñeiro case, it was a case amenable to support random walks beyond birth and death. 

In this paper we perform such a study and show that the Lima and Loureiro hypergeometric multiple orthogonal polynomials \cite{lima_loureiro} are indeed random walk polynomials.
For that aim we explicitly compute the value at unity of the type I linear forms.
Then, whenever the zeros of the type II multiple orthogonal polynomials are confined in the interior of the support of the weight system, 
the type II multiple orthogonal polynomials at unity are also positive and our method in \cite{bfmaf} is applicable. Notice that this confinement of zeros happens for example for algebraic Chebyshev (AT) systems of weights, in particular for Nikishin systems as the hypergeometric system of weights is in some region of parameters, see~\cite{lima_loureiro}.

Let us stress that the parameters determining the system of weights in \cite[cf. equation~(1.2)]{lima_loureiro}, 
although very useful to unify their results, in some instances are too restrictive for our purposes.
In fact, sometimes is too strong and not so convenient when it excludes from the discussion cases such as the Piñeiro weights, or the set of parameters leading to a uniform stochastic matrix. Both cases, are \emph{perfect systems }of weights.
Therefore, in this paper we will relax such conditions when required.
For a general introduction on multiple orthogonal polynomials and on Markov chains we refer the reader to our paper \cite{bfmaf} and references therein.

Now, we describe the contents and results of this paper. Within this introduction we remind the reader the main aspects of multiple orthogonal polynomials relevant for our objectives. Then we give a brief description of the results %of Lima and Loureiro
in~\cite{lima_loureiro}.
We finish the introduction by recalling some of our results in \cite{bfmaf}. 

Section \ref{S:ratio_asymptotics} is devoted to ratio asymptotics.
In Theorem \ref{teo:linear_forms_at_unity} we use the Rodrigues type formula of \cite{lima_loureiro} to get the type I linear forms at unity explicitly.
This is an important finding, allowing explicit expressions for the type I stochastic matrices. Then, we consider the ratio asymptotics at unity of these type~I linear forms and some further properties of these linear forms. Next, we consider the ratio asymptotics at unity of the type~II multiple orthogonal polynomials. For that aim, we use the Christoffel--Darboux formula on the step-line and the Poincaré theorem for homogeneous linear recurrences relations, see Proposition \ref{pro:ratio_asymptotics_typeII}. We end this section with Theorem~\ref{teo:ratio asymptotics_linear forms} in where the ratio asymptotics of the type I linear forms is analyzed, 
using the Osgood theorem, in compact subsets \cite{Beardon_Minda, Osgood}.

Then, in \S\ref{S:randowm_walks} we give, cf. %in 
Theorem \ref{teo:JPII_stochastic},
the type I and II stochastic matrices for the hypergeometric multiple orthogonal polynomials. We also show that both matrices are connected for large $n$, as they respective limits are transposed to each other. Clearly, the Karlin--McGregor representation formulas, see Theorems \ref{teo:KMcG} and \ref{teo:KMcG2}, apply here. We also discuss in Theorem \ref{Theorem:recurrent-transient} whether these hypergeometric Markov chains are recurrent or transient and also find left eigenvectors of the stochastic matrices, close to steady states, in Proposition \ref{pro:steady}. 
Then, in Theorem~\ref{teo:stochastic _factorization} we give the stochastic factorization of the hypergeometric stochastic matrices in terms of three very simple pure birth or death matrices (with only one superdiagonal or subdiagonal, respectively). This is an important result as we have an $LU$ factorization of an stochastic matrix in terms of stochastic matrices, leading to the corresponding interpretation in terms of compositions of experiments, and possibly to urn models \cite{grunbaum_de la iglesia,grunbaum_de la iglesia2}. 

In \S\ref{S:uniform} we seek for uniform or almost uniform hypergeometric Jacobi matrices, i.e the Jacobi matrix is a banded Toeplitz matrix but for the first column. In particular, we search for Jacobi matrices that are variations (only in the first column) of the large $n$-limit of the Jacobi matrix for any set of hypergeometric parameters, the banded Toeplitz matrix in \eqref{eq:Jacobi_Jacobi_Piñeiro_uniform}, that appeared in \cite{Coussement_Coussment_VanAssche} and \cite{bfmaf}. In Theorem \ref{teo:12}, we find that there are twelve hypergeometric tuples leading to such cases. These hypergeometric \emph{uniform tuples }are organized in six couples. Each couple lies in the same \emph{gauge class} described in \S\ref{S:gauge_freedom}, and lead to the same sequences of multiple orthogonal polynomials of type II, linear forms and Jacobi matrix. In \S\ref{S:stochastic_uniform}, we analyze three couples of uniform tuples which have named as \emph{stochastic uniform tuples}, describing recurrent Markov chains with stochastic transition matrices, having recurrent dual Markov chains with sinks and sources and semi-stochastic transition matrices. In Theorem \ref{teo:uniform_stochastic_factorization} we find that one of these stochastic uniform tuples has for its Markov matrix an stochastic factorization with three uniform pure death and pure birth factors. Then, in \S\ref{S:semi-stochastic uniform tuples} we discuss
the other three couples of uniform tuples, that we call \emph{semi-stochastic uniform tuples} because the corresponding transient Markov chains have sink states.
One of these six semi-stochastic uniform tuples is the one considered in \cite{lima_loureiro}. For the twelve cases we give the corresponding set of weights, all of them perfect. The three couples of semi-stochastic uniform tuples correspond to Nikishin systems. In Theorem \ref{teo:Christoffel chains for uniform tuples} we connect the stochastic uniform tuples using permuting Christoffel transformations \cite{bfm}, this mixed with the \emph{gauge} transformations connect all the tuples in this set. The same is done for the semi-stochastic uniform tuples. We also discuss some connections trough basic Christoffel transformations \cite{bfm}. As byproduct we get summation formulas at unity and three and four terms contiguous relations for the generalized hypergeometric function $\tensor[_3]{F}{_2}$. To conclude the section,  using   uniform recurrence relations, we construct a  generating function  that leads to explicit expressions  for the type I multiple orthogonal polynomials, see Theorem \ref{teo:uniform_type_I}. 

To conclude the paper, in \S\ref{S:Summations} we consider the very recent summation formulas by Karp and Prilepkina \cite{Karp_Prilepkina} to get summation formulas relevant in the theory of simultaneous Hermite--Padé approximants. In particular, we give in Proposition~\ref{pro:summations_Padé} explicit summations for the generalized type II moments linked to the remainders in the interpolation conditions of the type II Hermite--Padé approximants to the Markov functions. 

%In the Appendix we give some explicit numerical examples of types I and II, recurrent and transient, and the corresponding stochastic factorizations. 

\subsection{Two component multiple orthogonal polynomials on the step-line}

We now present a brief introduction to multiple orthogonal polynomials form the Gauss--Borel factorization point of view \cite{afm}, see also~\cite{abv,Coussement_Coussment_VanAssche,Ismail}.

\subsubsection{Gauss--Borel factorization of the moment matrix}
Let us consider a couple of weights $(w_1,w_2)$, the semi-infinite vectors of monomials
\begin{align*}
%\hspace{-.15cm}
\rchi := \left( \begin{NiceMatrix}%[columns-width = auto]
 1 & x & x^2 & \Cdots 
 \end{NiceMatrix} \right)^\top,
 &&
\rchi_1 :=\left( \begin{NiceMatrix}%[columns-width = auto]
 1 & 0 & x & 0 & x^2 & %0 &
 \Cdots 
 \end{NiceMatrix} \right)^\top, 
 && 
\rchi_2 := \left( \begin{NiceMatrix}%[columns-width = auto]
 0 & 1 & 0 & x & 0 & x^2 &
 \Cdots
 \end{NiceMatrix} \right)^\top,
\end{align*}
and the following vector of undressed linear forms
\begin{align}
 \xi &:= \rchi_1 w_1 + \rchi_2 w_2
 =\left( \begin{NiceMatrix}%[columns-width = auto]
 w_1 & w_2 & xw_1 & xw_2 & x^2w_1 & x^2w_2 &
 \Cdots
 \end{NiceMatrix} \right)^\top. 
\end{align}
Given a measure $\mu$ with support on the closed interval $\Delta\subset\R$, our moment matrix is 
 \begin{align}
 \label{compact.g}
 g :=\int_\Delta\rchi(x)(\xi(x))^\top\d \mu(x).
 \end{align}

 The Gauss--Borel factorization of the moment matrix $g$ is the
 problem of finding the solution of
 \begin{align}\label{facto}
 g &=S^{-1} H \tilde S^{-\top}, 
 \end{align}
 with $S$, $\tilde S$ lower unitriangular semi-infinite matrices
 \begin{align*}
 S&=\left(\begin{NiceMatrix}[columns-width = auto]
 1 & 0 & \Cdots & \\
 S_{1,0 } & 1& \Ddots & \\
 S_{2,0} & S_{2,1} & \Ddots & \\
 \Vdots & \Ddots & \Ddots &
 \end{NiceMatrix}\right), & 
 \tilde S&=
 \left(\begin{NiceMatrix}[columns-width = auto]
 1 & 0 &\Cdots &\\
 \tilde S_{1,0 } & 1&\Ddots&\\
 \tilde S_{2,0} & \tilde S_{2,1} & \Ddots &\\
 \Vdots & \Ddots& \Ddots& 
 \end{NiceMatrix}\right), 
 \end{align*}
 and $H$ a semi-infinite diagonal matrix 
$ H=\diag
 \begin{pNiceMatrix}%[columns-width = auto]
 H_0 & H_1 & \Ldots 
 \end{pNiceMatrix} {}$,
 with $H_l\neq 0$, $l\in\N_0$. 
 
 Vector of type~II multiple orthogonal polynomials and of type~I linear forms associated are defined respectively by
 \begin{align*}%\label{polynomials type~IIS}
 B & :=
 \left( \begin{NiceMatrix}%[columns-width = auto]
 B^{(0)}\\
 B^{(1)}\\
 \Vdots
 \end{NiceMatrix} \right)
 = S \, \rchi, &%\\
 % \label{linear forms of type~IS}
 A_1 & :=
 \left( \begin{NiceMatrix}%[columns-width = auto]
 A_1^{(0)} \\[.1cm]
 A_1^{(1)} \\
 \Vdots
 \end{NiceMatrix} \right) 
 = H^{-1} \, \tilde S \, \rchi_1, &&
 A_2 :=
 \left( \begin{NiceMatrix}%[columns-width = auto]
 A_2^{(0)}\\[.1cm]
 A_2^{(1)}\\
 \Vdots
 \end{NiceMatrix} \right) 
 = H^{-1} \, \tilde S \, \rchi_2, &&
 Q :=
 \left( \begin{NiceMatrix}%[columns-width = auto]
 Q^{(0)}\\[.1cm]
 Q^{(1)}\\
 \Vdots
 \end{NiceMatrix} \right)
 =H^{-1} \, \tilde S \, \xi .
 \end{align*}
 %\end{defi}
 
\subsubsection{Multiple orthogonality and bi-orthogonality}

 Let us assume that the Gauss--Borel factorization exists, that~is, the system $(w_1,w_2,\d\mu)$ is perfect. Then, in terms of $\, S$ and $\tilde S$ we construct the type~II multiple orthogonal~polynomials
 \begin{align} \label{defmops}
 B^{(m)}&:=x^m+\sum_{i=0}^{m-1} S_{m,i}x^{i}, && m \in\N_0,
 \end{align}
as well as the type~I multiple orthogonal polynomials,
 \begin{align}\label{eq:A12} 
 A^{(2 m)}_1&=\frac{1}{H_{2m}}\Bigg(x^m+\sum_{i=0}^{m-1}\tilde S_{2m,2i} x^{i}\Bigg), &
 A^{(2 m +1)}_1&=\frac{1}{H_{2m+1}}\Bigg(\sum_{i=0}^{m}\tilde S_{2m+1,2i} x^{i}\Bigg), & m \in\N_0,
 \end{align}
 and\begin{align}\label{eq:A12'}
 \begin{aligned}
 A^{(0)}_2&=0, & A^{(1)}_2&=1, & \\
 A^{(2m)}_2&=\frac{1}{H_{2m}}\Bigg(\sum_{i=0}^{m-1}\tilde S_{2m,2i+1} x^{i}\Bigg), &A^{(2m+1)}_2&=\frac{1}{H_{2m+1}}\Bigg(x^m+\sum_{i=0}^{m - 1}\tilde S_{2m+1,2i+1} x^{i} \Bigg), & m \in\N.
 \end{aligned}
 \end{align}
 For $m \in\N_0$, the linear forms are
 \begin{align*}
 Q^{(m)}:=w_1A^{(m)}_1+w_2A_2^{(m)}.
 \end{align*}
%\end{defi}
%The type~II multiple orthogonal polynomials are as usual, $B^{(m)}=x^m+\sum_{i=0}^{m-1}S_{l,i}x^i$.
Given the vector of indices $\vec \nu (2 m)=(m+1,m)$ and $\vec\nu(2m+1)=(m+1,m+1)$, $m \in \N_0$, and the corresponding multiple orthogonal polynomials
 \begin{gather*} 
 \begin{aligned}
 B^{ (2 m)}&=B_{(m,m)}, & B^{(2m+1)}&=B_{(m+1,m)},
 \end{aligned}\\
 \begin{aligned}
 A^{ (2 m)}_1&=A_{(m+1,m),1}&
 A^{(2m+1)}_1&=A_{(m+1,m+1),1}, &
 A^{ (2 m)}_2&=A_{(m+1,m),2}, &
 A^{(2m+1)}_2&=A_{(m+1,m+1),2},
 \end{aligned}
 \end{gather*}
the following type I orthogonality relations
 \begin{align*}%\label{tipoI}
 \int_{\Delta} x^{j} (A_{\vec \nu, 1}(x)w_{1} (x)+A_{\vec \nu, 2}(x)w_{2} (x))\d\mu (x) =0, && 
 \deg A_{\vec \nu, 1}\leq\nu_{1}-1, &&
 \deg A_{\vec \nu, 2}\leq\nu_{2}-1,
 %\\ \label{tipoII}
 \end{align*}
 for $j\in\{0,\ldots, |\vec \nu|-2\}$,
 and type~II orthogonality relations
 \begin{align*}
 \int_{\Delta} B_{\vec\nu}(x) w_{a} (x) x^{j} \d\mu (x) =0, && \deg B_{\vec\nu} &\leq|\vec \nu|, && j=0,\ldots, \nu_a - 1, && a=1,2,
 % \\
 %\int_{\Delta} B_{\vec\nu}(x) w_{2} (x) x^{j} \d\mu (x)&=0, & \deg B_{\vec\nu} &\leq|\vec \nu|,& j&=0,\ldots, \nu_2-1,
 \end{align*}
 are fulfilled.

 %\begin{pro}[Biorthogonality]%\label{proposition: biorthogonality}
 The following multiple biorthogonality relations 
 \begin{align}\label{biotrhoganility}
 \int_\Delta B^{(m)}(x) Q^{(k)}(x)\d \mu(x)&=\delta_{m,k},& m,k \in \N_0,
 \end{align}
 hold.

\subsubsection{The Jacobi matrix and the fourth order homogeneous linear recurrence relations}

For describing the linear recurrence relations we require the following shift matrix 
$ \Lambda
 =\left( \begin{NiceMatrix}[small]%[columns-width = auto]
 0& 1 & 0 & \Cdots & &&\\
 \Vdots &\Ddots & \Ddots&\Ddots&& &\\
 & & &&&&\\
 & & & &&&\\
 &&&&&&\\
 &&&&&&
 \end{NiceMatrix} \right) $,
%in terms of which the lower unitriangular factors $S$, $\tilde S$ can be written using subdiagonals as
%\begin{align*}
% S&= I +\Lambda^\top S^{[1]}+\big(\Lambda^\top\big)^2S^{[2]}+\cdots, &
% \tilde S&= I+\Lambda^\top \tilde S^{[1]}+\big(\Lambda^\top\big)^2\tilde S^{[2]}+\cdots, 
%\end{align*}
%with 
%$S^{[k]}
%=\diag
%\left( \begin{NiceMatrix}%[columns-width = auto]
% S^{[k]}_0 & S^{[k]}_1 & \Ldots
%\end{NiceMatrix} \right)
%$ and $\tilde S^{[k]}=\diag
%\left( \begin{NiceMatrix}%[columns-width = auto]
% \tilde S^{[k]}_0 & \tilde S^{[k]}_1 & \Ldots
%\end{NiceMatrix} \right)$
%diagonal matrices. Hence,
%$S^{[k]}_m=S_{k+m,m}$ and $\tilde S^{[k]}_m =\tilde S_{k+m,m}$,
%so that
%for all $m \in \N_0$,
%$\displaystyle
%B^{(m)}=x^m+\sum_{i=0}^{m-1}S^{[m-i]}_{i} x^i $, 
%and
%\begin{align*} 
% A^{ (2 m)}_1&=\frac{1}{H_{2m}}\Bigg(x^m+\sum_{i=0}^{m-1}\tilde S^{[2(m-i)]}_{2i} x^{i} \Bigg), &
% A^{(2m+1)}_1&=\frac{1}{H_{2m+1}}\Bigg(\sum_{i=0}^{m}\tilde S^{[2(m-i)+1]}_{2i} x^{i} \Bigg), \\
% A^{ (2 m)}_2&=\frac{1}{H_{2m}} \Bigg(\sum_{i=0}^{m-1}\tilde S^{[2(m-i)-1]}_{2i+1} x^{i} \Bigg), &
% A^{(2m+1)}_2&=\frac{1}{H_{2m+1}} \Bigg(x^m + \sum_{i=0}^{m-1}\tilde S^{[2(m-i)]}_{2i+1} x^{i}\Bigg) .
%\end{align*}
that satisfies $\Lambda\chi(x)=x\chi(x)$. The Jacobi matrix 
$ { J } := S \Lambda S^{-1}$
 is a banded semi-infinite matrix
\begin{align}\label{eq:Jacobi}
% { J } = %\left( \begin{NiceMatrix}[columns-width = 0.5cm]
% \left( \begin{NiceMatrix}%[columns-width = auto]
% b_{0,0}& 1 & 0 & 0 & 0 & \Cdots\\ %& 0 &0 & 0 & 0 \\
% c_{1,0}&b_{1,0}& 1 & 0 & 0 & \Ddots \\ %& 0 & 0 & 0 & 0 \\
% d_{1,1}& c_{1,1}& b_{1,1}& 1 &0&\Ddots\\ %&% 0 & 0 & 0 & 0 & 0 & 0 \\
% 0& d_{2,1}&c_{2,1}& b_{2,1} & 1 & \Ddots\\ %& 0 & 0 & 0 & 0 \\
% \Vdots&\Ddots&\Ddots&\Ddots&\Ddots&\Ddots
% \end{NiceMatrix} \right) .
%\begin{align}
% \label{eq:Jacobi_hypergeometric}
 { J } 
 = \left(\begin{NiceMatrix}%[columns-width = auto,margin]
 % [columns-width = 0.5cm]
 \beta_0& 1 & 0 & \Cdots & & \\[-4pt]
 \alpha_1&\beta_1& 1 & \Ddots& & \\ 
 \gamma_1& \alpha_2& \beta_2& 1 &&\\ 
 0& \gamma_2&\alpha_3 & \beta_3& 1 & \\ 
 \Vdots&\Ddots&\Ddots&\Ddots&\Ddots&
 \end{NiceMatrix}\right)
%\end{align}
\end{align}
The Jacobi matrix, type~II multiple orthogonal polynomials, and the corresponding type~I multiple orthogonal polynomials and linear forms of type~I fulfill the eigenvalue property
\begin{align}\label{eq:eigen_value}
 { J } \, B&= x \, B, & { J } ^\top A_1&= x \, A_1, & { J } ^\top A_2&= x \, A_2,& { J } ^\top Q&= x \, Q. 
\end{align}
This eigenvalue property component-wise gives the following fourth order homogeneous linear recurrence relations:
\begin{align*}
\gamma_{n-1} B^{(n-2)} + \alpha_{n} B^{(n-1)}+\beta_{n} B^{(n)}+B^{(n+1)}&=xB^{(n)},\\
A_1^{(n-1)} + \beta_{n}A_1^{(n)}+\alpha_{n+1}A_1^{(n+1)}+\gamma_{n+1}A_1^{(n+2)} &= x A_1^{(n)},\\
A_2^{(n-1)}+\beta_{n}A_2^{(n)}+\alpha_{n+1}A_2^{(n+1)}+\gamma_{n+1}A_2^{(n+2)} &= x A_2^{(n)},\\
Q^{(n-1)}+\beta_{n}Q^{(n)}+\alpha_{n+1}Q^{(n+1)}+\gamma_{n+1}Q^{(n+2)}&=xQ^{(n)} ,
\end{align*}
for $n \in \N_0$, with the restriction of objects with negative indices be treated as zero.

\subsubsection{Gauge freedom}\label{S:gauge_freedom}

%This new theorem is fundamental, otherwise our examples in \S\ref{S:uniform} would shake the foundations of the theory of multiple orthogonality.

In \cite{bfm} %{afm}
we presented
\begin{teo}\label{teo:gauge_freedom_0}
 %Assume a compact support 
 Let $\Delta \subset \R$ be the compact support of two perfect systems, $(w_1, w_2,\d\mu)$, with $\int_\Delta w_1(x) \d \mu(x) =1$, and $(\hat w_1,\hat w_2,\d\mu)$, with $\int_\Delta \hat{w}_1 (x) \d \mu(x) =1$, that have the same sequence of type~II multiple orthogonal polynomials $\{B^{(n)}(x)\}_{n=0}^\infty$. 
 Then $ \hat w_1 = w_1$ and there exists $\alpha,\beta \in\R$ with $\alpha,\beta \not =0$ such that
 \begin{align}\label{eq:gauge_1_0}
 \gamma w_1+ \alpha w_2+ \beta \hat w_2 =0. 
 \end{align}
 If $A^{(m)}_1$, $A^{(m)}_2$ are the type I multiple orthogonal polynomials, associated with the system
 $(w_1, w_2,\d\mu)$, then the type~I multiple orthogonal polynomials, $\hat{A}^{(m)}_1,\hat{A}^{(m)}_2$
 associated with the system $(w_1,\hat w_2,\d\mu)$ are given by
 \begin{align*} 
 \hat{A}^{(m)}_1=A^{(m)}_1- \frac{\gamma}{\alpha} A_2^{(m)}, && \hat{A}^{(m)}_2= -\frac{\beta}{\alpha} A_2^{(m)} ,
 \end{align*} and both systems has the same type I linear forms , i.e., 
$ \hat{Q}^{(m)} = Q^{(m)}$.
% where
%$ Q^{(m)}:=w_1A^{(m)}_1+w_2 A^{(m)}_2 $ and $ \hat{Q}^{(m)}:=w_1\hat{A}^{(m)}_1+\hat w_2 \hat{A}^{(m)}_2$.
This will be used in \S\ref{S:uniform}.
\end{teo}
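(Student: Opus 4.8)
The guiding idea is that the two perfect systems produce the \emph{same} monic, degree\-/graded sequence $\{B^{(n)}\}_{n=0}^\infty$ of type~II polynomials, so the type~II orthogonality relations and the biorthogonality \eqref{biotrhoganility} already pin down the relevant $L^1(\d\mu)$ functions once we add the compactness of $\Delta$, which makes the moment problem determinate. \textbf{First} I would prove $\hat w_1 = w_1$: the type~II relations give $\int_\Delta B^{(n)} w_1 \d\mu = \int_\Delta B^{(n)}\hat w_1\d\mu = 0$ for every $n\ge 1$ (the $j=0$, $a=1$ conditions), while the two normalizations give $\int_\Delta B^{(0)}(w_1-\hat w_1)\d\mu = 1-1 = 0$. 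Since $B^{(n)}$ is monic of degree $n$, $\{B^{(0)},\dots,B^{(N)}\}$ spans the polynomials of degree $\le N$, so every moment of the finite signed measure $(w_1-\hat w_1)\d\mu$ vanishes, and compactness of $\Delta$ forces $\hat w_1 = w_1$ ($\d\mu$-a.e.). \textbf{Next}, the very same determinacy argument applied to \eqref{biotrhoganility} --- valid for each perfect system with the common $B^{(m)}$ --- gives $\int_\Delta B^{(m)}(Q^{(k)}-\hat Q^{(k)})\d\mu = \delta_{m,k}-\delta_{m,k}=0$ for all $m,k$, hence $\hat Q^{(k)} = Q^{(k)}$ in $L^1(\d\mu)$; this is already the last assertion of the theorem.

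\textbf{Then} I would turn $\hat Q^{(k)} = Q^{(k)}$, together with $\hat w_1 = w_1$, into
\[
 w_1\bigl(A_1^{(k)}-\hat A_1^{(k)}\bigr) + w_2 A_2^{(k)} - \hat w_2\hat A_2^{(k)} = 0, \qquad k\in\N_0 .
\]
Evaluating at $k=1$, where $A_2^{(1)}=\hat A_2^{(1)}=1$ and $A_1^{(1)},\hat A_1^{(1)}$ are constants by \eqref{eq:A12}--\eqref{eq:A12'}, gives $(A_1^{(1)}-\hat A_1^{(1)})\,w_1 + w_2 - \hat w_2 = 0$, so $\gamma := A_1^{(1)}-\hat A_1^{(1)}$, $\alpha := 1$, $\beta := -1$ realise \eqref{eq:gauge_1_0} with $\alpha,\beta\neq0$ (any nonzero common rescaling of $(\gamma,\alpha,\beta)$ works, and everything that follows depends only on $\gamma/\alpha$ and $\beta/\alpha$). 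Substituting $\hat w_2 = w_2 + \gamma w_1$ back in yields
\[
 w_1\bigl(A_1^{(k)}-\hat A_1^{(k)}-\gamma\hat A_2^{(k)}\bigr) + w_2\bigl(A_2^{(k)}-\hat A_2^{(k)}\bigr) = 0 .
\]
The concluding ingredient is a separation lemma: for a perfect system, the only polynomials $P_1,P_2$ with $w_1 P_1 + w_2 P_2 = 0$ ($\d\mu$-a.e.) are $P_1=P_2=0$. Indeed such a relation means $\xi^\top c \equiv 0$ for a nonzero, finitely supported vector $c$ whose entries are the interlaced coefficients of $P_1,P_2$, whence $g\,c = \int_\Delta \rchi\,(\xi^\top c)\,\d\mu = 0$, forcing a leading principal truncation of the moment matrix $g$ to be singular --- impossible when the Gauss--Borel factorization \eqref{facto} exists, i.e.\ when the system is perfect. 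Applying the lemma to the last display gives $\hat A_2^{(k)} = A_2^{(k)}$ and $\hat A_1^{(k)} = A_1^{(k)} - \gamma A_2^{(k)}$, which are precisely the stated formulas for $\alpha=1$, $\beta=-1$.

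I expect the one place needing genuine care to be this separation lemma (equivalently, the polynomial-linear-independence of $w_1,w_2$ against $\d\mu$ for a perfect system); the rest is bookkeeping with the orthogonality relations and determinacy of moments on a compact set, and the degenerate subcase $\gamma=0$ simply says the two systems coincide. A more structural route to the first two steps, perhaps better matched to the paper's language, is to use \eqref{facto} directly: the two moment matrices share the lower\-/unitriangular factor $S$ (because $B=S\rchi$ is common), so $S(g-\hat g)$ is upper triangular; its $(n,0)$ entry equals $\int_\Delta B^{(n)}(w_1-\hat w_1)\d\mu$, which vanishes for $n\ge1$ by upper-triangularity and for $n=0$ because $\int_\Delta w_1\d\mu=\int_\Delta\hat w_1\d\mu=1$, so the whole first column of $g-\hat g$ vanishes, i.e.\ $(w_1-\hat w_1)\d\mu$ has all moments zero, and one concludes $\hat w_1=w_1$ by determinacy exactly as above.
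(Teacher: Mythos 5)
Your proof is correct and complete; note that the paper does not actually prove Theorem~\ref{teo:gauge_freedom_0} here --- it is quoted from \cite{bfm} --- so there is no in-text argument to compare against, and your write-up has to stand on its own, which it does. The three pillars are all sound: (a) determinacy of the Hausdorff moment problem on the compact set $\Delta$ turns ``all moments of a finite signed measure vanish'' into ``the measure is zero'', which legitimately yields both $\hat w_1=w_1$ and $\hat Q^{(k)}=Q^{(k)}$ from the orthogonality and biorthogonality relations (the latter hold for both systems because each is perfect and they share the $B^{(n)}$); (b) evaluating the identity $w_1(A_1^{(k)}-\hat A_1^{(k)})+w_2A_2^{(k)}-\hat w_2\hat A_2^{(k)}=0$ at $k=1$, where all four type~I polynomials are constants and $A_2^{(1)},\hat A_2^{(1)}\neq0$, produces \eqref{eq:gauge_1_0} with nonzero $\alpha,\beta$; and (c) your separation lemma is exactly the right key point, and your reduction of $w_1P_1+w_2P_2=0$ to a finitely supported kernel vector of the moment matrix $g$, contradicting the nonvanishing of the leading principal minors that perfectness guarantees, is a clean and correct proof of it. Two minor remarks. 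First, your normalization $\alpha=1,\beta=-1$ leans on the paper's convention $A_2^{(1)}=\hat A_2^{(1)}=1$ from \eqref{eq:A12'}; if one instead keeps the generic constants $A_2^{(1)},\hat A_2^{(1)}$, the same computation gives $\gamma=A_1^{(1)}-\hat A_1^{(1)}$, $\alpha=A_2^{(1)}$, $\beta=-\hat A_2^{(1)}$ and reproduces the stated formulas $\hat A_2^{(m)}=-\tfrac{\beta}{\alpha}A_2^{(m)}$, $\hat A_1^{(m)}=A_1^{(m)}-\tfrac{\gamma}{\alpha}A_2^{(m)}$ verbatim, so nothing is lost --- your observation that only the ratios $\gamma/\alpha$ and $\beta/\alpha$ matter already covers this. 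Second, your closing ``structural'' variant (both moment matrices share the factor $S$, so $S(g-\hat g)$ is upper triangular and its first column kills the moments of $(w_1-\hat w_1)\,\d\mu$) is the version best aligned with the Gauss--Borel language of \eqref{facto} and would be the natural way to phrase the argument within this paper's framework.
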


\begin{rem}
	This theorem leads to a surprising fact. For multiple orthogonal polynomials the Jacobi matrix $J$, the sequence of type~II multiple orthogonal polynomials $\{B^{(n)}\}_{n=0}^\infty$, and even the sequence of type I linear forms $\{Q^{(n)}\}_{n=0}^\infty$ do not determine uniquely the spectral system $(w_1,w_2,\d\mu)$, as it determines uniquely $w_1$, for the second weight one has the freedom just described. Inspired by similar phenomena in gauge field theories, we call this 
	%freedom 
	a \emph{gauge freedom}.
\end{rem}

\begin{rem}\label{rem:scaling}
	If in \eqref{eq:gauge_1_0} we put $\gamma=0$ and $\beta=-1$ we get $\hat w_1=w_1$ and $\hat w_2=\alpha w_2$, that is we are dealing with a rescaling of the second weight. In this case we get $\hat B^{(n)}=B^{(n)}$, $\hat Q^{(n)}=Q^{(n)}$, $\hat A_1^{(n)}=A_1^{(n)}$ and 
	$\hat A_2^{(n)}=\alpha^{-1}\beta A_2^{(n)}$. This is a particular case of a rescaling $\hat w_1=\alpha_1 w_1$ and $\hat w_2=\alpha_2 w_2$, and using the Gauss--Borel factorization it can be shown that $\hat B^{(n)}=B^{(n)}$, $\hat Q^{(n)}=Q^{(n)}$, $\hat A_1^{(n)}=\alpha_1^{-1} A_1^{(n)}$ and 
	$\hat A_2^{(n)}=\alpha_2^{-1} A_2^{(n)}$.
\end{rem}

\begin{rem}\label{teo:gauge_freedom}
A version of this result that we will use in \S\ref{S:uniform} goes as follows. Assume a compact support $\Delta$, a solvable Hausdorff moment problem for $w_2$, and that two perfect systems $(w_1, w_2,\d\mu)$, with $\int_\Delta w_1(x) \d \mu(x) =1$ and $\int_\Delta w_2(x) \d \mu(x) =1$, and $(w_1,\hat w_2,\d\mu)$ with
$\int_\Delta \hat w_2(x) \d \mu(x) =1$
have the same sequence of type II multiple orthogonal polynomials $\{B^{(n)}(x)\}_{n=0}^\infty$. Then, there exists $\alpha,\beta\in\R$ such that
\begin{align}\label{eq:gauge_1}
 w_1&=\alpha w_2+\beta\hat w_2, & \alpha+\beta&=1.
\end{align}
 Notice that \eqref{eq:gauge_1} can be written as
\begin{align}\label{eq:gauge_2}
	\alpha' w_1+\beta'\hat w_2&= w_2, & \alpha'+\beta'&=1.
\end{align}
Indeed, from \eqref{eq:gauge_2} we get $ w_1= \alpha w_2+\beta\hat w_2$ with $\alpha=\frac{1}{\alpha'}$ and $\beta=1-\alpha=-\frac{\beta'}{\alpha'}$.
 To analyze the \emph{gauge freedom} in \eqref{eq:gauge_2}, after the replacement $\alpha'\to \alpha$ and $\beta'\to \beta$, 
 requires to consider solutions of 
 \begin{align}\label{eq:rho}
 \alpha \rho_{1,0} + \beta \hat \rho_{2,0} &= \rho_{2,0} , &
 \alpha \rho_{1,1} + \beta \hat \rho_{2,1} &= \rho_{2,1} ,
 \end{align}
 where $\rho_{a,n}=\int_{\Delta} x^nw_a(x)\d\mu (x)$ with $a\in\{1,2\}$ and $\hat \rho_{2,n}=\int_{\Delta} x^n\hat w_2(x)\d\mu (x)$.
\end{rem}

\begin{rem}
 This \emph{gauge freedom} will be used in \S\ref{S:uniform}. In that section couple of perfect systems $(W_1,W_2,\d x)$ and $(W_1,\hat W_2,\d x)$ leading to the same sequences multiple orthogonal polynomials of type II and Jacobi matrices are presented for pair of tuples of hypergeometric parameters $(a,b,c,d)$ and $(b,a,c,d)$, see \S\ref{S:LL} below.
\end{rem}

\subsection{Christoffel transformations}

For the aim of this paper two Christoffel transformations presented in \cite{bfm} %{afm} 
will be useful in the developments in \S\ref{S:uniform}. See also \cite{aagmm, matrix,matrix2}.
First, let us consider $\vec w=(w_1,w_2)$ and the transformed vector of weights
$\vec{\underline w}
=(w_2, x \, w_1)$, that is a simple Christoffel transformation of $w_1$ followed by a permutation of the two weights. Then, \cite[Theorem 4]{bfm} %{afm} 
says
\begin{teo}[Permuting Christoffel formulas]\label{teo:Christoffel}
 For the type I orthogonal polynomials and linear forms we have,
 \begin{gather*}
 Q_{\underline{\vec w}}^{(n)}(x)=Q_{{\vec w}}^{(n)}(x)-\frac{A_{1,\vec w}^{(n)}(0)}{A_{1,\vec w}^{(n+1)}(0)}Q_{{\vec w}}^{(n+1)}(x),\\
 \begin{aligned}
 A_{1,\underline{\vec w}}^{(n)}(x)&=A_{2,{\vec w}}^{(n)}(x)-\frac{A_{1,\vec w}^{(n)}(0)}{A_{1,\vec w}^{(n+1)}(0)}A_{2,{\vec w}}^{(n+1)}(x),&
 A_{2,\underline{\vec w}}^{(n)}(x)&=\frac{1}{x}\Big(A_{1,{\vec w}}^{(n)}(x)-\frac{A_{1,\vec w}^{(n)}(0)}{A_{1,\vec w}^{(n+1)}(0)}A_{1,{\vec w}}^{(n+1)}(x)\Big).
 \end{aligned}
 \end{gather*}
 For the type II orthogonal polynomials we have 
 \begin{align}\label{eq:permuting_Christoffel}
%B^{(n)}_{\underline{\vec w}}(x)
% =\frac{1}{x}\left(B_{\vec w}^{(n+1)}(x)
% -\left(\frac{A_{1,\vec w}^{(n+1)}(0)}{A^{(n)}_{1,\vec w}(0)} J_{n+1,n}+\frac{A_{1,\vec w}^{(n+2)}(0)}{A^{(n)}_{1,\vec w}(0)}J_{n+2,n}\right)B_{\vec w}^{(n)}(x)-\frac{A_{1,\vec w}^{(n+1)}(0)}{A^{(n)}_{1,\vec w}(0)}J_{n+1,n-1}B_{\vec w}^{(n-1)}(x) \right).
B^{(n)}_{\underline{\vec w}}(x)
=\frac{1}{x}\bigg(B_{\vec w}^{(n+1)}(x)
+\bigg(\frac{A_{1,\vec w}^{(n-1)}(0)}{A^{(n)}_{1,\vec w}(0)} +J_{n,n}\bigg)B_{\vec w}^{(n)}(x)-\frac{A_{1,\vec w}^{(n+1)}(0)}{A^{(n)}_{1,\vec w}(0)}J_{n+1,n-1}B_{\vec w}^{(n-1)}(x) \bigg). 
\end{align}
%\begin{align*}
%	B^{(n)}_{\underline{\vec w}}(x)
%	&=-
%	\frac{K_1^{(n+1)}(0,x)}{A^{(n-1)}_{1,\vec w}(0)}\\&
%	\begin{multlined}[t][0.9\textwidth]
%		=-\frac{1}{xA^{(n-1)}_{1,\vec w}(0)}\big(	A_{1,\vec w}^{(n)}(0)B_{\vec w}^{(n+1)}(x)
%		-A_{1,\vec w}^{(n+1)}(0)\big(J_{n+1,n-1}B_{\vec w}^{(n-1)}(x) + J_{n+1,n}B_{\vec w}^{(n)}(x))
%		-A_{1,\vec w}^{(n+2)}(0)J_{n+2,n}B_{\vec w}^{(n)}(x)\big).
%	\end{multlined}
%\end{align*}
 For the $H$'s we find the transformation formula
$ H_{\underline{\vec w},n}=-\frac{A_{1,\vec w}^{(n+1)}(0)}{A_{1,\vec w}^{(n)}(0)} H_{{\vec w},n}$.
\end{teo}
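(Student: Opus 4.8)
The statement to prove is Theorem~\ref{teo:Christoffel} (Permuting Christoffel formulas), which relates the type I and type II multiple orthogonal objects of the system $\vec w = (w_1,w_2)$ to those of the Christoffel-transformed-and-permuted system $\underline{\vec w} = (w_2, x\,w_1)$. The plan is to work entirely at the level of the Gauss--Borel factorization \eqref{facto} of the moment matrices, exploiting the fact that the permutation combined with multiplication by $x$ has a clean matrix representation on the space of undressed linear forms $\xi$.

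First I would write down the moment matrix $\underline g$ for the transformed system and express it in terms of $g$. Since $\underline{\vec w}=(w_2, xw_1)$, the vector of undressed linear forms transforms as $\underline\xi = \rchi_1 w_2 + \rchi_2\, x w_1$, which differs from $\xi = \rchi_1 w_1 + \rchi_2 w_2$ by a fixed sparse matrix acting on the monomial structure; concretely there is a banded matrix $\mathcal{C}$ (encoding "multiply $w_1$ by $x$ and swap the two families") with $\underline\xi = \mathcal C^{\top}\xi$ up to the $\rchi$ reshuffling, so that $\underline g = g\,\mathcal C^{\top}$ or a similar one-sided multiplication. The key point is that $\mathcal C$ is lower Hessenberg (one superdiagonal), so that inserting the factorization $g = S^{-1}H\tilde S^{-\top}$ and performing a new Gauss--Borel factorization of $\underline g$ amounts to an $LU$-type re-factorization of $\tilde S^{-\top}\mathcal C^{\top}$, i.e. a Darboux-type step. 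This is exactly the mechanism already used in \cite{bfm}, so I would cite \cite[Theorem 4]{bfm} for the underlying identity and focus on extracting the stated closed forms.

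Next I would identify the ratio $A_{1,\vec w}^{(n)}(0)/A_{1,\vec w}^{(n+1)}(0)$ as the connecting coefficient. Evaluating the type I polynomials at $x=0$ picks out the relevant entries of $\tilde S$ and $H$; the Christoffel kernel for a shift at the point $0$ (because $x w_1$ vanishes at $0$) is governed precisely by $A_1^{(n)}(0)$, analogously to the classical scalar Christoffel formula where the transformed polynomials are $x^{-1}(p_{n+1}(x) - \frac{p_{n+1}(0)}{p_n(0)}p_n(x))$. From the re-factorization one reads off that $H_{\underline{\vec w},n} = -\frac{A_{1,\vec w}^{(n+1)}(0)}{A_{1,\vec w}^{(n)}(0)}H_{\vec w,n}$, and then the formulas for $Q_{\underline{\vec w}}^{(n)}$, $A_{1,\underline{\vec w}}^{(n)}$, $A_{2,\underline{\vec w}}^{(n)}$ follow by applying $H^{-1}\tilde S$ to $\underline{\xi}$, $\underline{\rchi}_1$, $\underline{\rchi}_2$ and matching coefficients. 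For the type II formula \eqref{eq:permuting_Christoffel}, I would dress $\underline\chi$ by the new $\underline S$; the three-term combination of $B^{(n+1)}, B^{(n)}, B^{(n-1)}$ with coefficients involving $J_{n,n}$ and $J_{n+1,n-1}$ comes from the fact that $\mathcal C$ couples to the Jacobi matrix $J = S\Lambda S^{-1}$, and the entries $J_{n,n}=\beta_n$, $J_{n+1,n-1}=\gamma_n$ appear when one commutes $\Lambda$ past the Christoffel factor. One then verifies the bi-orthogonality \eqref{biotrhoganility} for the transformed objects to pin down all constants, using that $\int_\Delta B^{(m)}_{\underline{\vec w}} Q^{(k)}_{\underline{\vec w}}\,\d\mu = \delta_{m,k}$.

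The main obstacle I anticipate is bookkeeping rather than conceptual: getting the index shifts and the precise sparse structure of $\mathcal C$ right, so that the re-factorization is genuinely a one-step Darboux transformation and the coefficient that drops out is exactly the ratio $A_{1,\vec w}^{(n)}(0)/A_{1,\vec w}^{(n+1)}(0)$ and not some neighbouring ratio. In particular one must be careful that $x w_1$ (rather than $x w_2$) is the weight being shifted and that the permutation is applied \emph{after} the shift, which is what forces the type I forms of $\underline{\vec w}$ to be built primarily from $A_{2,\vec w}$; a sign or an off-by-one here would corrupt every formula. Since the detailed computation is carried out in \cite{bfm}, in this paper it suffices to recall the statement and indicate that it is the specialization of \cite[Theorem 4]{bfm} to the two-weight step-line setting, which is what we do.
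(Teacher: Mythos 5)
Your proposal is correct and matches the paper's treatment: the paper gives no proof of Theorem~\ref{teo:Christoffel} here, stating it as a direct quotation of \cite[Theorem 4]{bfm}, which is exactly what you conclude one should do. Your sketch of the underlying mechanism is also sound — indeed, since $\underline{\vec w}=(w_2,xw_1)$ one has $\underline\xi=\Lambda\xi$ with $\Lambda$ the shift matrix, hence $\underline g=g\,\Lambda^{\top}$, and the one-step Darboux re-factorization of the Gauss--Borel problem with the connecting coefficients read off from $A_{1,\vec w}^{(n)}(0)$ is precisely how the formulas are derived in \cite{bfm}.
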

We also will be faced with the basic Christoffel transformation
$ \underline{\vec w}:=x\vec w$,
and \cite[Theorem 5]{bfm} %{afm} 
states that
\begin{teo}[Basic Christoffel formulas] \label{teo:Basic Christoffel formulas}
 We have the following relations
 \begin{align*}
 B^{(n)}_{\underline{\vec w}}(x)&=\frac{1}{x}
 \frac{\begin{vNiceMatrix}
 B_{\vec w}^{(n)}(0)& B_{\vec w}^{(n)}(x)\\[3pt]
 B_{\vec w}^{(n+1)}(0)& B_{\vec w}^{(n+1)}(x) 
 \end{vNiceMatrix}}{B_{\vec w}^{(n)}(0)}, &
 Q^{(n)}_{\underline{\vec w}}(x)&=\frac{1}{x}
 \frac{\begin{vNiceMatrix}
 A^{(n)}_{\vec w,1} (0)& A^{(n)}_{\vec w,2} (0) & Q^{(n)}_{{\vec w}}(x)\\[3pt]
 A^{(n+1)}_{\vec w,1} (0)& A^{(n+1)}_{\vec w,2} (0)& Q^{(n+1)}_{{\vec w}}(x)\\[3pt]
 A^{(n+2)}_{\vec w,1} (0)& A^{(n+2)}_{\vec w,2} (0)& Q^{(n+2)}_{{\vec w}}(x)
 \end{vNiceMatrix}}{\begin{vNiceMatrix}
 A^{(n+1)}_{\vec w,1} (0)& A^{(n+1)}_{\vec w,2} (0)\\
 A^{(n+2)}_{\vec w,1} (0)& A^{(n+2)}_{\vec w,2} (0)
 \end{vNiceMatrix}}.
 \end{align*}
\end{teo}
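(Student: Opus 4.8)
The plan is to obtain each identity directly from the defining (bi)orthogonality of the transformed system $\underline{\vec w}=x\vec w$ (with the same measure $\d\mu$), matching it against a short window of consecutive objects of the original system $\vec w$, and then fixing the finitely many remaining constants by evaluating at $x=0$. An alternative would be to argue through the Gauss--Borel factorization, observing that $\xi_{\underline{\vec w}}=x\,\xi_{\vec w}$ forces $g_{\underline{\vec w}}=g_{\vec w}\Lambda^{\top 2}$ and then re-factorizing; the orthogonality route sketched below is more self-contained.

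First I would treat the type II polynomials. Since $\underline{\vec w}=(xw_1,xw_2)$, the relations $\int_\Delta B^{(n)}_{\underline{\vec w}}(x)\,x\,w_a(x)\,x^j\,\d\mu(x)=0$ say precisely that the monic degree-$(n+1)$ polynomial $x\,B^{(n)}_{\underline{\vec w}}(x)$ satisfies the level-$n$ type II orthogonality conditions of the $\vec w$-system. The monic polynomial $B^{(n+1)}_{\vec w}$ also satisfies those conditions (they are contained in its own level-$(n+1)$ conditions), so $x\,B^{(n)}_{\underline{\vec w}}-B^{(n+1)}_{\vec w}$ has degree $\le n$ and meets the full level-$n$ conditions; by perfectness of $(w_1,w_2,\d\mu)$ it must be a scalar multiple of $B^{(n)}_{\vec w}$. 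Thus $x\,B^{(n)}_{\underline{\vec w}}=B^{(n+1)}_{\vec w}+c_nB^{(n)}_{\vec w}$, and evaluating at $x=0$ gives $c_n=-B^{(n+1)}_{\vec w}(0)/B^{(n)}_{\vec w}(0)$; rewriting $B^{(n+1)}_{\vec w}(x)-\tfrac{B^{(n+1)}_{\vec w}(0)}{B^{(n)}_{\vec w}(0)}B^{(n)}_{\vec w}(x)$ as a $2\times2$ determinant over $B^{(n)}_{\vec w}(0)$ yields the first formula, the numerator being divisible by $x$ since its two rows coincide at $x=0$.

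Next I would treat the type I forms. Write $Q^{(n)}_{\underline{\vec w}}=x\big(w_1A^{(n)}_{1,\underline{\vec w}}+w_2A^{(n)}_{2,\underline{\vec w}}\big)=:x\,R$, where $R$ is a $\vec w$-linear form whose two coefficients carry the same degree profile as $A^{(n)}_{\vec w,1},A^{(n)}_{\vec w,2}$. From the biorthogonality \eqref{biotrhoganility} one gets $\int_\Delta x^jQ^{(n)}\d\mu=\delta_{j,n}$ for $j\le n$, for both systems; hence $x\,R=Q^{(n)}_{\underline{\vec w}}$ and $Q^{(n)}_{\vec w}$ have the same moments $\int_\Delta x^k(\cdot)\d\mu$ for $k=0,\dots,n$, so $x\,R-Q^{(n)}_{\vec w}$ is annihilated by those $n+1$ moment functionals. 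Inside the space of $\vec w$-linear forms carrying the degree profile of $A^{(n+2)}_{\vec w,1},A^{(n+2)}_{\vec w,2}$, a dimension count together with perfectness shows this common kernel is two-dimensional and spanned by $Q^{(n+1)}_{\vec w}$ and $Q^{(n+2)}_{\vec w}$; therefore $x\,Q^{(n)}_{\underline{\vec w}}=Q^{(n)}_{\vec w}+\beta Q^{(n+1)}_{\vec w}+\gamma Q^{(n+2)}_{\vec w}$ for scalars $\beta,\gamma$. Since the left-hand side is divisible by $x$ as a linear form, the polynomials $A^{(n)}_{\vec w,a}+\beta A^{(n+1)}_{\vec w,a}+\gamma A^{(n+2)}_{\vec w,a}$ must vanish at $x=0$ for $a=1,2$; solving this $2\times2$ system for $(\beta,\gamma)$ by Cramer's rule and reassembling the solution gives exactly the quotient of the $3\times3$ determinant by the $2\times2$ determinant in the statement, the numerator again being divisible by $x$ because at $x=0$ its last column is the $w_1(0),w_2(0)$-combination of the first two.

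The hard part is the bookkeeping and the non-degeneracy rather than any single computation: on the step-line the multi-indices alternate ($\vec\nu(2m)=(m+1,m)$ versus $\vec\nu(2m+1)=(m+1,m+1)$), so the degree profiles and hence the dimension counts above must be verified separately for even and odd $n$; and at each stage one must invoke perfectness of $(w_1,w_2,\d\mu)$ (equivalently $H_l\neq 0$ for all $l$) to guarantee that the relevant moment functionals are independent and, in particular, that the denominator $2\times2$ determinant of the $A^{(n+1)}_{\vec w,a}(0),A^{(n+2)}_{\vec w,a}(0)$ does not vanish, so that the quotients make sense.
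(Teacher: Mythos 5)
The paper does not actually prove this statement; it is imported verbatim from \cite[Theorem 5]{bfm}, so there is no internal proof to compare against. Your strategy (match the transformed orthogonality conditions against a short window of objects of the original system, count dimensions using perfectness, and fix the remaining constants by evaluation at the origin) is the natural one, and your treatment of the type II formula is correct and complete: $xB^{(n)}_{\underline{\vec w}}$ is monic of degree $n+1$ and satisfies the level-$n$ conditions of $(w_1,w_2,\d\mu)$, so it equals $B^{(n+1)}_{\vec w}+c_nB^{(n)}_{\vec w}$, and $x=0$ gives $c_n$. Also, the parity bookkeeping you worry about is benign: $\lfloor n/2\rfloor+1=\lfloor (n+2)/2\rfloor$ and $\lfloor (n-1)/2\rfloor+1=\lfloor (n+1)/2\rfloor$ for every $n$, so multiplying a level-$n$ type I profile by $x$ lands exactly in the level-$(n+2)$ profile regardless of the parity of $n$.

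The type I half, however, contains a concrete gap: a factor of $x$ appears out of nowhere. Under your own convention $Q^{(n)}_{\underline{\vec w}}=xR$ with $R=w_1A^{(n)}_{1,\underline{\vec w}}+w_2A^{(n)}_{2,\underline{\vec w}}$, both $Q^{(n)}_{\underline{\vec w}}$ and $Q^{(n)}_{\vec w}$ satisfy $\int_\Delta x^k(\cdot)\,\d\mu=\delta_{k,n}$ for $k\le n$ with respect to the \emph{same} measure $\d\mu$, so your kernel argument yields
\begin{align*}
Q^{(n)}_{\underline{\vec w}}=Q^{(n)}_{\vec w}+\beta Q^{(n+1)}_{\vec w}+\gamma Q^{(n+2)}_{\vec w},
\end{align*}
i.e. the determinant ratio \emph{without} the prefactor $1/x$; the line ``therefore $xQ^{(n)}_{\underline{\vec w}}=Q^{(n)}_{\vec w}+\beta Q^{(n+1)}_{\vec w}+\gamma Q^{(n+2)}_{\vec w}$'' does not follow from the two sentences preceding it. The divisibility-by-$x$ step that pins down $\beta,\gamma$ is fine (it is exactly the statement that $A^{(n)}_{\vec w,a}+\beta A^{(n+1)}_{\vec w,a}+\gamma A^{(n+2)}_{\vec w,a}$ vanishes at $0$ for $a=1,2$), but it determines the coefficients of the combination equal to $Q^{(n)}_{\underline{\vec w}}$ itself, not to $xQ^{(n)}_{\underline{\vec w}}$. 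The stated formula with the $1/x$ is correct only under the alternative convention in which the Christoffel factor $x$ is absorbed into the measure and the transformed linear form is still written against the original weights, $Q^{(n)}_{\underline{\vec w}}=w_1A^{(n)}_{1,\underline{\vec w}}+w_2A^{(n)}_{2,\underline{\vec w}}$ with orthogonality against $x\,\d\mu$ (note that composing the permuting Christoffel formula of Theorem \ref{teo:Christoffel} with itself, which uses the weight-absorbing convention, likewise produces no $1/x$). You must either adopt that convention explicitly from the start, or drop the spurious $x$; as written, the two halves of your type I argument are mutually inconsistent.
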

%\begin{rem}
% For the type I orthogonal polynomials the Christoffel formulas read as follows
% \begin{align*}
% A^{(n)}_{\underline{\vec w},1}(x)&=
% \frac{\begin{vNiceMatrix}
% A^{(n)}_{\vec w,1} (0)& A^{(n)}_{\vec w,2} (0) & A^{(n)}_{{\vec w},1}(x)\\[3pt]
% A^{(n+1)}_{\vec w,1} (0)& A^{(n+1)}_{\vec w,2} (0)& A^{(n+1)}_{{\vec w},1}(x)\\[3pt]
% A^{(n+2)}_{\vec w,1} (0)& A^{(n+2)}_{\vec w,2} (0)& A^{(n+2)}_{{\vec w},1}(x)
% \end{vNiceMatrix}}{\begin{vNiceMatrix}
% A^{(n+1)}_{\vec w,1} (0)& A^{(n+1)}_{\vec w,2} (0)\\
% A^{(n+2)}_{\vec w,1} (0)& A^{(n+2)}_{\vec w,2} (0)
% \end{vNiceMatrix}}, &
% A^{(n)}_{\underline{\vec w},2}(x)&=
% \frac{\begin{vNiceMatrix}
% A^{(n)}_{\vec w,2} (0)& A^{(n)}_{\vec w,2} (0) & A^{(n)}_{{\vec w},2}(x)\\[3pt]
% A^{(n+1)}_{\vec w,1} (0)& A^{(n+1)}_{\vec w,2} (0)& A^{(n+1)}_{{\vec w},2}(x)\\[3pt]
% A^{(n+2)}_{\vec w,1} (0)& A^{(n+2)}_{\vec w,2} (0)& A^{(n+2)}_{{\vec w},2}(x)
% \end{vNiceMatrix}}{\begin{vNiceMatrix}
% A^{(n+1)}_{\vec w,1} (0)& A^{(n+1)}_{\vec w,2} (0)\\
% A^{(n+2)}_{\vec w,1} (0)& A^{(n+2)}_{\vec w,2} (0)
% \end{vNiceMatrix}}.
% \end{align*}
%\end{rem}
Hence, we have Christoffel formulas expressing the new multiple orthogonal polynomials of type II and linear forms of type I, in terms of the 
original ones.
\subsection{The hypergeometric multiple orthogonal polynomials}\label{S:LL}

Here we explain the main results of Lima and Loureiro in \cite{lima_loureiro} regarding hypergeometric multiple orthogonal polynomials that are required in this paper.

Given $\{a_k\}_{k=1}^p\subset \C, \{b_k\}_{k=1}^q\subset \C\setminus (-\N_0)$, the generalized hypergeometric is defined by the following power series
\begin{align*}
\tensor[_p]{F}{_{q}} \left[
\begin{NiceMatrix}[small]a_1, &\Ldots && &, a_p \\&b_1, &\Ldots&, b_q&\end{NiceMatrix}
;x	\right]
=
\sum_{k=0}^\infty \frac{(a_1)_k\cdots\cdots(a_p)_k}{(b_1)_k\cdots(b_{q})_k}\frac{x^k}{k!},
\end{align*}
where the Pochhammer symbol is $(a)_n:=a(a+1)\cdots(a+n-1)$, and $(a)_0=1$, the series converge absolutely when $p\leq q$, and for $|x|<1$ when $p=q+1$, that will be the cases in this paper for $q=,1,2,3$.
See \cite{Andrews} for details about these functions and in particular Theorems 2.1.1 and 2.1.2. Se also \cite{Bailey,Rainville0}.

\subsubsection{The system of weights}

The weights are constructed in terms of the Gauss' hypergeometric function
$\tensor[_2]{F}{_1}$%\hspace*{-3pt}\left[{\begin{NiceArray}{c}[small] \alpha,\beta \\\gamma\end{NiceArray}};x\right]$
as follows. 
Given $(a,b,c,d)\in\R$, that we call \emph{hypergeometric tuple}, and $\delta:=c+d-a-b$ let us define the~function
\begin{align*}
\mathscr w (x,a,b;c,d) =\frac{\Gamma(c)\Gamma(d)}{\Gamma(a)\Gamma(b)\Gamma(\delta)}x^{a-1}(1-x)^{\delta-1}\;\tensor[_2]{F}{_1}\hspace*{-3pt}\left[{\begin{NiceArray}{c}[small]c-b,d-b \\\delta\end{NiceArray}};1-x\right].
\end{align*}
For the hypergeometric multiple orthogonality we consider $(W_1, W_2,\d x)$, where the weights are given by
\begin{align}\label{eq:pesosLL_w}
 W_1(x)&:=\mathscr w(x,a,b;c,d), &W_2(x)&:=\mathscr w(x,a,b+1;c+1,d), 
\end{align}
and $\d x$ is the Lebesgue measure in $[0,1]$. 

In \cite{lima_loureiro} the hypergeometric parameters are constrained by 
\begin{align}\notag
 a,b,c,d&\geq 0,\\
 \min(c,d)&>\max (a,b),\label{eq:min-max}
\end{align}
so that $\delta>0$. In \cite[Theorem 2.1]{lima_loureiro} it was shown, for these constrained hypergeometric parameters,
 that $\{\tilde w_1,\tilde w_2,\d x\}$ is a Nikishin system on $(0,1)$, therefore an~AT system and consequently a perfect system.
As we will see the min-max condition, $ \min(c,d)>\max (a,b)$, is not always required for our purposes.

For example, whenever
\begin{align}\label{eq:parameters_moments}
 a,b,\delta>0
\end{align}
Equation 11 in \cite[\S2.21.1]{Prudnikov} gives for the moments the following expression
\begin{align}\label{eq:moments}
 \int_0^1 x^{a+n-1}(1-x)^{\delta-1} \;\tensor[_2]{F}{_1}\hspace*{-3pt}\left[{\begin{NiceArray}{c}[small]c-b,d-b \\\delta\end{NiceArray}};1-x\right]\d x=\frac{(a)_n(b)_n}{(c)_n(d)_n}, && n \in \N_0 .
\end{align}
This formula is very useful
%when implementing the Gauss--Borel factorization in Mathematica. 
in doing computations to perform the Gauss--Borel factorization.
As we see the min-max is too demanding for this result.
Moreover, this formula is used in \cite{lima_loureiro} to prove the multiple orthogonal relations, and also the expression for the $H$'s.
Orthogonality and bi-orthogonality directly give, for all $n \in \N_0$,
 \begin{align}
\label{eq:H_2n}
 H_{2n}&=\int_0^1B^{(2n)}(x)x^n w_1(x)\d\mu(x), &%\\
%\label{eq:H_2n+1}
 H_{2n+1}&=\int_0^1B^{(2n+1)}(x)x^n w_2(x)\d\mu(x).
 \end{align}
Using \eqref{eq:moments}, in \cite{lima_loureiro} the expressions \eqref{eq:H_2n} %and \eqref{eq:H_2n+1} 
were %expressed
given as generalized hypergeometric functions ${}_4F_3$ at unity. Then, the 
%used
Karlsson--Minton summation method \cite{minton,karlsson,miller,Karp_Prilepkina} was used in \cite[Equation (3.5)]{lima_loureiro} to evaluate the $H$'s and check their positivity. For complex numbers $\{\beta,f_1,\ldots,f_p\}\subset \C$ 
%with positive real parts, 
and positive integers $\{m_1,\ldots,m_p\}\subset \N$, 
with $n\geq m_1+\cdots+m_p$, we have for the value at unity of the hypergeometric functions the following Minton summation
\begin{align*}
 {}_{p+2}F_{p+1}\hspace*{-3pt}\left[\!\!\!\begin{array}{c}
 	\begin{NiceMatrix}[small]-n, &\beta,&f_1+m_1&\Ldots &, f_p+m_p \end{NiceMatrix}\\
 	\begin{NiceMatrix}[small]\beta+1,&f_1, &\Ldots&, f_p&\end{NiceMatrix}
 \end{array}\!\!\!
;1\right]=\frac{n!(f_1-\beta)_{m_1}\cdots(f_p-\beta)_{m_p}}{(\beta+1)_n(f_1)_{m_1}\cdots(f_p)_{m_p}}. 
\end{align*}
At this point we stress that although in \cite{lima_loureiro} it was required that $\Re\beta,\Re f_1,\ldots,\Re f_p>0$, that constraint is not really required, see \cite{minton,Karp_Prilepkina}. From Minton summation, formulas (3.7) and (3.8) in \cite{lima_loureiro} follow, and they~read
 \begin{align}\label{eq:H_pochhammer_1}
 \begin{cases}
\displaystyle
 H_{2n} =\frac{(2n)!(a)_{2n}(b)_{2n}(d-a)_{n}(d-b)_n}{(c)_{3n}(d)_{3n}(d+n-1)_{2n}}, \\[.25cm]
\displaystyle
 H_{2n+1}=\frac{(2n+1)!(a)_{2n+1}(b+1)_{2n}(c-a+1)_n(c-b)_{n+1}}{(c+1)_{3n+1}(c+n)_{2n+1}(d)_{3n+1}} ,
 \end{cases}
&& n \in \N_0 .
 \end{align}
In order to have a Gauss--Borel factorization of the moment matrix, that is equivalent to the perfectness of the system of weights, we must have $H_n\neq 0$, $n \in \N_0$ (positivity of the $H_n$ is sufficient but not necessary).
In this context,
%Given that, 
to derive these formulas we assume \eqref{eq:parameters_moments}, 
%to ensure that $H_{2n}\neq 0$ we must have $d-a,d-b\not\in -\N_0$, 
with $d-a,d-b\not\in -\N_0$ to get $H_{2n}\neq 0$,
and to ensure $H_{2n+1}\neq 0$ we must have $c+1-a,c-b\not\in -\N_0$.
Therefore, perfectness is ensured whenever we have
\begin{align}\label{eq:region_parameters_pochhammer_perfect}
 a,b,\delta &>0, &
 d-a,d-b,c+1-a,c-b&\not\in -\N_0.
\end{align}

\subsubsection{Multiple orthogonal polynomials}

In \cite[Equations (3.2) and (3.1)]{lima_loureiro}, explicit expressions for the corresponding monic orthogonal polynomials of type II, were given for all $n \in \N_0$ by
\begin{align} \label{eq:polinomiohipergeometrico}
 B^{(n)}(x)&=\sum_{j=0}^n(-1)^j\binom{n}{j}\frac{(a+n-j)_j(b+n-j)_j}{\big(c+n+\big\lfloor\frac{n}{2}\big\rfloor-j\big)_j\big(d+n+\big\lfloor\frac{n-1}{2}\big\rfloor-j\big)_j}x^{n-j}
 \\ \notag
 &=
 (-1)^n
 \frac{(a)_n(b)_n}{\big(c+\big\lfloor\frac{n}{2}\big\rfloor\big)_n\big(d+\big\lfloor\frac{n-1}{2}
 \big\rfloor\big)_n }\,
 \tensor[_3]{F}{_2}\hspace*{-3pt}\left[{\begin{NiceArray}{c}[small]-n,\; c+\big\lfloor\frac{n}{2}\big\rfloor,\;d+\big\lfloor\frac{n-1}{2}\big\rfloor \\a,\;b\end{NiceArray}};x\right] . 
\end{align}
The second expression is given in terms of the generalized hypergeometric function $\tensor[_3]{F}{_2}$.
Therefore, for $n \in \N$,
%they find the orthogonality relations, %(3.3)
\begin{align*}
 \int_0^1B^{(n)}(x) x^k W_1(x)\d x &=0, & k&\in\Big\{0,\ldots,\Big\lfloor\frac{n-1}{2}\Big\rfloor \Big\} ,&
 \int_0^1B^{(n)}(x)x^k W _2(x)\d x &=0, &k&\in\Big\{0,\ldots,\Big\lfloor\frac{n}{2}\Big\rfloor-1 \Big\} , 
%&& n \in \N_0 
 .
\end{align*}
The proof in \cite{lima_loureiro} of the orthogonality of these polynomials only requires \eqref{eq:moments} so, we need the parameters to satisfy \eqref{eq:region_parameters_pochhammer_perfect}.

The type I orthogonal polynomials $A^{(n)}_{1}(x)$ and $A_2^{(n)}(x)$ and the associated linear forms
\begin{align*}
 Q^{(n)}(x)= W_1(x)A^{(n)}_{1}(x)+ W_2(x)A^{(n)}_{2}(x), && n \in \N_0,
\end{align*}
that satisfies the biorthogonality
$ \int_0^1B^{(n)}(x) Q^{(m)}(x)\d x=\delta_{n , m}$,
were found in \cite[equation (2.16)]{lima_loureiro} to be determined by the following Rodrigues-type formula
\begin{align}\label{eq:LL_Rodrigues}
 Q^{(n)}(x)=\frac{(-1)^n}{n!}\frac{\d^n}{\d x^n}\mathscr{w}\Big(x;a+n,b+n;c+\Big\lfloor\frac{n+1}{2}\Big\rfloor+n,d+\Big\lfloor\frac{n}{2}\Big\rfloor+n\Big), && n \in \N_0 .
\end{align}
We have $ \deg A^{(n)}_1=\big\lfloor\frac{n}{2}\big\rfloor,$ and $\deg A^{(n)}_2=\big\lfloor\frac{n-1}{2}\big\rfloor$, with $A^{(0)}_2 = 0$,
and 
\begin{align*}
 \int_0^1x^k Q^{(n)}(x)\d x&=0, & k& \in\{0,\ldots,n-1 \}, & 
 \int_0^1x^{n} Q^{(n)}(x)\d x&=1.
\end{align*}
A convenient alternative pair of weights is
\begin{align} \label{eq:measures}
 w_1(x)&:=\;\tensor[_2]{F}{_1}\hspace*{-3pt}\left[{\begin{NiceArray}{c}[small]c-b,d-b \\\delta\end{NiceArray}};1-x\right], &
 w_2(x)&:=\frac{c}{b}\;\tensor[_2]{F}{_1}\hspace*{-3pt}\left[{\begin{NiceArray}{c}[small]c-b,d-b-1 \\\delta\end{NiceArray}};1-x\right],
\end{align}
where now we have
\begin{align}\label{eq:mu}
 \d\mu(x)&:= \frac{\Gamma(c)\Gamma(d)}{\Gamma(a)\Gamma(b)\Gamma(\delta)}x^{a-1}(1-x)^{\delta-1}\d x.
\end{align}
Defining
\begin{align} \label{eq:polinomio}
 q^{(n)}(x)= w_1(x)A^{(n)}_{1}(x)+ w_2(x)A^{(n)}_{2}(x)
\end{align}
we have 
\begin{align*}
 \int_0^1B^{(n)}(x) q^{(m)}(x)\d \mu(x)=\delta_{n,m} , && n,m \in \N_0 ,
\end{align*}
and 
\begin{align*}
 \int_0^1x^k q^{(n)}(x)\d \mu(x)&=0, & k&\in\{0, \ldots , n-1 \},&
 \int_0^1x^{n} q^{(n)}(x)\d \mu(x)&=1.
\end{align*}

\subsubsection{The hypergeometric Jacobi matrix}

In \cite[Theorem 3.5]{lima_loureiro} it is shown that the Jacobi matrix \eqref{eq:Jacobi} describing the higher recurrence relation of these hypergeometric multiple orthogonal polynomials is nonnegative.
%It can be written as follows
%\begin{align}\label{eq:Jacobi_hypergeometric}
% { J } 
% = \left(\begin{NiceMatrix}%[columns-width = auto,margin]
% % [columns-width = 0.5cm]
% \beta_0& 1 & 0 & 0 & 0 & \Cdots\\ %& 0 &0 & 0 & 0 \\
% \alpha_1&\beta_1& 1 & 0 & 0 & \Ddots \\ %& 0 & 0 & 0 & 0 \\
% \gamma_1& \alpha_2& \beta_2& 1 &0&\Ddots\\ %&% 0 & 0 & 0 & 0 & 0 & 0 \\
% 0& \gamma_2&\alpha_3 & \beta_3& 1 & \Ddots\\ %& 0 & 0 & 0 & 0 \\
% \Vdots&\Ddots&\Ddots&\Ddots&\Ddots&\Ddots
% \end{NiceMatrix}\right)
%\end{align}
%where, for $n\in\N_0$, we have
Indeed, they show that for $n \in \N_0$,
\begin{align}\label{eq:jacobi_hyper_coeff}
 \beta_n =\lambda_{3n}+\lambda_{3n+1}+\lambda_{3n+2},
&&
 \alpha_{n+1}=(\lambda_{3n+1} %\lambda_{3n+3}
 +\lambda_{3n+2})\lambda_{3n+3}+\lambda_{3n+2}\lambda_{3n+4},
&&
 \gamma_{n+1}=\lambda_{3n+2}\lambda_{3n+4}\lambda_{3n+6},
\end{align}
being the $\lambda$'s defined in terms of the sequence
\begin{align*}
 c_n=\begin{cases}
 c+k, & n=2k-1,\\
 d+k , & n=2k,
 \end{cases} &&
k \in \N ,
\end{align*}
as follows 
\begin{align}\label{eq:lambdas}
%\left\{\begin{aligned}
\begin{cases} 
\displaystyle
 \lambda_{3n} =\frac{n(b+n-1)(c_n-a-1)}{(c_n+n-2)(c_n+n-1)(c_{n-1}+n-1)}, \\[.25cm]
\displaystyle
 \lambda_{3n+1} =\frac{n(a+n)(c_{n-1}-b)}{(c_n+n-1)(c_{n-1}+n-1)(c_{n-1}+n)}, \\[.25cm]
\displaystyle
 \lambda_{3n+2}=\frac{(a+n)(b+n)(c_n-1)}{(c_n+n-1)(c_n+n)(c_{n-1}+n)} ,
\end{cases}
%\end{aligned}\right.
 &&
n \in \N_0 .
\end{align}
An important point regarding this paper is to find when these recursion coefficients are nonnegative. As was stated in \cite{lima_loureiro}, for $a,b,c,d>0$, if the max-min \eqref{eq:min-max} condition is fulfilled the $\lambda$'s are positive and Jacobi matrix is nonnegative.
However, this condition is again too restrictive. For example, the region
\begin{align}\label{eq:positivity_jacobi}
a,b&>0, & d&>\max(a,b), & c&\geq a, &c+1\geq b,
\end{align}
is such that \eqref{eq:region_parameters_pochhammer_perfect} holds, i.e. the system of weights is perfect, and the $\lambda$'s are nonnegative, and therefore the Jacobi matrix is nonnegative.

Moreover, if
$ \kappa=\frac{4}{27}$,
we have
$ \lim\limits_{n\to\infty}\lambda_n=\kappa$
and, consequently, 
\begin{align*}
 \lim_{n\to\infty}\beta_n&=3\kappa,&
 \lim_{n\to\infty}\alpha_n&=3\kappa^2,&
 \lim_{n\to\infty}\gamma_n&=\kappa^3.
\end{align*}
As we have seen, the coefficients of the Jacobi matrix associated with the hypergeometric multiple orthogonal polynomials share their limit with the ones of the Jacobi--Piñeiro case %we have 
studied in \cite{bfmaf}.
In \cite{lima_loureiro} the authors comment that the Piñeiro reduction $\gamma=0$ of the Jacobi--Piñeiro multiple orthogonal polynomials ---with parameters $(\alpha,\beta,\gamma)$ with $\alpha,\beta,\gamma>-1$ and such that $\alpha-\beta\not\in\Z$--- corresponds to a particular limit case of these hypergeometric multiple orthogonal polynomials. In fact, for the particular choice of the hypergeometric parameters 
$c=a$, and $d=b+1$
one gets the Piñeiro weights~\cite{pineiro}
$w_1=b x^{b-1}$ and $w_2=ax^{a-1}$. These Piñeiro weights have a nonnegative Jacobi matrix whenever $|a-b|<1$ (cf. \cite{bfmaf}).
With this choice we have $\delta=1>0$ but we cannot meet the required min-max condition \eqref{eq:min-max}.
%%and we need to satisfy the requirement
%$\min (a,b+1)>\max(a,b)$.
%Indeed, assume that $a>b+1$ then $\min (a,b+1)=b+1$ and $\max (a,b) =a$ and the condition requires $b+1>a$, in contradiction with the assumption. Now, let us assume 
%%the opposite, 
%$a<b+1$ so that $\min (a,b+1)=a$. In this case, we can have $a<b$, $a=b$ or $a>b$. For the first option $a<b$, we have $\max(a,b)=b$ and the requirement is $a>b$ in contradiction again with the assumption. 
%For the second option $a=b$, we get $\max(a,b)=a$ and the requirement is $a<a$, which is absurd. Finally, for the third choice $a>b$, we get $\max(a,b)=a$ and we get $a<a$. Thus, the Piñeiro case is not attainable from the hypergeometric case, but only as a limiting case.
However, the relaxed conditions \eqref{eq:positivity_jacobi} are satisfied. Now, in terms of $a,b$ we have
\begin{align}\label{eq:positivity_piñeiro}
 b+1&>\max(a,b), & a&\geq a, &a+1\geq b,
\end{align}
if $a\leq b$ we only need to require $b\leq a+1$ and if $a\geq b$ we only need to check that $b+1>a$. These are precisely the conditions $|b-a|<1$. We stress that the Piñeiro case satisfies \eqref{eq:region_parameters_pochhammer_perfect}, thus the moments are quotients of Pochhammer as in \eqref{eq:moments} and the $H$'s do not cancel (cf. \cite[Corollary 3]{bfmaf}),
 \begin{align}\label{eq:H_pochhammer_pineiro}
 \begin{cases}
 H_{2n} =\dfrac{n! (2n)! (a)_{2n}(b)_{2n}(b+1-a)_{n}}{(a)_{3n}(b+1)_{3n}(b+n)_{2n}}, \\[.25cm]
 H_{2n+1}=\dfrac{n! (2n+1)! (a)_{2n+1}(b+1)_{2n}(a-b)_{n+1}}{(a+1)_{3n+1}(a+n)_{2n+1}(b+1)_{3n+1}},
 \end{cases}
 && n \in \N_0 .
\end{align}

%\subsubsection{The uniform Jacobi matrix}

Following \cite{lima_loureiro} we see that for the hypergeometric tuple
$\Big(\frac{4}{3},\frac{5}{3},2,\frac{5}{2}\Big)$
we have that
$ \beta_n=3\kappa$, $\alpha_{n+1}=3\kappa^2$ and $\gamma_n=\kappa^3$.
That is, the Jacobi matrix is uniform along its diagonals, a Toeplitz matrix
\begin{align}\label{eq:Jacobi_Jacobi_Piñeiro_uniform}
{ J } 
 = \left(\begin{NiceMatrix}%[columns-width = auto,margin]
 % [columns-width = 0.5cm]
 3\kappa& 1 & 0 & \Cdots & & \\[-5pt]
 3\kappa^2&3\kappa& 1 & \Ddots& & \\ 
 \kappa^3& 3\kappa^2& 3\kappa& 1 &&\\ 
 0& \kappa^3&3\kappa^2& 3\kappa& 1 & \\
 \Vdots&\Ddots&\Ddots&\Ddots&\Ddots&\Ddots
 \end{NiceMatrix}\right).
\end{align}
We refer to this matrix as the \emph{asymptotic uniform Jacobi matrix}.

That is, the hypergeometric multiple orthogonal polynomials contain as a particular case the limit case, not as for the Jacobi--Piñeiro case, where no set of parameters reproduce such uniform Jacobi matrix. 
Notice that
$ \delta=2+\frac{5}{2}-\frac{4}{3}-\frac{5}{3}=\frac{3}{2}>0$
and
$ \min(c,d)=2>\frac{5}{3}=\max(a,b)$.

The perfect system $(W_1,W_2,\d x)$ is
\begin{align} \label{eq:pesosLL}
\left\{
\begin{aligned}
W_1(x)&=\frac{81\sqrt 3}{16\pi}\sqrt[3]{x}\left(\sqrt[3]{1+\sqrt{1-x}}-\sqrt[3]{1-\sqrt{1-x}}\right),
 \\
W_2(x)&=\frac{243\sqrt 3}{160\pi}\sqrt[3]{x}\left(\left(\sqrt[3]{1+\sqrt{1-x}}\,\right)^4-\left(\sqrt[3]{1-\sqrt{1-x}}\,\right)^4\right).
\end{aligned}
\right.
\end{align}

\subsection{Random walk multiple orthogonal polynomials}

We now describe the main results we derived in our previous paper \cite{bfmaf} extending the results of Karlin and McGregor to multiple orthogonal polynomials. 

%Let $Z(B^{(n)}):= \{z_{n,k} \}_{k=1}^n\subset \C$ be the ordered set of zeros of the polynomial $B^{(n)}$, i.e. $B^{(n)}(\zeta_{n,k})=0$, with $|z_{n,k}|\leq |z_{n,k+1}|$, $k=1, \ldots, n$, and let $\mathscr Z(B):= \cup_{n=0}^\infty Z(B^{(n)})$ be the union of all these zero sets. Let $Z(Q^{(n)}):=\{\zeta_{n,k}\}_{k=1}^n\subset \C$ be the ordered set of zeros of the linear form $Q^{(n)}(x)$, $Q^{(n)}(\zeta_{n,k})=0$ and $\mathscr Z(Q):= \cup_{n=0}^\infty Z(Q^{(n)})$. Then, we denote the smallest upper bound of these zeros by
%\begin{align*}
% b(B)&:=\sup_{n\in\N_0}\big\{|z_{n,k}|\big\}_{k=1}^n, & b(Q)&:=\sup_{n\in\N_0}\big\{|\zeta_{n,k}|\big\}_{k=1}^n.
%\end{align*}

We reproduce \cite[Theorems 1 \& 2]{bfmaf} in a convenient form for the aims of this paper.

\begin{teo}%[Multiple stochastic matrix of type~II]
 \label{pro:sigma_spectral}
 Let us assume that
 \begin{enumerate}
 \item The Jacobi matrix $ { J } $ is nonnegative.
 \item The values at unity of the polynomials and of the linear forms are positive, i.e. $B^{(n)}(1)$, $q^{(n)}(1)>0$.
 \end{enumerate}
 Then, the diagonal matrices
 \begin{align*}
 \sigma_{II}&=
 \diag
 \begin{pNiceMatrix}%[columns-width = auto,margin]
 \sigma_{II,0},\sigma_{II,1},\ldots
 \end{pNiceMatrix},
 & \sigma_{II,n}&:=\frac{1}{B^{(n)}(1)},&
 \sigma_I&=
 \diag
 \begin{pNiceMatrix}%[columns-width = auto,margin]
 \sigma_{I,0},\sigma_{I,1},\ldots
 \end{pNiceMatrix},
 & \sigma_{I,n}:=\frac{1}{q^{(n)}(1)} ,
\end{align*}
 are such that
 \begin{align*}%\label{eq:stochastic_II}
 P_{II}&:=\sigma_{II} { J } \, \sigma_{II} ^{-1}, & P_{II,n,m}&=\frac{B^{(m)}(1)}{B^{(n)}(1)} J_{n,m},&
 P_{I}&:=\sigma_I { J } ^\top\sigma_I^{-1}, & P_{I,n,m}&=\frac{q^{(m)}(1)}{q^{(n)}(1)}J_{m,n} ,
\end{align*}
are stochastic matrices.
\end{teo}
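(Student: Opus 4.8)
The plan is to verify directly the two defining properties of a row-stochastic matrix — nonnegativity of all entries and each row summing to $1$ — for $P_{II}$ and for $P_I$, handling the two cases in parallel since they are mirror images of one another ($J$ versus $J^\top$, the $B^{(n)}(1)$ versus the $q^{(n)}(1)$). The starting point is that hypothesis (ii) makes $\sigma_{II}$ and $\sigma_I$ genuine invertible diagonal matrices with positive entries, so the conjugations $\sigma_{II}J\sigma_{II}^{-1}$ and $\sigma_I J^\top\sigma_I^{-1}$ are well defined, preserve the band structure of \eqref{eq:Jacobi}, and have the stated entrywise formulas $P_{II,n,m}=\frac{B^{(m)}(1)}{B^{(n)}(1)}J_{n,m}$ and $P_{I,n,m}=\frac{q^{(m)}(1)}{q^{(n)}(1)}J_{m,n}$.

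For nonnegativity, each such entry is the product of a strictly positive ratio of values at unity (hypothesis (ii)) and a nonnegative entry of $J$ (hypothesis (i)), hence is $\geq 0$. Since every row of $J$, and of $J^\top$, has at most four nonzero entries, the row sums below are finite sums and no convergence question arises.

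For the row sums I would read them off the eigenvalue property \eqref{eq:eigen_value}. For $P_{II}$: $\sum_m P_{II,n,m}=\frac{1}{B^{(n)}(1)}\sum_m J_{n,m}B^{(m)}(1)=\frac{1}{B^{(n)}(1)}\,(JB)^{(n)}\big|_{x=1}$, and $JB=xB$ evaluated at $x=1$ gives $(JB)^{(n)}(1)=B^{(n)}(1)$, so the $n$-th row sum is $1$; explicitly this is the recurrence $\gamma_{n-1}B^{(n-2)}(1)+\alpha_nB^{(n-1)}(1)+\beta_nB^{(n)}(1)+B^{(n+1)}(1)=B^{(n)}(1)$, valid including for $n=0,1$ with the convention that negative-index terms vanish. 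For $P_I$: $\sum_m P_{I,n,m}=\frac{1}{q^{(n)}(1)}\sum_m J_{m,n}q^{(m)}(1)=\frac{1}{q^{(n)}(1)}\,(J^\top q)^{(n)}\big|_{x=1}$, so it suffices to know $J^\top q=xq$. This follows from $J^\top A_1=xA_1$ and $J^\top A_2=xA_2$ in \eqref{eq:eigen_value}: the systems $(W_1,W_2,\d x)$ and $(w_1,w_2,\d\mu)$ have the same moment matrix because $W_a\,\d x=w_a\,\d\mu$, hence the same $S,\tilde S,H$, the same type I polynomials $A_1^{(n)},A_2^{(n)}$ and the same Jacobi matrix, while $q^{(n)}=w_1A_1^{(n)}+w_2A_2^{(n)}$; the index-$n$ recurrence has $x$-independent coefficients, so multiplying the $A_i$ by the fixed functions $w_1(x),w_2(x)$ preserves it. Evaluating at $x=1$ then yields unit row sums for $P_I$ as well.

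The argument is purely algebraic and uses only hypotheses (i) and (ii); there is no analytic subtlety. The one place that needs a moment's care — and the closest thing to an obstacle — is the bookkeeping that identifies $q$, a density with respect to $\d\mu$, with the linear form built from the same $A_1,A_2$ that satisfy the transposed eigenvalue equation, i.e. keeping the two equivalent normalizations of the measure straight. Once that is settled, both verifications reduce to one-line computations.
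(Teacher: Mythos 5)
Your proof is correct and is the standard argument: conjugation by the diagonal of values at unity preserves nonnegativity, and the unit row sums are exactly the four-term recurrences $JB=xB$ and $J^\top Q=xQ$ evaluated at $x=1$ (with the boundary conventions you note). The paper itself gives no proof here --- it imports the statement from \cite{bfmaf} --- but your verification, including the careful identification of $q^{(n)}$ with the linear form satisfying the transposed eigenvalue relation under the change of normalization $W_a\,\d x=w_a\,\d\mu$, is precisely the intended reasoning.
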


We have \cite[Theorems 6 \& 7]{bfmaf} that for the reader convenience we reproduce here

\begin{teo}[KMcG representation formula]\label{teo:KMcG}
 Let us assume the conditions requested in Theorem~\ref{pro:sigma_spectral}.
 Then, for random walks with Markov matrices $P_{II}$ and $P_I$, the transition probabilities, after~$r$ transitions from state~$n$ to state~$m$ are given respectively by
 \begin{align*}%\label{eq:representation_II}
 P_{II,nm}^r &=\frac{ B^{(m)}(1)}{B^{(n)}(1)} \int_\Delta x^rB^{(n)}(x) q^{(m)}(x)\d \mu(x),&
 %\label{eq:representation_I}
 P_{I,nm}^r&=\frac{q^{(m)}(1) }{ q^{(n)}(1)}
 \int_\Delta x^rB^{(m)}(x) q^{(n)}(x)\d \mu(x) .
 \end{align*}
\end{teo}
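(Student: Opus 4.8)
The plan is to reduce the statement to a single algebraic identity for powers of the Jacobi matrix, namely
\begin{align*}
 (J^r)_{n,m}=\int_\Delta x^r B^{(n)}(x)\, q^{(m)}(x)\,\d\mu(x), && n,m\in\N_0,\ r\in\N_0,
\end{align*}
and then conjugate by the diagonal matrices $\sigma_{II}$, $\sigma_I$ from Theorem~\ref{pro:sigma_spectral}. Indeed, since $P_{II}=\sigma_{II}J\sigma_{II}^{-1}$ and $\sigma_{II}$ is diagonal, we immediately get $P_{II}^{\,r}=\sigma_{II}J^r\sigma_{II}^{-1}$, so $(P_{II}^{\,r})_{n,m}=\frac{B^{(m)}(1)}{B^{(n)}(1)}(J^r)_{n,m}$; likewise $P_I=\sigma_I J^\top\sigma_I^{-1}$ gives $P_I^{\,r}=\sigma_I (J^r)^\top\sigma_I^{-1}$ and $(P_I^{\,r})_{n,m}=\frac{q^{(m)}(1)}{q^{(n)}(1)}(J^r)_{m,n}$. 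Since $P_{II},P_I$ are stochastic, their $r$-th powers are the $r$-step transition matrices of the associated Markov chains, so substituting the boxed identity (for $P_I$, with the roles of $n,m$ swapped) yields exactly the two asserted formulas.

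To prove the boxed identity I would start from the eigenvalue property \eqref{eq:eigen_value}, $J\,B=x\,B$, i.e. $\sum_k J_{n,k}B^{(k)}(x)=x\,B^{(n)}(x)$ for every $n$. Because $J$ is banded (only the entries $\gamma,\alpha,\beta,1$ along four diagonals are nonzero), each such sum is finite, and iterating $r$ times gives $J^r B=x^r B$, that is, $\sum_k (J^r)_{n,k}B^{(k)}(x)=x^r B^{(n)}(x)$, again a finite sum for each fixed $n$ (the bandwidth of $J^r$ is at most $2r$). Multiplying this relation by $q^{(m)}(x)$ and integrating against $\d\mu$ on $\Delta=[0,1]$, and using the biorthogonality $\int_\Delta B^{(k)}(x)q^{(m)}(x)\d\mu(x)=\delta_{k,m}$ recorded in \S\ref{S:LL}, the left-hand side collapses to $(J^r)_{n,m}$, which is precisely the claim.

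There is essentially no analytic obstacle here: the only interchange of summation and integration occurs over a finite index set thanks to the banded structure of $J$, so no convergence argument is needed; the weights $W_1,W_2$ (equivalently $w_1,w_2,\d\mu$) are assumed to form a perfect system, so biorthogonality and the eigenvalue relations are available verbatim from \S\ref{S:LL}. The one point that deserves a sentence of care is the passage from "$P_{II}$ is a stochastic matrix" to "$P_{II}^{\,r}$ is the matrix of $r$-step transition probabilities of the Markov chain with one-step matrix $P_{II}$": this is the Chapman--Kolmogorov identity $P^{r}=P^{r-1}P$, valid entrywise since each row of $P_{II}$ (and of its powers) is a probability vector supported on finitely many indices, so all products are absolutely convergent. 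With that remark in place, conjugating the boxed identity by $\sigma_{II}$ and by $\sigma_I$ completes the proof of both representation formulas.
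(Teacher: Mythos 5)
Your argument is correct and is essentially the standard Karlin--McGregor argument that the paper imports verbatim from \cite[Theorems 6 \& 7]{bfmaf}: everything reduces by diagonal conjugation with $\sigma_{II}$, $\sigma_I$ to the identity $(J^r)_{n,m}=\int_\Delta x^r B^{(n)}(x)q^{(m)}(x)\,\d\mu(x)$, which you correctly derive from the eigenvalue relation $JB=xB$, the bandedness of $J$ (so all sums are finite), and the biorthogonality \eqref{biotrhoganility}. No gaps.
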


\begin{teo}\label{teo:KMcG2}
 Let us assume the conditions requested in Theorem~\ref{pro:sigma_spectral}. Then, for $|s|<1$, the transition probability generating functions reads~as
 \begin{align*}
 P_{II,nm}(s)&=\frac{B^{(m)}(1)}{B^{(n)}(1)}
 \bigintssss_\Delta \frac{B^{(n)}(x) q^{(m)}(x)}{1-sx}\d \mu(x),& P_{I,nm}(s)&=\frac{q^{(m)}(1)}{q^{(n)}(1)}
 \bigintssss_\Delta \frac{B^{(m)}(x) q^{(n)}(x)}{1-sx}\d \mu(x),
 \end{align*}
respectively.
The first passage generating functions are given by
\begin{align*}
%\begin{aligned}
 F_{II,nm}(s) =\frac{B^{(m)}(1)}{B^{(n)}(1)}
 \dfrac{\bigintss_\Delta \dfrac{B^{(n)}(x) q^{ (m)}(x)}{1-sx}\d \mu(x)}{
 \bigintss_\Delta \dfrac{B^{(m)}(x) q^{(m)}(x)}{1-sx}\d \mu(x)}, && 
 F_{I,nm}(s) =\frac{Q^{(m)}(1)}{q^{(n)}(1)}
 \dfrac{\bigintss_\Delta \dfrac{B^{(m)}(x) q^{(n)}(x)}{1-sx}\d \mu(x)}{
 \bigintss_\Delta \dfrac{B^{(m)}(x) q^{(m)}(x)}{1-sx}\d \mu(x)},
\end{align*}
for $n \neq m$, and for $n =m$ by 
%\end{aligned}\\
\begin{align*}
F_{nn}(s)=1-\frac{1}{\bigintss_\Delta \dfrac{B^{(n)}(x) q^{(n)}(x)}{1-sx}\d \mu(x)} .
%\end{gather*}
\end{align*}
%respectively.
The last expression $F_{nn}(s)$ is valid for both Markov chains of types I and II.
\end{teo}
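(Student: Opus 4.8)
The plan is to derive everything from the Karlin--McGregor representation formula of Theorem~\ref{teo:KMcG}, combined with two classical and essentially combinatorial facts about Markov chains: the generating-function form of the first-passage (renewal) decomposition, and the nonvanishing of the return generating function $P_{nn}(s)$ on the unit disk $\D$.

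\textbf{Step 1: the transition generating functions.} For the type~II chain, whose transition matrix $P_{II}$ is stochastic by Theorem~\ref{pro:sigma_spectral}, write $P_{II,nm}(s)=\sum_{r\ge 0}P_{II,nm}^r\,s^r$ and insert the expression for $P_{II,nm}^r$ from Theorem~\ref{teo:KMcG}. For fixed $n,m$ the functions $B^{(n)}$ and $q^{(m)}$ are bounded on the compact support $\Delta\subset[0,1]$, and for $|s|<1$ one has $|sx|\le|s|<1$ on $\Delta$, so the series $\sum_{r\ge 0}(sx)^rB^{(n)}(x)q^{(m)}(x)$ converges uniformly in $x\in\Delta$; by dominated convergence the sum and the integral $\int_\Delta$ may be interchanged, and $\sum_{r\ge 0}(sx)^r=(1-sx)^{-1}$ gives the stated formula for $P_{II,nm}(s)$. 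The identical argument applied to the second half of Theorem~\ref{teo:KMcG} yields $P_{I,nm}(s)$. In particular, for $n=m$ the scalar prefactors $B^{(m)}(1)/B^{(n)}(1)$ and $q^{(m)}(1)/q^{(n)}(1)$ equal $1$, so
\[
P_{II,nn}(s)=P_{I,nn}(s)=\int_\Delta\frac{B^{(n)}(x)q^{(n)}(x)}{1-sx}\,\d\mu(x),
\]
which already forces the diagonal first-passage functions of the two chains to coincide.

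\textbf{Step 2: the renewal decomposition.} For any Markov chain with transition matrix $P$ and first-passage probabilities $F_{nm}^{k}$ (the probability of reaching $m$ from $n$ for the first time at step $k$), conditioning on the first visit to $m$ gives $P_{nm}^{r}=\sum_{k=1}^{r}F_{nm}^{k}P_{mm}^{r-k}$ for $n\ne m$ and $P_{nn}^{r}=\delta_{r,0}+\sum_{k=1}^{r}F_{nn}^{k}P_{nn}^{r-k}$. These convolutions become products of generating functions,
\[
P_{nm}(s)=F_{nm}(s)\,P_{mm}(s)\ \ (n\ne m),\qquad P_{nn}(s)=1+F_{nn}(s)\,P_{nn}(s),
\]
and this bookkeeping is valid verbatim for the genuine Markov chains governed by $P_{II}$ and $P_{I}$. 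Since $F_{nn}(s)=\sum_{k\ge 1}F_{nn}^{k}s^{k}$ has nonnegative coefficients with $\sum_{k\ge 1}F_{nn}^{k}=F_{nn}(1)\le 1$, one has $|F_{nn}(s)|<1$ for $|s|<1$; together with the finiteness of $P_{nn}(s)$ from Step~1 this yields $P_{nn}(s)=(1-F_{nn}(s))^{-1}\ne 0$ on $\D$, so we may divide. Hence $F_{nm}(s)=P_{nm}(s)/P_{mm}(s)$ for $n\ne m$ and $F_{nn}(s)=1-1/P_{nn}(s)$.

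\textbf{Step 3: substitution.} Plugging the Step~1 integral formulas into $F_{II,nm}(s)=P_{II,nm}(s)/P_{II,mm}(s)$ (and likewise for type~I), and using that the $P_{mm}(s)$ prefactor is $1$, reproduces exactly the displayed ratios of integrals for $F_{II,nm}(s)$ and $F_{I,nm}(s)$; substituting into $F_{nn}(s)=1-1/P_{II,nn}(s)=1-1/P_{I,nn}(s)$ gives the final displayed formula, valid for both chains by the coincidence noted at the end of Step~1. The only point requiring any care is the interchange of summation and integration in Step~1; it is routine here because $\Delta$ is compact and contained in $[0,1]$, so that $\sup_{x\in\Delta}|sx|<1$ whenever $|s|<1$, and everything else is the standard renewal calculus.
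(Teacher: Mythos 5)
Your proposal is correct and follows essentially the same route as the source: the paper itself reproduces this theorem from \cite{bfmaf} without reproving it, and the argument there is exactly the one you give, namely summing the Karlin--McGregor representation of $P^r_{nm}$ against $s^r$ (the interchange being justified by $\sup_{x\in\Delta}|sx|<1$) and then applying the standard first-passage renewal identities $P_{nm}(s)=F_{nm}(s)P_{mm}(s)$ and $P_{nn}(s)=1+F_{nn}(s)P_{nn}(s)$. Your observation that the diagonal prefactors are $1$, so that $P_{II,nn}(s)=P_{I,nn}(s)$ and hence the two chains share the same $F_{nn}(s)$, is precisely the point behind the final claim of the theorem (and your derivation also shows the prefactor $Q^{(m)}(1)$ in the stated $F_{I,nm}$ should read $q^{(m)}(1)$, a typo carried over in the statement).
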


\section{Ratio asymptotics}\label{S:ratio_asymptotics}

In this section we assume \eqref{eq:region_parameters_pochhammer_perfect}, so that the system of weights is perfect.

\subsection{On the linear forms at unity and Rodrigues type formula}

In \cite{lima_loureiro} multiple orthogonality for the measures $\tilde w_1\d x $ and $\tilde w_2(x)\d x$ supported on $[0,1]$ for the step-line were considered, i.e., one takes $\vec n=(1,1)$ (in the notation of \cite{afm,bfm,bfmaf}).
Here we use the weights $w_1$, $w_2$ defined on \eqref{eq:measures}.

%It is also deduced in \cite[Theorem 2.5]{lima_loureiro} the Rodrigues type formula \eqref{eq:LL_Rodrigues} that we are going to use to get:
\begin{teo}[Type I linear forms at unity]\label{teo:linear_forms_at_unity}
For $n\in\N_0$, the linear forms \eqref{eq:polinomio} at unity have the following values
\begin{align*}
 q^{(2n)}(1) = \frac{1}{(2n)!}\frac{(c)_{3n}(d)_{3n}}{(a)_{2n}(b)_{2n}},
&&
 q^{(2n+1)}(1) = \frac{1}{(2n+1)!}\frac{(c)_{3n+2}(d)_{3n+1}}{(a)_{2n+1}(b)_{2n+1}} .
 \end{align*}
\end{teo}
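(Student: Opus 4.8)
The plan is to reduce everything to the Rodrigues-type formula \eqref{eq:LL_Rodrigues} for the linear forms $Q^{(n)}$ of the system $(W_1,W_2,\d x)$, and then transfer to the rescaled system $(w_1,w_2,\d\mu)$. The first step is the elementary identity relating the two systems. Writing $\rho(x):=\frac{\Gamma(c)\Gamma(d)}{\Gamma(a)\Gamma(b)\Gamma(\delta)}x^{a-1}(1-x)^{\delta-1}$, so that $\d\mu(x)=\rho(x)\,\d x$ by \eqref{eq:mu}, one checks directly from \eqref{eq:pesosLL_w}, \eqref{eq:measures} and the identity $\Gamma(c+1)/\Gamma(b+1)=\tfrac{c}{b}\,\Gamma(c)/\Gamma(b)$ that $W_1=\rho\, w_1$ and $W_2=\rho\, w_2$ (the $\tensor[_2]{F}{_1}$ factors already match, and the $\delta$ of the tuple $(a,b+1;c+1,d)$ equals $\delta$), whence $Q^{(n)}=\rho\, q^{(n)}$. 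Consequently
\[
q^{(n)}(x)=\frac{(-1)^n}{n!\,\rho(x)}\,\frac{\d^n}{\d x^n}\,\mathscr w\Big(x;a+n,b+n;c+\big\lfloor\tfrac{n+1}{2}\big\rfloor+n,\,d+\big\lfloor\tfrac n2\big\rfloor+n\Big),
\]
and since $w_1,w_2$ are analytic at $x=1$ (the hypergeometric series are power series in $1-x$), so is $q^{(n)}=w_1A_1^{(n)}+w_2A_2^{(n)}$; thus $q^{(n)}(1)$ is obtained by letting $x\to1$ on the right, even though $\rho$ and $Q^{(n)}$ may individually blow up there when $\delta<1$.

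Next I would carry out the parameter bookkeeping. Put $\tilde a=a+n$, $\tilde b=b+n$, $\tilde c=c+n+\lfloor\tfrac{n+1}{2}\rfloor$, $\tilde d=d+n+\lfloor\tfrac n2\rfloor$ and $\tilde\delta:=\tilde c+\tilde d-\tilde a-\tilde b$; the identity $\lfloor\tfrac{n+1}{2}\rfloor+\lfloor\tfrac n2\rfloor=n$ gives $\tilde\delta=\delta+n$. Setting $y=1-x$, the definition of $\mathscr w$ reads
\[
\mathscr w(x;\tilde a,\tilde b;\tilde c,\tilde d)=\frac{\Gamma(\tilde c)\Gamma(\tilde d)}{\Gamma(\tilde a)\Gamma(\tilde b)\Gamma(\tilde\delta)}\,y^{\tilde\delta-1}\,h(y),\qquad h(y):=(1-y)^{\tilde a-1}\,\tensor[_2]{F}{_1}\big(\tilde c-\tilde b,\tilde d-\tilde b;\tilde\delta;y\big),
\]
where $h$ is analytic at $y=0$ with $h(0)=1$. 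Writing $h(y)=\sum_{k\ge0}h_k y^k$ and using $\frac{\d^n}{\d x^n}=(-1)^n\frac{\d^n}{\d y^n}$ together with $\frac{\d^n}{\d y^n}y^{\tilde\delta-1+k}=\frac{\Gamma(\tilde\delta+k)}{\Gamma(\tilde\delta+k-n)}y^{\tilde\delta-1+k-n}=\frac{\Gamma(\delta+n+k)}{\Gamma(\delta+k)}y^{\delta-1+k}$, one finds that the leading power produced is $y^{\delta-1}$ — exactly the singular factor carried by $\rho$.

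Finally, dividing by $\rho(x)=\frac{\Gamma(c)\Gamma(d)}{\Gamma(a)\Gamma(b)\Gamma(\delta)}(1-y)^{a-1}y^{\delta-1}$ and letting $y\to0$ annihilates all $k\ge1$ contributions and the factor $(1-y)^{1-a}$, leaving
\[
q^{(n)}(1)=\frac{1}{n!}\,\frac{\Gamma(\tilde c)\Gamma(\tilde d)}{\Gamma(\tilde a)\Gamma(\tilde b)\Gamma(\tilde\delta)}\cdot\frac{\Gamma(\delta+n)}{\Gamma(\delta)}\cdot\frac{\Gamma(a)\Gamma(b)\Gamma(\delta)}{\Gamma(c)\Gamma(d)}=\frac{1}{n!}\,\frac{(c)_{n+\lfloor(n+1)/2\rfloor}\,(d)_{n+\lfloor n/2\rfloor}}{(a)_n\,(b)_n},
\]
using $\Gamma(\tilde\delta)=\Gamma(\delta+n)$ to cancel, and converting the remaining Gamma ratios into Pochhammer symbols. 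Splitting according to parity — for $n=2m$ one has $n+\lfloor\tfrac{n+1}{2}\rfloor=n+\lfloor\tfrac n2\rfloor=3m$, while for $n=2m+1$ one has $n+\lfloor\tfrac{n+1}{2}\rfloor=3m+2$ and $n+\lfloor\tfrac n2\rfloor=3m+1$ — yields the two stated formulas. The only genuinely delicate point is justifying the term-by-term differentiation of $y^{\tilde\delta-1}h(y)$ near $y=0$ (legitimate since $h$ is analytic there) and checking that the resulting leading power $y^{\delta-1}$ cancels the singularity of $\rho$ so that the limit is finite and nonzero; the rest is routine Gamma-function manipulation.
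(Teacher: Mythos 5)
Your proof is correct and follows essentially the same route as the paper's: both start from the Rodrigues-type formula \eqref{eq:LL_Rodrigues}, divide by the density $\rho$ of $\d\mu$, and observe that at $x=1$ only the contribution in which all $n$ derivatives fall on the factor $(1-x)^{\delta+n-1}$ survives, producing the factor $(\delta)_n$ that cancels against $\Gamma(\tilde\delta)=\Gamma(\delta+n)$ in the prefactor. The only difference is presentational — you substitute $y=1-x$ and differentiate the power series of $y^{\delta+n-1}h(y)$ term by term, whereas the paper expands the derivative by the Leibniz rule into a double sum before evaluating — and your parameter bookkeeping ($\tilde\delta=\delta+n$, the parity split of $n+\lfloor\tfrac{n+1}{2}\rfloor$ and $n+\lfloor\tfrac n2\rfloor$) reproduces the stated formulas exactly.
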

\begin{proof}%\begin{align*}
% \tilde Q^{(n+1)}(x)=\frac{(-1)^n}{n!}\frac{\d^n}{\d x^n}\mathscr{w}\Big(x;a+n,b+n;c+\Big\lfloor\frac{n+1}{2}\Big\rfloor+n,d+\Big\lfloor\frac{n}{2}\Big\rfloor+n\Big).
%\end{align*}
%with
%\begin{align*}
%\mathscr{w}(x;a,b;c,d)=\frac{\Gamma(c)\Gamma(d)}{\Gamma(a)\Gamma(b)\Gamma(\delta)}x^{a-1}(1-x)^{\delta-1}\;{}_2F_1\hspace*{-3pt}\left[{\begin{NiceArray}{c}[small]c-
% b,d-b \\\delta\end{NiceArray}};1-x\right].
% \end{align*}
% That is
% \begin{align*}
% \tilde Q^{(2n+1)}(x)&=\frac{1}{(2n)!}\frac{\d^{2n}}{\d x^{2n}}\mathscr{w}(x;a+2n,b+2n;c+3n,d+3n\Big),\\
% \tilde Q^{(2n+2)}(x)&=-\frac{1}{(2n+1)!}\frac{\d^{2n+1}}{\d x^{2n+1}}\mathscr{w}\Big(x;a+2n+1,b+2n+1;c+3n+2,d+3n+1\Big).
% \end{align*}
%so that
% \begin{align*}
% \tilde Q^{(2n+1)}(x)&=
%\begin{multlined}[t][0.7\textwidth] \frac{1}{(2n)!}\frac{\Gamma(c+3n)\Gamma(d+3n)}{\Gamma(a+2n)\Gamma(b+2n)\Gamma(\delta+2n)}\\\times \frac{\d^{2n}}{\d x^{2n}}\Big(x^{a-1+2n}(1-x)^{\delta-1+2n}{}_2F_1\hspace*{-3pt}\left[{\begin{NiceArray}{c}[small]c-
% b+n,d-b+n \\\delta+2n\end{NiceArray}};1-x\right]\Big),
% \end{multlined}
% \\
% \tilde Q^{(2n+2)}(x)&=
% \begin{multlined}[t][0.7\textwidth]-\frac{1}{(2n+1)!}\frac{\Gamma(c+3n+2)\Gamma(d+3n+1)}{\Gamma(a+2n+1)\Gamma(b+2n+1)\Gamma(\delta+2n+1)}\\\times\frac{\d^{2n+1}}{\d x^{2n+1}}\Big(x^{a+2n}(1-x)^{\delta+2n}\;{}_2F_1\hspace*{-3pt}\left[{\begin{NiceArray}{c}[small]c-
% b+n+1,d-b +n\\\delta+2n+1\end{NiceArray}};1-x\right].\Big).
% \end{multlined}
%\end{align*}
Recall that
\begin{align*}
 Q^{(n)}(x)=\frac{\Gamma(c)\Gamma(d)}{\Gamma(a)\Gamma(b)\Gamma(\delta)}x^{a-1}(1-x)^{\delta-1}q^{(n)}(x)
\end{align*}
so that the mentioned Rodrigues type formulas \eqref{eq:LL_Rodrigues} can be rewritten as
% \begin{align*}
% Q^{(2n+1)}(x)&=
% \begin{multlined}[t][0.7\textwidth] \frac{1}{(2n)!}\frac{\Gamma(a)\Gamma(b)\Gamma(\delta)\Gamma(c+3n)\Gamma(d+3n)}{\Gamma(c)\Gamma(d)\Gamma(a+2n)\Gamma(b+2n)\Gamma(\delta+2n)}\\\times \frac{1}{x^{a-1}(1-x)^{\delta-1}}\frac{\d^{2n}}{\d x^{2n}}\Big(x^{a-1+2n}(1-x)^{\delta-1+2n}{}_2F_1\hspace*{-3pt}\left[{\begin{NiceArray}{c}[small]c-
% b+n,d-b+n \\\delta+2n\end{NiceArray}};1-x\right]\Big),
% \end{multlined}
% \\
% Q^{(2n+2)}(x)&=
% \begin{multlined}[t][0.7\textwidth]-\frac{1}{(2n+1)!}\frac{\Gamma(a)\Gamma(b)\Gamma(\delta)\Gamma(c+3n+2)\Gamma(d+3n+1)}{\Gamma(c)\Gamma(d)\Gamma(a+2n+1)\Gamma(b+2n+1)\Gamma(\delta+2n+1)}\\\times\frac{1}{x^{a-1}(1-x)^{\delta-1}}\frac{\d^{2n+1}}{\d x^{2n+1}}\Big(x^{a+2n}(1-x)^{\delta+2n}\;{}_2F_1\hspace*{-3pt}\left[{\begin{NiceArray}{c}[small]c-
% b+n+1,d-b +n\\\delta+2n+1\end{NiceArray}};1-x\right].\Big).
% \end{multlined}
%\end{align*}
%In terms of Pochhammer symbols we get
%and, consequently,
\begin{align*}
%\begin{multlined}[t][.9\textwidth] 
q^{(2n)}(x) &=
 \frac{1}{(2n)!}\frac{(c)_{3n}(d)_{3n}}{(a)_{2n}(b)_{2n}(\delta)_{2n}}
% \\
% \times
 \frac{1}{x^{a-1}(1-x)^{\delta-1}}\frac{\d^{2n}}{\d x^{2n}}\Big(x^{a-1+2n}(1-x)^{\delta-1+2n}\tensor[_2]{F}{_1}\hspace*{-3pt}\left[{\begin{NiceArray}{c}[small]c-b+n,d-b+n \\
 \delta+2n\end{NiceArray}};1-x\right]\Big),
% \end{multlined}
 \\
%\begin{multlined}[t][.9\textwidth]
q^{(2n+1)}(x) &=
 -\frac{1}{(2n+1)!}\frac{(c)_{3n+2}(d)_{3n+1}}{(a)_{2n+1}(b)_{2n+1}(\delta)_{2n+1}}
 %\\\times
 \frac{1}{x^{a-1}(1-x)^{\delta-1}}\frac{\d^{2n+1}}{\d x^{2n+1}}\Big(x^{a+2n}(1-x)^{\delta+2n}\;\tensor[_2]{F}{_1}\hspace*{-3pt}\left[{\begin{NiceArray}{c}[small]c-
 b+n+1,d-b +n\\\delta+2n+1\end{NiceArray}};1-x\right]\Big).
% \end{multlined}
\end{align*}
Let us expand the derivatives to get
\begin{subequations}\label{eq:Q}
	\begin{align}\label{eq:Qodd}
 & \begin{multlined}[t][.9\textwidth]
q^{(2n)}(x) =
 \frac{1}{(2n)!}\frac{(c)_{3n}(d)_{3n}}{(a)_{2n}(b)_{2n}(\delta)_{2n}}%\\\times
 \\
 \times \sum_{k=0}^{2n}\sum_{l=0}^k
(-1)^{l+k}\binom{2n}{k}\binom{k}{l} (a+2n-k+l)_{k-l}(\delta+2n-l)_l x^{2n-k+l}(1-x)^{2n-l}%\\\times
\tensor*[_2]{F}{_1^{(2n-k)}}\hspace*{-3pt}\left[{\begin{NiceArray}{c}[small]c-
 b+n,d-b+n \\\delta+2n\end{NiceArray}};1-x\right],
 \end{multlined}
 \\ \label{eq:Qeven}
& \begin{multlined}[t][.9\textwidth]
q^{(2n+1)}(x) =
 \frac{1}{(2n+1)!}\frac{(c)_{3n+2}(d)_{3n+1}}{(a)_{2n+1}(b)_{2n+1}(\delta)_{2n+1}}\\\times
 \sum_{k=0}^{2n+1}\sum_{l=0}^k
 (-1)^{l+k}\binom{2n+1}{k}\binom{k}{l} (a+2n+1-k+l)_{k-l}(\delta+2n+1-l)_l x^{2n+1-k+l}(1-x)^{2n+1-l}\\\times
 \tensor*[_2]{F}{_1^{(2n+1-k)}}\hspace*{-3pt}\left[{\begin{NiceArray}{c}[small]c-
 b+n+1,d-b+n \\\delta+2n+1\end{NiceArray}};1-x\right],
\end{multlined}
\end{align}
\end{subequations}
where $\tensor*[_2]{F}{_1^{(m)}}\hspace*{-3pt}\left[{\begin{NiceArray}{c}[small]\alpha,\; \beta \\\gamma \end{NiceArray}};1-x\right]$, stands for the $m$-th $x$-derivative of $\tensor[_2]{F}{_1}\hspace*{-3pt}\left[{\begin{NiceArray}{c}[small]\alpha,\; \beta \\\gamma \end{NiceArray}};x\right]$ when evaluated at~$1-x$.

Hence, when evaluating at unity we see that almost all terms cancel due to the presence of the factor $(1-x)^N$ for some natural number $N$. Then, in the evaluation at unity only survive the summands corresponding to $l=k=2n$ and $l=k=2n+1$, respectively.
In doing that one obtains for the factor with the double summations the values $(\delta)_{2n}$ and $(\delta)_{2n+1}$, respectively. Then, the announced result follows immediately.
\end{proof}

%We have two important corollaries of this result
\begin{coro}\label{coro:positivity}
Whenever $a,b,c,d>0$, the linear forms when evaluated at unity are positive numbers
 \begin{align*}
 q^{(n)}(1)&>0, & n&\in\N_0.
 \end{align*}
\end{coro}

\begin{coro}\label{cor:expressions_type_I}
 The linear forms \eqref{eq:polinomio} can be expressed as 
 \begin{align*}%\label{eq:Qodd_exp}
 & \begin{multlined}[t][0.95\textwidth] 
 q^{(2n)}(x) =
 \frac{1}{(2n)!}\frac{(c)_{3n}(d)_{3n}}{(a)_{2n}(b)_{2n}(\delta)_{2n}}%\\ \times
 \sum_{k=0}^{2n}\sum_{l=0}^k
 (-1)^{l+k}\binom{2n}{k}\binom{k}{l} (a+2n-k+l)_{k-l}(\delta+2n-l)_l x^{2n-k+l}(1-x)^{2n-l}\\
 \times \frac{(c - b+n)_{2n-k}(d-b+n)_{2n-k}}{(\delta+2n)_{2n-k}}\,\tensor[_2]{F}{_1}\hspace*{-3pt}\left[{\begin{NiceArray}{c}[small]c- b+3n-k , d-b+3n -k\\
 \delta+4n-k\end{NiceArray}};1-x\right],
 \end{multlined}
 \\%\label\label{eq:Qeven_epp}
 %\hspace*{-1cm}
 & \begin{multlined}[t][0.95\textwidth] 
 q^{(2n+1)}(x) = 
 \frac{1}{(2n+1)!}\frac{(c)_{3n+2}(d)_{3n+1}}{(a)_{2n+1}(b)_{2n+1}(\delta)_{2n+1}}\\ \times
 \sum_{k=0}^{2n+1}\sum_{l=0}^k
 (-1)^{l+k}\binom{2n+1}{k}\binom{k}{l} (a+2n+1-k+l)_{k-l}(\delta+2n+1-l)_l x^{2n+1-k+l}(1-x)^{2n+1-l}\\
 \times \frac{(c- b+n+1)_{2n-k+1}(d-b+n)_{2n-k+1}}{(\delta+2n+1)_{2n-k+1}}\,
 \,\tensor[_2]{F}{_1}\hspace*{-3pt}\left[{\begin{NiceArray}{c}[small]
 c-b+3n+2-k,d-b+3n+1-k \\\delta+4n+2-k\end{NiceArray}};1-x\right] .
 \end{multlined}
 \end{align*}
\end{coro}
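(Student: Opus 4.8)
The plan is to resume the computation exactly where the proof of Theorem~\ref{teo:linear_forms_at_unity} stops, namely at the fully expanded identities \eqref{eq:Qodd} for $q^{(2n)}(x)$ and \eqref{eq:Qeven} for $q^{(2n+1)}(x)$. In those two identities every factor is already a polynomial in $x$ and $1-x$ \emph{except} for the symbol $\tensor*[_2]{F}{_1^{(m)}}$, which stands for an $x$-derivative of a Gauss hypergeometric function subsequently evaluated at $1-x$. Hence the whole corollary amounts to writing each such derivative as a single (undifferentiated) $\tensor[_2]{F}{_1}$ with shifted parameters.

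First I would record the elementary differentiation formula, immediate from termwise differentiation of the defining power series,
\begin{align*}
 \frac{\d^m}{\d x^m}\,\tensor[_2]{F}{_1}\hspace*{-3pt}\left[{\begin{NiceArray}{c}[small]\alpha,\beta\\\gamma\end{NiceArray}};x\right]
 =\frac{(\alpha)_m(\beta)_m}{(\gamma)_m}\,\tensor[_2]{F}{_1}\hspace*{-3pt}\left[{\begin{NiceArray}{c}[small]\alpha+m,\beta+m\\\gamma+m\end{NiceArray}};x\right].
\end{align*}
By the very definition of $\tensor*[_2]{F}{_1^{(m)}}$ (apply $\d^m/\d x^m$ first, evaluate at $1-x$ afterwards), evaluating the right-hand side at $1-x$ shows that the derivative appearing in \eqref{eq:Qodd}, with $(\alpha,\beta,\gamma)=(c-b+n,\,d-b+n,\,\delta+2n)$ and $m=2n-k$, equals $\tfrac{(c-b+n)_{2n-k}(d-b+n)_{2n-k}}{(\delta+2n)_{2n-k}}$ times $\tensor[_2]{F}{_1}$ with upper parameters $c-b+3n-k,\ d-b+3n-k$ and lower parameter $\delta+4n-k$, all evaluated at $1-x$. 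The same rule applied in \eqref{eq:Qeven}, now with $(\alpha,\beta,\gamma)=(c-b+n+1,\,d-b+n,\,\delta+2n+1)$ and $m=2n+1-k$, produces the Pochhammer prefactor $\tfrac{(c-b+n+1)_{2n-k+1}(d-b+n)_{2n-k+1}}{(\delta+2n+1)_{2n-k+1}}$ together with a $\tensor[_2]{F}{_1}$ having upper parameters $c-b+3n+2-k,\ d-b+3n+1-k$ and lower parameter $\delta+4n+2-k$.

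Substituting these two closed forms back into \eqref{eq:Qodd} and \eqref{eq:Qeven}, and leaving the polynomial prefactors untouched, yields verbatim the two displays in the statement; no further work is required. The only points that need attention are purely clerical, and I do not expect any genuine obstacle: carrying the three shifted Pochhammer symbols and the three shifted hypergeometric parameters through without arithmetic slips (the identities $(c-b+n)+(2n-k)=c-b+3n-k$, $(\delta+2n)+(2n-k)=\delta+4n-k$, and their odd-index analogues), and using the ``differentiate first, evaluate at $1-x$ afterwards'' convention baked into $\tensor*[_2]{F}{_1^{(m)}}$ so that this last substitution introduces no sign, the chain-rule signs having already been absorbed into the factor $(-1)^{l+k}$ present in \eqref{eq:Q}.
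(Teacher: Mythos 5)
Your proposal is correct and coincides with the paper's own proof: both apply the standard differentiation formula $\frac{\d^m}{\d z^m}\,\tensor[_2]{F}{_1}\hspace*{-3pt}\left[{\begin{NiceArray}{c}[small]\alpha,\beta\\\gamma\end{NiceArray}};z\right]=\frac{(\alpha)_m(\beta)_m}{(\gamma)_m}\,\tensor[_2]{F}{_1}\hspace*{-3pt}\left[{\begin{NiceArray}{c}[small]\alpha+m,\beta+m\\\gamma+m\end{NiceArray}};z\right]$ to the expanded expressions \eqref{eq:Q} from the proof of Theorem~\ref{teo:linear_forms_at_unity}, with the same parameter bookkeeping and the same ``differentiate first, then evaluate at $1-x$'' convention for $\tensor*[_2]{F}{_1^{(m)}}$.
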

\begin{proof}
 The result follows from the well known equation
 \begin{align*}
 \frac{\d^m}{\d z^m}\tensor[_2]{F}{_1}\!\!\left[{\begin{NiceArray}{c}[small]\alpha,\; \beta \\\gamma \end{NiceArray}};x\right]=
 \frac{(\alpha)_m(\beta)_m}{(\gamma)_m}\,{}_2F_1\hspace*{-3pt}\left[{\begin{NiceArray}{c}[small]\alpha+m , \beta +m\\\gamma +m\end{NiceArray}};x\right] ,
 \end{align*}
 applied to \eqref{eq:Q}.%\eqref{eq:Qodd} and \eqref{eq:Qeven}.
\end{proof}

For the next Corollary we require the system of weights is such that the zeros are confined in the support of the measure. For example, this happens whenever the system of weights is an AT system.
This holds when the min-max constraint \eqref{eq:min-max} is assumed, so the system is Nikishin \cite{lima_loureiro}, but it also holds for another system of hypergeometric parameters, as those leading to the Piñeiro multiple orthogonal polynomials. 
\begin{coro}\label{coro:psoitivity>1}
Whenever $a,b,c,d>0$, the linear forms fulfill $q_n(x)>0$, for all $n \in \N_0$ and $x\geq 1$.
\end{coro}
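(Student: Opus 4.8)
The plan is to combine the positivity at unity, $q^{(n)}(1)>0$, established in Corollary~\ref{coro:positivity}, with the localization of the zeros of the type~I linear forms inside the support $[0,1]$ of $\mu$, and then to propagate the sign to the whole ray $[1,\infty)$ by a connectedness (intermediate value) argument. So the proof has three ingredients: continuity of $q^{(n)}$ on $[1,\infty)$, absence of zeros there, and the known sign at $x=1$.

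First I would record that, by \eqref{eq:polinomio} and \eqref{eq:measures}, $q^{(n)}(x)=w_1(x)A_1^{(n)}(x)+w_2(x)A_2^{(n)}(x)$, where $A_1^{(n)},A_2^{(n)}$ are polynomials (hence entire) while $w_1,w_2$ are Gauss hypergeometric functions ${}_2F_1$ of the variable $1-x$. For $x>0$ the argument $z=1-x$ lies in $(-\infty,1)$, which stays off the branch cut $[1,+\infty)$ of ${}_2F_1$; consequently $w_1,w_2$, read as functions of the real variable $x$, extend to real-analytic functions on $(0,\infty)$ (given by the convergent series in $1-x$ on $(0,2)$ and elsewhere by analytic continuation, e.g.\ through the Euler integral representation). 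In particular $q^{(n)}$ is real-analytic, a fortiori continuous, on $[1,\infty)$.

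Next I would invoke the standing hypothesis of the corollary, namely that the system of weights has its type~I zeros confined to the support of $\mu$, i.e.\ to $[0,1]$; this holds, for instance, when $(w_1,w_2,\d\mu)$ is an AT system, which is the case under the min--max condition \eqref{eq:min-max} (where the system is Nikishin) and also for the Piñeiro parameters. Concretely, the $n$ orthogonality relations $\int_0^1 x^k q^{(n)}(x)\,\d\mu(x)=0$, $k=0,\dots,n-1$, force $q^{(n)}$ to have at least $n$ sign changes in $(0,1)$, whereas the AT property bounds the number of zeros of the generalized polynomial $w_1A_1^{(n)}+w_2A_2^{(n)}$ by $n$; hence all $n$ zeros lie in $(0,1)$, and $q^{(n)}$ has no zero on $[1,\infty)$.

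Finally, being continuous and nowhere vanishing on the connected set $[1,\infty)$, the function $q^{(n)}$ keeps a constant sign there; since $q^{(n)}(1)>0$ by Corollary~\ref{coro:positivity}, we conclude $q^{(n)}(x)>0$ for every $x\ge 1$ and every $n\in\N_0$. I expect the main obstacle to be the zero-localization input: one must have the AT (or Nikishin) structure available on a set reaching beyond $x=1$, so that the count ``at most $n$ zeros'' really prevents a zero from escaping $(0,1)$ into $[1,\infty)$; by comparison, the analytic-continuation remark needed to make sense of $q^{(n)}$ for $x\ge 2$ is routine.
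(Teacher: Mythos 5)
Your argument is exactly the paper's: the proof given there is the one-liner "as the zeros of $q_n(x)$ belong to $(0,1)$, the result follows from Corollary~\ref{coro:positivity}," which is precisely your combination of zero confinement (the standing AT/Nikishin hypothesis stated just before the corollary), continuity on $[1,\infty)$, and the positive sign at $x=1$. You have merely spelled out the details (analytic continuation of the weights, the sign-change count) that the paper leaves implicit.
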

\begin{proof}
 As the zeros of $q_n(x)$ belong to $(0,1)$, the result follows from Corollary \ref{coro:positivity}.
\end{proof}
\subsection{Ratio asymptotics at unity of linear forms of type I }

\begin{coro}\label{coro:ratio_asymptotics_typeI}
 The large $n$ ratio asymptotics for the type I hypergeometric linear forms at unity is 
 \begin{align*}
 \lim_{n\to\infty} \frac{q^{(n+1)}(1)}{q^{(n)}(1)}=\frac{1}{2\kappa}=\frac{27}{8}.
 \end{align*}
\end{coro}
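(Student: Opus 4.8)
The plan is to compute the limit directly from the closed-form expressions for $q^{(n)}(1)$ obtained in Theorem~\ref{teo:linear_forms_at_unity}. Since the ratio asymptotics for a sequence indexed by $n$ can be extracted by looking at the two subsequences determined by parity, I would first write down the two consecutive ratios $q^{(2n+1)}(1)/q^{(2n)}(1)$ and $q^{(2n+2)}(1)/q^{(2n+1)}(1)$ explicitly. Using
\begin{align*}
 q^{(2n)}(1) = \frac{1}{(2n)!}\frac{(c)_{3n}(d)_{3n}}{(a)_{2n}(b)_{2n}}, &&
 q^{(2n+1)}(1) = \frac{1}{(2n+1)!}\frac{(c)_{3n+2}(d)_{3n+1}}{(a)_{2n+1}(b)_{2n+1}},
\end{align*}
the first ratio is
\begin{align*}
 \frac{q^{(2n+1)}(1)}{q^{(2n)}(1)} = \frac{1}{2n+1}\,\frac{(c+3n)(c+3n+1)(d+3n)}{(a+2n)(b+2n)},
\end{align*}
after cancelling the common Pochhammer factors and using $(2n+1)!=(2n+1)(2n)!$, $(c)_{3n+2}=(c)_{3n}(c+3n)(c+3n+1)$, $(d)_{3n+1}=(d)_{3n}(d+3n)$, $(a)_{2n+1}=(a)_{2n}(a+2n)$, $(b)_{2n+1}=(b)_{2n}(b+2n)$.

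Similarly, I would simplify $q^{(2n+2)}(1)/q^{(2n+1)}(1)$ by expressing $q^{(2n+2)}(1)$ via $(2n+2)!=(2n+2)(2n+1)!$, $(c)_{3n+3}=(c)_{3n+2}(c+3n+2)$, $(d)_{3n+3}=(d)_{3n+1}(d+3n+1)(d+3n+2)$, $(a)_{2n+2}=(a)_{2n+1}(a+2n+1)$, $(b)_{2n+2}=(b)_{2n+1}(b+2n+1)$, yielding
\begin{align*}
 \frac{q^{(2n+2)}(1)}{q^{(2n+1)}(1)} = \frac{1}{2n+2}\,\frac{(c+3n+2)(d+3n+1)(d+3n+2)}{(a+2n+1)(b+2n+1)}.
\end{align*}
In each case, as $n\to\infty$ the dominant behaviour is that of a ratio of polynomials in $n$: the numerator of the fraction behaves like $(3n)^3 = 27n^3$, the denominator like $(2n)^2=4n^2$, and the prefactor $1/(2n+O(1))$ contributes $1/(2n)$. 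Hence both parity-ratios tend to $\dfrac{27n^3}{4n^2\cdot 2n} = \dfrac{27}{8}$.

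Since both interlaced subsequences of consecutive ratios converge to the same value $27/8$, the full sequence $q^{(n+1)}(1)/q^{(n)}(1)$ converges to $27/8$, and recalling $\kappa = \tfrac{4}{27}$ this is precisely $\tfrac{1}{2\kappa}$. The argument is entirely elementary; the only point requiring a modicum of care is the bookkeeping of which Pochhammer factors split off when incrementing $n$ by one in each parity class, so that the cancellations are performed correctly — there is no real obstacle, just the need to track the shifts $3n\mapsto 3n+2$, $3n\mapsto 3n+1$, etc., accurately before taking leading-order asymptotics.
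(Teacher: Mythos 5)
Your proposal is correct and follows essentially the same route as the paper: both compute the two parity ratios $q^{(2n+1)}(1)/q^{(2n)}(1)$ and $q^{(2n+2)}(1)/q^{(2n+1)}(1)$ directly from the closed forms of Theorem~\ref{teo:linear_forms_at_unity}, cancel the Pochhammer factors, and observe that each ratio tends to $27/8$. The bookkeeping of the shifted Pochhammer indices in your proposal matches the paper's computation exactly.
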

\begin{proof}
It follows from the explicit values of the linear form at unity. Indeed,
 \begin{align*}
\hspace{-.25cm}
\lim_{n\to\infty} \frac{q^{(2n+1)}(1)}{q^{(2n)}(1)}&=\lim_{n\to\infty}\frac{(2n)!}{(2n+1)!} \frac{(c)_{3n+2}(d)_{3n+1}(a)_{2n}(b)_{2n}}{(c)_{3n}(d)_{3n}(a)_{2n+1}(b)_{2n+1}} =\lim_{n\to\infty}\frac{(c+3n)(c+3n+1)(d+3n)}{(2n+1)(a+2n)(b+2n)} =\frac{27}{8},\\
\hspace{-.25cm}
\lim_{n\to\infty} \frac{q^{(2n+2)}(1)}{q^{(2n+1)}(1)}&=\lim_{n\to\infty}\frac{(2n+1)!}{(2n+2)!} \frac{(c)_{3n+3}(d)_{3n+3}(a)_{2n+1}(b)_{2n+1}}{(c)_{3n+2}(d)_{3n+1}(a)_{2n+2}(b)_{2n+2}} =\lim_{n\to\infty}\frac{(c+3n+2)(d+3n+1)(d+3n+2)}{(2n+2)(a+2n+1)(b+2n+1)} =\frac{27}{8}.
 \end{align*}
Which is what we wanted to prove.
\end{proof}

\subsection{Ratio asymptotics at unity of multiple orthogonal polynomials of type II}

A well known classical tool in the theory of orthogonal polynomials is the Christoffel--Darboux (CD) kernel, see~\cite{simon}. 
Multiple orthogonal polynomials have also Christoffel--Darboux kernels, 
Sorokin and Van Iseghem~\cite{sorokin} derived a formula that can be applied to multiple orthogonal polynomials, see also~\cite{Coussement__VanAssche} and~\cite{tesis}. Daems and Kuijlaars derived a~CD formula for the mixed multiple case~\cite{daems-kuijlaars,daems-kuijlaars2} using Riemann--Hilbert approach in the context of nonintersecting Brownian motions. Later on, the article~\cite{afm} reproduces the same result with an algebraic approach %and 
in the framework of mixed multiple orthogonal polynomial sequences.
The~CD formula derived in~\cite{CD} suits particularly well to our problem because it is expressed only in terms of a unique polynomial sequence.

\begin{pro}\label{pro:ratio_asymptotics_typeII}
 The large $n$ ratio asymptotics for the type II hypergeometric multiple orthogonal polynomials at unity~is 
\begin{align*}
 \lim_{n\to\infty} \frac{B^{(n+1)}(1)}{B^{(n)}(1)}=2\kappa=\frac{8}{27}.
\end{align*}
\end{pro}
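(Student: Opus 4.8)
The plan is to use the Christoffel--Darboux formula on the step-line together with Poincaré's theorem for homogeneous linear recurrence relations, exactly as announced in the paragraph preceding the statement. The key observation is that the sequence $\{B^{(n)}(1)\}_{n\in\N_0}$ satisfies the fourth order homogeneous linear recurrence
\begin{align*}
 \gamma_{n-1} B^{(n-2)}(1) + \alpha_{n} B^{(n-1)}(1)+\beta_{n} B^{(n)}(1)+B^{(n+1)}(1)=B^{(n)}(1),
\end{align*}
obtained by setting $x=1$ in the recurrence for the type~II polynomials. Since the coefficients converge, $\beta_n\to 3\kappa$, $\alpha_n\to 3\kappa^2$, $\gamma_n\to\kappa^3$, the limiting characteristic equation is $t^{3}+(3\kappa-1)t^{2}+3\kappa^{2} t+\kappa^{3}=0$, equivalently $(t+\kappa)^{3}=t^{2}$ after shifting by the eigenvalue $x=1$; writing it out, the characteristic polynomial factors so that its roots are the values $\zeta$ for which $1$ is an eigenvalue of the limiting Toeplitz symbol, i.e. $1 = \zeta + 3\kappa + 3\kappa^{2}\zeta^{-1}+\kappa^{3}\zeta^{-2}$, that is $(\zeta+\kappa)^{3}=\zeta^{2}$. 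One checks that $\zeta = 2\kappa = \tfrac{8}{27}$ is a root (indeed $(2\kappa+\kappa)^3 = 27\kappa^3 = \tfrac{4}{27} = (2\kappa)^2$ using $\kappa=\tfrac{4}{27}$), and that it is the root of strictly largest modulus among the three.

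Poincaré's theorem then guarantees that $\lim_{n\to\infty} B^{(n+1)}(1)/B^{(n)}(1)$, \emph{if it exists}, must equal one of the three characteristic roots. The main obstacle, and the reason the Christoffel--Darboux formula is invoked rather than a bare application of Poincaré, is to (i) establish that the limit actually exists, and (ii) identify which root it is, namely the dominant one $2\kappa$. For (ii), the natural argument is that $B^{(n)}(1)>0$ for all $n$ (this is part of the hypotheses feeding Theorem~\ref{pro:sigma_spectral}, valid under the perfectness assumption \eqref{eq:region_parameters_pochhammer_perfect} once the zeros are confined in $(0,1)$), so the ratio stays positive; the two non-dominant roots of $(\zeta+\kappa)^3=\zeta^2$ can be shown to be complex (a conjugate pair) or to have smaller modulus, hence a positive real limit forces the value $2\kappa$. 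For (i), I would use the Christoffel--Darboux formula from \cite{CD} to express a suitable combination of $B^{(n)}(1)$, $B^{(n-1)}(1)$ and the linear forms at unity, and then invoke the already-established ratio asymptotics of the type~I linear forms at unity, Corollary~\ref{coro:ratio_asymptotics_typeI}, which gives $q^{(n+1)}(1)/q^{(n)}(1)\to \tfrac{1}{2\kappa}$. Since the type~I forms obey the transposed recurrence (the $\Lambda$-adjoint one), the CD kernel evaluated at the pair $(1,1)$ couples the two sequences; positivity of the CD kernel together with the known behaviour of the $H_n$ from \eqref{eq:H_pochhammer_1} pins down the growth rate of $B^{(n)}(1)$ and forces convergence of the ratio.

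Concretely, the steps in order: first, write down the recurrence for $B^{(n)}(1)$ and its limiting characteristic equation $(\zeta+\kappa)^{3}=\zeta^{2}$; second, solve it and verify $\zeta_1 = 2\kappa$ is a simple root and is dominant (the other two roots multiply to $\kappa^{3}/\zeta_1 = \kappa^2/2$ with sum $1-3\kappa-2\kappa = 1-5\kappa$, so they are the roots of a quadratic whose discriminant is negative, giving a complex conjugate pair of modulus $\sqrt{\kappa^2/2}=\kappa/\sqrt2 < 2\kappa$); third, apply Poincaré's theorem to conclude the ratio, if convergent, is $2\kappa$; fourth, use the Christoffel--Darboux formula of \cite{CD} on the step-line evaluated at $x=y=1$, combined with Corollary~\ref{coro:ratio_asymptotics_typeI} and the positivity $B^{(n)}(1),q^{(n)}(1)>0$, to show the ratio does converge. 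The hard part will be step four: making the Christoffel--Darboux argument rigorous enough to upgrade "the only possible limit is $2\kappa$" to "the limit exists and equals $2\kappa$", rather than leaving open an oscillation between roots of equal modulus — here the fact that $2\kappa$ is the \emph{unique} root of maximal modulus is what makes the standard Poincaré-type argument (or a Perron--Frobenius-flavoured argument via positivity of the $B^{(n)}(1)$ and of the CD kernel) go through cleanly.
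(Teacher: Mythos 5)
Your toolkit (Christoffel--Darboux plus Poincaré plus the type~I ratio asymptotics at unity) is the right one, but there is a concrete error in your step two that breaks the plan. For the recurrence satisfied by $B^{(n)}(1)$ the limiting characteristic polynomial is the cubic $p(t)=(t+\kappa)^3-t^2$, and with $\kappa=\frac{4}{27}$ this factors as $(t-2\kappa)^2\bigl(t+\frac{\kappa}{4}\bigr)$: the root $2\kappa$ is a \emph{double} root, not a simple dominant one. Your computation of the residual quadratic has a sign slip: the product of the three roots of a monic cubic $t^3+at^2+bt+c$ is $-c$, so the remaining two roots have product $-\kappa^3/(2\kappa)=-\kappa^2/2$, not $+\kappa^2/2$; the discriminant of the residual quadratic is then $(1-5\kappa)^2+2\kappa^2=\frac{1}{9}>0$, and the ``other two roots'' are real, namely $2\kappa$ again and $-\kappa/4$. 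Consequently the standard Poincaré theorem, which requires characteristic roots of pairwise distinct moduli, does not apply to this recurrence at all, and your claim that a positive real limit must be $2\kappa$ because the competitors form a complex pair of modulus $\kappa/\sqrt{2}$ collapses.

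This also means you have mis-assigned the role of the Christoffel--Darboux formula: it is not an auxiliary device for upgrading ``the only possible limit'' to ``the limit exists'', it is the step that replaces the defective order-three recurrence by a genuinely second-order one. The paper's proof takes the CD identity on the step-line at $x=1$,
\[
q^{(n+1)}(1)\,B^{(n+2)}(1)-q^{(n+2)}(1)\,\gamma_{n+1}B^{(n)}(1)-\bigl(q^{(n+2)}(1)\,\alpha_{n+2}+q^{(n+3)}(1)\,\gamma_{n+2}\bigr)B^{(n+1)}(1)=0,
\]
rewrites it as $B^{(n+2)}(1)=a_nB^{(n+1)}(1)+b_nB^{(n)}(1)$, and uses Theorem~\ref{teo:linear_forms_at_unity} and Corollary~\ref{coro:ratio_asymptotics_typeI} to get $a_n\to\frac{7\kappa}{4}$ and $b_n\to\frac{\kappa^2}{2}$. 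The limiting characteristic polynomial $r^2-\frac{7\kappa}{4}r-\frac{\kappa^2}{2}=\bigl(r-\frac{8}{27}\bigr)\bigl(r+\frac{1}{27}\bigr)$ now has two simple roots of distinct absolute value, so Poincaré applies and positivity of the $B^{(n)}(1)$ selects $\frac{8}{27}=2\kappa$. In effect the CD relation divides one factor $(t-2\kappa)$ out of the cubic; without this reduction the argument cannot be completed along the lines you propose.
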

\begin{proof}
From the Christoffel--Darboux formula on the sequence, see \cite[Proposition 6]{bfmaf}, we find 
\begin{multline}\label{eq:CD_QP_1}
q^{(n+1)}(1)B^{(n+2)}(1)
 -q^{(n+2)}(1)J_{n+2,n}B^{(n)}(1) -q^{(n+2)}(1) J_{n+2,n+1}B^{(n+1)}(1)
 -q^{(n+3)}(1)J_{n+3,n+1}B^{(n+1)}(1)=0,
\end{multline}
that allows %for 
us the application of the ideas in \cite[Theorem 4]{bfmaf}.
Notice that \eqref{eq:CD_QP_1} can be written as
\begin{align}\label{eq:CD_QP_2}
B^{(n+2)}(1) =
 a_nB^{(n+1)}(1)+b_nB^{(n)}(1), 
\end{align}
with
$
 a_n :=\frac{q^{(n+2)}(1)}{q^{(n+1)}(1)} \alpha_{n+2}
 +\frac{q^{(n+3)}(1)}{q^{(n+1)}(1)}\gamma_{n+2}$ and $
 b_n :=\frac{q^{(n+2)}(1)}{q^{(n+1)}(1)}\gamma_{n+1}$.
%As we know the explicit expressions for the hypergeometric type I linear forms at unity we find that
 From the explicit expressions for the hypergeometric type I linear forms at unity we find that
%Here the coefficients $a_n$, $b_n$, for which we have explicit expressions as the hypergeometric type I linear forms at unity have been evaluated, have the following limits
\begin{align*}
 \lim_{n\to\infty}a_n&=\frac{1}{2\kappa}3\kappa^2+\frac{1}{4\kappa^2}\kappa^3=\frac{7\kappa}{4},&
 \lim_{n\to\infty}b_n&=\frac{1}{2\kappa}\kappa^3=\frac{\kappa^2}{2}.
\end{align*}
Hence, we can apply the Poincaré's theory \cite{poincare,montel,Norlund, Elaydi} for homogeneous linear recurrence relations with coefficients having finite large $n$ limits. The characteristic polynomial 
$ r^2-\frac{7\kappa}{4}r-\frac{\kappa^2}{2}=\Big(r+\frac{1}{27}\Big)\Big(r-\frac{8}{27}\Big)$
has two simple roots, $-\frac{1}{27}$ and $\frac{8}{27}$, with distinct absolute value. Consequently, being a sequence of positive numbers, following Poincaré we find that it converges to the positive root~$\frac{27}{8}$.
\end{proof}

\begin{coro}[Ratio asymptotics for ${}_3F_2$]
We have the following large $n$ limit for the ratio of two generalized hypergeometric functions
\begin{align*}
 \lim_{n\to\infty}\frac{\tensor[_3]{F}{_2}\hspace*{-3pt}\left[{\begin{NiceArray}{c}[small]-2n-2,\; c+n+1,\;d+n \\a,\;b\end{NiceArray}};1\right]}{\tensor[_3]{F}{_2}\hspace*{-3pt}\left[{\begin{NiceArray}{c}[small]-2n-1,\; c+n,\;d+n \\a,\;b\end{NiceArray}};1\right] }=-2.
\end{align*}
\end{coro}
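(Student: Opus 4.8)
The plan is to translate the ratio asymptotics for the type II polynomials at unity, already established in Proposition~\ref{pro:ratio_asymptotics_typeII}, directly into the stated limit via the explicit hypergeometric representation of $B^{(n)}(x)$ in \eqref{eq:polinomiohipergeometrico}. Recall from that formula, specialized to $x=1$, that
\begin{align*}
 B^{(n)}(1) = (-1)^n \frac{(a)_n(b)_n}{\big(c+\big\lfloor\frac{n}{2}\big\rfloor\big)_n\big(d+\big\lfloor\frac{n-1}{2}\big\rfloor\big)_n}\,\tensor[_3]{F}{_2}\hspace*{-3pt}\left[{\begin{NiceArray}{c}[small]-n,\; c+\big\lfloor\frac{n}{2}\big\rfloor,\;d+\big\lfloor\frac{n-1}{2}\big\rfloor \\a,\;b\end{NiceArray}};1\right].
\end{align*}
The idea is to take $n=2m+1$ and $n=2m+2$, so that the two $\tensor[_3]{F}{_2}$'s appearing in the statement are exactly the hypergeometric factors of $B^{(2m+1)}(1)$ and $B^{(2m+2)}(1)$: for $n=2m+1$ one has $\lfloor n/2\rfloor = m$ and $\lfloor (n-1)/2\rfloor = m$, giving the denominator $\tensor[_3]{F}{_2}[-2m-1, c+m, d+m; a, b; 1]$; for $n=2m+2$ one has $\lfloor n/2\rfloor = m+1$ and $\lfloor (n-1)/2\rfloor = m$, giving the numerator $\tensor[_3]{F}{_2}[-2m-2, c+m+1, d+m; a, b; 1]$.

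Next I would solve for the ratio of the two $\tensor[_3]{F}{_2}$'s. Writing $n=2m+1$ and $n=2m+2$ in the displayed expression for $B^{(n)}(1)$, one obtains
\begin{align*}
 \frac{\tensor[_3]{F}{_2}\big[-2m-2, c+m+1, d+m; a, b; 1\big]}{\tensor[_3]{F}{_2}\big[-2m-1, c+m, d+m; a, b; 1\big]}
 = -\,\frac{B^{(2m+2)}(1)}{B^{(2m+1)}(1)}\cdot
 \frac{(a)_{2m+1}(b)_{2m+1}\,(c+m+1)_{2m+2}(d+m)_{2m+2}}{(a)_{2m+2}(b)_{2m+2}\,(c+m)_{2m+1}(d+m)_{2m+1}}.
\end{align*}
The Pochhammer prefactor simplifies by cancellation: $(a)_{2m+1}/(a)_{2m+2} = 1/(a+2m+1)$, similarly for $b$, while $(c+m+1)_{2m+2}/(c+m)_{2m+1} = (c+3m+1)(c+3m+2)/(c+m)$ and $(d+m)_{2m+2}/(d+m)_{2m+1} = d+3m+1$. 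Hence the prefactor equals
\begin{align*}
 \frac{(c+3m+1)(c+3m+2)(d+3m+1)}{(a+2m+1)(b+2m+1)(c+m)},
\end{align*}
whose limit as $m\to\infty$ is $\dfrac{3\cdot 3\cdot 3}{2\cdot 2\cdot 1} = \dfrac{27}{4}$.

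Finally I would assemble the pieces. By Proposition~\ref{pro:ratio_asymptotics_typeII}, $\lim_{m\to\infty} B^{(2m+2)}(1)/B^{(2m+1)}(1) = 2\kappa = \tfrac{8}{27}$ (the full sequence converges, so in particular this subsequence does). Multiplying by the limit $\tfrac{27}{4}$ of the prefactor and the sign $-1$ yields $-\,\tfrac{8}{27}\cdot\tfrac{27}{4} = -2$, as claimed. There is no serious obstacle here: the only thing to be careful about is the bookkeeping of the floor functions when choosing the two parities of $n$ that reproduce the indices in the statement, and the routine but slightly fiddly Pochhammer cancellations; everything analytic has already been done in Proposition~\ref{pro:ratio_asymptotics_typeII}, so this corollary is essentially an algebraic rewriting of that result.
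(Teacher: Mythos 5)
Your proposal is correct and follows essentially the same route as the paper: both specialize the ${}_3F_2$ representation \eqref{eq:polinomiohipergeometrico} to the two consecutive indices $2m+1$ and $2m+2$, cancel the Pochhammer prefactor (whose limit is $\tfrac{27}{4}$, reciprocal of the paper's $-\kappa$ normalization), and invoke Proposition~\ref{pro:ratio_asymptotics_typeII} to conclude $-\tfrac{8}{27}\cdot\tfrac{27}{4}=-2$. The bookkeeping of the floor functions and the Pochhammer cancellations in your write-up checks out.
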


\begin{proof}
As we have
\begin{align*}
	 B^{(2n)}(x)&=
 \frac{(a)_{2n}(b)_{2n}}{(c+n)_{2n}(d+n-1)_{2n}}
 \tensor[_3]{F}{_2}\hspace*{-3pt}\left[{\begin{NiceArray}{c}[small]-2n,\; c+n,\;d+n-1 \\a,\;b\end{NiceArray}};x\right],&
B^{(2n+1)}(x)&=-
\frac{(a)_{2n+1}(b)_{2n+1}}{(c+n)_{2n+1}(d+n)_{2n+1}}
\tensor[_3]{F}{_2}\hspace*{-3pt}\left[{\begin{NiceArray}{c}[small]-2n-1,\; c+n,\;d+n \\a,\;b\end{NiceArray}};x\right]. 
\end{align*}
The ratios we are interested in are
\begin{align*}
 \frac{B^{(2n+1)}(1)}{B^{(2n)}(1)}&=-\frac{(a+2n)(b+2n)(d+n-1)}{(c+3n)(d+3n)(d+3n-1)}\frac{\tensor[_3]{F}{_2}\hspace*{-3pt}\left[{\begin{NiceArray}{c}[small]-2n-1,\; c+n,\;d+n \\a,\;b\end{NiceArray}};1\right]}{\tensor[_3]{F}{_2}\hspace*{-3pt}\left[{\begin{NiceArray}{c}[small]-2n,\; c+n,\;d+n-1 \\a,\;b\end{NiceArray}};1\right]},\\
 \frac{B^{(2n+2)}(1)}{B^{(2n+1)}(1)}&=-\frac{(a+2n+1)(b+2n+1)(c+n)}{(c+3n+1)(c+3n+2)(d+3n+1)}\frac{\tensor[_3]{F}{_2}\hspace*{-3pt}\left[{\begin{NiceArray}{c}[small]-2n-2,\; c+n+1,\;d+n \\a,\;b\end{NiceArray}};1\right]}{\tensor[_3]{F}{_2}\hspace*{-3pt}\left[{\begin{NiceArray}{c}[small]-2n-1,\; c+n,\;d+n \\a,\;b\end{NiceArray}};1\right] }.
\end{align*}
Notice that if the ratio asymptotics at unity, $ \lim\limits_{n\to\infty}\frac{B^{(2n+1)}(1)}{B^{(2n)}(1)}$, exists, in terms of $\kappa=\frac{4}{27}$, we have
\begin{align*}
\begin{aligned}
 \lim_{n\to\infty} \frac{B^{(2n+1)}(1)}{B^{(2n)}(1)}&=- \kappa\lim_{n\to\infty}\frac{
 	\tensor[_3]{F}{_2}\hspace*{-3pt}\left[{\begin{NiceArray}{c}[small]-2n-1,\; c+n,\;d+n \\a,\;b\end{NiceArray}};1\right]}{\tensor[_3]{F}{_2}\hspace*{-3pt}\left[{\begin{NiceArray}{c}[small]-2n,\; c+n,\;d+n-1 \\a,\;b\end{NiceArray}};1\right]} , &
 \lim_{n\to\infty} \frac{B^{(2n+2)}(1)}{B^{(2n+1)}(1)}&=- \kappa\lim_{n\to\infty}\frac{\tensor[_3]{F}{_2}\hspace*{-3pt}\left[{\begin{NiceArray}{c}[small]-2n-2,\; c+n+1,\;d+n \\a,\;b\end{NiceArray}};1\right]}{\tensor[_3]{F}{_2}\hspace*{-3pt}\left[{\begin{NiceArray}{c}[small]-2n-1,\; c+n,\;d+n \\a,\;b\end{NiceArray}};1\right]} ,
\end{aligned}
 \end{align*}
and using Proposition \ref{pro:ratio_asymptotics_typeII} the result follows.
\end{proof}

\subsection{Ratio asymptotics in compact subsets}

For this section we need that the system of weights is such that the zeros are confined in the support of the measure. For example, this happens whenever the system of weights is an AT system. 

\begin{rem}
 In \cite[\S3.4]{lima_loureiro} it was shown that
\begin{align}\label{eq:B_F}
 \lim_{n\to\infty}\frac{B^{(n+1)}(x)}{B^{(n)}(x)}&=-\frac{\kappa}{F(x)},
%& F(x)&:=1-\frac{3}{2}\sqrt[3]{x}
% \Big(\omega^2\sqrt[3]{\sqrt{1-x}-1}+\omega\sqrt[3]{-\sqrt{1-x}-1}\Big), &\omega&:=\Exp{\frac{2\pi\ii}{3}},
\end{align}
with
\begin{align}\label{eq:F}
F(x)&:=1-\frac{3}{2}\sqrt[3]{x}
 \Big(\omega^2\sqrt[3]{\sqrt{1-x}-1}+\omega\sqrt[3]{-\sqrt{1-x}-1}\Big), &\omega&:=\Exp{\frac{2\pi\ii}{3}},
\end{align}
uniformly in compact subsets of $\C\setminus[0,1]$. Taking $\sqrt[3]{-1}=-1$ we get
\begin{align*}
 \lim_{x\to 1^+} F(x)=1+\frac{3}{2}(\omega^2+\omega)=1-\frac{3}{2}=-\frac{1}{2}.
\end{align*}
Consequently, we have
\begin{align*}
 \lim_{x\to 1^+} \lim_{n\to\infty}\frac{B^{(n+1)}(x)}{B^{(n)}(x)}=2\kappa,
\end{align*}
so that using Proposition \ref{pro:ratio_asymptotics_typeII} we conclude
\begin{align*}
 \lim_{x\to 1^+} \lim_{n\to\infty}\frac{B^{(n+1)}(x)}{B^{(n)}(x)}= \lim_{n\to\infty}\lim_{x\to 1^+}\frac{B^{(n+1)}(x)}{B^{(n)}(x)}.
\end{align*}
\end{rem}

\begin{rem}
As $B^{(n)}(x)>0$ for $x\geq1$, from \eqref{eq:B_F} we get that $F(x)<0$ for $x>1$.
%Notice also that 
Moreover, from \cite[Lemma~3.5]{Coussement_Coussment_VanAssche}, $F(x)$ is analytic in $\C\setminus[0,1]$.
\end{rem}

%\begin{multline*}%\label{eq:CD_QP_1}
% Q^{(n+1)}(x)B^{(n+2)}(x)
% -Q^{(n+2)}(x)J_{n+2,n}B^{(n)}(x) -Q^{(n+2)}(x) J_{n+2,n+1}B^{(n+1)}(x)
% \\ -Q^{(n+3)}(x)J_{n+3,n+1}B^{(n+1)}(x)=0,
%\end{multline*}

%\begin{comment}
\begin{teo}\label{teo:ratio asymptotics_linear forms}
 For $F(x)$ as in \eqref{eq:F}, we have
 \begin{align*}
 \lim_{n\to\infty}\frac{q^{(n+1)}(x)}{q^{(n)}(x)}=\frac{1}{2\kappa}\left(
 F(x)-3+\sqrt{\frac{x}{\kappa}\frac{F(x)-4}{F(x)-1}}\,
 \right),
% \frac{F(x)-3+
% \sqrt{
% (F(x)-3)^2-\frac{4}{F(x)}
% }}
% {2 \kappa}
 \end{align*}
pointwise in $\C\setminus (-\infty,1]$. There exists an open dense subset $\Omega \subset \{x\in \C\setminus (-\infty,1]: F(x)\not\in [0,4] \}$ in where the convergence is
 uniform in compact sets.
\end{teo}
\begin{proof}From \cite[Proposition 6]{bfmaf} we get
\begin{align*}%\label{eq:CD_QP_1}
 q^{(n+1)}(x)\frac{B^{(n+2)}(x)}{B^{(n+1)}(x)}
 -q^{(n+2)}(x)\Big(\gamma_{n+1}\frac{B^{(n)}(x) }{B^{(n+1)}(x)}+\alpha_{n+2}\Big)
 -q^{(n+3)}(x)\gamma_{n+2}=0,
\end{align*}
that can be written as follows 
\begin{align}\label{eq:CD_Q}
a_{n+1}(x) q^{(n+1)}(x)+b_{n+2}(x)q^{(n+2)}(x)+c_{n+3}q^{(n+3)}(x)=0,
\end{align}
where
\begin{align*}
 \hspace*{-.25cm}
\begin{aligned}
 a_n(x)&:=\frac{B^{(n+1)}(x)}{B^{(n)}(x)}\xrightarrow[n\to\infty]{}-\frac{\kappa}{F(x)},&
 b_{n+2}(x)&:=-\gamma_{n+1}\frac{B^{(n)}(x) }{B^{(n+1)}(x)}- \alpha_{n+2}\xrightarrow[n\to\infty]{}\kappa^2(F(x)-3),&
 c_{n+3}&:=-\gamma_{n+2}\xrightarrow[n\to\infty]{}-\kappa^3 ,
\end{aligned}
\end{align*}
%with uniform convergence in compact sets in $\C\setminus[0,1]$ for $a_n(x)$ and $b_n(x)$.
which holds uniformly on compact sets of $\C\setminus[0,1]$ (in the $a_n(x)$ and $b_n(x)$ cases).

The characteristic equation for the third order linear homogeneous recurrence \eqref{eq:CD_Q} reads as
\begin{align*}
\kappa^2r^2-(F(x)-3)\kappa r+ \frac{1}{F(x)}=0 ,
\end{align*}
with roots given by
\begin{align*}
 r_\pm(x)=\frac{F(x)-3\pm
 \sqrt{
 (F(x)-3)^2-\frac{4}{F(x)}
 }}
 {2 \kappa} .
%\frac{\kappa^2\Big(\frac{\kappa}{F(x)}+3\kappa\Big\pm\sqrt{\kappa^4\Big(\frac{\kappa}{F(x)}+3\kappa\Big)^2+\kappa^3})}{-2\kappa^3}
\end{align*}
%Moreover, as $ \lim_{x\to 1^+} F=-\frac{1}{2}$, we conclude 
%\begin{align*}
% \lim_{x\to 1^+}r_{\pm}(x)=
%\begin{cases}
% \frac{27}{8},\\
% -27.
%\end{cases}
%\end{align*}
As $F(x)<0$ for $x>1$ we get $r_\pm(x)\gtrless 0$ for $x>1$. Given that $q^{(n)} (x)>0$ for $x\geq 1$, see Corollary \ref{coro:psoitivity>1}, we must have, according to Poincaré \cite{poincare} that
$%\displaystyle
\lim_{n\to\infty}\frac{q^{(n+1)}(x)}{q^{(n)}(x)}=r_+(x)$
pointwise for $x>1$.
For the rest $x\in\C\setminus \R$, Poincaré theory ensures that the pointwise limit exists and its value must be $r_+(x)$ or $r_-(x)$.

Notice that the linear forms $q^{(n)}(x)$ have branch with a cut at $(-\infty,0]$; moreover, being $(w_1,w_2)$ an~AT~system has all its zeros in $(0,1)$. Hence,
$f_n(x):=\frac{q^{(n+1)}(x)}{q^{(n)}(x)}$ 
is holomorphic in the domain $\Omega_1:= \C\setminus(-\infty,1]$.
Therefore, we have a sequence of holomorphic functions $\{f_n\}_{n=0}^\infty$ with
pointwise convergence in the domain~$\Omega_1$.

Now, according to Osgood theorem (cf. \cite{Osgood,Beardon_Minda,Krantz}) the sequence converges pointwise in an open dense subset of $\Omega_1$ to a holomorphic function.
Hence, as in $(1,+\infty)$ converges to $r_+(x)$, by analytical continuation it converges pointwise in $\Omega_0=\{x\in \Omega_1: F(x)\not\in[0,4]\}$ to $r_+(x)$, which is holomorphic in that domain. Notice that the set $0\leq F(x)\leq 4$ is the cut of $r_+$, determined by the cut of the square root in the function $r_+(x)$, thus in that set the function $r_+(x)$ ceases to be holomorphic. 

 From Osgood's theorem we also know that 
%also inform us that 
 there is an open dense subset $\Omega\subset \Omega_0\subset \Omega_1$, $\overline \Omega= \overline \Omega_0= \overline \Omega_1=\C$, in where the convergence can be taken to be uniform in compact sets.

Now, as $F=-\kappa\phi$ with $\phi$ given in \cite[Equation (3.5)]{Coussement_Coussment_VanAssche} and, according to \cite[Lemma 3.5]{Coussement_Coussment_VanAssche}, we have $x\phi=(1+\kappa\phi)^3$, we are led to $xF=\kappa(F-1)^3$, and, consequently, we find for the radicand in $r_+(x)$ the alternative expression
\begin{align*}
 (F-3)^2-\frac{4}{F}=\frac{F^3-6F^2+9F-4}{F}=\frac{(F-1)^3-3(F-1)^2}{F}=\frac{x}{\kappa}\frac{F-4}{F-1} ,
\end{align*}
%Therefore,
%\begin{align*}
% r_\pm=\frac{1}{2\kappa}\Big(
% F-3\pm\sqrt{\frac{x}{\kappa}\frac{F-4}{F-1}}
% \Big).
%\end{align*}
%Hence, according to Poincaré theory we get 
% \begin{align*}
% \lim_{n\to\infty}\frac{Q^{(n+1)}(x)}{Q^{(n)}(x)}=r_+(x)
%\end{align*}
%pointwise in $\Omega:=\C\setminus(-\infty,1]$. 
and from this we get the desired representations for the limit function.
\end{proof}
%\end{comment}

%Notice also that
%\begin{align*}
% F(x)&=-\kappa\lim_{n\to\infty}\frac{\tensor[_3]{F}{_2}\hspace*{-3pt}\left[{\begin{NiceArray}{c}[small]-2n,\; c+n,\;d+n-1 \\a,\;b\end{NiceArray}};x\right]}{\tensor[_3]{F}{_2}\hspace*{-3pt}\left[{\begin{NiceArray}{c}[small]-2n-1,\; c+n,\;d+n \\a,\;b\end{NiceArray}};x\right]}=- \kappa\lim_{n\to\infty}\frac{\tensor[_3]{F}{_2}\hspace*{-3pt}\left[{\begin{NiceArray}{c}[small]-2n-1,\; c+n,\;d+n \\a,\;b\end{NiceArray}};1\right] }{\tensor[_3]{F}{_2}\hspace*{-3pt}\left[{\begin{NiceArray}{c}[small]-2n-2,\; c+n+1,\;d+n \\a,\;b\end{NiceArray}};1\right]},
%\end{align*}
%uniformly in compact subsets of $\C\setminus[0,1]$.
%\end{comment}

%\textbf{If we believe }in Theorem 3.4 of \cite{lima_loureiro}, \textbf{that of course it might be wrong,} we have
%\begin{align*}
% \gamma_{2n+1}&=\frac{(2n+1)_2(a+2n)_2(b+2n)_2(d+n-1)(d+n-a)(d+n-b)}{(c+2n)_3(d+2n)_3(d+2n-1)_3},\\
% \alpha_{2n+2}&= \frac{(n+1)(a+n)(b+n)}{(d+2n)(c+1+n)}\Big(
% \Big)
%\end{align*}

\section{Multiple hypergeometric random walks}\label{S:randowm_walks}

In this section we assume \eqref{eq:positivity_jacobi}, i.e. $a,b>0$, $d>\max(a,b)$, $c\geq a$ and $c+1\geq b$, 
so that the Jacobi matrix is a nonnegative matrix. Being the Jacobi matrix nonnegative, the hypergeometric multiple orthogonal polynomials of Lima and Loureiro are in fact random walk multiple orthogonal polynomials. 

\subsection{The stochastic transition matrices}
Following \cite{bfmaf} one concludes that there are two stochastic matrices
\begin{teo}\label{teo:JPII_stochastic}
 Let us assume for the hypergeometric system that \eqref{eq:region_parameters_pochhammer_perfect} holds.~Then, 
 \begin{enumerate}
 \item If the zeros of the polynomial sequence $\{ B^{(n)} \}_{n\in\N}$ are confined in $(0,1)$, the semi-infinite matrix
 \begin{align}\label{eq:stochastic_hypergeometric_II}
 P_{II}=
 \left( \begin{NiceMatrix}[columns-width = .5cm]%[columns-width = auto,margin]%
 P_{II,0,0} & P_{II,0,1}& 0 & \Cdots & & \\
 P_{II,1,0}&P_{II,1,1}& P_{II,1,2}& \Ddots& & \\[5pt]
 P_{II,2,0}& P_{II,2,1}& P_{II,2,2} & P_{II,2,3}&&\\[5pt]
 0& P_{II,3,1}&P_{II,3,2}& P_{II,3,3} & P_{II,3,4}& \\
 \Vdots&\Ddots&\Ddots&\Ddots&\Ddots&\Ddots
 \end{NiceMatrix}\right)
 \end{align}
with coefficients given in terms of the Jacobi matrix~\eqref{eq:jacobi_hyper_coeff} and the multiple orthogonal polynomials of type~II evaluated at unity, $B^{(n)}(1)$ for $n \in\N_0$, as~follows
 %\begin{align*}
 %\check b_{n,n}&=b_{n,n}, & \check b_{n+1,n}&=b_{n+1,n}
 %\end{align*}
 \begin{align*}
 P_{II,n,n+1} & = \frac{B^{(n+1)}(1)}{B^{(n)}(1)}, &
 P_{II,n,n} & = \beta_{n}, &
 P_{II,n+1,n} & = \frac{B^{(n)}(1)}{B^{(n+1)}(1)}\alpha_{n+1}, &
 P_{II,n+2,n}&= \frac{B^{(n)}(1)}{B^{(n+2)}(1)}\gamma_{n+1}.
 \end{align*}
is a multiple stochastic matrix of type~II. 
 \item 
The semi-infinite matrix
 \begin{align}\label{eq:stochastic_Jacobi_Piñeiro_I}
 P_I=\left(\begin{NiceMatrix}[columns-width = .5cm]%[columns-width = auto,margin]%
 P_{I,0,0} & P_{I,0,1}& P_{I,0,2} & 0 & \Cdots & \\
 P_{I,1,0}&P_{I,1,1}& P_{I,1,2}& P_{I,1,3}& \Ddots& \\[5pt]
 0& P_{I,2,1}& P_{I,2,2} & P_{I,2,3} & P_{I,2,4}&\\
 \Vdots & \Ddots&P_{I,3,2}& P_{I,3,3} & P_{I,3,4}& \Ddots\\
 &&\Ddots&\Ddots&\Ddots&\Ddots
 \end{NiceMatrix}\right)
 \end{align}
with coefficients expressed in terms 
%of the coefficients 
of the Jacobi matrix~\eqref{eq:jacobi_hyper_coeff} and the linear forms of type~I evaluated at unity, $Q^{(n)}(1)$, for $n\in\N_0$, as~follows
 %\begin{align*}
 %\check b_{n,n}&=b_{n,n}, & \check b_{n+1,n}&=b_{n+1,n}
 %\end{align*}
 \begin{align*}
 P_{I,n,n+2}&= \frac{q^{(n+2)}(1)}{q^{(n)}(1)}\gamma_{n+1}, &
 P_{I,n,n+1}&= \frac{q^{(n+1)}(1)}{q^{(n)}(1)}\alpha_{n+1}, &
 P_{I,n,n}&=\beta_{n}, &
 P_{I,n+1,n}&=\frac{q^{(n)}(1)}{q^{(n+1)}(1)},
 \end{align*}
 is a multiple stochastic matrix of type~I. 
 \end{enumerate}
 
 % where the $b_{n,n}^{\alpha,\beta,\gamma}$ and $b_{n+1,n}^{\alpha,\beta,\gamma}$, $n \in \mathbb N_0$, are the ones in the matrix~\eqref{eq:Jacobi_Jacobi_Piñeiro}.
\end{teo}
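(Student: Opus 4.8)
The plan is to deduce both items from the abstract normalization result Theorem~\ref{pro:sigma_spectral} (the multiple Karlin--McGregor set-up of \cite{bfmaf}), whose hypotheses are the nonnegativity of the Jacobi matrix together with the positivity of $B^{(n)}(1)$ and $q^{(n)}(1)$ for all $n\in\N_0$. The one structural point worth isolating at the outset is that the two conclusions decouple: $P_{II}=\sigma_{II}\,J\,\sigma_{II}^{-1}$ is stochastic as soon as $J\geq 0$ and $B^{(n)}(1)>0$, whereas $P_I=\sigma_I\,J^\top\sigma_I^{-1}$ is stochastic as soon as $J\geq 0$ and $q^{(n)}(1)>0$. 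This is precisely why item~(2) can afford to carry no hypothesis on the location of the zeros, while item~(1) needs it.

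First I would observe that the standing assumption \eqref{eq:positivity_jacobi} of this section does double duty. On the one hand it implies the perfectness region \eqref{eq:region_parameters_pochhammer_perfect} (and in particular $a,b,c,d>0$), so that the Gauss--Borel factorization exists and $B^{(n)}$, $q^{(n)}$, $J$ are all well defined. On the other hand, as recorded after \eqref{eq:lambdas}, under \eqref{eq:positivity_jacobi} every $\lambda_n$ is nonnegative, so \eqref{eq:jacobi_hyper_coeff} shows $\beta_n,\alpha_{n+1},\gamma_{n+1}\geq 0$ and hence $J$ is a nonnegative banded matrix (also bounded, since $\lambda_n\to\kappa$). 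This discharges the Jacobi hypothesis once and for all.

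Next comes the positivity at unity. For the linear forms, $q^{(n)}(1)>0$ for all $n$ is exactly Corollary~\ref{coro:positivity}, which needs only $a,b,c,d>0$; together with $J\geq 0$ this already yields item~(2) via the type~I half of Theorem~\ref{pro:sigma_spectral}. For the type~II polynomials, under the confinement hypothesis of item~(1) the monic polynomial $B^{(n)}$ (see \eqref{eq:polinomiohipergeometrico}) factors as $\prod_j\big(x-x^{(n)}_j\big)$ with all $x^{(n)}_j\in(0,1)$, so that $B^{(n)}(1)=\prod_j\big(1-x^{(n)}_j\big)>0$; together with $J\geq 0$ this yields item~(1) via Theorem~\ref{pro:sigma_spectral}. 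The explicit entries are then just the band pattern of $J$ in \eqref{eq:Jacobi} dressed by the diagonal conjugation: from $P_{II,n,m}=\tfrac{B^{(m)}(1)}{B^{(n)}(1)}J_{n,m}$ one reads off the stated super-diagonal ($J_{n,n+1}=1$), diagonal ($J_{n,n}=\beta_n$) and two sub-diagonal ($J_{n+1,n}=\alpha_{n+1}$, $J_{n+2,n}=\gamma_{n+1}$) coefficients, while $P_{I,n,m}=\tfrac{q^{(m)}(1)}{q^{(n)}(1)}J_{m,n}$ uses $J^\top$, so the roles of super- and sub-diagonals are exchanged and one obtains two super-diagonals and one sub-diagonal; the row sums equal one because evaluating the eigenvalue relations \eqref{eq:eigen_value} at $x=1$ gives $\sum_m J_{n,m}B^{(m)}(1)=B^{(n)}(1)$ and $\sum_m J_{m,n}q^{(m)}(1)=q^{(n)}(1)$.

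I do not expect a genuine obstacle here; the statement is essentially a packaging of the \cite{bfmaf} machinery for this particular family. The only points that need care are bookkeeping ones: verifying that the relaxed region \eqref{eq:positivity_jacobi} --- rather than the stronger min--max condition of \cite{lima_loureiro} --- already delivers simultaneously perfectness and $J\geq 0$; keeping the type~I and type~II normalizations (and their hypotheses) separate so that item~(2) is obtained without any zero-confinement assumption; and transposing the band structure correctly when passing from $P_{II}$ (built from $J$) to $P_I$ (built from $J^\top$). One cosmetic remark: the ratio $q^{(m)}(1)/q^{(n)}(1)$ appearing in $P_I$ coincides with $Q^{(m)}(1)/Q^{(n)}(1)$, since $Q^{(n)}$ and $q^{(n)}$ differ only by the common factor $\tfrac{\Gamma(c)\Gamma(d)}{\Gamma(a)\Gamma(b)\Gamma(\delta)}x^{a-1}(1-x)^{\delta-1}$, so no inconsistency arises in the displayed formulas.
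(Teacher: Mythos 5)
Your proposal is correct and follows essentially the same route as the paper: the paper's own (very terse) proof also deduces $B^{(n)}(1)>0$ from the zero confinement plus monicity for item~(1), invokes Theorem~\ref{teo:linear_forms_at_unity} (equivalently Corollary~\ref{coro:positivity}) for $q^{(n)}(1)>0$ in item~(2), and then relies on the normalization machinery of Theorem~\ref{pro:sigma_spectral} together with the nonnegativity of $J$ under \eqref{eq:positivity_jacobi}. Your write-up simply makes explicit the bookkeeping (decoupling of hypotheses, band structure under transposition, row sums from the eigenvalue relations at $x=1$) that the paper leaves implicit.
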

\begin{proof}
%\begin{enumerate}
 %\item 
i) As the zeros of the polynomials $B^{(n)}$ belong to $(0,1)$ and $B^{(n)}$ is monic and therefore diverges to $+\infty$ when $x\to+\infty$, we deduce that $B^{(n)}(1)>0$, for $n\in\N_0$. 
 
 %\item 
 
ii) It follows from Theorem \ref{teo:linear_forms_at_unity}.
%\end{enumerate}
\end{proof}

%\begin{proof}
% From the explicit expression~\eqref{eq:Ben1} we know that $B^{(n)}(1)$ is a strictly positive number, from the AT property for the system $\{ x^\alpha , x^\beta \}$.
% %\begin{align*}
% %%\hspace*{-1cm}
% %B_{(n,m)}(1)
% %%&=N_{n,m}B_{n,m}^{0,0}\\&=\dfrac{(\gamma +1)_{m} (\gamma +m+1)_{n}}{\sum _{k=0}^n \sum_{j=0}^m \binom{m}{j}\binom{n}{k}(\gamma +j+k+1)_{m-j}(\gamma +k+m+1)_{n-k} (\alpha-k+n+1)_k (\beta -j-k+m+n+1)_j} \\
% % & = \frac{(\gamma +1)_{m+n}}{(\alpha +\gamma +m+n+1)_n (\beta +\gamma +m+n+1)_m},
% %\end{align*}
% %that is clearly positive.
% Hence, using Theorem~\ref{pro:sigma_spectral} we can normalize at $x=1$ to get the stochastic Jacobi matrix using the factors $\sigma_{II,n}=\frac{1}{B^{(n)}(1)}$.
%\end{proof}
%%\begin{align*}
%%\check a_{n,n}&= \frac{B_{n+1,n}(1)}{B_{n,n}(1)}, & \check a_{n+1,n}&=\frac{B_{n+1,n+1}(1)}{B_{n+1,n}(1)},\\
%%\check c_{n,n}&= \frac{B_{n,n-1}(1)}{B_{n,n}(1)}c_{n,n}, & \check c_{n+1,n}&=\frac{B_{n,n}(1)}{B_{n+1,n}(1)}c_{n+1,n},\\
%%\check d_{n,n}&= \frac{B_{n-1,n-1}(1)}{B_{n,n}(1)}d_{n,n}, & \check d_{n+1,n}&=\frac{B_{n,n-1}(1)}{B_{n+1,n}(1)}d_{n+1,n}.
%%\end{align*}

The corresponding diagrams for these Markov chains are
\begin{center}
 %{
 \tikzset{decorate sep/.style 2 args={decorate,decoration={shape backgrounds,shape=circle,shape size=#1,shape sep=#2}}}
 %}
 \begin{tikzpicture}[start chain = going right,
 -latex, every loop/.append style = {-latex}]
 \foreach \i in {0,...,5}
 \node[state, on chain,fill=gray!50!white] (\i) {\i};
 \foreach
 \i/\txt in {0/$P_{01}$,1/$P_{12}$/,2/$P_{23}$,3/$P_{34}$,4/$P_{45}$}
 \draw let \n1 = { int(\i+1) } in
 (\i) edge[bend left,"\txt",color=Periwinkle] (\n1);
 \foreach
 \i/\txt in {0/$P_{10}$,1/$P_{21}$/,2/$P_{32}$,3/$P_{34}$,4/$P_{54}$}
 \draw let \n1 = { int(\i+1) } in
 (\n1) edge[bend left, "\txt",color=Mahogany,auto=right] (\i);
 
 \foreach
 \i/\txt in {0/$P_{20}$,1/$P_{31}$/,2/$P_{42}$,3/$P_{53}$}
 \draw let \n1 = { int(\i+2) } in
 (\n1) edge[auto,bend left=50,"\txt",color=RawSienna] (\i);
 
 \foreach \i/\txt in {1/$P_{11}$,2/$P_{22}$/,3/$P_{33}$,4/$P_{44}$,5/$P_{55}$}
 \draw (\i) edge[loop above,color=NavyBlue, "\txt"] (\i);
 \draw (0) edge[loop left, color=NavyBlue,"$P_{00}$"] (0);
 
 \draw[decorate sep={1mm}{4mm},fill] (10.5,0) -- (12,0);
 \end{tikzpicture}
 \begin{tikzpicture}
 \draw (4,-1.8) node
 {\begin{minipage}{0.8\textwidth}
 \begin{center}\small
 \textbf{Type II Markov chain diagram}
 \end{center}
 \end{minipage}};
 \end{tikzpicture}

 \begin{tikzpicture}[start chain = going right,
 -latex, every loop/.append style = {-latex}]
 \foreach \i in {0,...,5}
 \node[state, on chain,fill=gray!50!white] (\i) {\i};
 \foreach
 \i/\txt in {0/$P_{01}$,1/$P_{12}$/,2/$P_{23}$,3/$P_{34}$,4/$P_{45}$}
 \draw let \n1 = { int(\i+1) } in
 (\i) edge[bend left,"\txt",below,color=Periwinkle] (\n1);
 \foreach
 \i/\txt in {0/$P_{10}$,1/$P_{21}$/,2/$P_{32}$,3/$P_{34}$,4/$P_{54}$}
 \draw let \n1 = { int(\i+1) } in
 (\n1) edge[bend left,below, "\txt",color=Mahogany,auto=right] (\i);
 
 \foreach
 \i/\txt in {0/$P_{02}$,1/$P_{13}$/,2/$P_{24}$,3/$P_{35}$}
 \draw let \n1 = { int(\i+2) } in
 (\i) edge[bend left=60,color=MidnightBlue,"\txt"](\n1);
 
 \foreach \i/\txt in {1/$P_{11}$,2/$P_{22}$/,3/$P_{33}$,4/$P_{44}$,5/$P_{55}$}
 \draw (\i) edge[loop below, color=NavyBlue,"\txt"] (\i);
 \draw (0) edge[loop left, color=NavyBlue,"$P_{00}$"] (0);
 
 \draw[decorate sep={1mm}{4mm},fill] (10.5,0) -- (12,0);
 \end{tikzpicture}
 \begin{tikzpicture}
 \draw (4,-1.8) node
 {\begin{minipage}{0.8\textwidth}
 \begin{center}\small
 \textbf{Type I Markov chain diagram}
 \end{center}
 \end{minipage}};
 \end{tikzpicture}
\end{center}

\begin{coro}
 The coefficients of the type I stochastic matrix can be written, for all $n \in \N_0$, as follows
 \begin{align*}
 %\hspace*{-3.5cm}
 \begin{aligned}
 P_{I,2n,2n+2} & = 
 %\frac{(c+3n)(c+3n)_2(d+3n)(d+3n-1)_2}{(2n+1)^2(a+2n-1)_2(b+2n-1)_2} \gamma_{2n+1}, 
 \frac{(d+n-1) (d-a+n) (d-b+n)}{%(d+3 n+1) (d+3 n) 
 (d+3 n-1)_3} ,
 &
 P_{I,2n+1,2n} & =\frac{(2n+1)(a+2n-1)(b+2n-1)}{(c+3n)(d+3n-1)_2}, 
 \\
 P_{I,2n+1,2n+3} & =
 %\frac{(c+3n)_3(d+3n)_3}{(2n+1)(2n+3)(a+2n)_2(b+2n)_2}\gamma_{2n+2}, 
 \frac{(c+n) (c-a+n+1) (c-b+n+1)}{%(c + 3 n + 3) (c + 3 n+2) 
 (c + 3 n+1)_3} ,
 &
 % \textcolor{magenta}{P_{I,2n+1,2n+2}}& = \frac{(c+3n)_2(d+3n)}{(2n+1)(a+2n)(b+2n)}\alpha_{2n+2}, \\
 P_{I,2n+2,2n+1} & =\frac{(2n+1)(a+2n)(b+2n)}{(c+3n)_2(d+3n)},
 \end{aligned} \phantom{olaolaolaolaol}
 \\
 \begin{aligned}
 P_{I,2n,2n} & = %\beta_{2n},
 \frac{(2 n+1) (a+2 n) (b+2 n)}{(c+3 n) (d+3 n)}-\frac{2 n (a+2 n-1) (b+2 n-1)}{(c+3 n-1) (d+3 n-2)} ,
 \end{aligned} \phantom{olaolaolaolaolaolaolaolaolaolaolaolaola}
 %\textcolor{blue}{P_{I,2n,2n+1}} & = \frac{(c+3n)(d+3n-1)(d+3n)}{(2n+1)(a+2n-1)(b+2n-1)}\alpha_{2n+1},
 \\
 \begin{aligned}
 P_{I,2n+1,2n+1} & = %\beta_{2n+1}, 
 \frac{2 (n+1) (a+2 n+1) (b+2 n+1)}{(c+3 n+2) (d+3 n+1)}-\frac{(2 n+1) (a+2 n) (b+2 n)}{(c+3 n) (d+3 n)} ,
 \end{aligned}\phantom{olaolaolaolaolaolaolaolaolaolaolaol}
 \\
 \begin{multlined}[t][1\textwidth]
 %\textcolor{blue}{
 P_{I,2n,2n+1} %} 
 = \frac{n (a+2 n-1) (b+2 n-1) (c+3 n+1)}{(c+3 n-1) (d+3 n-1)} \\
 -\frac{(2 n+1) (a+2 n) (b+2 n) (c+3 n+1)}{(c+3 n) (d+3 n)}+\frac{(n+1) (a+2 n+1) (b+2 n+1)}{d+3 n+1} , \phantom{olaolaolaolaola}
 \end{multlined}
 \\
 \begin{multlined}[t][1\textwidth]
 %\textcolor{magenta}{
 P_{I,2n+1,2n+2} %}
 =
 (d+3 n+2) \left( -\frac{2 (n+1) (a+2 n+1) (b+2 n+1)}{(c+3 n+2) (d+3 n+1)} \right.
 \\
 \left. +\frac{(2 n+3) (a+2 (n+1)) (b+2 (n+1))}{2 (c+3 n+3) (d+3 n+2)}+\frac{(2 n+1) (a+2 n) (b+2 n)}{2 (c+3 n+1) (d+3 n)}\right). \phantom{olaolaolaolaola}
 \end{multlined}
 \end{align*}
\end{coro}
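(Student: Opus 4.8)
The plan is to prove the Corollary by direct substitution into the formulas of Theorem~\ref{teo:JPII_stochastic}(ii), which already express every entry of $P_I$ as a ratio $q^{(m)}(1)/q^{(n)}(1)$ (or its reciprocal) times the appropriate entry of the Jacobi matrix. Thus the only inputs needed are the closed forms for $q^{(2n)}(1)$ and $q^{(2n+1)}(1)$ from Theorem~\ref{teo:linear_forms_at_unity} and the closed forms \eqref{eq:jacobi_hyper_coeff}--\eqref{eq:lambdas} for $\alpha_n,\beta_n,\gamma_n$. First I would record the telescoped one-step ratios obtained from Theorem~\ref{teo:linear_forms_at_unity}, namely $q^{(2n+1)}(1)/q^{(2n)}(1)=\tfrac{1}{2n+1}\tfrac{(c+3n)(c+3n+1)(d+3n)}{(a+2n)(b+2n)}$ and $q^{(2n+2)}(1)/q^{(2n+1)}(1)=\tfrac{1}{2n+2}\tfrac{(c+3n+2)(d+3n+1)(d+3n+2)}{(a+2n+1)(b+2n+1)}$, and then the two-step ratios $q^{(n+2)}(1)/q^{(n)}(1)$ by composing two consecutive one-step ratios; since all of these are quotients of Pochhammer symbols differing by a fixed index, they collapse to short rational functions of $(a,b,c,d,n)$. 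Because both the formulas for $q^{(n)}(1)$ and those for the $\lambda$'s depend on the parity of the index, the whole computation splits into an even branch and an odd branch.

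On each branch I would substitute the auxiliary sequence $c_n$, i.e. $c_{2k}=d+k$ and $c_{2k-1}=c+k$, into \eqref{eq:lambdas}, writing each $\lambda_{3k},\lambda_{3k+1},\lambda_{3k+2}$ as an explicit rational function of $a,b,c,d,n$. For the two-step-down entries one forms $\gamma_{n+1}=\lambda_{3n+2}\lambda_{3n+4}\lambda_{3n+6}$, multiplies by $q^{(n+2)}(1)/q^{(n)}(1)$, and cancels: the $(a)$- and $(b)$-factors, the factorials, and two of the three copies of each $c$- and $d$-string cancel, leaving $\tfrac{(d+n-1)(d-a+n)(d-b+n)}{(d+3n-1)_3}$ in the even case ($P_{I,2n,2n+2}$) and the companion $c$-indexed expression $\tfrac{(c+n)(c-a+n+1)(c-b+n+1)}{(c+3n+1)_3}$ in the odd case ($P_{I,2n+1,2n+3}$). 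The subdiagonal entries $P_{I,n+1,n}=q^{(n)}(1)/q^{(n+1)}(1)$ require nothing beyond the one-step ratios already computed.

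The genuinely non-mechanical points are the diagonal and one-step superdiagonal entries, where three $\lambda$'s contribute. For $P_{I,n,n}=\beta_n=\lambda_{3n}+\lambda_{3n+1}+\lambda_{3n+2}$ one must place the three explicit rational functions over a common denominator and recognise that the numerator factors so that $\beta_n$ collapses to the displayed two-term difference; this can be verified by clearing denominators and comparing polynomials in $n$. For $P_{I,n,n+1}=\tfrac{q^{(n+1)}(1)}{q^{(n)}(1)}\alpha_{n+1}$ with $\alpha_{n+1}=(\lambda_{3n+1}+\lambda_{3n+2})\lambda_{3n+3}+\lambda_{3n+2}\lambda_{3n+4}$, I would distribute, multiply each resulting monomial in the $\lambda$'s by the $q$-ratio, simplify each term separately, and collect; the parity split then yields the two displayed three-term sums. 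The main obstacle is precisely this bookkeeping for the three-$\lambda$ entries --- keeping the even/odd branches and the numerous Pochhammer cancellations straight and identifying the telescoping collapse for $\beta_n$ --- rather than any conceptual difficulty; everything ultimately reduces to identities between rational functions of $(a,b,c,d,n)$.
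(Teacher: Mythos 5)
Your proposal is correct and coincides with the paper's (implicit) proof: the corollary is a direct computation obtained by substituting the closed forms of Theorem \ref{teo:linear_forms_at_unity} and of \eqref{eq:jacobi_hyper_coeff}--\eqref{eq:lambdas} into the entries of Theorem \ref{teo:JPII_stochastic}(ii) and simplifying the resulting Pochhammer quotients, with the even/odd split and the three-$\lambda$ bookkeeping handled exactly as you describe. One caveat: carrying out your own subdiagonal step $P_{I,n+1,n}=q^{(n)}(1)/q^{(n+1)}(1)$ yields $P_{I,2n+1,2n}=\frac{(2n+1)(a+2n)(b+2n)}{(c+3n)_2(d+3n)}$ and $P_{I,2n+2,2n+1}=\frac{(2n+2)(a+2n+1)(b+2n+1)}{(c+3n+2)(d+3n+1)_2}$, so the two down-step entries as printed in the corollary appear to have their labels interchanged and slightly misindexed, and your computation should be trusted over the displayed formulas there.
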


 \begin{pro}[Large $n$ limit for the dual hypergeometric Markov chains]\label{pro:hypergeometric_stochastic_dual}
 The large $n$ limit of the hypergeometric stochastic matrices of type~I and II are the same after transposition, i.e.,
 \begin{align*}
 \lim_{n\to\infty} P_{I,n,n+k}
 &=\lim_{n\to\infty}P_{II,n+k,n}, & k\in\{-2,-1,0,1\}.
 \end{align*}
\end{pro}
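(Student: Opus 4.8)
The plan is to compute both one-sided limits explicitly from the closed-form data already assembled in the excerpt, and then match the four relevant entries term by term. The type~II stochastic matrix entries are given in Theorem~\ref{teo:JPII_stochastic}(i) in terms of the Jacobi coefficients $\alpha_{n+1},\beta_n,\gamma_{n+1}$ and the ratios $B^{(n+1)}(1)/B^{(n)}(1)$, while the type~I entries are given in Theorem~\ref{teo:JPII_stochastic}(ii) in terms of the same Jacobi coefficients and the ratios $q^{(n+1)}(1)/q^{(n)}(1)$. From the asymptotic values $\lim_n\beta_n=3\kappa$, $\lim_n\alpha_n=3\kappa^2$, $\lim_n\gamma_n=\kappa^3$ recalled after \eqref{eq:lambdas}, together with $\lim_n B^{(n+1)}(1)/B^{(n)}(1)=2\kappa$ from Proposition~\ref{pro:ratio_asymptotics_typeII} and $\lim_n q^{(n+1)}(1)/q^{(n)}(1)=\tfrac{1}{2\kappa}$ from Corollary~\ref{coro:ratio_asymptotics_typeI}, every limit below is a finite product of these quantities.

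First I would handle the diagonal $k=0$ case: $\lim_n P_{II,n,n}=\lim_n\beta_n=3\kappa$ and likewise $\lim_n P_{I,n,n}=\lim_n\beta_n=3\kappa$, so these agree trivially. For $k=1$, $P_{II,n+1,n}=\frac{B^{(n)}(1)}{B^{(n+1)}(1)}\alpha_{n+1}\to \frac{1}{2\kappa}\cdot 3\kappa^2=\frac{3\kappa}{2}$, and $P_{I,n,n+1}=\frac{q^{(n+1)}(1)}{q^{(n)}(1)}\alpha_{n+1}\to \frac{1}{2\kappa}\cdot 3\kappa^2=\frac{3\kappa}{2}$; they match. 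For $k=-1$, $P_{II,n,n+1}=\frac{B^{(n+1)}(1)}{B^{(n)}(1)}\to 2\kappa$, while $P_{I,n+1,n}=\frac{q^{(n)}(1)}{q^{(n+1)}(1)}\to 2\kappa$; again equal. For $k=-2$, $P_{II,n+2,n}=\frac{B^{(n)}(1)}{B^{(n+2)}(1)}\gamma_{n+1}\to\frac{1}{(2\kappa)^2}\cdot\kappa^3=\frac{\kappa}{4}$, and $P_{I,n,n+2}=\frac{q^{(n+2)}(1)}{q^{(n)}(1)}\gamma_{n+1}\to\left(\frac{1}{2\kappa}\right)^2\cdot\kappa^3=\frac{\kappa}{4}$; equal. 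Since the four limits coincide, the proposition follows.

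There is essentially no obstacle here — this is a bookkeeping verification — but the one point requiring care is that in each case I must insert $\lim P_{II,n+k,n}$ with the index shift written exactly as in Theorem~\ref{teo:JPII_stochastic}(i) (so, e.g., $P_{II,n+2,n}$ uses $B^{(n)}(1)/B^{(n+2)}(1)$, whose limit is $(2\kappa)^{-2}$, not $(2\kappa)^{-1}$, because the ratio jumps two indices). The only genuine input beyond elementary algebra is the existence of the limits $\lim_n B^{(n+1)}(1)/B^{(n)}(1)$ and $\lim_n q^{(n+1)}(1)/q^{(n)}(1)$; once those are known, the products of $\kappa$-powers are forced. Alternatively, one can observe directly that $P_{I,n,n+k}$ and $P_{II,n+k,n}$ differ only in the ratio factors $\frac{q^{(n+k)}(1)}{q^{(n)}(1)}$ versus $\frac{B^{(n)}(1)}{B^{(n+k)}(1)}$ multiplying the same Jacobi coefficient, and both ratio factors tend to $(2\kappa)^{-k}$ in the limit (for $k\in\{-2,-1,0,1\}$), which immediately gives the claimed equality without computing the individual values.
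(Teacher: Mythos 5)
Your argument is correct and is essentially the paper's own proof: the paper just observes that $P_{I,n,n+k}/P_{II,n+k,n}=\frac{B^{(n+k)}(1)\,q^{(n+k)}(1)}{B^{(n)}(1)\,q^{(n)}(1)}\to 1$ by Corollary~\ref{coro:ratio_asymptotics_typeI} and Proposition~\ref{pro:ratio_asymptotics_typeII}, which is precisely your closing remark, and your band-by-band evaluation of the limits $3\kappa$, $\tfrac{3\kappa}{2}$, $2\kappa$, $\tfrac{\kappa}{4}$ is a more explicit rendering of the same two inputs. (The band you label $k=-2$ is really the $k=2$ band $P_{I,n,n+2}$ versus $P_{II,n+2,n}$, but the paper's stated index set $\{-2,-1,0,1\}$ has the same slip, so the substance is unaffected.)
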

\begin{proof}
We need to show that
 \begin{align}\label{eq:other_relations}
 \frac{B^{(n-k)}(1)q^{(n-k)}(1)}{B^{(n)}(1)q^{(n)}(1)}&\xrightarrow[n\to\infty]{}1, &k&=2,1,-1.
 \end{align}
But these relations follow from Corollary \ref{coro:ratio_asymptotics_typeI} and Proposition \ref{pro:ratio_asymptotics_typeII}.
\end{proof}

\begin{rem}[Karlin--McGregor representation formulas]
 The Karlin--McGregor representation formulas in terms of multiple orthogonal polynomials are given by Theorems \ref{teo:KMcG} and \ref{teo:KMcG2}, that hold, as we showed above, for these hypergeometric multiple random walks. 
\end{rem}

\begin{teo}[Recurrent and transient hypergeometric random walks]\label{Theorem:recurrent-transient}
 Both dual hypergeometric random walks are recurrent whenever $0<\delta\leq1$ and transient for $\delta> 1$.
\end{teo}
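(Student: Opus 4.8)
The plan is to reduce the statement to the elementary fact that $\int_{1-\varepsilon}^{1}(1-x)^{\delta-2}\,\d x$ converges exactly when $\delta>1$. The bridge is the recurrence criterion of \cite{bfmaf}: for these multiple random walks a state $n$ is recurrent if and only if $\sum_{r\ge0}P^{r}_{nn}=\infty$, and by the Karlin--McGregor generating function of Theorem~\ref{teo:KMcG2} together with Abel's theorem this amounts to
\begin{align*}
 \lim_{s\to1^{-}}P_{nn}(s)=\lim_{s\to1^{-}}\int_{0}^{1}\frac{B^{(n)}(x)\,q^{(n)}(x)}{1-sx}\,\d\mu(x)=+\infty .
\end{align*}
Since, by Theorem~\ref{teo:KMcG2}, $F_{nn}(s)$ — hence this limit — is one and the same for the type~I and the type~II chains, the two dual walks are recurrent or transient simultaneously, and it suffices to analyse this single integral; moreover the local exponent at $x=1$ found below is the same for every $n$, so the dichotomy is in fact $n$-independent.

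Next I would localise the problem at $x=1$. Using $\d\mu(x)=\frac{\Gamma(c)\Gamma(d)}{\Gamma(a)\Gamma(b)\Gamma(\delta)}x^{a-1}(1-x)^{\delta-1}\,\d x$ from \eqref{eq:mu} and the identity $Q^{(n)}(x)=\frac{\Gamma(c)\Gamma(d)}{\Gamma(a)\Gamma(b)\Gamma(\delta)}x^{a-1}(1-x)^{\delta-1}q^{(n)}(x)$ recalled in the proof of Theorem~\ref{teo:linear_forms_at_unity}, the integral above is a positive constant times $\int_{0}^{1}B^{(n)}(x)\,q^{(n)}(x)\,x^{a-1}(1-x)^{\delta-2}/(1-sx)\,\d x$. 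On $[0,1-\varepsilon]$ the factor $1/(1-sx)$ stays bounded uniformly in $s<1$, while $x^{a-1}w_{1}(x)=\mathrm{const}\cdot W_{1}(x)/(1-x)^{\delta-1}$ and the analogous $x^{a-1}w_{2}(x)$ are integrable near $x=0$ because $W_{1},W_{2}$ are genuine, finite-moment weights on $[0,1]$; hence this part contributes a finite, $s$-bounded amount. The only possible source of divergence is $x\to1^{-}$: there $w_{1}(x)={}_2F_1[\,c-b,d-b;\delta;1-x\,]\to1$ and $w_{2}(x)\to c/b$, so $q^{(n)}(x)\to q^{(n)}(1)>0$ by Corollary~\ref{coro:positivity}, while $x^{a-1}\to1$ and $B^{(n)}(x)\to B^{(n)}(1)>0$ since in the regime where the walk is defined the zeros of $B^{(n)}$ lie in $(0,1)$ (cf. the proof of Theorem~\ref{teo:JPII_stochastic}). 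Thus near $x=1$ the integrand is asymptotic to a strictly positive constant times $(1-x)^{\delta-2}$, and for $\varepsilon$ small enough that $B^{(n)}q^{(n)}x^{a-1}>0$ on $[1-\varepsilon,1]$ one may pass $s\to1^{-}$ there by monotone convergence.

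It then remains to observe that $\int_{1-\varepsilon}^{1}(1-x)^{\delta-2}\,\d x<\infty$ precisely when $\delta-2>-1$, i.e. $\delta>1$, and equals $+\infty$ when $\delta\le1$. Combined with the finiteness of the $[0,1-\varepsilon]$ contribution, $\int_{0}^{1}\frac{B^{(n)}(x)q^{(n)}(x)}{1-x}\,\d\mu(x)$ diverges if and only if $\delta\le1$; since $\delta>0$ under \eqref{eq:region_parameters_pochhammer_perfect}, this gives recurrence of both dual chains on $0<\delta\le1$ and transience on $\delta>1$. I expect the only genuinely delicate ingredients to be the two facts that concentrate the singularity at $x=1$, namely that $B^{(n)}(1)q^{(n)}(1)>0$ — supplied by Corollary~\ref{coro:positivity} together with the confinement of the zeros of $B^{(n)}$ — and that the hypergeometric factors in $w_{1},w_{2}$ are regular at $x=1$ with value ${}_2F_1[\,\cdots;0\,]=1$; once these are in hand the recurrent/transient split is nothing more than the convergence test for $(1-x)^{\delta-2}$ and needs no further input.
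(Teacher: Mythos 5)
Your proposal is correct and follows essentially the same route as the paper: both reduce the question to the convergence of $\int_0^1\frac{B^{(n)}(x)q^{(n)}(x)}{1-x}\,\d\mu(x)$ (the paper takes $n=0$, i.e.\ $\int_0^1\frac{w_1(x)}{1-x}\,\d\mu(x)$, by citing \cite[Theorem 8]{bfmaf}) and then observe that the integrand behaves near $x=1$ like a positive constant times $(1-x)^{\delta-2}$, giving divergence exactly for $\delta\leq 1$. The only difference is that you rederive the cited criterion from the Karlin--McGregor generating function of Theorem~\ref{teo:KMcG2} together with Abel's theorem, and spell out the supporting facts (positivity of $B^{(n)}(1)q^{(n)}(1)$, integrability away from $x=1$) that the paper leaves implicit.
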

\begin{proof}
 From \cite[Theorem 8]{bfmaf} we know that
 both dual Markov chains are recurrent if and only if the integral
 % $\lim_{s\to 1^-} F_{00}(s)=1$, i.e., if and only if,
 \begin{align}\label{eq:integral}
 \int_0^1 \frac{w_1(x)}{1-x}\d \mu(x),
 \end{align}
 diverges, and both dual Markov chains are transient whenever the integral converges.
 
 According to \eqref{eq:measures} and \eqref{eq:mu} we have to discuss the convergence
of the integral
 \begin{align}%\label{eq:integral}
 \int_0^1 {}_2F_1\hspace*{-3pt}\left[{\begin{NiceArray}{c}[small]c-b,d-b \\\delta\end{NiceArray}};1-x\right]\frac{x^{a-1}(1-x)^{\delta-1}}{1-x}\d x.
\end{align}
Hence, the divergence appears linked to the behavior of the integrand near the point $x=1$. The divergence of the integral appears for $\delta-2\leq-1$ and the integral converges for $\delta-2>-1$. That is, for $\delta\leq1$ the integral diverges and for $\delta >1$ the integral converges.
\end{proof}

\begin{rem}
 As we discussed in \cite{bfmaf}, given that there is no mass points, we conjecture that this recurrence is a null recurrence, and the mean of the return times is infinity. 
\end{rem}

\begin{rem}
 Notice that $\displaystyle\int_0^1 \frac{w_1(x)}{1-x}\d \mu(x)$ is the Stieltjes--Markov transform of the measure $w_1\d\mu$ at unity.
\end{rem}

Following \cite{bfmaf} we get
\begin{pro}\label{pro:steady}
The state
$ \boldsymbol \pi=(\begin{NiceMatrix}
 B^{(0)}(1) q^{(0)}(1)&B^{(1)}(1) q^{(1)}(1) &\Cdots
 \end{NiceMatrix})$
satisfies the steady state conditions
$ \boldsymbol \pi P_{I}=\boldsymbol \pi$ and $\boldsymbol \pi P_{II}=\boldsymbol \pi$.
\end{pro}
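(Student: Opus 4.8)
The plan is to deduce the statement directly from the similarity relations $P_{II}=\sigma_{II}\,{ J }\,\sigma_{II}^{-1}$ and $P_I=\sigma_I\,{ J }^\top\sigma_I^{-1}$ of Theorem \ref{pro:sigma_spectral}, together with the eigenvalue property \eqref{eq:eigen_value} evaluated at $x=1$. Throughout, let $B(1)$ and $q(1)$ denote the semi-infinite column vectors with entries $B^{(n)}(1)$ and $q^{(n)}(1)$. Under the standing assumptions of this section these entries are strictly positive (Corollary \ref{coro:positivity}, together with the confinement of the zeros as in Theorem \ref{teo:JPII_stochastic}), so that $\sigma_{II}=\diag(B(1))^{-1}$ and $\sigma_I=\diag(q(1))^{-1}$ are well defined, as is $\boldsymbol\pi$, the row vector with entries $B^{(n)}(1)q^{(n)}(1)$. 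Because ${ J }$ is banded, every matrix--vector product appearing below has only finitely many nonzero terms in each component, so the reassociations used are legitimate.

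First I would record the two one-eigenvector statements. Setting $x=1$ in ${ J }B=xB$ gives ${ J }\,B(1)=B(1)$. Since $q^{(n)}=w_1A_1^{(n)}+w_2A_2^{(n)}$ and, by \eqref{eq:eigen_value}, ${ J }^\top A_1=xA_1$ and ${ J }^\top A_2=xA_2$, linearity in $n$ yields ${ J }^\top q=xq$; setting $x=1$ gives ${ J }^\top q(1)=q(1)$, equivalently $q(1)^\top { J }=q(1)^\top$. Thus $B(1)$ is a right $1$-eigenvector of ${ J }$ and $q(1)$ a left $1$-eigenvector.

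The conclusion is then a two-line computation. For the type~II chain, $\boldsymbol\pi\,\sigma_{II}=q(1)^\top$, whence
\begin{align*}
\boldsymbol\pi\,P_{II}=\boldsymbol\pi\,\sigma_{II}\,{ J }\,\sigma_{II}^{-1}=q(1)^\top { J }\,\sigma_{II}^{-1}=q(1)^\top\sigma_{II}^{-1}=\boldsymbol\pi,
\end{align*}
and for the type~I chain, $\boldsymbol\pi\,\sigma_I=B(1)^\top$, whence
\begin{align*}
\boldsymbol\pi\,P_I=\boldsymbol\pi\,\sigma_I\,{ J }^\top\sigma_I^{-1}=B(1)^\top { J }^\top\sigma_I^{-1}=({ J }\,B(1))^\top\sigma_I^{-1}=B(1)^\top\sigma_I^{-1}=\boldsymbol\pi.
\end{align*}
There is no genuine obstacle here; the only point deserving a word of care is the legitimacy of reassociating the semi-infinite products, which is ensured by the bandedness of ${ J }$.

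To close I would add a remark distinguishing this \emph{invariant measure} from an honest stationary probability: by Corollary \ref{coro:ratio_asymptotics_typeI} and Proposition \ref{pro:ratio_asymptotics_typeII} the ratio $\frac{B^{(n+1)}(1)\,q^{(n+1)}(1)}{B^{(n)}(1)\,q^{(n)}(1)}$ tends to $2\kappa\cdot\frac{1}{2\kappa}=1$, so the entries of $\boldsymbol\pi$ neither decay nor grow geometrically and $\boldsymbol\pi$ is not expected to be summable; this is consistent with the (conjecturally null) recurrence discussed after Theorem \ref{Theorem:recurrent-transient}, where an invariant measure of infinite total mass replaces a stationary distribution.
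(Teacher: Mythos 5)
Your proof is correct and is exactly the intended argument: the paper defers to \cite{bfmaf}, where the statement follows, just as you show, from the conjugations $P_{II}=\sigma_{II}J\sigma_{II}^{-1}$, $P_I=\sigma_I J^\top\sigma_I^{-1}$ together with $JB(1)=B(1)$ and $J^\top q(1)=q(1)$ obtained by setting $x=1$ in the eigenvalue property. Your remarks on bandedness (so all products are finite sums) and on the non-summability of $\boldsymbol\pi$ are consistent with the paper's own comment that $\boldsymbol\pi$ is conjectured not to lie in $\ell^1$.
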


\begin{rem}
In \cite{bfmaf} it was conjectured that the state
$ \boldsymbol \pi=\begin{pNiceMatrix}
 B^{(0)}(1) q^{(0)}(1)&B^{(1)}(1) q^{(1)}(1) &\Cdots
 \end{pNiceMatrix}$
does not belong to~$\ell^1$,~i.e. not being a proper steady state.
\end{rem}

%\appendix

\subsection{Stochastic $LU$ factorizations of the Markov transition matrices}

Inspired by \cite{grunbaum_de la iglesia} in this section we find stochastic factorizations of the Jacobi matrix.

Following \cite{Barrios_Branquinho_Foulquie} we easily find
\begin{lemma}\label{lem:Jacobi_LLU}
 The Jacobi matrix $J$ given in \eqref{eq:Jacobi} with coefficients given in \eqref{eq:jacobi_hyper_coeff} and \eqref{eq:lambdas} has the following Gauss--Borel factorization
\begin{align*}
 J = L_1 L_2 U
\end{align*}
where
\begin{align*}
 L_1 & := \left( 
 \begin{NiceMatrix}
 1 & 0&\Cdots & \\[-5pt]
 \lambda_3 & 1 & \Ddots& \\
 0 & \lambda_6 & 1 & \\
 \Vdots & \Ddots & \Ddots & \Ddots 
 \end{NiceMatrix}
 \right),
 &
 L_2 & := \left( \begin{NiceMatrix}
 1 & 0 &\Cdots & \\[-5pt]
 \lambda_4 & 1 & \Ddots& \\
 0 & \lambda_7 & 1 & \\
 \Vdots& \Ddots & \Ddots & \Ddots 
 \end{NiceMatrix}
 \right) 
&
 U & := \left( \begin{NiceMatrix}
 \lambda_2 & 1 & 0& \Cdots& \\[-5pt]
 0 & \lambda_5 & 1 &\Ddots & \\[-5pt]
 \Vdots& \Ddots & \lambda_8 & 1 &\\
 & & & \Ddots & \Ddots
 \end{NiceMatrix}
 \right).
 \end{align*}
\end{lemma}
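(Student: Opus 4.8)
The plan is to verify the identity $J = L_1 L_2 U$ by a direct bidiagonal-by-bidiagonal multiplication and then match the resulting entries against the explicit coefficients in \eqref{eq:jacobi_hyper_coeff}. Since every factor on the right is bidiagonal, the product is cheap to compute in two steps, and it is cleanest to associate it as $L_1(L_2U)$ so that the inner product is already tridiagonal.

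First I would form $M := L_2 U$. Because $L_2$ has only ones on the main diagonal and the entries $\lambda_{3n+1}$ on the first subdiagonal, while $U$ carries $\lambda_{3n+2}$ on the main diagonal and ones on the superdiagonal, the product $M$ is tridiagonal with
\begin{align*}
M_{n,n+1}=1, && M_{n,n}=\lambda_{3n+1}+\lambda_{3n+2}, && M_{n,n-1}=\lambda_{3n+1}\lambda_{3n-1}, && n\in\N_0,
\end{align*}
entries with negative column index being absent. Next I would left-multiply by $L_1$, whose only nonzero entries are the ones on the diagonal and the $\lambda_{3n}$ on the subdiagonal, so that the row operation reads $(L_1M)_{n,m}=M_{n,m}+\lambda_{3n}M_{n-1,m}$. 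Carrying this out gives a matrix supported on five diagonals, with
\begin{align*}
(L_1M)_{n,n+1}&=1, & (L_1M)_{n,n}&=\lambda_{3n}+\lambda_{3n+1}+\lambda_{3n+2},\\
(L_1M)_{n+1,n}&=(\lambda_{3n+1}+\lambda_{3n+2})\lambda_{3n+3}+\lambda_{3n+2}\lambda_{3n+4}, & (L_1M)_{n+2,n}&=\lambda_{3n+2}\lambda_{3n+4}\lambda_{3n+6}.
\end{align*}

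Comparing with \eqref{eq:Jacobi} and \eqref{eq:jacobi_hyper_coeff}, the superdiagonal is the constant $1$, the main diagonal is literally $\beta_n$, the entry in position $(n+1,n)$ is literally $\alpha_{n+1}$, and the entry in position $(n+2,n)$ is literally $\gamma_{n+1}$; hence $L_1L_2U=J$. The only step that needs a moment's care is the top-left corner, where one must check that the absence of the $(-1)$-indexed rows and columns of $L_2$ and $U$ is consistent with $\beta_0=\lambda_1+\lambda_2$ and with $\alpha_1$; this is immediate once one uses that $\lambda_0=\lambda_1=0$ by the factor $n$ in \eqref{eq:lambdas}, so that no spurious boundary terms appear and the formulas above hold uniformly in $n\in\N_0$. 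I do not expect a genuine obstacle here: the content is purely computational, and the only thing to watch is the bookkeeping of the shift by three in the indices of the $\lambda$'s. Indeed, this is just the specialization to the present $\lambda$'s of the tridiagonal Gauss--Borel (LU) factorization worked out in \cite{Barrios_Branquinho_Foulquie}, with $L_1L_2$ playing the role of the lower unitriangular factor and $U$ that of the upper triangular one.
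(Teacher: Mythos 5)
Your verification is correct and matches what the paper intends: the paper simply asserts the factorization "following \cite{Barrios_Branquinho_Foulquie}" without writing out the computation, and your bidiagonal-by-bidiagonal multiplication, together with the observation that $\lambda_0=\lambda_1=0$ handles the top-left corner, is exactly the calculation being invoked. The resulting entries agree with $\beta_n$, $\alpha_{n+1}$, $\gamma_{n+1}$ from \eqref{eq:jacobi_hyper_coeff}, so nothing is missing.
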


This simple $LU$ factorization induces a corresponding $LU$ factorization with stochastic factors of the stochastic matrices $P_{II}$ and $P_I$, i.e. stochastic Gauss--Borel factorizations.

\begin{teo}[Stochastic $LU$ factorization]\label{teo:stochastic _factorization}
\begin{enumerate}
 \item The stochastic matrix $P_{II}$ has the following stochastic $LU$ factorization
 \begin{align}\label{eq:stochastic_factorization_II}
 P_{II}=P_{II,1}^L P_{II,2}^L P_{II}^U
 \end{align} 
in terms of the stochastic matrices 
\begin{align*}
P_{II,1}^L &:=\sigma_{II} L_1 D_{II,2}^{-1}, &
P_{II,2}^L &:=D_{II,2} L_2 D_{II,1}^{-1} ,&
P_{II}^U&:=D_{II,1}U \sigma_{II}^{-1},
\end{align*}
where
\begin{align*}
 \sigma_{II}& = \diag \left( \begin{NiceMatrix} \frac{1}{B^{(0)}(1)} & \frac{1}{B^{(1)}(1)} & \Cdots \end{NiceMatrix}\right), &
 D_{II,i }&= \operatorname{diag} \left( \begin{NiceMatrix} \frac{1}{ d_{II,i}^{(0)}}& \frac{1}{d_{II,i}^{(1)}} & \Cdots \end{NiceMatrix} \right), &i&\in\{1,2\},
\end{align*}
with
\begin{align*}
 d_{II,1}^{(n)} &= \lambda_{3n+2}B^{(n)}(1) + B^{(n+1)}(1) , & n&\in \N_0\\
 d_{II,2}^{(n)} & = \lambda_{3n+1}\lambda_{3n-1}B^{(n-1)}(1) + (\lambda_{3n+1} +\lambda_{3n+2} )B^{(n)}(1) + B^{(n+1)}(1)& n&\in \N,
\end{align*}
and $d_{II,2}^{(0)}=d_{II,1}^{(0)}$.

\item The stochastic matrix $P_I$ has the following stochastic $LU$ factorization
 \begin{align}\label{eq:stochastic_factorization_I}
 P_{I}=P_{I}^L P_{I,2}^U P_{I,1}^U
\end{align} 
in terms of stochastic matrices
\begin{align*}
 P_{I}^L &:=\sigma_I U^\top D_{I,2}^{-1},&
 P_{I,2}^U&:= D_{I,2} L_2^\top D_{I,1}^{-1},&
 P_{I,1}^U&:=D_{I,1} L_1^\top \sigma_I ^{-1},
\end{align*}
where
\begin{align*}
 \sigma_{I}& = \diag \left( \begin{NiceMatrix} \frac{1}{q^{(0)}(1)} & \frac{1}{q^{(1)}(1)} & \Cdots \end{NiceMatrix}\right), &
 D_{I,i }&= \operatorname{diag} \left( \begin{NiceMatrix} \frac{1}{ d_{I,i}^{(0)}}& \frac{1}{d_{I,i}^{(1)}} & \Cdots \end{NiceMatrix} \right), &i&\in\{1,2\},
\end{align*}
with
\begin{align*}
 d_{I,1}^{(n)} &= q^{(n)}(1)+\lambda_{3n+3}q^{(n+1)}(1) , & %&j&\in\N_0\\
 d_{I,2}^{(n)} & = q^{(n)}(1) + (\lambda_{3n+3} + \lambda_{3n+4})q^{(n+1)}(1) 
 + \lambda_{3n+4} \lambda_{3n+6} q^{(n+2)}(1), 
\end{align*}
for $n\in\N_0$.
%and $ d_{I,2}^{(0)}= d_{I,1}^{(0)}$. 
\end{enumerate}
\end{teo}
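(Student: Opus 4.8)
The plan is to deduce both stochastic $LU$ factorizations from the purely algebraic one $J=L_1L_2U$ of Lemma~\ref{lem:Jacobi_LLU} by conjugating with $\sigma_{II}$ (resp.\ $\sigma_I$) and inserting the diagonal matrices $D_{II,i}$ (resp.\ $D_{I,i}$) so as to redistribute the mass and make every factor stochastic. I would do part~(i) first; part~(ii) is the same argument applied to $J^\top=U^\top L_2^\top L_1^\top$.

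For part~(i) I would start from $P_{II}=\sigma_{II}J\sigma_{II}^{-1}=\sigma_{II}L_1L_2U\sigma_{II}^{-1}$ (Theorem~\ref{teo:JPII_stochastic}) and insert $D_{II,2}^{-1}D_{II,2}$ and $D_{II,1}^{-1}D_{II,1}$, obtaining the identity $P_{II}=(\sigma_{II}L_1D_{II,2}^{-1})(D_{II,2}L_2D_{II,1}^{-1})(D_{II,1}U\sigma_{II}^{-1})=P_{II,1}^LP_{II,2}^LP_{II}^U$. This holds for \emph{any} invertible diagonal $D_{II,1},D_{II,2}$, so the content of the theorem is just that the particular choice stated makes each of the three factors stochastic. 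Nonnegativity is immediate under the standing hypothesis \eqref{eq:positivity_jacobi}: all $\lambda_n\ge0$, the $B^{(n)}(1)$ and $q^{(n)}(1)$ are positive by Theorem~\ref{teo:JPII_stochastic} and Corollary~\ref{coro:positivity}, and each $d_{II,i}^{(n)}$ is a finite sum of nonnegative terms containing a strictly positive one (the $B^{(n+1)}(1)$ summand), hence is positive; so the three factors have nonnegative entries. It remains only to check that each has row sums equal to $1$.

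The key point for the row sums is to recognize that $P_{II}$ being stochastic is equivalent to $Jv=v$ with $v:=(B^{(0)}(1),B^{(1)}(1),\dots)^\top$, which is precisely the eigenvalue relation $JB=xB$ evaluated at $x=1$. Then I would set $v_1:=Uv$ and $v_2:=L_2v_1$, and verify by a short component computation, using the explicit $\lambda$-entries of $U$ and $L_2$, that $v_1=(d_{II,1}^{(n)})_{n\ge0}^\top$ and $v_2=(d_{II,2}^{(n)})_{n\ge0}^\top$ (with $d_{II,2}^{(0)}=d_{II,1}^{(0)}$ falling out of the first rows); moreover $L_1v_2=v$ by associativity in $L_1L_2Uv=v$. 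Since $D_{II,i}^{-1}$ has $d_{II,i}^{(n)}$ on its diagonal, the all-ones vector $\mathbf 1$ satisfies $D_{II,2}^{-1}\mathbf 1=v_2$, $D_{II,1}^{-1}\mathbf 1=v_1$, $\sigma_{II}^{-1}\mathbf 1=v$, and hence $P_{II}^U\mathbf 1=D_{II,1}Uv=D_{II,1}v_1=\mathbf 1$, $P_{II,2}^L\mathbf 1=D_{II,2}L_2v_1=D_{II,2}v_2=\mathbf 1$, and $P_{II,1}^L\mathbf 1=\sigma_{II}L_1v_2=\sigma_{II}v=\mathbf 1$. So each factor is stochastic, and the cancellations $D_{II,2}^{-1}D_{II,2}=D_{II,1}^{-1}D_{II,1}=\sigma_{II}^{-1}\sigma_{II}=I$ recover $P_{II,1}^LP_{II,2}^LP_{II}^U=\sigma_{II}L_1L_2U\sigma_{II}^{-1}=P_{II}$, which is \eqref{eq:stochastic_factorization_II}.

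For part~(ii) the same scheme applies to $P_I=\sigma_IJ^\top\sigma_I^{-1}=\sigma_I(U^\top L_2^\top L_1^\top)\sigma_I^{-1}$: here $P_I$ stochastic is equivalent to $J^\top w=w$ with $w:=(q^{(0)}(1),q^{(1)}(1),\dots)^\top$, the $x=1$ instance of $J^\top q=xq$ (which itself follows from $J^\top A_1=xA_1$, $J^\top A_2=xA_2$ and $q=w_1A_1+w_2A_2$). Setting $w_1:=L_1^\top w$ and $w_2:=L_2^\top w_1$ and computing components, one gets $w_1=(d_{I,1}^{(n)})_{n\ge0}^\top$ and $w_2=(d_{I,2}^{(n)})_{n\ge0}^\top$, and then $P_I^L\mathbf 1=\sigma_IU^\top w_2=\sigma_Iw=\mathbf 1$, $P_{I,2}^U\mathbf 1=D_{I,2}L_2^\top w_1=D_{I,2}w_2=\mathbf 1$, $P_{I,1}^U\mathbf 1=D_{I,1}L_1^\top w=D_{I,1}w_1=\mathbf 1$; together with nonnegativity this yields \eqref{eq:stochastic_factorization_I}. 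I expect the only genuinely delicate step to be the index bookkeeping: matching the shifted subscripts $\lambda_{3n},\lambda_{3n+1},\lambda_{3n+2}$ appearing in $L_1,L_2,U$ (and their transposes, which produce $\lambda_{3n+3},\lambda_{3n+4},\lambda_{3n+6}$) against the stated closed forms of $d_{II,i}^{(n)}$ and $d_{I,i}^{(n)}$, together with the boundary rows and columns. This is routine, but must be done carefully, since the entire argument rests on those identities.
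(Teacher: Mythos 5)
Your proposal is correct and follows essentially the same route as the paper: both start from the Gauss--Borel factorization $J=L_1L_2U$ of Lemma~\ref{lem:Jacobi_LLU}, insert the diagonal matrices, and determine $D_{II,i}$ (resp.\ $D_{I,i}$) by forcing unit row sums via the eigenvalue relations $J\,B(1)=B(1)$ and $J^\top q(1)=q(1)$, with positivity of the $d$'s and the $\lambda$'s giving nonnegativity of each factor. Your observation that the last factor is automatically stochastic by associativity ($L_1v_2=JB(1)=B(1)$) is exactly the paper's closing step $\1=P_{II}\1=P_{II,1}^L\1$.
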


\begin{proof}
\begin{enumerate}
 \item 
Lemma \ref{lem:Jacobi_LLU} leads to the following factorization
\begin{align*}
 P_{II} = \sigma_{II} L_1 D_{II,2}^{-1} D_{II,2} L_2 D_{II,1}^{-1} D_{II,1} U \sigma_{II}^{-1},
\end{align*}
where we are going to determine $D_{1,II}$ and $D_{II,2}$ so that the three factors are stochastic matrices; i.e., the $LU$ factorization is stochastic.
We now seek for $D_{II,1}$, in order that the matrix $D_{II,1} U \sigma^{-1}$ be stochastic, that is
$ D_{II,1 }U \sigma_{II}^{-1}\1= \1$, 
so it holds that
\begin{align*}
 \left( \begin{NiceMatrix}
 \lambda_2 & 1 & 0& \Cdots& \\
 0 & \lambda_5 & 1 &\Ddots & \\
 \Vdots& \Ddots & \lambda_8 & 1 &\\
 & & & \Ddots & \Ddots
 \end{NiceMatrix}
 \right) \left( \begin{NiceMatrix}
 B^{(0)}(1) \\
 B^{(1)}(1) \\
 \Vdots \\
 \end{NiceMatrix}
 \right) = \left( \begin{NiceMatrix}
 d_{II,1}^{(0)} \\[2pt]
 d_{II,1}^{(1)}\\
 \Vdots \\
 \end{NiceMatrix}
 \right)
\end{align*}
and we get
$ d_{II,1}^{(n)}= \lambda_{3j+2}B^{(n)}(1) + B^{(n+1)}(1)$.
An important fact to notice here is that the RHS is strictly positive, $d_{II,1}^{(n)}>0$.

Let us find $D_{II,2}$, in order that the matrix $D_{II,2} L_2 D_{II,1}^{-1}$ be stochastic, that is
$D_{II,2} L_2 D_{II,1}^{-1}\1= \1$,
%This is equivalent to
%\begin{align*}
% L_2 D_1^{-1}\left( \begin{matrix}
% 1 \\
% 1 \\
% \vdots \\
% 1\\
% \end{matrix}
% \right) =D_2^{-1} \left( \begin{matrix}
% 1 \\
% 1 \\
% \vdots \\
% 1\\
% \end{matrix}
% \right) 
%\end{align*}
%so it holds that
and we get
\begin{align*}
\left( \begin{NiceMatrix}
 1 & 0 &\Cdots & \\[-5pt]
 \lambda_4 & 1 & \Ddots& \\
 0 & \lambda_7 & 1 & \\
 \Vdots& \Ddots & \Ddots & \Ddots 
\end{NiceMatrix}
\right) 
 \left( \begin{NiceMatrix}
 d_{II,1}^{(0)} \\[2pt]
 d_{II,1}^{(1)}\\
 \Vdots \\
 \end{NiceMatrix} \right) 
= \left( \begin{NiceMatrix}
 d_{II,2}^{(0)} \\[2pt]
 d_{II,2}^{(1)}\\
 \Vdots \\
 \end{NiceMatrix}\right).
\end{align*}
Therefore,
\begin{align*}
 d_{II,2}^{(n) } = \lambda_{3n+1} d_{II,1}^{(n-1) } + d_{II,1}^{(n) } 
 & = \lambda_{3n+1}\lambda_{3n-1}B^{(n-1)}(1) + (\lambda_{3n+1} +\lambda_{3n+2} )B^{(n)}(1) + B^{(n+1)}(1).
\end{align*}
Notice that the RHS is strictly positive, $ d_{II,2}^{(n)}>0$.

Finally, $ \sigma_{II} L_1 D_{II,2}^{-1}$ is an stochastic matrix, because all its entries are nonnegative and
\begin{align*}
 \1=P_{II}\1=P_{II,1}^L P_{II,2}^L P_{II}^U\1=P_{II,1}^L\1.
\end{align*}
%
%
%\begin{align*}
% \sigma L_1 D_2^{-1} \1= P_{II} (D_1U \sigma^{-1})^{-1} (D_2 L_2 D_1^{-1} )^{-1}\1= \1.
%\end{align*}

\item
Again, from Lemma \ref{lem:Jacobi_LLU} and a transposition we get
\begin{align*}
 P_{I} = \sigma_{I} U^\top L_2^\top L_1^\top \sigma_{I} ^{-1}= \sigma_{I} U^\top D_{I,2}^{-1}D_{I,2} L_2^\top D_{I,1}^{-1}D_{I,1}L_1^\top \sigma_{I} ^{-1}.
\end{align*}
Let us find $D_{I,1}$, in order that the matrix $D_{I,1} L_1^\top \sigma_{I}^{-1}$ be stochastic, that is
$ \tilde{D}_1 L_1^\top \tilde{\sigma}^{-1}\1= \1$.
This is equivalent to
$L_1^\top \tilde{\sigma}^{-1}\1= \tilde{D}_1^{-1}\1$,
and taking the transpose we get
\begin{align*}
 \left( \begin{NiceMatrix} q^{(0)}(1) & q^{(1)}(1) & \Cdots \end{NiceMatrix} \right)
 \left( \begin{NiceMatrix}
 1 & 0 & \Cdots& \\[-5pt]
 \lambda_3 & 1 &\Ddots & \\
 0 & \lambda_6 & 1 & \\[-5pt]
 \Vdots& \Ddots & \lambda_8 & 1 \\
 & & \Ddots & \Ddots & \Ddots 
 \end{NiceMatrix}
 \right) =
 \left( \begin{NiceMatrix}
 d_{I,1}^{(0)} &
 d_{I,1}^{(1)} &
 \Cdots &
 \end{NiceMatrix}
 \right)
\end{align*}
Hence, we deduce that
\begin{align*}
 d_{I,1}^{(n)}= q^{(n)}(1)+\lambda_{3n+3}q^{(n+1)}(1).
\end{align*}
An important fact to notice here is that the RHS is strictly positive, so that $ d_{I,1}^{(n)}>0$.

Let us find $D_{I,2}$, in order that the matrix $\tilde{D}_2 L_2^\top \tilde{D}_1^{-1}$ be stochastic, that is
$D_{I,2} L_2^\top D_{I,1}^{-1}\1=\1$.
This is equivalent to
$ L_2^\top \tilde{D}_1^{-1}\1= \tilde{D}_2^{-1} \1$, and taking the transpose we obtain
\begin{align*}
 \left( \begin{NiceMatrix} d_{I,1}^{(0)} & d_{I,1}^{(1)} & \Cdots \end{NiceMatrix} \right)
 \left( \begin{NiceMatrix}
 1 & 0 & \Cdots& \\[-5pt]
 \lambda_4 & 1 & \Ddots& \\
 0 & \lambda_7 & 1 & \\[-5pt]
 \Vdots & \Ddots& \lambda_{10} & 1 \\
 & & \Ddots & \Ddots & \Ddots 
 \end{NiceMatrix}
 \right)= \left( \begin{NiceMatrix} d_{I,2}^{(0)} & d_{I,2}^{(1)} & \Cdots \end{NiceMatrix} \right).
\end{align*}
Consequently, we find
\begin{align*}
d_{I,2}^{(n)}& = d_{I,1}^{(n)} +\lambda_{3n+4}d_{I,1}^{(j+1)}
 = q^{(n)}(1) + (\lambda_{3n+3} + \lambda_{3n+4})q^{(n+1)}(1) 
 + \lambda_{3n+4} \lambda_{3n+6} q^{(n+2)}(1).
\end{align*}
Notice that the RHS of this identity is strictly positive and $d_{I,2}^{(n)}>0$.

\end{enumerate}
Using the same reasoning as in the previous case we get that the matrix
$ \sigma_I U^\top {D}_{I,2}^{-1} $ is stochastic.
\end{proof}

\begin{rem}
 This stochastic factorization has been motivated by \cite{grunbaum_de la iglesia2} in where an urn model was proposed for the Jacobi--Piñeiro random walks in \cite{bfmaf}. In \cite{grunbaum_de la iglesia2} for the Jacobi--Piñeiro situation they give a factorization $P_{II}=P_LP_U$ with $P_{U}$ an stochastic upper triangular matrix with only the first superdiagonal nonzero, i.e. an stochastic matrix describing a pure birth Markov chain, and a matrix $P_L$ an stochastic lower triangular matrix, with zero as an absorbing state and only the two first subdiagonals nonzero. As there is a nonzero second subdiagonal, following \cite{Gallager,Bremaud} this is not a pure death Markov chain, and it goes beyond it as there are transitions beyond near neighbors.
 
 The stochastic factorization provided here for the hypergeometric situation in three simple stochastic factors is in terms of a pure birth factor and two pure death factors for the type II, and in terms of one pure death factor and two pure birth factors for the type I case.
\end{rem}
\begin{rem}
 The construction of the corresponding urn models, once the stochastic factorization is provided, will be given by an appropriate choice of the hypergeometric parameters $(a,b,c,d)$ and three urns, one urn per factor, with three different experiments. 
\end{rem}
\subsection{Stochastic factorization of the type I Markov matrix}

Surprisingly, the stochastic factorization of the hypergeometric Markov matrix of type I can be carried out leading to extremely simple expressions in terms of quotients of arithmetic progressions in $n$.
\begin{teo}\label{theorem:stochastic factorization}
For $n\in\N_0$, the stochastic factorization \eqref {eq:stochastic_factorization_I}, 
 $P_{I}=P_{I,1}^L P_{I,2}^U P_{I,1}^U$, is explicitly given as follows
 \begin{align*}
\hspace*{-.5cm}\begin{aligned}
	 (P_{I}^L )_{2n,2n-1}&=\frac{2n}{3n-1+d}, & (P_{I}^L )_{2n,2n}&=\frac{n-1+d}{3n-1+d},&
 (P_{I}^L )_{2n+1,2n}&=\frac{2n+1}{3n+1+c}, & (P_{I}^L )_{2n+1,2n+1}&=\frac{n+c}{3n+1+c},\\
 (P_{I,2}^U )_{2n,2n+1}&=\frac{n+d-b}{3n+d}, & (P_{I,2}^U)_{2n,2n}&=\frac{2n+b}{3n+d},&
 (P_{I,2}^U )_{2n+1,2n+2}&=\frac{n+1+c-b}{3n+2+c}, & (P_{I,2}^U)_{2n+1,2n+1}&=\frac{2n+1+b}{3n+2+c},\\
 (P_{I,1}^U )_{2n,2n+1}&=\frac{n+c-a}{3n+c}, & (P_{I,1}^U)_{2n,2n}&=\frac{2n+a}{3n+c},&
 (P_{I,1}^U )_{2n+1,2n+2}&=\frac{n+d-a}{3n+1+d}, & (P_{I,1}^U)_{2n+1,2n+1}&=\frac{2n+1+a}{3n+1+d}.
\end{aligned}
 \end{align*}
Notice that $(P_{I}^L )_{2n,2n-1}$ does not exist. 
The matrices read
{\scriptsize\begin{align*}
 P_{I}^L&=\left(
 \begin{NiceMatrix}[columns-width = 0.3cm]
 1 & 0 &\Cdots & & & & & \\
 \frac{1}{c+1} & \frac{c}{c+1} & \Ddots & & & & & \\[4pt]
 0 & \frac{2}{d+2} & \frac{d}{d+2} & & & & & \\[2pt]
 \Vdots & \Ddots & \frac{3}{c+4} & \frac{c+1}{c+4} & & && \\[2pt]
 & & & \frac{4}{d+5} & \frac{d+1}{d+5} & & & \\[2pt]
 & & & & \frac{5}{c+7} & \frac{c+2}{c+7} & & \\[2pt]
 & & & & & \frac{6}{d+8} & \frac{d+2}{d+8}& \\[2pt]
 %& & & & & & \frac{7}{c+10} & \frac{c+3}{c+10} & \\[2pt]
 &&&&&\Ddots&\Ddots&\Ddots
 \end{NiceMatrix}
 \right),
\end{align*}}
for which zero is an absorbent state, and 
{\scriptsize\begin{align*}
\hspace*{-.25cm} 
\begin{aligned}
 P_{I,2}^U&=\left(
 \begin{NiceMatrix}[columns-width = 0.4cm]
 \frac{b}{d} &\frac{d-b}{d} & 0 & \Cdots& & & & \\
 0 & \frac{b+1}{c+2} & \frac{c-b+1}{c+2} & \Ddots & & & &\\
 \Vdots& \Ddots& \frac{b+2}{d+3} & \frac{d-b+1}{d+3}& & & &\\[2pt]
 & & & \frac{b+3}{c+5} & \frac{c-b+2}{c+5} & & & \\[2pt]
 & & & & \frac{b+4}{d+6} & \frac{d-b+2}{d+6} & & \\[2pt]
 & & & & & \frac{b+5}{c+8} & \frac{c-b+3}{c+8} & \\[2pt]
 % & & & & & & \frac{b+6}{d+9} & \frac{d-b+3}{d+9} & \\
 & & & & & & \Ddots & \Ddots
 \end{NiceMatrix}
 \right),&
 P_{I,1}^U&=\left(
 \begin{NiceMatrix}[columns-width = 0.4cm]
 \frac{a}{c} &\frac{c-a}{c} & 0 & \Cdots& & & & \\
 0 & \frac{a+1}{d+1} & \frac{d-a}{d+1} & \Ddots & & & & \\
 \Vdots& \Ddots& \frac{a+2}{c+3} & \frac{c-a+1}{c+3}& & & & \\[2pt]
 & & & \frac{a+3}{d+4} & \frac{d-a+1}{d+4} & & & \\[2pt]
 & & & & \frac{a+4}{c+6} & \frac{c-a+2}{c+6} & & \\[2pt]
 & & & & & \frac{a+5}{d+7} & \frac{d-a+2}{d+7} & \\[2pt]
 %& & & & & & \frac{a+6}{c+9} & \frac{c-a+3}{c+9} & \\
 & & & & & & \Ddots & \Ddots
 \end{NiceMatrix}
 \right).
 \end{aligned}
\end{align*}}
\end{teo}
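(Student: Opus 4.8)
The plan is to evaluate each of the three bidiagonal factors of $P_I=P_I^L P_{I,2}^U P_{I,1}^U$ directly from the definitions in Theorem~\ref{teo:stochastic _factorization}, substituting the closed forms for $q^{(n)}(1)$ from Theorem~\ref{teo:linear_forms_at_unity} and for the $\lambda$'s from \eqref{eq:lambdas}. The only real content is that the auxiliary diagonal matrices $D_{I,1},D_{I,2}$ collapse to one-line Pochhammer quotients; once this is established, every matrix entry reduces to an elementary ratio of arithmetic progressions.

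\emph{Step 1: closed forms for the $d$'s.} Starting from $d_{I,1}^{(n)}=q^{(n)}(1)+\lambda_{3n+3}\,q^{(n+1)}(1)$, I would split into $n=2k$ and $n=2k+1$, using the bookkeeping $c_n=c+k$ for $n=2k-1$ and $c_n=d+k$ for $n=2k$ in \eqref{eq:lambdas}. The ratios $q^{(n+1)}(1)/q^{(n)}(1)$ coming from Theorem~\ref{teo:linear_forms_at_unity} cancel almost everything against $\lambda_{3n+3}$ (using $(x)_{m+1}/(x)_m=x+m$), leaving $\lambda_{3n+3}q^{(n+1)}(1)/q^{(n)}(1)$ equal to $(c-a+k)/(a+2k)$ in the even case and $(d-a+k)/(a+2k+1)$ in the odd case, so that
\begin{align*}
d_{I,1}^{(2n)}&=q^{(2n)}(1)\,\frac{c+3n}{a+2n}, & d_{I,1}^{(2n+1)}&=q^{(2n+1)}(1)\,\frac{d+3n+1}{a+2n+1}.
\end{align*}
Feeding these into the recursion $d_{I,2}^{(n)}=d_{I,1}^{(n)}+\lambda_{3n+4}\,d_{I,1}^{(n+1)}$ obtained in the proof of Theorem~\ref{teo:stochastic _factorization} and repeating the same manipulation yields
\begin{align*}
d_{I,2}^{(2n)}&=q^{(2n)}(1)\,\frac{(c+3n)(d+3n)}{(a+2n)(b+2n)}, & d_{I,2}^{(2n+1)}&=q^{(2n+1)}(1)\,\frac{(c+3n+1)(d+3n+1)}{(a+2n+1)(b+2n+1)}.
\end{align*}

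\emph{Step 2: reading off the entries.} Each factor is bidiagonal: $P_{I,1}^U=D_{I,1}L_1^\top\sigma_I^{-1}$ and $P_{I,2}^U=D_{I,2}L_2^\top D_{I,1}^{-1}$ are upper bidiagonal, $P_I^L=\sigma_I U^\top D_{I,2}^{-1}$ is lower bidiagonal, where $L_1^\top$ carries $\lambda_{3n+3}$ on its superdiagonal, $L_2^\top$ carries $\lambda_{3n+4}$, and $U^\top$ carries $\lambda_{3n+2}$ on its diagonal and $1$ on its subdiagonal. Hence the diagonal (resp.\ the single off-diagonal of $P_I^L$) entries are the pure quotients $(P_{I,1}^U)_{n,n}=q^{(n)}(1)/d_{I,1}^{(n)}$, $(P_{I,2}^U)_{n,n}=d_{I,1}^{(n)}/d_{I,2}^{(n)}$, $(P_I^L)_{n,n}=\lambda_{3n+2}\,d_{I,2}^{(n)}/q^{(n)}(1)$ and $(P_I^L)_{n,n-1}=d_{I,2}^{(n-1)}/q^{(n)}(1)$; substituting the Step~1 formulas together with the closed form of $\lambda_{3n+2}$ makes every Pochhammer factor cancel and leaves precisely the claimed arithmetic-progression quotients. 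Since Theorem~\ref{teo:stochastic _factorization} already guarantees the three matrices are stochastic and they are bidiagonal, the remaining super- or sub-diagonal entry in each row equals $1$ minus the one just computed, which reproduces the ``$c-a+n$'', ``$d-a+n$'', ``$c-b+n+1$''-type numerators; the $n=0$ rows are the absorbing/boundary cases, consistent with $(P_I^L)_{0,0}=1$ (and with the index $2n-1$ being out of range for $n=0$).

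\emph{Main obstacle.} There is no conceptual difficulty: the effort lies entirely in the even/odd case split interacting with the alternating sequence $c_n$ and in keeping the Pochhammer algebra straight. The one non-obvious point, and the reason the final answer is so clean, is that $d_{I,1}^{(n)}$ and $d_{I,2}^{(n)}$ telescope into single products of linear factors divided by $q^{(n)}(1)$; establishing this collapse (Step~1) is where the care is needed, after which Step~2 is purely mechanical.
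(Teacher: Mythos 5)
Your strategy coincides with the paper's, whose entire proof is the one-line remark that the result is ``an algebraic calculation involving Theorems \ref{teo:linear_forms_at_unity} and \ref{teo:stochastic _factorization} and Equations \eqref{eq:lambdas}''; your two-step breakdown is exactly how that calculation goes, and your key observation --- that $d_{I,1}^{(n)}$ and $d_{I,2}^{(n)}$ collapse to $q^{(n)}(1)$ times a quotient of linear factors, after which every entry is a ratio of arithmetic progressions --- is the right one. Your closed forms for $d_{I,1}^{(2n)}$, $d_{I,1}^{(2n+1)}$ and $d_{I,2}^{(2n)}$ all check out against \eqref{eq:lambdas} and Theorem \ref{teo:linear_forms_at_unity}.

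There is, however, one concrete error: your formula for $d_{I,2}^{(2n+1)}$ is off by one in the $c$-factor. Carrying out the cancellation with
$\lambda_{3(2n+2)+1}=\frac{(2n+2)(a+2n+2)(c+n+1-b)}{(d+3n+2)(c+3n+2)(c+3n+3)}$
gives $\lambda_{6n+7}\,d_{I,1}^{(2n+2)}/d_{I,1}^{(2n+1)}=\frac{c+n+1-b}{b+2n+1}$, hence
\begin{align*}
d_{I,2}^{(2n+1)}=d_{I,1}^{(2n+1)}\,\frac{c+3n+2}{b+2n+1}=q^{(2n+1)}(1)\,\frac{(c+3n+2)(d+3n+1)}{(a+2n+1)(b+2n+1)},
\end{align*}
not $(c+3n+1)(d+3n+1)$ as you wrote. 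The asymmetry between $c$ and $d$ here is forced by the asymmetry $(c)_{3n+2}(d)_{3n+1}$ in $q^{(2n+1)}(1)$ and cannot be symmetrized away. If your version were fed into Step~2 it would yield $(P_{I,2}^U)_{2n+1,2n+1}=\frac{b+2n+1}{c+3n+1}$ and $(P_I^L)_{2n+1,2n+1}=\frac{c+n}{c+3n+2}$, contradicting the statement being proved; with the corrected value all twelve families of entries match. The rest of Step~2 (reading the diagonal and single off-diagonal entries off the bidiagonal factors, and obtaining the complementary entry from stochasticity) is sound.
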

\begin{proof}
 The proof is an algebraic calculation involving Theorems \ref{teo:linear_forms_at_unity} and \ref{teo:stochastic _factorization} and Equations \eqref{eq:lambdas}.
\end{proof}
%(P_{I,1}^U)_diagonal\left\{\frac{a}{c},\frac{a+1}{d+1},\frac{a+2}{c+3},\frac{a+3}{d+4},\frac{a+4}{c+6},\frac{a+5}{d+7},\frac{a+6}{c+9},\frac{a+7}{d+10},\frac{a+8}{c+12},\frac{a+9}{d+13},\frac{a+10}{c+15},\frac{a+11}{d+16},\frac{a+12}{c+18},\frac{a+13}{d+19},\frac{a+14}{c+21},\frac{a+15}{d+22},\frac{a+16}{c+24},\frac{a+17}{d+25},\frac{a+18}{c+27},\frac{a+19}{d+28},\frac{a+20}{c+30}\right\}
%\left\{1-\frac{a}{c},\frac{d-a}{d+1},\frac{-a+c+1}{c+3},\frac{-a+d+1}{d+4},\frac{-a+c+2}{c+6},\frac{-a+d+2}{d+7},\frac{-a+c+3}{c+9},\frac{-a+d+3}{d+10},\frac{-a+c+4}{c+12},\frac{-a+d+4}{d+13},\frac{-a+c+5}{c+15},\frac{-a+d+5}{d+16},\frac{-a+c+6}{c+18},\frac{-a+d+6}{d+19},\frac{-a+c+7}{c+21},\frac{-a+d+7}{d+22},\frac{-a+c+8}{c+24},\frac{-a+d+8}{d+25},\frac{-a+c+9}{c+27},\frac{-a+d+9}{d+28},\frac{-a+c+10}{c+30}\right\}

\section{Uniform matrices}\label{S:uniform}

In this section we find and study real uniform or almost uniform Jacobi matrices. A uniform Jacobi matrix is understood in this context as a banded Toeplitz matrix, that is constant or uniform by diagonals as in~\eqref{eq:Jacobi_Jacobi_Piñeiro_uniform},~i.e.,
 \begin{align*}
 J= \left(\begin{NiceMatrix}%[columns-width = auto,margin]
 % [columns-width = 0.5cm]
 3\kappa& 1 & 0 & \Cdots & & \\[-5pt]
 3\kappa^2&3\kappa& 1 & \Ddots& & \\ 
 \kappa^3& 3\kappa^2& 3\kappa& 1 &&\\ 
 0& \kappa^3&3\kappa^2& 3\kappa& 1 & \\
 \Vdots&\Ddots&\Ddots&\Ddots&\Ddots&\Ddots
\end{NiceMatrix}\right).
 \end{align*}
with $\kappa = \frac 4 {27} $, 
 that we called \emph{asymptotic uniform Jacobi matrix}. The \emph{almost asymptotic uniform Jacobi matrix} is understood as the asymptotic uniform Jacobi matrix but for the first two columns. Asymptotic uniform Jacobi matrix appeared in the context of multiple orthogonal polynomials \cite{Coussement_Coussment_VanAssche}, see also \cite{bfmaf} for a discussion for Jacobi--Piñeiro polynomials and random walks, as limiting cases. 
\begin{teo}\label{teo:12}
Almost asymptotic uniform Jacobi matrices happen if and only if the hypergeometric tuple $(a,b,c,d)$ is one of the following twelve tuples 
\begin{align}\label{eq:almost uniform tuples}
\begin{aligned}
 &\Big(\frac 1 3, \frac 2 3, \frac1 2, 1\Big), & &\Big(\frac 2 3, \frac 1 3, \frac1 2, 1\Big), 
 & &\Big(\frac 1 3, \frac 2 3, 1, \frac 3 2\Big), & &\Big( \frac 2 3, \frac 1 3, 1, \frac 3 2\Big) , & &\Big(\frac 2 3, \frac 4 3, 1, \frac 3 2 \Big), & &\Big(\frac 4 3, \frac 2 3, 1, \frac 3 2\Big), 
\\
&\Big(\frac 2 3, \frac 4 3, \frac 3 2 , 2\Big), &&\Big(\frac 4 3, \frac 2 3, \frac 3 2 , 2 \Big), &&
\Big(\frac 4 3, \frac 5 3, \frac 3 2 , 2\Big), &&\Big(\frac 5 3, \frac 4 3, \frac 3 2 , 2\Big ), &&
\Big(\frac 4 3, \frac 5 3, 2 , \frac 5 2\Big) , &&\Big(\frac 5 3, \frac 4 3, 2 , \frac 5 2 \Big),
\end{aligned}
\end{align}
that we will called \emph{uniform  tuples}.
\end{teo}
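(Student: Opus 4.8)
The plan is to reduce the claim to a finite, explicit algebraic problem by exploiting the closed forms \eqref{eq:jacobi_hyper_coeff}--\eqref{eq:lambdas} for the Jacobi coefficients. Recall that the ``almost asymptotic uniform'' condition demands that $\beta_n = 3\kappa$, $\alpha_{n+1} = 3\kappa^2$, $\gamma_{n+1} = \kappa^3$ for all $n \geq 1$ (the first two columns are unconstrained), with $\kappa = \tfrac{4}{27}$. Since $\lim_n \lambda_n = \kappa$ always holds, and since $\beta_n$, $\alpha_{n+1}$, $\gamma_{n+1}$ are polynomials in $\lambda_{3n}, \dots, \lambda_{3n+6}$ as displayed in \eqref{eq:jacobi_hyper_coeff}, a natural sufficient condition is that $\lambda_m = \kappa$ for all $m$ past some small index. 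First I would show that $\lambda_m = \kappa$ for all $m \geq 6$ (equivalently, for all $m \geq 3$ once the first few are checked) is in fact \emph{necessary}: given $\beta_n = 3\kappa$ for all $n \geq 1$ one gets $\lambda_{3n}+\lambda_{3n+1}+\lambda_{3n+2} = 3\kappa$; combined with $\gamma_{n+1} = \lambda_{3n+2}\lambda_{3n+4}\lambda_{3n+6} = \kappa^3$ and $\alpha_{n+1} = (\lambda_{3n+1}+\lambda_{3n+2})\lambda_{3n+3} + \lambda_{3n+2}\lambda_{3n+4} = 3\kappa^2$, an elementary elimination (using that the three roots of $t^3 - 3\kappa t^2 + 3\kappa^2 t - \kappa^3 = (t-\kappa)^3$ are forced) shows all the $\lambda$'s entering these relations equal $\kappa$. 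This is the cleanest way to see that uniformity of the Jacobi matrix from the second column on is \emph{equivalent} to $\lambda_m = \kappa$ for all $m \geq 3$ (the indices $\lambda_0,\lambda_1,\lambda_2$ affect only $\beta_0$, $\alpha_1$, $\gamma_1$, i.e. the first two columns).

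Next I would substitute the explicit formulas \eqref{eq:lambdas}. Writing $c_n$ as in the statement ($c_{2k-1} = c+k$, $c_{2k} = d+k$), the equations $\lambda_{3n} = \lambda_{3n+1} = \lambda_{3n+2} = \kappa = \tfrac{4}{27}$ become, for each $n \geq 1$, three rational identities in $(a,b,c,d,n)$:
\begin{align*}
 \frac{n(b+n-1)(c_n-a-1)}{(c_n+n-2)(c_n+n-1)(c_{n-1}+n-1)} &= \frac{4}{27}, &
 \frac{n(a+n)(c_{n-1}-b)}{(c_n+n-1)(c_{n-1}+n-1)(c_{n-1}+n)} &= \frac{4}{27}, \\
 \frac{(a+n)(b+n)(c_n-1)}{(c_n+n-1)(c_n+n)(c_{n-1}+n)} &= \frac{4}{27}. &&
\end{align*}
Because $c_n$ alternates between the two arithmetic progressions $c+k$ and $d+k$, clearing denominators turns each family (even $n$, odd $n$) into a \emph{polynomial identity in $n$} that must hold for infinitely many values of $n$, hence identically in $n$. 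Comparing coefficients of the top powers of $n$ in these polynomial identities pins down relations among $a,b,c,d$; in particular the leading terms already force $d - c = \tfrac12$ in one parity and constrain $c,d$ to the half-integer lattice appearing in \eqref{eq:almost uniform tuples}. I expect that matching, say, the coefficients of $n^3$, $n^2$, $n^1$ in the three cleared identities yields a small polynomial system whose real solution set is exactly the twelve listed tuples (grouped in six pairs swapping $a \leftrightarrow b$, reflecting the gauge symmetry of \S\ref{S:gauge_freedom} and the pair structure already emphasized in the surrounding text).

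For the converse direction I would simply verify, for each of the twelve tuples in \eqref{eq:almost uniform tuples}, that the substitution into \eqref{eq:lambdas} gives $\lambda_m = \tfrac{4}{27}$ for all $m \geq 3$ (a short check: for these rational parameters the numerator and denominator of each $\lambda_m$ factor and the quotient collapses to $\kappa$), and hence via \eqref{eq:jacobi_hyper_coeff} that the Jacobi matrix is the asymptotic uniform matrix \eqref{eq:Jacobi_Jacobi_Piñeiro_uniform} except possibly in its first two columns; one records along the way which entries of the first two columns actually deviate. The main obstacle is the forward direction: organizing the elimination among the three $\lambda$-equations cleanly enough that the ``for all $n$'' quantifier can be converted to a manageable finite polynomial system, and then certifying that this system has \emph{no} real solutions beyond the advertised twelve. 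A careful bookkeeping of which $\lambda$-indices are constrained (those feeding $\beta_n,\alpha_{n+1},\gamma_{n+1}$ for $n\geq 1$, i.e. $\lambda_m$ for $m \geq 3$) versus which are free ($\lambda_0,\lambda_1,\lambda_2$) is what makes the ``almost'' (first two columns excepted) statement precise, and I would present that bookkeeping explicitly before doing the coefficient comparison.
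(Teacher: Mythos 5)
There is a genuine gap, and it sits at the heart of your forward direction: the claim that uniformity of the Jacobi matrix forces $\lambda_m=\kappa$ for all $m$ past a small index is false. The relations \eqref{eq:jacobi_hyper_coeff} are \emph{not} the elementary symmetric functions of the triple $(\lambda_{3n},\lambda_{3n+1},\lambda_{3n+2})$ — note that $\alpha_{n+1}$ involves $\lambda_{3n+3},\lambda_{3n+4}$ and $\gamma_{n+1}$ involves $\lambda_{3n+2},\lambda_{3n+4},\lambda_{3n+6}$, which straddle three different triples — so the ``roots of $(t-\kappa)^3$'' elimination does not apply. Worse, the conclusion itself fails even for the fully uniform tuple $\big(\tfrac43,\tfrac53,2,\tfrac52\big)$: a direct evaluation of \eqref{eq:lambdas} there gives $\lambda_3=\lambda_4=\tfrac{2}{27}=\tfrac{\kappa}{2}$ and $\lambda_5=\tfrac{8}{27}=2\kappa$, yet $\lambda_3+\lambda_4+\lambda_5=3\kappa$ and (with $\lambda_7=\tfrac{8}{81}$, $\lambda_9=\tfrac19$) $\lambda_5\lambda_7\lambda_9=\kappa^3$; the sums and products conspire without the individual $\lambda$'s being constant. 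Consequently your proposed reduction to the identity $\lambda_m\equiv\kappa$, and the subsequent coefficient comparison in $n$, would produce the wrong (in fact empty or incorrect) solution set, and your verification step as written (``check $\lambda_m=\tfrac{4}{27}$ for all $m\ge 3$'') would reject the correct tuples.

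The paper avoids this pitfall by working directly with $\beta_n,\alpha_{n+1},\gamma_{n+1}$ rather than the individual $\lambda$'s: it imposes the four necessary conditions $\beta_1=3\kappa$, $\alpha_2=3\kappa^2$, $\gamma_2=\kappa^3$, $\beta_2=3\kappa$ as an explicit polynomial system in the four unknowns $(a,b,c,d)$, finds (by computer algebra) that its real positive solutions are exactly the twelve listed tuples, and then verifies that each of these candidates yields $\beta_n=3\kappa$, $\alpha_{n+1}=3\kappa^2$, $\gamma_{n+1}=\kappa^3$ for all $n\in\N$. If you want to salvage your strategy, the quantities you should force to be identically constant in $n$ are the recursion coefficients themselves (each parity of $n$ giving a rational identity in $n$ whose numerator must vanish identically), not the $\lambda$'s; that would give a legitimate, if heavier, route to the same finite polynomial system.
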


\begin{proof}
The twelve uniform cases follow, we used Mathematica, from the real positive solution $(a,b,c,d)$ of the system of equations
\begin{align*}
 \beta_1& = 3\kappa , &
 \alpha_2& = 3\kappa^2 , &
 \gamma_2& = \kappa^3 , &
 \beta_2& = 3\kappa , &&,
\end{align*}
where the sequences $(\beta_n)$, $(\alpha_n)$, $(\gamma_n)$ are given in \eqref{eq:jacobi_hyper_coeff} and \eqref{eq:lambdas}.
That is
\begin{align*}
 \frac{2 (a+1) (b+1)}{(c+2) (d+1)}-\frac{a b}{c d} & = 3\kappa , \\
 \frac{2 (a+1) (b+1}{(c+2) (d+1)} ) \left(\frac{a b}{2 (c+1) d}-\frac{2 (a+1) (b+1)}{(c+2) (d+1)}+\frac{3 (a+2) (b+2)}{2 (c+3) (d+2)}\right)& = 3\kappa^2 , \\
 \frac{6 (a+1) (a+2) (b+1) (b+2) c (-a+c+1) (-b+c+1)}{(c+1) (c+2)^2 (c+3)^2 (c+4) (d+1) (d+2) (d+3)} & = \kappa^3 , \\
 \frac{3 (a+2) (b+2)}{(c+3) (d+3)}-\frac{2 (a+1) (b+1)}{(c+2) (d+1)} & = 3\kappa.
\end{align*}
Then, one can check, replacing the given tuples, that for all different cases and for $n\in\N$ one has $\beta_n=3\kappa$, $\alpha_{n+1}=3\kappa^2$ and $\gamma_{n+1}=\kappa^3$
\end{proof}

\begin{coro}
If the  tuple $(a,b,c,d)$ is uniform, 
i.e. it is in the list \eqref{eq:almost uniform tuples}, then so is $(b,a,c,d)$.
\end{coro}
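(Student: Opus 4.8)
The plan is to show that the conditions underlying Theorem~\ref{teo:12} are themselves invariant under the transposition $a\leftrightarrow b$, so that their solution set — the list \eqref{eq:almost uniform tuples} — is automatically stable under it.

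The direct route uses the system of four equations displayed in the proof of Theorem~\ref{teo:12}, namely $\beta_1=3\kappa$, $\alpha_2=3\kappa^2$, $\gamma_2=\kappa^3$, $\beta_2=3\kappa$, written out explicitly as rational functions of $(a,b,c,d)$. Inspecting those four expressions, one checks that the dependence on $a,b$ enters only through symmetric combinations — products of the form $(a+k)(b+k)$ in the numerators, together with $(c+1-a)(c+1-b)$ in the $\gamma_2$ equation — with no isolated occurrence of $a$ or $b$; hence each equation is unchanged by $a\leftrightarrow b$. Consequently the real positive solution set of that system is invariant under $a\leftrightarrow b$, which is exactly the assertion. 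As a sanity check this is visible directly in \eqref{eq:almost uniform tuples}, which is organized into six couples $\{(a,b,c,d),(b,a,c,d)\}$.

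The conceptual reason behind this, worth recording, is stronger: the monic type~II multiple orthogonal polynomials in \eqref{eq:polinomiohipergeometrico} are manifestly symmetric under $a\leftrightarrow b$, since $a,b$ enter only through the symmetric factor $(a+n-j)_j(b+n-j)_j$ in each coefficient while $c,d$ are untouched. As the Jacobi matrix $J$ is read off from the four-term recurrence $\gamma_{n-1}B^{(n-2)}+\alpha_n B^{(n-1)}+\beta_n B^{(n)}+B^{(n+1)}=xB^{(n)}$, and a monic polynomial sequence determines $\beta_n,\alpha_n,\gamma_n$ uniquely, the tuples $(a,b,c,d)$ and $(b,a,c,d)$ share the \emph{entire} Jacobi matrix and the \emph{entire} sequence $\{B^{(n)}\}_{n\ge 0}$, not merely the first few recurrence coefficients; in particular one of them is an almost asymptotic uniform Jacobi matrix precisely when the other is.

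The only mild bookkeeping point is to note that $(b,a,c,d)$ still lies in an admissibility region such as \eqref{eq:region_parameters_pochhammer_perfect} or \eqref{eq:positivity_jacobi}; this region is symmetric in $a,b$ up to the harmless extra exclusions $c+1-b,\,c-a\notin-\N_0$, and in any case is checked at once on the twelve tuples of \eqref{eq:almost uniform tuples}. There is no genuine obstacle: the single subtlety is to invoke the symmetry of the underlying data (the $B^{(n)}$, or equivalently the four displayed equations) rather than attempting to symmetrize the individual coefficients $\lambda_{3n},\lambda_{3n+1}$ of \eqref{eq:lambdas}, which are not symmetric term by term even though their combinations $\beta_n,\alpha_n,\gamma_n$ are.
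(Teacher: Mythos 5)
Your proposal is correct. The paper itself offers no argument beyond the visible pairing of the twelve tuples in Theorem \ref{teo:12} (the corollary is read off from the list), so your two routes both go a little further than the text does. The direct route is sound: each of the four displayed equations $\beta_1=3\kappa$, $\alpha_2=3\kappa^2$, $\gamma_2=\kappa^3$, $\beta_2=3\kappa$ involves $a,b$ only through the symmetric products $ab$, $(a+k)(b+k)$ and $(c+1-a)(c+1-b)$, so the solution set is stable under $a\leftrightarrow b$; and your warning that the individual $\lambda$'s of \eqref{eq:lambdas} are \emph{not} termwise symmetric, while the combinations \eqref{eq:jacobi_hyper_coeff} are, is exactly the right thing to flag. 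Your conceptual route --- that $B^{(n)}$ in \eqref{eq:polinomiohipergeometrico} is manifestly symmetric in $a,b$, hence the whole monic sequence and therefore the whole Jacobi matrix coincide for $(a,b,c,d)$ and $(b,a,c,d)$ --- is the stronger statement that the paper only records afterwards, in the remark identifying the permutation with the \emph{gauge} symmetry of Theorem \ref{teo:gauge_freedom_0}; it buys you invariance of \emph{every} recurrence coefficient at once rather than just the four tested ones, and it explains why the twelve tuples collapse to six distinct Jacobi matrices. The only caveat, which you already address, is that one must separately confirm $(b,a,c,d)$ remains an admissible (perfect) tuple, since \eqref{eq:region_parameters_pochhammer_perfect} is not literally symmetric in $a,b$; for the twelve explicit tuples this is an immediate check.
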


\begin{rem}
Notice that when $a$ and $b$ are permuted in the hypergeometric tuple the weight $w_1$ is not altered but the weight $w_2$ changes to a new weight $\check w_2$. As we discussed in
Theorem~\ref{teo:gauge_freedom}
%\S \ref{S:gauge_freedom}
a given pair of weights $(w_1,w_2)$ uniquely determines the corresponding sequences of orthogonal polynomials of type $II$, $\{B^{(n)}(x)\}_{n=0}^\infty$, the sequence of linear forms of type $I$, $\{Q^{(n)}(x)\}_{n=0}^\infty$ and the Jacobi matrix $J$, up to the transformations $(w_1,w_2)\mapsto (w_1,\check w_2=\alpha w_1+\beta w_2)$ with $\beta\neq 0$ and $\alpha+\beta=1$, what we called a \emph{gauge} transformation. We will show in the sequel that the transformation given by the permutation $(a,b,c,d)\leftrightarrow (b,a,c,d)$ is precisely a manifestation of this \emph{gauge} symmetry.
\end{rem}

\begin{rem}
 According to the previous remark we have only six different almost cubic Toeplitz--Jacobi matrices, sequences of type II orthogonal polynomials and type I linear forms. To each of these sets we have two different sets of weights $(w_1,w_2)$ and $(w_1,\check w_2)$ and correspondingly two different uniform tuples $(a,b,c,d)$and $(b,a,c,d)$.
\end{rem}

To study this \emph{gauge} symmetry is convenient to introduce the following algebraic functions
\begin{align*}
 \vartheta_\pm (x)&:= \sqrt[\leftroot{2}\uproot{2}\scriptstyle 3]{1 \pm \sqrt{1-x}} , & x &\in (0,1).
\end{align*}
\begin{pro}
 The functions $\vartheta_\pm$ satisfies the following relations
\begin{align*}
 \vartheta_+^3 + \vartheta_-^3 &= 2 , & \vartheta_+^3 - \vartheta_-^3 &= 2 \sqrt{1-x} , &
 \vartheta_+ \vartheta_- &= \sqrt[\leftroot{2}\uproot{2}\scriptstyle 3]{x},
\end{align*}
and also
\begin{align}\label{eq:algebraic_functions2}
 (\vartheta_+ + \vartheta_-)(\vartheta_+^2 -\vartheta_+\vartheta_- + \vartheta_-^2) &= 2 ,&
 (\vartheta_+ - \vartheta_-)(\vartheta_+^2 +\vartheta_+\vartheta_- + \vartheta_-^2) &= 2 \sqrt{1-x} ,
\end{align}
\end{pro}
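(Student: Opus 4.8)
The plan is to reduce everything to the two defining relations $\vartheta_+^3 = 1 + \sqrt{1-x}$ and $\vartheta_-^3 = 1 - \sqrt{1-x}$, which hold by construction for $x\in(0,1)$; here one notes that $1\pm\sqrt{1-x}>0$ on that interval, so the real cube roots are well defined and in fact $\vartheta_+,\vartheta_->0$. First I would simply add and subtract these two identities: addition yields $\vartheta_+^3 + \vartheta_-^3 = 2$ and subtraction yields $\vartheta_+^3 - \vartheta_-^3 = 2\sqrt{1-x}$, which are the first two asserted relations.

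For the product formula I would multiply the defining relations, obtaining $(\vartheta_+\vartheta_-)^3 = \vartheta_+^3\vartheta_-^3 = (1+\sqrt{1-x})(1-\sqrt{1-x}) = 1-(1-x) = x$. Since $\vartheta_+,\vartheta_->0$ on $(0,1)$, taking the real positive cube root gives $\vartheta_+\vartheta_- = \sqrt[3]{x}$. (Alternatively, one avoids any branch discussion by cubing the claimed identity $\vartheta_+\vartheta_-=\sqrt[3]{x}$ and checking equality of cubes together with equality of signs.)

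Finally, for the two relations in \eqref{eq:algebraic_functions2}, I would invoke the elementary polynomial factorizations $A^3 + B^3 = (A+B)(A^2 - AB + B^2)$ and $A^3 - B^3 = (A-B)(A^2 + AB + B^2)$ with $A=\vartheta_+$, $B=\vartheta_-$, and substitute the values $\vartheta_+^3+\vartheta_-^3 = 2$ and $\vartheta_+^3-\vartheta_-^3 = 2\sqrt{1-x}$ already established. This immediately gives $(\vartheta_+ + \vartheta_-)(\vartheta_+^2 - \vartheta_+\vartheta_- + \vartheta_-^2) = 2$ and $(\vartheta_+ - \vartheta_-)(\vartheta_+^2 + \vartheta_+\vartheta_- + \vartheta_-^2) = 2\sqrt{1-x}$.

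There is essentially no obstacle in this proof: it is a purely algebraic manipulation of cube roots of the explicit expressions $1\pm\sqrt{1-x}$. The only point deserving a remark is the choice of real branch of the cube root in relation (iii), which is handled by the positivity of $\vartheta_\pm$ on $(0,1)$.
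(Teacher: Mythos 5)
Your proof is correct and follows essentially the same route as the paper: the first three identities come directly from the defining relations $\vartheta_\pm^3=1\pm\sqrt{1-x}$, and the two identities in \eqref{eq:algebraic_functions2} follow from the standard factorizations of $A^3\pm B^3$. Your added remark on the choice of the real positive cube root in $\vartheta_+\vartheta_-=\sqrt[3]{x}$ is a welcome (if minor) extra precaution that the paper leaves implicit.
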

\begin{proof}
Equation\eqref {eq:algebraic_functions2} follow from 
 \begin{align*}
 (\vartheta_+ + \vartheta_-)(\vartheta_+^2 -\vartheta_+\vartheta_- + \vartheta_-^2) &= \vartheta_+^3 + \vartheta_-^3 , &
 (\vartheta_+ - \vartheta_-)(\vartheta_+^2 +\vartheta_+\vartheta_- + \vartheta_-^2) &= \vartheta_+^3 - \vartheta_-^3 ,
 \end{align*}
and the first ones follows directly form the definition of $\vartheta_\pm$.
\end{proof}
These identities will be instrumental in proving some identities between $(W_1,W_2)$ and 
$(W_1,\hat W_2)$.

We now proceed with the study of these matrices. We divide this family of twelve tuples in two sets, the six \emph{stochastic uniform tuples} are
 \begin{align}\label{eq:stochastic uniform tuples}
&\Big(\frac 1 3, \frac 2 3, \frac1 2, 1\Big), &&\Big(\frac 2 3, \frac 1 3, \frac1 2, 1\Big), &&
 \Big( \frac 2 3, \frac 4 3, 1, \frac 3 2\Big) ,&& \Big(\frac 4 3, \frac 2 3, 1, \frac 3 2\Big) , &&
\Big( \frac 4 3, \frac 5 3, \frac 3 2 , 2\Big) , && \Big(\frac 5 3, \frac 4 3, \frac 3 2 , 2 \Big),
 \end{align}
 and the six \emph{semi-stochastic tuples} are
 \begin{align}\label{eq:semi-stochastic uniform tuples}
 & &\Big(\frac 1 3, \frac 2 3, 1, \frac 3 2\Big), & &( \frac 2 3, \frac 1 3, 1, \frac 3 2\Big) , &
 &\Big(\frac 2 3, \frac 4 3, \frac 3 2 , 2\Big), &&\Big(\frac 4 3, \frac 2 3, \frac 3 2 , 2 \Big), &&
 \Big(\frac 4 3, \frac 5 3, 2 , \frac 5 2\Big) , &&\Big(\frac 5 3, \frac 4 3, 2 , \frac 5 2 \Big).
 \end{align}
We will see that the stochastic tuples lead to three stochastic matrices of type I (taking into account the permutation \emph{gauge} symmetry $a\leftrightarrow b$), with corresponding recurrent Markov chains and to semi-stochastic matrices of type II that describe Markov chains with sinks and sources. The semi-stochastic tuples lead to three double semi-stochastic matrices of type I and three more of type II corresponding Markov chains with~sinks.

In what follows we will give for each case, the explicit expressions of the system of weights, the type II multiple orthogonal polynomials, and its values at unity. We will also give the stochastic factorization, and discuss whether there are uniform pure death o birth factors. 
Notice that, as we know the values at unity of type II multiple orthogonal polynomials and type I linear forms, we can apply Theorem \ref{pro:sigma_spectral} in order to construct type II and I stochastic matrices, but unfortunately not uniform, but for three the type I stochastic matrices related to stochastic uniform tuples. This is why we follow an alternative path leading to semi-stochastic uniform matrices. 

 It is also remarkable that the knowledge of the explicit value at unity of the type II multiple orthogonal polynomials leads to following nontrivial summation formulas for the generalized hypergeometric function $\tensor[_3]{F}{_2}(1)$ that, for the reader convenience, we collect together here.
\begin{pro}[Summation formulas at unity]
	The following summation formulas at unity for the generalized hypergeometric function $\tensor[_3]{F}{_2}$ hold true
 \begin{align*}
 		\tensor[_3]{F}{_2}\hspace*{-3pt}\left[{\begin{NiceArray}{c}[small]-n,\; \frac{n+1}{2} , \;\frac{n}{2} \\[3pt]
 			\frac{1}{3},\;\frac{2}{3}\end{NiceArray}};1\right]&=\frac{1+2(-8)^{n}}{3 \times 4^{n}}, &
 		\tensor[_3]{F}{_2}\hspace*{-3pt}\left[{\begin{NiceArray}{c}[small]-n,\; \frac{n+1}{2} , \;\frac{n+2}{2} \\[3pt]
 			\frac{2}{3},\;\frac{4}{3}\end{NiceArray}};1\right]&=
\frac{4 (-8)^{n} - 1}{3 (3 n+1) 4^{n}} ,
%\frac{1+2 (9 n+4)(-8)^n }{9 \times 4^{n}},
 \\
 		\tensor[_3]{F}{_2}\hspace*{-3pt}\left[{\begin{NiceArray}{c}[small]-n,\; \frac{n+3}{2} ,\; \frac{n+2}{2} \\[3pt]
 				\frac{4}{3},\;\frac{5}{3}\end{NiceArray}};1\right]&
 		=\frac{2(1-(-8)^{n+1} )}{9(n+1)(3n+2)4^n},&
 		\tensor[_3]{F}{_2}\hspace*{-3pt}\left[{\begin{NiceArray}{c}[small]-n,\; \frac{n+1}{2} , \; \frac{n+2}{2} \\[3pt]
 				\frac{1}{3},\;\frac{2}{3}\end{NiceArray}};1\right]&
 		=\frac{1+2 (9 n+4)(-8)^n}{9 \times 4^{n}},\\
 		\tensor[_3]{F}{_2}\hspace*{-3pt}\left[{\begin{NiceArray}{c}[small]-n,\; \frac{n+3}{2} , \; \frac{n+2}{2} \\[3pt]
 				\frac{2}{3},\;\frac{4}{3}\end{NiceArray}};1\right]&
 		=\frac{ 4 (9 n+7) (-8)^n -1 }{27 (n+1) 4^{n}},&	\tensor[_3]{F}{_2}\hspace*{-3pt}\left[{\begin{NiceArray}{c}[small]-n,\; \frac{n+4}{2} , \; \frac{n+3}{2} \\[3pt]\frac{4}{3},\;\frac{5}{3}\end{NiceArray}};1\right]&
 		=\frac{2(1- (9 n+10)(-8)^{n+1})}{81 (n+1) (n+2)4^n}.
 \end{align*}
\end{pro}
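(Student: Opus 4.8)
The plan is to read each of the six identities off the explicit value $B^{(n)}(1)$ of the corresponding type~II hypergeometric polynomial, combined with the second representation of $B^{(n)}(x)$ in \eqref{eq:polinomiohipergeometrico}. Evaluated at $x=1$ that representation gives
\begin{align*}
 \tensor[_3]{F}{_2}\hspace*{-3pt}\left[{\begin{NiceArray}{c}[small]-n,\; c+\big\lfloor\tfrac n2\big\rfloor,\;d+\big\lfloor\tfrac{n-1}2\big\rfloor\\a,\;b\end{NiceArray}};1\right]
 =(-1)^n\,\frac{\big(c+\big\lfloor\tfrac n2\big\rfloor\big)_n\big(d+\big\lfloor\tfrac{n-1}2\big\rfloor\big)_n}{(a)_n(b)_n}\,B^{(n)}(1),
\end{align*}
so it suffices to (i) find $B^{(n)}(1)$ in closed form for each of the six inequivalent uniform tuples, and (ii) simplify the Pochhammer prefactor. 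The six tuples appearing in the statement are exactly representatives of the six gauge classes of Theorem~\ref{teo:12} (the third-integers $a,b$ running over $\{\tfrac13,\tfrac23\}$, $\{\tfrac23,\tfrac43\}$, $\{\tfrac43,\tfrac53\}$, each with the two admissible pairs $(c,d)$), and since $a,b$ enter the $\tensor[_3]{F}{_2}$ symmetrically the permuted partner $(b,a,c,d)$ yields the same identity.

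For step~(i) I would use that, by Theorem~\ref{teo:12}, a uniform tuple has $\beta_n=3\kappa$, $\alpha_{n+1}=3\kappa^2$, $\gamma_{n+1}=\kappa^3$ for $n\geq1$, so that the fourth order recurrence for $B^{(n)}$ forces, at $x=1$,
\begin{align*}
 B^{(n+1)}(1)+(3\kappa-1)B^{(n)}(1)+3\kappa^2B^{(n-1)}(1)+\kappa^3B^{(n-2)}(1)=0,\qquad n\geq3.
\end{align*}
Its characteristic polynomial $r^3+(3\kappa-1)r^2+3\kappa^2r+\kappa^3$ factors, precisely because $\kappa=\tfrac4{27}$, as $\big(r-\tfrac8{27}\big)^2\big(r+\tfrac1{27}\big)$, whence $B^{(n)}(1)=(A+Bn)\big(\tfrac8{27}\big)^n+C\big(-\tfrac1{27}\big)^n$ for all $n\geq1$, the constants $A,B,C$ being pinned down by $B^{(1)}(1),B^{(2)}(1),B^{(3)}(1)$ read off from \eqref{eq:polinomiohipergeometrico}; the value $B^{(0)}(1)=1$ is the lone exception and is handled by inspection. (These closed forms are also the ones tabulated in \S\ref{S:stochastic_uniform}--\S\ref{S:semi-stochastic uniform tuples}.)

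For step~(ii), for each tuple I would split according to the parity of $n$ and evaluate $\frac{(c+\lfloor n/2\rfloor)_n(d+\lfloor(n-1)/2\rfloor)_n}{(a)_n(b)_n}$ by writing every Pochhammer symbol as a ratio of Gamma functions and invoking the Legendre duplication formula (for the half-integer parameters $c,d$) and the triplication formula $\Gamma(z)\Gamma(z+\tfrac13)\Gamma(z+\tfrac23)=2\pi\,3^{\,1/2-3z}\Gamma(3z)$ (for $a,b\in\{\tfrac13,\tfrac23,\tfrac43,\tfrac53\}$). The factorials cancel en masse and the prefactor collapses to $\big(\tfrac{27}4\big)^n$ up to a short rational factor in $n$; multiplying it by the closed form for $B^{(n)}(1)$ cancels $27^n$ against $3^{3n}$, turns $\big(\tfrac8{27}\big)^n$ into $\tfrac{(\pm8)^n}{4^n}$ and $\big(-\tfrac1{27}\big)^n$ into $\tfrac1{4^n}$, and reproduces the six displayed right-hand sides, the $(A+Bn)$ part supplying the $(-8)^n$ term and the $C$ part the constant term in the numerator; a couple of small values of $n$ are then checked directly to cover the jumps of the floor functions and of $B^{(0)}(1)$.

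The main obstacle is the Pochhammer/Gamma bookkeeping of step~(ii): it must be carried out separately for the six tuples and both parities of $n$, and the floor functions make the arguments $c+\lfloor n/2\rfloor$ and $d+\lfloor(n-1)/2\rfloor$ shift non-uniformly, so one has to be careful to align them with the correct half-integer or third-integer translates of $z$ in the multiplication formulas. By contrast, factoring the characteristic polynomial and fixing $A,B,C$ are immediate.
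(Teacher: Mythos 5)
Your proposal is correct and follows essentially the same route as the paper: the paper also evaluates $B^{(n)}(1)$ by observing that at $x=1$ the uniform recurrence becomes a constant-coefficient one with characteristic polynomial $(\kappa+t)^3-t^2=(t-2\kappa)^2\big(t+\frac{\kappa}{4}\big)$, fixes the constants from $B^{(1)}(1),B^{(2)}(1),B^{(3)}(1)$, and then divides by the explicit prefactor in the $\tensor[_3]{F}{_2}$ representation \eqref{eq:polinomiohipergeometrico} specialized to each uniform tuple (the paper simply records these prefactors in \eqref{eq:B_3F2_1}--\eqref{eq:B_3F2_6} rather than re-deriving them via duplication/triplication formulas as you suggest). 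Your remarks about the $a\leftrightarrow b$ symmetry, the alignment of the floor functions, and the exceptional value $B^{(0)}(1)$ all match the paper's treatment.
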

\subsection{Stochastic uniform tuples. Uniform recurrent random walks }\label{S:stochastic_uniform}

\begin{pro}
 Among all the uniform tuples \eqref{eq:almost uniform tuples} only the stochastic uniform tuples \eqref{eq:stochastic uniform tuples} are such that the type I linear forms at unity satisfies
 \begin{align}\label{eq:linear form at unity uniform}
 Q^{(n)}(1)=\frac 1 {(2\kappa)^n} , && n \in \N .
 \end{align}
\end{pro}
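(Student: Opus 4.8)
The plan is to reduce the statement to the closed forms for the type~I linear forms at unity recorded in Theorem~\ref{teo:linear_forms_at_unity}, and then to scan the finite list of uniform tuples. Since the case $n=0$ of Theorem~\ref{teo:linear_forms_at_unity} gives $q^{(0)}(1)=1$ for every tuple $(a,b,c,d)$, the condition \eqref{eq:linear form at unity uniform} is equivalent to the single requirement that all consecutive ratios equal $\tfrac1{2\kappa}=\tfrac{27}{8}$. These ratios were already computed in the proof of Corollary~\ref{coro:ratio_asymptotics_typeI}:
\begin{align*}
 \frac{q^{(2n+1)}(1)}{q^{(2n)}(1)}&=\frac{(c+3n)(c+3n+1)(d+3n)}{(2n+1)(a+2n)(b+2n)}, &
 \frac{q^{(2n+2)}(1)}{q^{(2n+1)}(1)}&=\frac{(c+3n+2)(d+3n+1)(d+3n+2)}{(2n+2)(a+2n+1)(b+2n+1)} .
\end{align*}
After clearing denominators, \eqref{eq:linear form at unity uniform} becomes a pair of polynomial identities in $n$ (cubic on each side, with common leading coefficient $216$), so for any given tuple it is a bounded computation.

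Next I would eliminate the six semi-stochastic tuples \eqref{eq:semi-stochastic uniform tuples} in one line: the $n=0$ instance of the first ratio is $q^{(1)}(1)/q^{(0)}(1)=c(c+1)d/(ab)$, which for those tuples takes the values $\tfrac{27}{2}$, $\tfrac{135}{16}$, $\tfrac{27}{4}$ (each twice, by the $a\leftrightarrow b$ pairing), none of them $\tfrac{27}{8}$; hence \eqref{eq:linear form at unity uniform} fails already at $n=1$.

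It then remains to check that \eqref{eq:linear form at unity uniform} does hold for the six stochastic uniform tuples \eqref{eq:stochastic uniform tuples}. As the two ratio formulas are symmetric under $a\leftrightarrow b$, it is enough to verify it for $\bigl(\tfrac13,\tfrac23,\tfrac12,1\bigr)$, $\bigl(\tfrac23,\tfrac43,1,\tfrac32\bigr)$ and $\bigl(\tfrac43,\tfrac53,\tfrac32,2\bigr)$; in each case, substituting the parameters and putting every linear factor over a common denominator $2$ or $3$, numerator and denominator become the same product of linear factors in $n$ up to the constant $\tfrac{27}{8}$. The structural reason, which also organizes the bookkeeping, is the Gauss multiplication formula: writing $(x)_{3m}=27^m(x/3)_m((x+1)/3)_m((x+2)/3)_m$ and $(x)_{2m}=4^m(x/2)_m((x+1)/2)_m$ in Theorem~\ref{teo:linear_forms_at_unity} turns $q^{(2n)}(1)(2\kappa)^{2n}$ into a quotient of the six Pochhammer symbols $\big((c+j)/3\big)_n$, $\big((d+j)/3\big)_n$ ($j=0,1,2$) by the six Pochhammer symbols $(1/2)_n$, $(1)_n$, $\big((a+j)/2\big)_n$, $\big((b+j)/2\big)_n$ ($j=0,1$); for these three tuples the two multisets of parameters coincide — for instance both equal $\{\tfrac16,\tfrac13,\tfrac12,\tfrac23,\tfrac56,1\}$ when $(a,b,c,d)=\bigl(\tfrac13,\tfrac23,\tfrac12,1\bigr)$ — so the quotient is identically $1$, and the odd-index identity then follows from the even-index one together with the first ratio above.

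I expect the only mild difficulty to be exactly this last point: confirming the cancellation is exact for \emph{all} $n$ rather than for a few small values, which is precisely what the polynomial-identity (equivalently, multiset-coincidence) reformulation secures. Beyond that the argument is a routine verification, the actual content being that exactly six of the twelve uniform tuples meet the extra constraint imposed by \eqref{eq:linear form at unity uniform}.
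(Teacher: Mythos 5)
Your proof is correct and rests on the same foundation as the paper's --- the closed forms for $q^{(n)}(1)$ in Theorem~\ref{teo:linear_forms_at_unity} --- but the execution is different and, in one respect, more complete. The paper determines the stochastic uniform tuples as the positive solutions (found symbolically) of the four equations $Q^{(k)}(1)=(2\kappa)^{-k}$, $k=1,\dots,4$, and then asserts that ``one can check'' that \eqref{eq:linear form at unity uniform} holds for all $n$; those four values pin down only the instances $n=0,1$ of each of the two cubic ratio identities, so the final check genuinely needs an argument valid for every $n$. You supply exactly that: rewriting $(c)_{3n}(d)_{3n}/\big((2n)!\,(a)_{2n}(b)_{2n}\big)$ via the Gauss multiplication formula reduces $q^{(2n)}(1)(2\kappa)^{2n}=1$ to the coincidence of two six-element parameter multisets, which holds for the three stochastic tuples (and their $a\leftrightarrow b$ partners), the odd-index case then following from the first ratio. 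Your elimination of the six semi-stochastic tuples by the single value $q^{(1)}(1)=c(c+1)d/(ab)\in\{27/2,\,135/16,\,27/4\}\neq 27/8$ is likewise a clean, hand-checkable replacement for the paper's appeal to a symbolic solve. The only cosmetic point: the proposition is stated for $Q^{(n)}(1)$ while Theorem~\ref{teo:linear_forms_at_unity} gives $q^{(n)}(1)$; you (like the paper's own proof) work with the latter, which is clearly what is intended.
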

\begin{proof}
 The stochastic uniform tuples are the real positive solution, $a,b,c,d$, of the system of equations
\begin{align*}
 Q^{(1)}(1) & = \frac 1 {2 \kappa} , &
 Q^{(2)}(1) & = \frac 1 {(2\kappa)^2} , &
 Q^{(3)}(1) & = \frac 1 {(2 \kappa)^3} , &
 Q^{(4)}(1) & = \frac 1 {(2\kappa)^4} , 
\end{align*}
where $\{ Q^{(n)} (1)\}_{n=0}^\infty$ is the sequence of type I linear forms at unity given in Theorem \ref{teo:linear_forms_at_unity}. 
One can check that for these stochastic uniform tuples Equation \eqref{eq:linear form at unity uniform} holds.
\end{proof}

\begin{rem}
 From \eqref{eq:linear form at unity uniform} and the procedure in \eqref{eq:stochastic_Jacobi_Piñeiro_I} we obtain a corresponding stochastic matrix of type I, an as all the objects are almost uniform as a result we get almost uniform stochastic matrices. Using the same matrix 
 \begin{align}\label{eq:norma}
 \sigma_{II} := \diag \left(\begin{NiceMatrix} \frac 1{2 \kappa} & \left( \frac 1{2 \kappa} \right)^2 & \Cdots\end{NiceMatrix}\right)
 \end{align}
we can obtain semi-stochastic matrices of type II.
\end{rem}

We now analyze the three cases. In all of them the Jacobi matrix differs from \eqref{eq:Jacobi_Jacobi_Piñeiro_uniform} only in the first column, and $\delta=\frac{1}{2}$ so that the corresponding Markov chains are recurrent. The min-max property \eqref{eq:min-max}, $\max(a,b)>\min(c,d)$ does not hold. Moreover,
the $\alpha_1$ in the proof of \cite[Theorem 2.1]{lima_loureiro} now is not positive, and the proof of the Nikishin property of the weight fails.
The coefficients $H_{2n}$ are positive and $H_{2n+1}$ are negative, but they never cancel, therefore the Gauss--Borel factorization of the moment matrix holds. Therefore, the system of weights $(W_1,W_2,\d x)$ and $(W_1,\check W_2,\d\mu)$ are perfect. From \eqref{eq:lambdas} we find that the Jacobi matrix is nonnegative whenever
$a<d$, $b<d$, $a<c$ and $b-1<c$. When the condition $\max(a,b)<\min(c,d)$ is meet the Jacobi matrix is a nonnegative case. However, for
$ a<d$, $b<d$, $a<c$ and $b-1<c<b$ we also get a nonnegative Jacobi matrix but \eqref{eq:min-max} is not fulfilled. 

The type I transition matrix $P_I$ is stochastic and uniform, and have stochastic factorizations. Only one pair of tuples, $\big(\frac 1 3, \frac 2 3, \frac1 2, 1\big), \big(\frac 2 3, \frac 1 3, \frac1 2, 1\big)$ have a uniform stochastic factorization. The type II transition matrix~$P_{II}$ is semi-stochastic having three states, the one with uniform stochastic factorization, or two states, the two remaining ones, which are sinks or sources. In a sink probability is destroyed, and in a source probability is created. In these case, the overall destroyed and created probability balance to zero. 

\paragraph{\textbf{The stochastic uniform tuples }$\big(\frac 1 3, \frac 2 3, \frac1 2, 1\big), \big(\frac 2 3, \frac 1 3, \frac1 2, 1\big)$}

In this case the Jacobi matrix and the type I stochastic matrix are
\begin{align*}
 J & =
 \left(\begin{NiceMatrix}%[columns-width = auto,margin]
 % [columns-width = 0.5cm]
 3\kappa& 1 & 0 & \Cdots & & \\[-5pt]
 6\kappa^2&3\kappa& 1 & \Ddots& & \\ 
 3\kappa^3& 3\kappa^2& 3\kappa& 1 &&\\ 
 0& \kappa^3&3\kappa^2& 3\kappa& 1 & \\
 \Vdots&\Ddots&\Ddots&\Ddots&\Ddots&\Ddots
\end{NiceMatrix}\right),
&
 P_I&= \left(\begin{NiceMatrix}[columns-width =auto]
 \frac{12}{27} &\frac{12}{27} &\frac{3}{27}& 0 & \Cdots& & \\
 \frac{8}{27} & \frac{12}{27} &\frac{6}{27} &\frac{1}{27} & \Ddots & & \\
 0& \frac{8}{27} & \frac{12}{27} &\frac{6}{27} &\frac{1}{27} & \Ddots & \\
 \Vdots& \Ddots& \frac{8}{27} & \frac{12}{27} &\frac{6}{27} &\frac{1}{27} & \\
 & & & \Ddots& \Ddots & \Ddots&\Ddots
 \end{NiceMatrix}
 \right),
\end{align*}
with corresponding diagram 
\begin{center}
 %{
 \tikzset{decorate sep/.style 2 args={decorate,decoration={shape backgrounds,shape=circle,shape size=#1,shape sep=#2}}}
 %}
 \begin{tikzpicture}[start chain = going right,
 -latex, every loop/.append style = {-latex}]\small
 \foreach \i in {0,...,5}
 \node[state, on chain,fill=gray!50!white] (\i) {\i};
 \foreach
 \i/\txt in {1/$\frac{6}{27}$/,2/$\frac{6}{27}$,3/$\frac{6}{27}$,4/$\frac{6}{27}$}
 \draw let \n1 = { int(\i+1) } in
 (\i) edge[line width=.22 mm,bend left,"\txt",below,color=Periwinkle] (\n1);
 
 \draw (0) edge[line width=.44 mm,bend left,"$\frac{12}{27}$",below,color=Periwinkle] (1);
 
 \foreach
 \i/\txt in {0/$\frac{8}{27}$,1/$\frac{8}{27}$/,2/$\frac{8}{27}$,3/$\frac{8}{27}$,4/$\frac{8}{27}$}
 \draw let \n1 = { int(\i+1) } in
 (\n1) edge[line width=.3 mm,bend left,below, "\txt",color=Mahogany,auto=right] (\i);
 
 \foreach
 \i/\txt in {1/$\frac{1}{27}$,1/$\frac{1}{27}$/,2/$\frac{1}{27}$,3/$\frac{1}{27}$}
 \draw let \n1 = { int(\i+2) } in
 (\i) edge[line width=.04 mm,bend left=65,color=MidnightBlue,"\txt"](\n1);
 
 \draw (0) edge[line width=.11 mm,bend left=65,color=MidnightBlue,"$\frac{3}{27}$"](2);
 
 \foreach \i/\txt in {1/$\frac{12}{27}$,2/$\frac{12}{27}$/,3/$\frac{12}{27}$,4/$\frac{12}{27}$,5/$\frac{12}{27}$}
 \draw (\i) edge[line width=.44 mm,loop below, color=NavyBlue,"\txt"] (\i);
 \draw (0) edge[line width=.44 mm,loop left, color=NavyBlue,"$\frac{12}{27}$"] (0);
 
 \draw[decorate sep={1mm}{4mm},fill] (10,0) -- (12,0);
 \end{tikzpicture}
 \begin{tikzpicture}
 \draw (4,-1.8) node
 {\begin{minipage}{0.8\textwidth}
 \begin{center}\small
 \textbf{Uniform type I Markov chain }
 \end{center}
 \end{minipage}};
 \end{tikzpicture}
\end{center}

The system of weights $(W_1,W_2,\d x)$ and $(W_1,\hat W_2,\d x)$ corresponding to the stochastic uniform tuples $\big(\frac 1 3, \frac 2 3, \frac1 2, 1\big)$ and $\big(\frac 2 3, \frac 1 3, \frac1 2, 1\big)$ are, respectively, 
\begin{align*}
 W_1 (x) & = \frac{\sqrt{3} (\vartheta_+(x)+\vartheta_-(x))}{4 \pi \sqrt[\leftroot{2}\uproot{2} 3]{x^{2}} \sqrt{1-x} },& 
 W_2 (x) & = \frac{3 \sqrt{3} (\vartheta_+^4(x)+\vartheta_-^4(x))}{16 \pi \sqrt[\leftroot{2}\uproot{2}3]{x^{2}}\sqrt{1-x} } , &
 \hat W_2 (x) & = \frac{3 \sqrt{3} ( \vartheta_+^2(x)+\vartheta_-^2(x))}{8 \pi \sqrt[\leftroot{2}\uproot{2} 3]{x}\sqrt{1-x} } .
\end{align*}
Solving the system of equations \eqref{eq:rho}
we find that $ \alpha = \frac{3}{2}$ and $\beta = -\frac{1}{2}$.
After some simplifications we get 
\begin{align*}
 \frac{3}{2} W_1 -\frac{1}{2} \hat W_2 & =3 \sqrt{3} \,
 \frac{2 (\vartheta_+(x)+\vartheta_-(x))+\sqrt[\leftroot{2}\uproot{2} 3]{x}(\vartheta_+^2(x)-\vartheta_-^2(x)) }{16 \pi \sqrt[\leftroot{2}\uproot{2} 3]{x^2} \sqrt{1-x} } ,
\end{align*}
but the numerator can be written as
$ (\vartheta_+ + \vartheta_-)^2 (\vartheta_+^2 - \vartheta_+ \vartheta_- + \vartheta_-^2) - (\vartheta_+^2 + \vartheta_-^2) \vartheta_+ \vartheta_- = \vartheta_+^4 + \vartheta_-^4 $,
and, consequently, we obtain 
\begin{align*}
 \frac{3}{2} W_1 -\frac{1}{2} \hat W_2 & = \frac{3 \sqrt{3}}{16 \pi \sqrt[\leftroot{2}\uproot{2} 3]{x^2}\sqrt{1-x}} \big( \vartheta_+^4 + \vartheta_-^4 \big) = W_2,
\end{align*}
i.e., $\hat W_2=3 W_1-2W_2$, and both set of measures are in the same \emph{gauge} class.

The type II multiple orthogonal polynomials are
\begin{align}\label{eq:B_3F2_1}
 B^{(n)}(x) & = 3(-\kappa)^n
 \tensor[_3]{F}{_2}\hspace*{-3pt}\left[{\begin{NiceArray}{c}[small]-n,\; \frac{n+1}{2} , \;\frac{n}{2} \\[3pt]
 \frac{1}{3},\;\frac{2}{3}\end{NiceArray}};x\right] , && n \in \N . 
\end{align}
To evaluate these polynomials at unity notice that the following constant coefficients  order four  homogeneous recurrence is satisfied
$ B^{(n)} (1) = B^{(n+1)} (1)+3 \kappa B^{(n)} (1) + 3\kappa^2 B^{(n-1)} (1) + \kappa^3 B^{(n-2)} (1)$,
that has characteristic polynomial in $p(t)=(\kappa+t)^3-t^2=(t-2\kappa)^2\big(t+\frac{\kappa}{4}\big)$. Hence,
$	B^{(n) }(1) = \Big(-\frac{\kappa}{4}\Big)^n c_1+\big(2\kappa\big)^n (c_2+n c_3)$,
for appropriate constants $\{c_1,c_2,c_3\}$ such that the initial conditions 
$B^{(1)} (1) = \frac{5}{9}$, $B^{(2)} (1) = \frac{43}{243}$ and $B^{(3)} (1)=\frac{341}{6561}$
are satisfied. Hence,
\begin{align}\label{eq:Bn(1)}
	B^{(n)}(1)&= \frac{2\times 8^n+(-1)^n}{27^{n}}, & 
	\tensor[_3]{F}{_2}\hspace*{-3pt}\left[{\begin{NiceArray}{c}[small]-n,\; \frac{n+1}{2} , \;\frac{n}{2} \\[3pt]
			\frac{1}{3},\;\frac{2}{3}\end{NiceArray}};1\right]&=\frac{1+2(-8)^{n}}{3 \times 4^{n}}, &
	n&\in\N.
\end{align}

The stochastic factorization of the type I Markov matrix $P_I=P_{I}^L P_{I,2}^U P_{I,1}^U$
\begin{align*}P_{I}^L&=\left(
 \begin{NiceMatrix}[columns-width = 0.1cm]
 1 & 0 &\Cdots & & \\
 \frac{2}{3} & \frac{1}{3} & \Ddots & & \\
 0 & \frac{2}{3} & \frac{1}{3} & & \\
 \Vdots & \Ddots & \frac{2}{3} & \frac{1}{3} & \\
 &&\Ddots&\Ddots&\Ddots
 \end{NiceMatrix}
 \right), & P_{I,1}^U&=P_{I,2}^U= \left(\begin{NiceMatrix}[columns-width =auto]
 \frac{2}{3} &\frac{1}{3} & 0 & \Cdots& & \\
 0 & \frac{2}{3} & \frac{1}{3} & \Ddots & & \\
 \Vdots& \Ddots& \frac{2}{3} & \frac{1}{3} & & \\[2pt]
 & & &\frac{2}{3} & \frac{1}{3}& \\[2pt]
 & & & & \Ddots & \Ddots
 \end{NiceMatrix}\right).
\end{align*}
We have a uniform stochastic factorization, all factors are uniform. This is a very peculiar property among the hypergeometric type I Markov matrices. 
\begin{teo}\label{teo:uniform_stochastic_factorization}
There is only one hypergeometric type stochastic matrix $P_I$ with a uniform stochastic factorization, and its hypergeometric tuples are the uniform stochastic tuples $\big(\frac 1 3, \frac 2 3, \frac1 2, 1\big)$ and $\big(\frac 2 3, \frac 1 3, \frac1 2, 1\big)$.
\end{teo}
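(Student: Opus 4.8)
The plan is to combine the explicit entrywise description of the stochastic factorization $P_I = P_I^L P_{I,2}^U P_{I,1}^U$ obtained in Theorem \ref{theorem:stochastic factorization} with the list of twelve uniform tuples in Theorem \ref{teo:12}. The point is that a \emph{uniform} stochastic factorization means that each of the three factors $P_I^L$, $P_{I,2}^U$, $P_{I,1}^U$ is itself a banded Toeplitz matrix (off the first row/column), i.e.\ all of their nonzero entries are constant along diagonals; this is strictly stronger than $P_I$ itself being uniform, since a product of non-Toeplitz factors can be Toeplitz. So the strategy is: assume $(a,b,c,d)$ is one of the twelve uniform tuples, write down the entries of the three factors from Theorem \ref{theorem:stochastic factorization}, and impose that these entries be independent of $n$.

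First I would recall from Theorem \ref{theorem:stochastic factorization} that, for $n\in\N_0$,
\begin{align*}
 (P_I^L)_{2n,2n}&=\frac{n-1+d}{3n-1+d}, & (P_I^L)_{2n+1,2n+1}&=\frac{n+c}{3n+1+c},\\
 (P_{I,2}^U)_{2n,2n}&=\frac{2n+b}{3n+d}, & (P_{I,2}^U)_{2n+1,2n+1}&=\frac{2n+1+b}{3n+2+c},\\
 (P_{I,1}^U)_{2n,2n}&=\frac{2n+a}{3n+c}, & (P_{I,1}^U)_{2n+1,2n+1}&=\frac{2n+1+a}{3n+1+d},
\end{align*}
together with the corresponding off-diagonal entries, which are $1$ minus these (each row has two nonzero entries summing to $1$). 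A factor is uniform precisely when its even-indexed diagonal entries do not depend on $n$ \emph{and} equal its odd-indexed diagonal entries, which by the above forces relations among $a,b,c,d$ of the form $d = \tfrac32(\text{something})$ etc. For instance $(P_{I,1}^U)_{2n,2n}$ is $n$-independent iff $2a = c$, and similarly $(P_{I,1}^U)_{2n+1,2n+1}$ is $n$-independent iff $2(1+a) = 1+d$, i.e.\ $d = 2a+1$; matching the two constant values then further ties things together. I would write out the three pairs of such constancy conditions (six rational-function-in-$n$ equalities, which reduce to linear equations in $a,b,c,d$), intersect their solution set with the explicit list \eqref{eq:almost uniform tuples}, and verify by direct substitution that only $\big(\tfrac13,\tfrac23,\tfrac12,1\big)$ and $\big(\tfrac23,\tfrac13,\tfrac12,1\big)$ survive — consistent with the already-displayed uniform factors $P_I^L$ with constant diagonal $\tfrac13$ and $P_{I,1}^U = P_{I,2}^U$ with constant diagonal $\tfrac23$ for that tuple. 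For the other five couples one exhibits a single value of $n$ (e.g.\ comparing $n=0$ and $n=1$ entries of one factor) at which a diagonal entry changes, which already breaks uniformity.

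The main obstacle is not conceptual but organizational: one must be careful that "uniform stochastic factorization" is interpreted as \emph{all three} factors being Toeplitz-off-the-corner, and that the factorization in question is the canonical one of Theorem \ref{teo:stochastic _factorization}/\ref{theorem:stochastic factorization} rather than some other hypothetical factorization into stochastic pure-birth/pure-death pieces. I would therefore add a short remark that the stochastic $LU$ factorization of a banded matrix into bidiagonal stochastic factors is essentially unique once one fixes the bandwidths (the diagonal normalizing matrices $D_{I,1}, D_{I,2}$ are determined by the requirement that each factor be stochastic, cf.\ the proof of Theorem \ref{teo:stochastic _factorization}), so checking the one canonical factorization suffices. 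The remaining work — substituting each of the twelve tuples into the six explicit rational expressions above and reading off constancy — is a routine finite calculation (Mathematica-assisted, as in Theorem \ref{teo:12}), and I would simply state that it was carried out, displaying the surviving tuple's factors as already done in the text preceding the theorem.
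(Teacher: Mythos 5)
Your plan is essentially the paper's proof: Theorem \ref{theorem:stochastic factorization} gives the entries of the three factors as ratios of arithmetic progressions in $n$, one imposes $n$-independence row by row, and the resulting linear conditions on $(a,b,c,d)$ single out $\big(\tfrac13,\tfrac23,\tfrac12,1\big)$ (with its gauge partner $\big(\tfrac23,\tfrac13,\tfrac12,1\big)$). Two points, however. First, your sample constancy conditions are wrong: $\frac{2n+a}{3n+c}$ is independent of $n$ if and only if $3a=2c$ (not $2a=c$), and $\frac{2n+1+a}{3n+1+d}$ is constant if and only if $3(1+a)=2(1+d)$, i.e.\ $3a-2d=-1$ (not $d=2a+1$). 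With the conditions as you state them the tuple $\big(\tfrac13,\tfrac23,\tfrac12,1\big)$ would \emph{fail} your test ($2a=\tfrac23\neq\tfrac12=c$), so carrying out the plan literally would contradict the theorem; the correct system is $d=1$, $c=\tfrac12$, $3b-2d=0$, $3b-2c=1$, $3a-2c=0$, $3a-2d=-1$, exactly as the paper derives. Second, the paper solves this linear system over \emph{all} hypergeometric tuples (the four pure-birth conditions force $(a,b,c,d)=(a,a+\tfrac13,\tfrac32 a,\tfrac32 a+\tfrac12)$ and then $d=1$ gives $a=\tfrac13$), whereas you propose to intersect with the list of twelve uniform tuples. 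That restriction needs justification, since the theorem asserts uniqueness among all hypergeometric $P_I$: you would have to argue that a uniform factorization forces $P_I$, hence the Jacobi matrix, to be almost uniform, and then invoke Theorem \ref{teo:12}. That chain is fillable (constancy of $P_{I,n+1,n}=q^{(n)}(1)/q^{(n+1)}(1)$ propagates to constancy of $\alpha_n,\beta_n,\gamma_n$), but the paper's direct solution of the six linear conditions makes it unnecessary.
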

\begin{proof}
 Here we use Theorem \ref{theorem:stochastic factorization} where the factorization into pure birth and death factors $P_I=LU_1, U_2$ was given. 
 The even rows of the pure death factor $P^{L}_I$ are uniform if and only if $d=1$, while the odd rows are uniform if and only if $c=\frac{1}{2}$:
 \begin{align*} 
 %\hspace*{-.5cm}
 \begin{aligned}
 P_I^L\big|_{d=1}&=\left(\begin{NiceMatrix}%[columns-width=0.5cm]
 1 & 0 &\Cdots & & & & & \\%[2pt]
 \frac{1}{c+1} & \frac{c}{c+1} & \Ddots & & & & & \\[1pt]
 0 & \frac{2}{3} & \frac{1}{3} & & & & & \\%[2pt]
 \Vdots & \Ddots & \frac{3}{c+4} & \frac{c+1}{c+4} & & && \\[1pt]
 & & & \frac{2}{3} & \frac{1}{3} & & & \\[1pt]
 & & & & \frac{5}{c+7} & \frac{c+2}{c+7} && \\[1pt]
 & & & & & \frac{2}{3} & \frac{1}{3}& \\[1pt]
 % & & & & & & \frac{7}{c+10} & \frac{c+3}{c+10} & \\%[2pt]
 &&&&&\Ddots&\Ddots&\Ddots
 \end{NiceMatrix}
 \right), & P_I^L\big|_{c=\frac{1}{2}}&=\left(
 \begin{NiceMatrix}%[columns-width = 0.5cm]
 1 & 0 &\Cdots & & & & & \\
 \frac{2}{3} & \frac{1}{3} & \Ddots & & & & & \\[1pt]
 0 & \frac{2}{d+2} & \frac{d}{d+2} & & & & & \\
 \Vdots & \Ddots & \frac{2}{3} & \frac{1}{3} & & & & \\[1pt]
 & & & \frac{4}{d+5} & \frac{d+1}{d+5} & & & \\[1pt]
 & & & & \frac{2}{3} & \frac{1}{3} & & \\[1pt]
 & & & & & \frac{6}{d+8} & \frac{d+2}{d+8}& \\%[2pt]
 %& & & & & & \frac{2}{3} & \frac{1}{3} & \\[2pt]
 &&&&&\Ddots&\Ddots&\Ddots
 \end{NiceMatrix}
 \right).
 \end{aligned}
 \end{align*}
 Therefore, the pure death factor is uniform if and only if $c=\frac{1}{2}$ and $d=1$ and
% \begin{align*}
 $ P_L:= P_I^L\big|_{c=\frac{1}{2},d=1}=\left(
 \begin{NiceMatrix}[small]
 1 & 0 &\Cdots & \\[-3pt]
 \frac{2}{3} & \frac{1}{3} & \Ddots & \\[3pt]
 0 & \frac{2}{3} & \frac{1}{3} & &\\
 \Vdots & \Ddots & \Ddots& \Ddots \\
 % & & & \frac{2}{3} & \frac{1}{3} & \\[1pt]
 % & & & & \frac{2}{3} & \frac{1}{3} & & \\[1pt]
 % & & & & & \frac{2}{3} & \frac{1}{3} & \\%[2pt]
 %& & & & & & \frac{2}{3} & \frac{1}{3} & \\[2pt]
 % &&\Ddots&\Ddots&\Ddots
 \end{NiceMatrix}
 \right)$.
% \end{align*}
 For the second pure birth factor $P_{I,2}^U$ we have uniformity on even rows if and only if $3b-2d=0$ and on the odd rows if and only if $3b-2c=1$:
 {\scriptsize\begin{align*}
 \hspace*{-.5cm}\begin{aligned}
 P_{I,2}^U\big|_{3b-2d=0}&=\left(
 \begin{NiceMatrix}[columns-width = 0.3cm]
 \frac{2}{3} &\frac{1}{3} & 0 & \Cdots& & & & \\
 0 & \frac{b+1}{c+2} & \frac{c-b+1}{c+2} & \Ddots & & & &\\
 \Vdots& \Ddots& \frac{2}{3} & \frac{1}{3}& & & &\\[2pt]
 & & & \frac{b+3}{c+5} & \frac{c-b+2}{c+5} & & & \\[2pt]
 & & & & \frac{2}{3}& \frac{1}{3}& & \\[2pt]
 & & & & & \frac{b+5}{c+8} & \frac{c-b+3}{c+8} & \\[2pt]
 % & & & & & & \frac{2}{3}& \frac{1}{3} & \\
 & & & & & & \Ddots & \Ddots
 \end{NiceMatrix}
 \right),&
 P_{I,2}^U\big|_{3b-2c=1}&=\left(
 \begin{NiceMatrix}[columns-width = 0.3cm]
 \frac{b}{d} &\frac{d-b}{d} & 0 & \Cdots& & & & \\
 0 & \frac{2}{3} & \frac{1}{3} & \Ddots & & & & \\
 \Vdots& \Ddots& \frac{b+2}{d+3} & \frac{d-b+1}{d+3}& & & &\\[2pt]
 & & &\frac{2}{3} & \frac{1}{3}& & & \\[2pt]
 & & & & \frac{b+4}{d+6} & \frac{d-b+2}{d+6} & & \\[2pt]
 & & & & & \frac{2}{3} & \frac{1}{3}& \\[2pt]
 % & & & & & & \frac{b+6}{d+9} & \frac{d-b+3}{d+9} & \\
 & & & & & & \Ddots & \Ddots
 \end{NiceMatrix}
 \right).
 \end{aligned}
 \end{align*}}
 In fact, this factor is uniform if only if $3b-2c=1$ and $3b-2d=0$, and
% \begin{align*}
 $ P_U:= P_{I,2}^U\bigg|_{\substack{3b-2d=0\\
 3b-2c=1 }}=\left(
 \begin{NiceMatrix}[small]
 \frac{2}{3} &\frac{1}{3} & 0 & \Cdots \\
 0 & \frac{2}{3} & \frac{1}{3} & \Ddots \\
 \Vdots& \Ddots& \Ddots& \Ddots 
 % & & &\frac{2}{3} & \frac{1}{3}& \\[2pt]
 % & & & &\frac{2}{3} & \frac{1}{3} & \\[2pt]
 % & & & & & \frac{2}{3} & \frac{1}{3}& \\[2pt]
 % & & & & \Ddots & \Ddots
 \end{NiceMatrix}
 \right)$.
% \end{align*}
 For the first pure birth factor we have uniformity on the even rows if and only if $3a-2c=0$ and on odd rows if and only if $3a-2d=-1$:
 {\scriptsize\begin{align*}
 \hspace*{-.35cm}\begin{aligned}
 P_{I,1}^U\big|_{3a-2c=0}&=\left(
 \begin{NiceMatrix}[columns-width = 0.3cm]
 \frac{2}{3} & \frac{1}{3}& 0 & \Cdots& & & & \\
 0 & \frac{a+1}{d+1} & \frac{d-a}{d+1} & \Ddots & & & &\\
 \Vdots& \Ddots& \frac{2}{3} & \frac{1}{3}& & & & \\[2pt]
 & & & \frac{a+3}{d+4} & \frac{d-a+1}{d+4} & & & \\[2pt]
 & & & & \frac{2}{3} & \frac{1}{3}& & \\[2pt]
 & & & & & \frac{a+5}{d+7} & \frac{d-a+2}{d+7} & \\[2pt]
 % & & & & & & \frac{2}{3} & \frac{1}{3}& \\
 & & & & & & \Ddots & \Ddots
 \end{NiceMatrix}
 \right),& P_{I,1}^U\big|_{3a-2d=-1}&=\left(
 \begin{NiceMatrix}[columns-width = 0.3cm]
 \frac{a}{c} &\frac{c-a}{c} & 0 & \Cdots& & & & \\
 0 & \frac{2}{3} & \frac{1}{3}& \Ddots & & & &\\
 \Vdots& \Ddots& \frac{a+2}{c+3} & \frac{c-a+1}{c+3}& & & &\\[2pt]
 & & & \frac{2}{3} & \frac{1}{3}& & & \\[2pt]
 & & & & \frac{a+4}{c+6} & \frac{c-a+2}{c+6} & & \\[2pt]
 & & & & & \frac{2}{3} & \frac{1}{3}& \\[2pt]
 % & & & & & & \frac{a+6}{c+9} & \frac{c-a+3}{c+9} & \\
 & & & & & & \Ddots & \Ddots
 \end{NiceMatrix}
 \right).
 \end{aligned}
 \end{align*}}
 This factor is uniform if and only if $3b-2c=1$ and $3b-2d=0$ with 
$ P_{I,2}^U\Big|_{\substack{3a-2c=0,\\
 3a-2d=-1 }}=P_U$.
 Hence, both pure birth factors $P^U_{I,1}$ and $P^U_{I,2}$ are uniform whenever we have 
 $3b-2d=0$, $3b-2c=1$, $3a-2c=0$ and $3a-2d=-1$
 whose solution is
 $(a,b,c,d)= \big( a, a+\frac{1}{3}, \frac{3}{2}a, \frac{3}{2}a+\frac{1}{2}\big)$. Then, to have $d=1$ we need $a=\frac{1}{3}$ and we get the result.
% Thus, for these parameters we have a uniform upper stochastic matrix $P_U^2=P^U_{I,1}P^U_{I,2}$ 
% \begin{align*}
% P_U^2=\left(
% \begin{NiceMatrix}[columns-width =auto]
% \frac{4}{9} &\frac{4}{9} &\frac{1}{9}& 0 & \Cdots& & \\
% 0 & \frac{4}{9} &\frac{4}{9} &\frac{1}{9} & \Ddots & & \\
% \Vdots& \Ddots& \frac{4}{9} &\frac{4}{9} &\frac{1}{9} & & \\[2pt]
% % & & &\frac{2}{3} & \frac{1}{3}& \\[2pt]
% % & & & &\frac{2}{3} & \frac{1}{3} & \\[2pt]
% % & & & & & \frac{2}{3} & \frac{1}{3}& \\[2pt]
% & & & \Ddots& \Ddots & \Ddots&
% \end{NiceMatrix}
% \right).
% \end{align*}
% If we require to this set of parameters to fulfill $\max(a,b)<\min(c,d)$ we find that $a+\frac{1}{3}<\frac{3}{2}a$ so that $a>\frac{2}{3}$, that implies $b>1$, $c>1$ and $d>\frac{7}{6}$.
\end{proof}

The corresponding type II transition matrix 
%\begin{align*}%\label{eq:semi_stochastic_uniform_II}
$ P_{\text{II}} 
 = \left(\begin{NiceMatrix}[small]
 %[columns-width = auto,margin]%[columns-width = 0.5cm]
 \frac{12}{27}& \frac{8}{27} & 0 & \Cdots & & \\ 
 \frac {12} {27}& \frac{12}{27}& \frac{8}{27} & \Ddots & & \\[5pt]
 \frac{3}{27} & \frac {6} {27}& \frac{12}{27}& \frac{8}{27} &&\\[5pt] 
 0& \frac{1}{27} &\frac {6} {27}& \frac{12}{27}& \frac{8}{27} & \\ 
 \Vdots&\Ddots&\Ddots&\Ddots&\Ddots&\Ddots
 \end{NiceMatrix}\right)
$
%\end{align*}
%This transition matrix 
is stochastic but for the first three rows, this models a recurrent random walk where the first state is a sink and the second and third states are sources. The diagram if this random walk is
\begin{center}
 %{
 \tikzset{decorate sep/.style 2 args={decorate,decoration={shape backgrounds,shape=circle,shape size=#1,shape sep=#2}}}
 %}
 \begin{tikzpicture}[start chain = going right,
 -latex, every loop/.append style = {-latex}]\small
 \foreach \i in {0,...,5}
 \node[state, on chain,fill=gray!50!white] (\i) {\i};
 \foreach
 \i/\txt in {0/$\frac{8}{27}$,1/$\frac{8}{27}$/,2/$\frac{8}{27}$,3/$\frac{8}{27}$,4/$\frac{8}{27}$}
 \draw let \n1 = { int(\i+1) } in
 (\i) edge[line width=.3 mm,bend left,"\txt",color=Periwinkle] (\n1);
 \foreach
 \i/\txt in {1/$\frac{6}{27}$/,2/$\frac{6}{27}$,3/$\frac{6}{27}$,4/$\frac{6}{27}$}
 \draw let \n1 = { int(\i+1) } in
 (\n1) edge[line width=.22 mm,bend left=50,above, "\txt",color=Mahogany,auto=right] (\i);
 \draw (1) edge[line width=.44 mm,bend left=50,above, auto=right,color=Mahogany,"$\frac{12}{27}$"] (0);
 
 \foreach
 \i/\txt in {1/$\frac{1}{27}$/,2/$\frac{1}{27}$,3/$\frac{1}{27}$}
 \draw let \n1 = { int(\i+2) } in
 (\n1) edge[line width=.04 mm,bend left=60,color=RawSienna,"\txt"] (\i);
 \draw (2) edge[line width=.12 mm,bend left=50,above,color=RawSienna,"$\frac{12}{27}$"] (0);
 
 \foreach \i/\txt in {1/$\frac{12}{27}$,2/$\frac{12}{27}$/,3/$\frac{12}{27}$,4/$\frac{12}{27}$,5/$\frac{12}{27}$}
 \draw (\i) edge[line width=.44 mm,loop above,color=NavyBlue, "\txt"] (\i);
 \draw (0) edge[line width=.44 mm,loop left, color=NavyBlue,"$\frac{12}{27}$"] (0);
 
 \draw[decorate sep={1mm}{4mm},fill] (10,0) -- (12,0);
 \end{tikzpicture}
 \begin{tikzpicture}
 \draw (4,-1.8) node
 {\begin{minipage}{0.8\textwidth}
 \begin{center}\small
 \textbf{Uniform type II Markov chain with one sink and two sources}
 \end{center}
 \end{minipage}};
 \end{tikzpicture}
\end{center}

\paragraph{\textbf{The stochastic uniform tuples }
$\Big(\frac 2 3, \frac 4 3, 1, \frac 3 2 \Big)$ and $\Big( \frac 4 3, \frac 2 3, 1, \frac 3 2 \Big)$}

In this case the Jacobi matrix and the type I stochastic matrix are
\begin{align*}
J & =
\left(\begin{NiceMatrix}%[columns-width = auto,margin]
 % [columns-width = 0.5cm]
 4\kappa& 1 & 0 & \Cdots & & \\[-5pt]
 5\kappa^2&3\kappa& 1 & \Ddots& & \\ 
 \kappa^3& 3\kappa^2& 3\kappa& 1 &&\\ 
 0& \kappa^3&3\kappa^2& 3\kappa& 1 & \\
 \Vdots&\Ddots&\Ddots&\Ddots&\Ddots&\Ddots
\end{NiceMatrix}\right),
&
 P_I&= \left(\begin{NiceMatrix}[columns-width =auto]
 \frac{16}{27} &\frac{10}{27} &\frac{1}{27}& 0 & \Cdots& & \\
 \frac{8}{27} & \frac{12}{27} &\frac{6}{27} &\frac{1}{27} & \Ddots & & \\
 0& \frac{8}{27} & \frac{12}{27} &\frac{6}{27} &\frac{1}{27} & \Ddots & \\
 \Vdots& \Ddots& \frac{8}{27} & \frac{12}{27} &\frac{6}{27} &\frac{1}{27} & \\
 & & & \Ddots& \Ddots & \Ddots&\Ddots
 \end{NiceMatrix}
 \right),
\end{align*}
 
with corresponding diagram 
\begin{center}
 %{
 \tikzset{decorate sep/.style 2 args={decorate,decoration={shape backgrounds,shape=circle,shape size=#1,shape sep=#2}}}
 %}
 \begin{tikzpicture}[start chain = going right,
 -latex, every loop/.append style = {-latex}]\small
 \foreach \i in {0,...,5}
 \node[state, on chain,fill=gray!50!white] (\i) {\i};
 \foreach
 \i/\txt in {1/$\frac{6}{27}$/,2/$\frac{6}{27}$,3/$\frac{6}{27}$,4/$\frac{6}{27}$}
 \draw let \n1 = { int(\i+1) } in
 (\i) edge[line width=.22 mm,bend left,"\txt",below,color=Periwinkle] (\n1);
 
 \draw (0) edge[line width=.37 mm,bend left,"$\frac{10}{27}$",below,color=Periwinkle] (1);
 
 \foreach
 \i/\txt in {0/$\frac{8}{27}$,1/$\frac{8}{27}$/,2/$\frac{8}{27}$,3/$\frac{8}{27}$,4/$\frac{8}{27}$}
 \draw let \n1 = { int(\i+1) } in
 (\n1) edge[line width=.3 mm,bend left,below, "\txt",color=Mahogany,auto=right] (\i);
 
 \foreach
 \i/\txt in {0/$\frac{1}{27}$,1/$\frac{1}{27}$,1/$\frac{1}{27}$/,2/$\frac{1}{27}$,3/$\frac{1}{27}$}
 \draw let \n1 = { int(\i+2) } in
 (\i) edge[line width=.04 mm,bend left=65,color=MidnightBlue,"\txt"](\n1); 
 % \draw (0) edge[line width=.11 mm,bend left=65,color=MidnightBlue,"$\frac{3}{27}$"](2);
 
 \foreach \i/\txt in {1/$\frac{12}{27}$,2/$\frac{12}{27}$/,3/$\frac{12}{27}$,4/$\frac{12}{27}$,5/$\frac{12}{27}$}
 \draw (\i) edge[line width=.44 mm,loop below, color=NavyBlue,"\txt"] (\i);
 \draw (0) edge[line width=.49 mm,loop left, color=NavyBlue,"$\frac{16}{27}$"] (0);
 
 \draw[decorate sep={1mm}{4mm},fill] (10,0) -- (12,0);
 \end{tikzpicture}
 \begin{tikzpicture}
 \draw (4,-1.8) node
 {\begin{minipage}{0.8\textwidth}
 \begin{center}\small
 \textbf{Uniform type I Markov chain }
 \end{center}
 \end{minipage}};
 \end{tikzpicture}
\end{center}
The system of weights $(W_1,W_2,\d x)$ and $(W_1,\hat W_2,\d x)$ corresponding to the stochastic uniform tuples $\big(\frac 2 3, \frac 4 3, 1,\frac3 2\big)$ and $\big(\frac 4 3, \frac 2 3, 1, \frac3 2,\big)$ are, respectively, 
\begin{align*}
 W_1 (x) & = \frac{3\sqrt{3} (\vartheta_+^2(x)+\vartheta_-^2(x))}{8 \pi \sqrt[\leftroot{2}\uproot{2} 3]{x} \sqrt{1-x} },& 
 W_2 (x) & = \frac{9\sqrt{3} (\vartheta_+^5(x)+\vartheta_-^5(x))}{32 \pi \sqrt[\leftroot{2}\uproot{2}3]{x}\sqrt{1-x} } , &
 \hat W_2 (x) & = \frac{9 \sqrt{3} \sqrt[\leftroot{2}\uproot{2} 3]{x}( (\vartheta_+(x)+\vartheta_-(x)))}{16\pi \sqrt{1-x} } .
\end{align*}
Solving the system of equations
\eqref{eq:rho}
we find that $ \alpha = \frac{3}{2}$ and $\beta = -\frac{1}{2}$.
After some simplifications we get 
\begin{align*}
 \frac{3}{2} W_1 -\frac{1}{2} \hat W_2 & =9 \sqrt{3} \,
 \frac{2 (\vartheta_+^2(x)+\vartheta_-^2(x))-\sqrt[\leftroot{2}\uproot{2} 3]{x^2}(\vartheta_+(x)+\vartheta_-(x)) }{32 \pi \sqrt[\leftroot{2}\uproot{2} 3]{x} \sqrt{1-x} } ,
\end{align*}
but the numerator can be written as
$ (\vartheta_+ + \vartheta_-) (\vartheta_+^2 - \vartheta_+ \vartheta_- + 
\vartheta_-^2) (\vartheta_-^2 + \vartheta_+^2) - (\vartheta_- + 
\vartheta_+) (\vartheta_+ \vartheta_-)^2$,
and, consequently, we obtain 
\begin{align*}
 \frac{3}{2} W_1 -\frac{1}{2} \hat W_2 & = \frac{9 \sqrt{3}}{32 \pi \sqrt[\leftroot{2}\uproot{2} 3]{x}\sqrt{1-x}} \big( \vartheta_+^5+ \vartheta_-^5\big) = W_2,
\end{align*}
i.e., $\hat W_2=3 W_1-2W_2$, and both sets of weights are in the same \emph{gauge} class.

The type II multiple orthogonal polynomials are
%\begin{align*}
% B^{(n)}(x)= (-1)^n\frac{(\frac{1}{3})_n(\frac{2}{3})_n}{\big(\frac{1}{2}+\big\lfloor\frac{n}{2}\big\rfloor\big)_n\big(1+\big\lfloor\frac{n-1}{2}
% \big\rfloor\big)_n }\,
% \tensor[_3]{F}{_2}\hspace*{-3pt}\left[{\begin{NiceArray}{c}[small]-n,\; 1+\big\lfloor\frac{n}{2}\big\rfloor,\;\frac{3}{2}+\big\lfloor\frac{n-1}{2}\big\rfloor \\\frac{1}{3}\;\frac{2}{3}\end{NiceArray}};x\right] . 
%\end{align*}
%\begin{align*}\hspace*{-.75cm}
% \begin{aligned}
% B^{(2n)}(x)&= \frac{(\frac{1}{3})_{2n}(\frac{2}{3})_{2n}}{\big(n+\frac{1}{2}\big)_{2n}(n)_{2n} }\,
% \tensor[_3]{F}{_2}\hspace*{-3pt}\left[{\begin{NiceArray}{c}[small]-2n,\; n+\frac{1}{2},\;n \\[4pt]\frac{1}{3}\;\frac{2}{3}\end{NiceArray}};x\right] ,&
% B^{(2n+1)}(x)&= -\frac{(\frac{1}{3})_{2n+1}(\frac{2}{3})_n}{\big(n+\frac{1}{2}\big)_{2n+1}\big(n+1\big)_{2n+1} }\,
% \tensor[_3]{F}{_2}\hspace*{-3pt}\left[{\begin{NiceArray}{c}[small]-2n-1,\; n+\frac{1}{2},\;n+1 \\[4pt]\frac{1}{3}\;\frac{2}{3}\end{NiceArray}};x\right] . 
% \end{aligned}
%\end{align*}
\begin{align}\label{eq:B_3F2_2}
 B^{(n)}(x) & = 
(3 n+1) \left( %\frac{4}{27}
- \kappa \right)^n
 \, \tensor[_3]{F}{_2}\hspace*{-3pt}\left[{\begin{NiceArray}{c}[small]-n,\; \frac{n+1}{2} , \;\frac{n+2}{2} \\[3pt]
 \frac{2}{3},\;\frac{4}{3}\end{NiceArray}};x\right], && n \in \N . 
\end{align}

Following similar arguments as for the deduction of \eqref{eq:Bn(1)}, we get the following values at unity of the type II multiple orthogonal polynomials and generalized hypergeometric functions 
\begin{align*}
	B^{(n)}(1)&= \frac{(-1)^{n+1}+4 \times 8^{n}}{3 \times 27^{n}}
%\frac{2(9 n+4)8^n +(-1)^n}{9\times 27^n}
 , 
& 
\tensor[_3]{F}{_2}\hspace*{-3pt}\left[{\begin{NiceArray}{c}[small]-n,\; \frac{n+1}{2} , \;\frac{n+2}{2} \\[3pt]
		\frac{2}{3},\;\frac{4}{3}\end{NiceArray}};1\right]&=
\frac{4 (-1)^{-n} 8^{n} - 1}{3 (3 n+1) 4^{n}}
%\frac{1+2 (9 n+4)(-8)^n }{9 \times 4^{n}}
 , 
 &
	n&\in\N.
\end{align*}

The stochastic factorization of the type I Markov matrix $P_I=P_{I}^L P_{I,2}^U P_{I,1}^U$
\begin{align*}
\hspace{-.35cm}
P_{I}^L&=\left(
 \begin{NiceMatrix}[columns-width = 0.1cm]
 1 & 0 &\Cdots & & \\
 \frac{1}{2} & \frac{1}{2} & \Ddots & & \\
 0 & \frac{4}{7} & \frac{3}{7} & & \\
 \Vdots & \Ddots & \frac{3}{5} & \frac{2}{5} & \\
 &&\Ddots&\Ddots&\Ddots
 \end{NiceMatrix}
 \right), & P_{I,2}^U&= \left(\begin{NiceMatrix}[columns-width =auto]
 \frac{2}{3} &\frac{1}{3} & 0 & \Cdots& & \\
 0 & \frac{2}{3} & \frac{1}{3} & \Ddots & & \\
 \Vdots& \Ddots& \frac{2}{3} & \frac{1}{3} & & \\[2pt]
 & & &\frac{2}{3} & \frac{1}{3}& \\[2pt]
 % & & & &\frac{2}{3} & \frac{1}{3} & \\[2pt]
 % & & & & & \frac{2}{3} & \frac{1}{3}& \\[2pt]
 & & & & \Ddots & \Ddots
 \end{NiceMatrix}\right),& P_{I,I}^U&= \left(\begin{NiceMatrix}[columns-width =auto]
 \frac{8}{9} &\frac{1}{9} & 0 & \Cdots& & \\
 0 & \frac{7}{9}&\frac{2}{9}& \Ddots & & \\
 \Vdots& \Ddots& \frac{20}{27} & \frac{7}{27} & & \\[2pt]
 & & &\frac{13}{18} & \frac{5}{18}& \\[2pt]
 % & & & &\frac{2}{3} & \frac{1}{3} & \\[2pt]
 % & & & & & \frac{2}{3} & \frac{1}{3}& \\[2pt]
 & & & & \Ddots & \Ddots
\end{NiceMatrix}\right).
 % \left(\begin{NiceMatrix}[columns-width =auto]
 % \frac{4}{9} &\frac{4}{9} &\frac{1}{9}& 0 & \Cdots& & \\
 % 0 & \frac{4}{9} &\frac{4}{9} &\frac{1}{9} & \Ddots & & \\
 % \Vdots& \Ddots& \frac{4}{9} &\frac{4}{9} &\frac{1}{9} & & \\
 % & & & \Ddots& \Ddots & \Ddots&
 % \end{NiceMatrix}
 % \right) 
\end{align*}
There is only a uniform factor, one of the two pure births.
The type II transition matrix 
%\begin{align*}%\label{eq:semi_stochastic_uniform_II}
 $P_{\text{II}} 
 = \left(\begin{NiceMatrix}[small]
 %[columns-width = auto,margin]%[columns-width = 0.5cm]
 \frac{16}{27}& \frac{8}{27} & 0 & \Cdots & & \\ 
 \frac {10} {27}& \frac{12}{27}& \frac{8}{27} & \Ddots & & \\[5pt]
 \frac{1}{27} & \frac {6} {27}& \frac{12}{27}& \frac{8}{27} &&\\[5pt] 
 0& \frac{1}{27} &\frac {6} {27}& \frac{12}{27}& \frac{8}{27} & \\ 
 \Vdots&\Ddots&\Ddots&\Ddots&\Ddots&\Ddots
 \end{NiceMatrix}\right)
$
%\end{align*}
 is stochastic but for the two first rows, representing a recurrent random walk with the first state being a sink, and the second a source. The diagram of this random walk is
\begin{center}
 %{
 \tikzset{decorate sep/.style 2 args={decorate,decoration={shape backgrounds,shape=circle,shape size=#1,shape sep=#2}}}
 %}
 \begin{tikzpicture}[start chain = going right,
 -latex, every loop/.append style = {-latex}]\small
 \foreach \i in {0,...,5}
 \node[state, on chain,fill=gray!50!white] (\i) {\i};
 \foreach
 \i/\txt in {0/$\frac{8}{27}$,1/$\frac{8}{27}$/,2/$\frac{8}{27}$,3/$\frac{8}{27}$,4/$\frac{8}{27}$}
 \draw let \n1 = { int(\i+1) } in
 (\i) edge[line width=.3 mm,bend left,"\txt",color=Periwinkle] (\n1);
 \foreach
 \i/\txt in {1/$\frac{6}{27}$/,2/$\frac{6}{27}$,3/$\frac{6}{27}$,4/$\frac{6}{27}$}
 \draw let \n1 = { int(\i+1) } in
 (\n1) edge[line width=.22 mm,bend left=50,above, "\txt",color=Mahogany,auto=right] (\i);
 \draw (1) edge[line width=.37 mm,bend left=50,above, auto=right,color=Mahogany,"$\frac{10}{27}$"] (0);
 
 \foreach
 \i/\txt in {1/$\frac{1}{27}$/,2/$\frac{1}{27}$,3/$\frac{1}{27}$}
 \draw let \n1 = { int(\i+2) } in
 (\n1) edge[line width=.04 mm,bend left=60,color=RawSienna,"\txt"] (\i);
 \draw (2) edge[line width=.12 mm,bend left=50,above,color=RawSienna,"$\frac{12}{27}$"] (0);
 
 \foreach \i/\txt in {1/$\frac{12}{27}$,2/$\frac{12}{27}$/,3/$\frac{12}{27}$,4/$\frac{12}{27}$,5/$\frac{12}{27}$}
 \draw (\i) edge[line width=.44 mm,loop above,color=NavyBlue, "\txt"] (\i);
 \draw (0) edge[line width=.59 mm,loop left, color=NavyBlue,"$\frac{16}{27}$"] (0);
 
 \draw[decorate sep={1mm}{4mm},fill] (10,0) -- (12,0);
 \end{tikzpicture}
 \begin{tikzpicture}
 \draw (4,-1.8) node
 {\begin{minipage}{0.8\textwidth}
 \begin{center}\small
 \textbf{Uniform type II Markov chain with one sink and one source}
 \end{center}
 \end{minipage}};
 \end{tikzpicture}
\end{center}

\paragraph{\textbf{The stochastic uniform tuples }
 $\Big(\frac 4 3, \frac 5 3, \frac 3 2, 2 \Big)$ and $\Big( \frac 5 3, \frac 4 3, \frac 3 2, 2 \Big)$}
In this case the Jacobi matrix and the type I stochastic matrix are
\begin{align*}
 J & =
 \left(\begin{NiceMatrix}[]
 5\kappa& 1 & 0 & 0 & 0 & \Cdots\\ 
 3\kappa^2&3\kappa& 1 & 0 & 0 & \Ddots \\ 
 \kappa^3& 3\kappa^2& 3\kappa& 1 &0&\Ddots\\ 
 0& \kappa^3&3\kappa^2& 3\kappa& 1 & \Ddots\\
 \Vdots&\Ddots&\Ddots&\Ddots&\Ddots&\Ddots
 \end{NiceMatrix}\right),&
 P_I&= \left(\begin{NiceMatrix}[columns-width =auto]
 \frac{20}{27} &\frac{6}{27} &\frac{1}{27}& 0 & \Cdots& & \\
 \frac{8}{27} & \frac{12}{27} &\frac{6}{27} &\frac{1}{27} & \Ddots & & \\
 0& \frac{8}{27} & \frac{12}{27} &\frac{6}{27} &\frac{1}{27} & \Ddots & \\
 \Vdots& \Ddots& \frac{8}{27} & \frac{12}{27} &\frac{6}{27} &\frac{1}{27} & \\
 & & & \Ddots& \Ddots & \Ddots&\Ddots
 \end{NiceMatrix}
 \right),
 \end{align*}
with corresponding diagram 
\begin{center}
 %{
 \tikzset{decorate sep/.style 2 args={decorate,decoration={shape backgrounds,shape=circle,shape size=#1,shape sep=#2}}}
 %}
 \begin{tikzpicture}[start chain = going right,
 -latex, every loop/.append style = {-latex}]\small
 \foreach \i in {0,...,5}
 \node[state, on chain,fill=gray!50!white] (\i) {\i};
 \foreach
 \i/\txt in {0/$\frac{6}{27}$,1/$\frac{6}{27}$,2/$\frac{6}{27}$,3/$\frac{6}{27}$,4/$\frac{6}{27}$}
 \draw let \n1 = { int(\i+1) } in
 (\i) edge[line width=.22 mm,bend left,"\txt",below,color=Periwinkle] (\n1);

 \foreach
 \i/\txt in {0/$\frac{8}{27}$,1/$\frac{8}{27}$/,2/$\frac{8}{27}$,3/$\frac{8}{27}$,4/$\frac{8}{27}$}
 \draw let \n1 = { int(\i+1) } in
 (\n1) edge[line width=.3 mm,bend left,below, "\txt",color=Mahogany,auto=right] (\i);
 
 \foreach
 \i/\txt in {0/$\frac{1}{27}$,1/$\frac{1}{27}$,1/$\frac{1}{27}$/,2/$\frac{1}{27}$,3/$\frac{1}{27}$}
 \draw let \n1 = { int(\i+2) } in
 (\i) edge[line width=.04 mm,bend left=65,color=MidnightBlue,"\txt"](\n1); 
 % \draw (0) edge[line width=.11 mm,bend left=65,color=MidnightBlue,"$\frac{3}{27}$"](2);
 
 \foreach \i/\txt in {1/$\frac{12}{27}$,2/$\frac{12}{27}$/,3/$\frac{12}{27}$,4/$\frac{12}{27}$,5/$\frac{12}{27}$}
 \draw (\i) edge[line width=.44 mm,loop below, color=NavyBlue,"\txt"] (\i);
 \draw (0) edge[line width=.74mm,loop left, color=NavyBlue,"$\frac{20}{27}$"] (0);
 
 \draw[decorate sep={1mm}{4mm},fill] (10,0) -- (12,0);
 \end{tikzpicture}
 \begin{tikzpicture}
 \draw (4,-1.8) node
 {\begin{minipage}{0.8\textwidth}
 \begin{center}\small
 \textbf{Uniform type I Markov chain}
 \end{center}
 \end{minipage}};
 \end{tikzpicture}
\end{center}
The system of weights $(W_1,W_2,\d x)$ and $(W_1,\hat W_2,\d x)$ corresponding to the stochastic uniform tuples $\big(\frac 4 3, \frac 5 3, \frac 3 2, 2 \big)$ and $\big( \frac 5 3, \frac 4 3, \frac 3 2, 2 \big)$ are, respectively, 
\begin{align*}
 W_1 (x) & = \frac{9\sqrt{3} \sqrt[\leftroot{2}\uproot{2} 3]{x} (\vartheta_+(x)+\vartheta_-(x))}{16\pi \sqrt{1-x} },& 
 W_2 (x) & = \frac{81\sqrt{3}\sqrt[\leftroot{2}\uproot{2} 3]{x} (\vartheta_+^4(x)+\vartheta_-^4(x))}{160 \pi \sqrt{1-x} } , &
 \hat W_2 (x) & = \frac{81 \sqrt{3} \sqrt[\leftroot{2}\uproot{2} 3]{x^2} (\vartheta_+^2(x)+\vartheta_-^2(x))}{128\pi \sqrt{1-x} } .
\end{align*}
Solving the system of equations \eqref{eq:rho}
we find that $ \alpha = \frac{9}{5}$ and $\beta = -\frac{4}{5}$.
After some simplifications we get 
\begin{align*}
 \frac{9}{5} W_1 -\frac{4}{5} \hat W_2 & =-81\sqrt{3}\sqrt[\leftroot{2}\uproot{2} 3]{x} \,
 \frac{\vartheta_+^2(x)+\vartheta_-^2(x)-2(\vartheta_+(x)+\vartheta_-(x)) }{32 \pi \sqrt[\leftroot{2}\uproot{2} 3]{x} \sqrt{1-x} } ,
\end{align*}
but the numerator can be written as
$ -(-(\vartheta_+ + \vartheta_-) (\vartheta_+^2 - \vartheta_+ \vartheta_- + 
\vartheta_-^2) (\vartheta_- + \vartheta_+) + (\vartheta_-^2 + \vartheta_+^2) \
(\vartheta_+ \vartheta_-)) (\vartheta_+ \vartheta_-)$,
and, consequently, we obtain 
\begin{align*}
 \frac{9}{5} W_1 -\frac{4}{5} \hat W_2 & = \frac{81 \sqrt{3}}{160 \pi \sqrt{1-x}} \vartheta_+\vartheta_-\big( \vartheta_+^4+ \vartheta_-^4\big) = W_2,
\end{align*}
i.e., $\hat W_2=\frac{9}{4}W_1-\frac{5}{4}W_2$, and both sets of weights are in the same \emph{gauge} class.

The type II multiple orthogonal polynomials are
\begin{align}\label{eq:B_3F2_3}
 B^{(n)}(x)= 
 \frac{(n+1) (3 n+2)}{2} ( -\kappa )^n
\, \tensor[_3]{F}{_2}\hspace*{-3pt}\left[{\begin{NiceArray}{c}[small]-n,\; \frac{n+3}{2} ,\; \frac{n+2}{2} \\[3pt]
\frac{4}{3},\;\frac{5}{3}\end{NiceArray}};x\right] , && n \in \N . 
\end{align}
Following similar arguments as for the deduction of \eqref{eq:Bn(1)}, we get the following values at unity of the type II multiple orthogonal polynomials and generalized hypergeometric functions 
\begin{align*}
	B^{(n)}(1)&= \frac{(-1)^n+8^{n+1} }{9\times 27^n }, & 
	\tensor[_3]{F}{_2}\hspace*{-3pt}\left[{\begin{NiceArray}{c}[small]-n,\; \frac{n+3}{2} ,\; \frac{n+2}{2} \\[3pt]
			\frac{4}{3},\;\frac{5}{3}\end{NiceArray}};1\right]&
		=\frac{2(1-(-8)^{n+1} )}{9(n+1)(3n+2)4^n}, &
	n&\in\N.
\end{align*}
The stochastic factorization of the type I Markov matrix $P_I=P_{I}^L P_{I,2}^U P_{I,1}^U$
\begin{align*}P_{I}^L&=\left(
 \begin{NiceMatrix}[columns-width = 0.1cm]
 1 & 0 &\Cdots & & \\
 \frac{2}{5} & \frac{3}{5} & \Ddots & & \\
 0 & \frac{1}{2} & \frac{1}{2} & & \\
 \Vdots & \Ddots & \frac{6}{11} & \frac{5}{11} & \\
 &&\Ddots&\Ddots&\Ddots
 \end{NiceMatrix}
 \right), & P_{I,2}^U&= \left(\begin{NiceMatrix}[columns-width =auto]
 \frac{5}{6} &\frac{1}{6} & 0 & \Cdots& & \\
 0 & \frac{16}{21} & \frac{5}{21} & \Ddots & & \\
 \Vdots& \Ddots& \frac{11}{15} & \frac{4}{15} & & \\[2pt]
 & & &\frac{28}{39} & \frac{11}{39}& \\[2pt]
 % & & & &\frac{2}{3} & \frac{1}{3} & \\[2pt]
 % & & & & & \frac{2}{3} & \frac{1}{3}& \\[2pt]
 & & & & \Ddots & \Ddots
 \end{NiceMatrix}\right),& P_{I,I}^U&= \left(\begin{NiceMatrix}[columns-width =auto]
 \frac{8}{9} &\frac{1}{9} & 0 & \Cdots& & \\
 0 & \frac{7}{9}&\frac{2}{9}& \Ddots & & \\
 \Vdots& \Ddots& \frac{20}{27} & \frac{7}{27} & & \\[2pt]
 & & &\frac{13}{18} & \frac{5}{18}& \\[2pt]
 % & & & &\frac{2}{3} & \frac{1}{3} & \\[2pt]
 % & & & & & \frac{2}{3} & \frac{1}{3}& \\[2pt]
 & & & & \Ddots & \Ddots
 \end{NiceMatrix}\right).
 % \left(\begin{NiceMatrix}[columns-width =auto]
 % \frac{4}{9} &\frac{4}{9} &\frac{1}{9}& 0 & \Cdots& & \\
 % 0 & \frac{4}{9} &\frac{4}{9} &\frac{1}{9} & \Ddots & & \\
 % \Vdots& \Ddots& \frac{4}{9} &\frac{4}{9} &\frac{1}{9} & & \\
 % & & & \Ddots& \Ddots & \Ddots&
 % \end{NiceMatrix}
 % \right) 
\end{align*}
There is no uniform factor. The corresponding type II transition matrix 
%\begin{align*}%\label{eq:semi_stochastic_uniform_II}
$ P_{\text{II}} 
 = \left(\begin{NiceMatrix}[small]
 %[columns-width = auto,margin]%[columns-width = 0.5cm]
 \frac{20}{27}& \frac{8}{27} & 0 & \Cdots & & \\ %& 0 &0 & 0 & 0 \\
 \frac {6} {27}& \frac{12}{27}& \frac{8}{27} & \Ddots & & \\ %& 0 & 0 & 0 & 0 \\
 \frac{1}{27} & \frac {6} {27}& \frac{12}{27}& \frac{8}{27} &&\\ %&% 0 & 0 & 0 & 0 & 0 & 0 \\
 0& \frac{1}{27} &\frac {6} {27}& \frac{12}{27}& \frac{8}{27} & \\ %& 0 & 0 & 0 & 0 \\
 \Vdots&\Ddots&\Ddots&\Ddots&\Ddots&\Ddots
 \end{NiceMatrix}\right)$
%\end{align*}
 is stochastic but for the two first rows, this models a recurrent random walk with the first state being a sink and the second a source. The diagram of this random walk is
\begin{center}
 %{
 \tikzset{decorate sep/.style 2 args={decorate,decoration={shape backgrounds,shape=circle,shape size=#1,shape sep=#2}}}
 %}
 \begin{tikzpicture}[start chain = going right,
 -latex, every loop/.append style = {-latex}]\small
 \foreach \i in {0,...,5}
 \node[state, on chain,fill=gray!50!white] (\i) {\i};
 \foreach
 \i/\txt in {0/$\frac{8}{27}$,1/$\frac{8}{27}$/,2/$\frac{8}{27}$,3/$\frac{8}{27}$,4/$\frac{8}{27}$}
 \draw let \n1 = { int(\i+1) } in
 (\i) edge[line width=.3 mm,bend left,"\txt",color=Periwinkle] (\n1);
 \foreach
 \i/\txt in {0/$\frac{6}{27}$,1/$\frac{6}{27}$,2/$\frac{6}{27}$,3/$\frac{6}{27}$,4/$\frac{6}{27}$}
 \draw let \n1 = { int(\i+1) } in
 (\n1) edge[line width=.22 mm,bend left=50,above, "\txt",color=Mahogany,auto=right] (\i);
 %\draw (1) edge[line width=.37 mm,bend left=50,above, auto=right,color=Mahogany,"$\frac{10}{27}$"] (0);
 
 \foreach
 \i/\txt in {1/$\frac{1}{27}$/,2/$\frac{1}{27}$,3/$\frac{1}{27}$}
 \draw let \n1 = { int(\i+2) } in
 (\n1) edge[line width=.04 mm,bend left=60,color=RawSienna,"\txt"] (\i);
 \draw (2) edge[line width=.12 mm,bend left=50,above,color=RawSienna,"$\frac{12}{27}$"] (0);
 
 \foreach \i/\txt in {1/$\frac{12}{27}$,2/$\frac{12}{27}$/,3/$\frac{12}{27}$,4/$\frac{12}{27}$,5/$\frac{12}{27}$}
 \draw (\i) edge[line width=.44 mm,loop above,color=NavyBlue, "\txt"] (\i);
 \draw (0) edge[line width=.74 mm,loop left, color=NavyBlue,"$\frac{20}{27}$"] (0);
 
 \draw[decorate sep={1mm}{4mm},fill] (10,0) -- (12,0);
 \end{tikzpicture}
 \begin{tikzpicture}
 \draw (4,-1.8) node
 {\begin{minipage}{0.8\textwidth}
 \begin{center}\small
 \textbf{Uniform type II Markov chain with one sink and one source}
 \end{center}
 \end{minipage}};
 \end{tikzpicture}
\end{center}

\subsection{Semi-stochastic uniform tuples. Uniform transient random walks with sinks}\label{S:semi-stochastic uniform tuples}

We now proceed as for the type II stochastic uniform tuples, that is we normalize the almost asymptotic uniform Jacobi matrices using the matrices
$\sigma_{II}$ and $\sigma_{I}=\sigma_{II}^{-1}$, see \eqref{eq:norma}. For these semi-stochastic tuples $\delta = \frac 32$, so that they are transient random walks with sinks. All of them satisfy the min-max property \eqref{eq:min-max}. Moreover, the corresponding uniform Jacobi matrices differ from \eqref{eq:Jacobi_Jacobi_Piñeiro_uniform} by at most one column.

\paragraph{\textbf{The semi-stochastic uniform tuples} $\Big(\frac 1 3, \frac 2 3, 1, \frac 3 2\Big)$ and $\Big( \frac 2 3, \frac 1 3, 1, \frac 3 2 \Big)$}

In this case the Jacobi matrix and semi-stochastic matrices are
\begin{align*}
\hspace*{-1cm}\begin{aligned}
 J & =
 \left(\begin{NiceMatrix}%[columns-width = auto,margin]
 % [columns-width = 0.5cm]
 \kappa& 1 & 0 & \Cdots & & \\[-5pt]
 2\kappa^2&3\kappa& 1 & \Ddots& & \\ 
 \kappa^3& 3\kappa^2& 3\kappa& 1 &&\\ 
 0& \kappa^3&3\kappa^2& 3\kappa& 1 & \\
 \Vdots&\Ddots&\Ddots&\Ddots&\Ddots&\Ddots
 \end{NiceMatrix}\right),&
 P_I&= \left(\begin{NiceMatrix}[columns-width =auto]
 \frac{4}{27} &\frac{4}{27} &\frac{1}{27}& 0 & \Cdots& & \\
 \frac{8}{27} & \frac{12}{27} &\frac{6}{27} &\frac{1}{27} & \Ddots & & \\
 0& \frac{8}{27} & \frac{12}{27} &\frac{6}{27} &\frac{1}{27} & \Ddots & \\
 \Vdots& \Ddots& \frac{8}{27} & \frac{12}{27} &\frac{6}{27} &\frac{1}{27} & \\
 & & & \Ddots& \Ddots & \Ddots&\Ddots
 \end{NiceMatrix}
 \right), & P_{\text{II}} 
& = \left(\begin{NiceMatrix}
 %[columns-width = auto,margin]%[columns-width = 0.5cm]
 \frac{4}{27}& \frac{8}{27} & 0 & \Cdots& & \\ 
 \frac {4} {27}& \frac{12}{27}& \frac{8}{27} & \Ddots& & \\[5pt]
 \frac{1}{27} & \frac {6} {27}& \frac{12}{27}& \frac{8}{27} &&\\ 
 0& \frac{1}{27} &\frac {6} {27}& \frac{12}{27}& \frac{8}{27} & \Ddots\\ 
 \Vdots&\Ddots&\Ddots&\Ddots&\Ddots&\Ddots
 \end{NiceMatrix}\right),
\end{aligned}
\end{align*}
with corresponding diagrams
%\enlargethispage{1cm}
\begin{center}
 %{
 \tikzset{decorate sep/.style 2 args={decorate,decoration={shape backgrounds,shape=circle,shape size=#1,shape sep=#2}}}
 %}
 \begin{tikzpicture}[start chain = going right,
 -latex, every loop/.append style = {-latex}]\small
 \foreach \i in {0,...,5}
 \node[state, on chain,fill=gray!50!white] (\i) {\i};
 \foreach
 \i/\txt in {1/$\frac{6}{27}$,2/$\frac{6}{27}$,3/$\frac{6}{27}$,4/$\frac{6}{27}$}
 \draw let \n1 = { int(\i+1) } in
 (\i) edge[line width=.22 mm,bend left,"\txt",below,color=Periwinkle] (\n1);
 \draw (0) edge[line width=.15 mm,bend left,color=Periwinkle,"$\frac{4}{27}$"](1);

 \foreach
 \i/\txt in {0/$\frac{8}{27}$,1/$\frac{8}{27}$/,2/$\frac{8}{27}$,3/$\frac{8}{27}$,4/$\frac{8}{27}$}
 \draw let \n1 = { int(\i+1) } in
 (\n1) edge[line width=.3 mm,bend left,below, "\txt",color=Mahogany,auto=right] (\i);
 
 \foreach
 \i/\txt in {0/$\frac{1}{27}$,1/$\frac{1}{27}$,1/$\frac{1}{27}$/,2/$\frac{1}{27}$,3/$\frac{1}{27}$}
 \draw let \n1 = { int(\i+2) } in
 (\i) edge[line width=.04 mm,bend left=65,color=MidnightBlue,"\txt"](\n1); 
 % \draw (0) edge[line width=.11 mm,bend left=65,color=MidnightBlue,"$\frac{3}{27}$"](2);
 
 \foreach \i/\txt in {1/$\frac{12}{27}$,2/$\frac{12}{27}$/,3/$\frac{12}{27}$,4/$\frac{12}{27}$,5/$\frac{12}{27}$}
 \draw (\i) edge[line width=.44 mm,loop below, color=NavyBlue,"\txt"] (\i);
 \draw (0) edge[line width=.15mm,loop left, color=NavyBlue,"$\frac{4}{27}$"] (0);
 
 \draw[decorate sep={1mm}{4mm},fill] (10,0) -- (12,0);
 \end{tikzpicture}
\end{center}

\begin{center}
 \tikzset{decorate sep/.style 2 args={decorate,decoration={shape backgrounds,shape=circle,shape size=#1,shape sep=#2}}}
 \begin{tikzpicture}
 \draw (4,-1.8) node
 {\begin{minipage}{0.8\textwidth}
 \begin{center}\small
 \textbf{Uniform type I Markov chain with one sink}
 \end{center}
 \end{minipage}};
 \end{tikzpicture}
 \begin{tikzpicture}[start chain = going right,
 -latex, every loop/.append style = {-latex}]\small
 \foreach \i in {0,...,5}
 \node[state, on chain,fill=gray!50!white] (\i) {\i};
 \foreach
 \i/\txt in {0/$\frac{8}{27}$,1/$\frac{8}{27}$/,2/$\frac{8}{27}$,3/$\frac{8}{27}$,4/$\frac{8}{27}$}
 \draw let \n1 = { int(\i+1) } in
 (\i) edge[line width=.3 mm,bend left,"\txt",color=Periwinkle] (\n1);
 \foreach
 \i/\txt in {1/$\frac{6}{27}$,2/$\frac{6}{27}$,3/$\frac{6}{27}$,4/$\frac{6}{27}$}
 \draw let \n1 = { int(\i+1) } in
 (\n1) edge[line width=.22 mm,bend left=50,above, "\txt",color=Mahogany,auto=right] (\i);
 \draw (1) edge[line width=.15 mm,bend left=50,above, auto=right,color=Mahogany,"$\frac{4}{27}$"] (0);
 
 \foreach
 \i/\txt in {1/$\frac{1}{27}$/,2/$\frac{1}{27}$,3/$\frac{1}{27}$}
 \draw let \n1 = { int(\i+2) } in
 (\n1) edge[line width=.04 mm,bend left=60,color=RawSienna,"\txt"] (\i);
 \draw (2) edge[line width=.12 mm,bend left=50,above,color=RawSienna,"$\frac{12}{27}$"] (0);
 
 \foreach \i/\txt in {1/$\frac{12}{27}$,2/$\frac{12}{27}$/,3/$\frac{12}{27}$,4/$\frac{12}{27}$,5/$\frac{12}{27}$}
 \draw (\i) edge[line width=.44 mm,loop above,color=NavyBlue, "\txt"] (\i);
 \draw (0) edge[line width=.15 mm,loop left, color=NavyBlue,"$\frac{4}{27}$"] (0);
 
 \draw[decorate sep={1mm}{4mm},fill] (10,0) -- (12,0);
\end{tikzpicture}
\begin{tikzpicture}
 \draw (4,-1.8) node
 {\begin{minipage}{0.8\textwidth}
 \begin{center}\small
 \textbf{Uniform type II Markov chain with two sinks}
 \end{center}
 \end{minipage}};
\end{tikzpicture}
\end{center}
The systems of weights $(W_1,W_2,\d x)$ and $(W_1,\hat W_2,\d x)$ corresponding to the semi-stochastic uniform tuples $\big(\frac 1 3, \frac 2 3, 1, \frac 3 2\big)$ and $\big( \frac 2 3, \frac 1 3, 1, \frac 3 2 \big)$ are, respectively, 
\begin{align*}
 W_1 (x) & = \frac{3\sqrt{3} (\vartheta_+(x)-\vartheta_-(x))}{4\pi \sqrt[\leftroot{2}\uproot{2} 3]{x^2} },& 
 W_2 (x) & = \frac{9\sqrt{3} (\vartheta_+^4(x)-\vartheta_-^4(x))}{32 \pi \sqrt[\leftroot{2}\uproot{2} 3]{x^2} } , &
 \hat W_2 (x) & = \frac{9 \sqrt{3} ( (\vartheta_+^2(x)-\vartheta_-^2(x)))}{8\pi \sqrt[\leftroot{2}\uproot{2} 3]{x} } .
\end{align*}
From \eqref{eq:rho}
we find that $ \alpha = \frac{3}{4}$ and $\beta = \frac{1}{4}$.
We get
\begin{align*}
 \frac{3}{4} W_1 +\frac{1}{4} \hat W_2 & =9\sqrt{3} \,
 \frac{(\vartheta_+^2(x)-\vartheta_-^2(x))\big(2+ \sqrt[\leftroot{2}\uproot{2} 3]{x} (\vartheta_+^3(x)+\vartheta_-^3(x)))\big) }{32 \pi \sqrt[\leftroot{2}\uproot{2} 3]{x^2} } ,
\end{align*}
but the numerator can be written as
$ (\vartheta_- - \vartheta_+) ((\vartheta_+ + \vartheta_-) (\vartheta_+^2 - \vartheta_+ \vartheta_- + 
\vartheta_-^2) + (\vartheta_+ + \vartheta_-) \vartheta_+ \vartheta_-) $,
and, consequently, we obtain 
\begin{align*}
 \frac{3}{4} W_1 +\frac{1}{4} \hat W_2 & = \frac{9 \sqrt{3}}{32 \pi \sqrt[\leftroot{2}\uproot{2} 3]{x^2}} \big( \vartheta_+^4- \vartheta_-^4\big) = W_2,
\end{align*}
i.e., $\hat W_2=-3W_1+4W_2$, and both sets of weights are in the same \emph{gauge} class.

The type II multiple orthogonal polynomials are
\begin{align}\label{eq:B_3F2_4}
 B^{(n)}(x)= (-\kappa)^n
 \tensor[_3]{F}{_2}\hspace*{-3pt}\left[{\begin{NiceArray}{c}[small]-n,\; \frac{n+1}{2} , \; \frac{n+2}{2} \\[3pt]
 \frac{1}{3},\;\frac{2}{3}\end{NiceArray}};x\right] , && n \in \N . 
\end{align}
Following similar arguments as for the deduction of \eqref{eq:Bn(1)}, we get the following values at unity of the type II multiple orthogonal polynomials and generalized hypergeometric functions 
\begin{align*}
	B^{(n)}(1)&= \frac{2(9 n+4)8^n+(-1)^n }{9\times 27^n}, & 
	\tensor[_3]{F}{_2}\hspace*{-3pt}\left[{\begin{NiceArray}{c}[small]-n,\; \frac{n+1}{2} , \; \frac{n+2}{2} \\[3pt]
			\frac{1}{3},\;\frac{2}{3}\end{NiceArray}};1\right]&
	=\frac{1+2(-1)^n (9 n+4)8^n}{9 \times 4^{n}}, &
	n&\in\N.
\end{align*}

The stochastic factorization of the type I Markov matrix $P_I=P_{I}^L P_{I,2}^U P_{I,1}^U$ is not uniform
\begin{align*}P_{I}^L&=\left(
 \begin{NiceMatrix}[columns-width = 0.1cm]
 1 & 0 &\Cdots & & \\
 \frac{1}{2} & \frac{1}{2} & \Ddots & & \\[5pt]
 0 & \frac{4}{7} & \frac{3}{7} & & \\
 \Vdots & \Ddots & \frac{3}{5} & \frac{2}{5} & \\
 &&\Ddots&\Ddots&\Ddots
 \end{NiceMatrix}
 \right), & P_{I,2}^U&= \left(\begin{NiceMatrix}[columns-width =auto]
 \frac{4}{9} &\frac{5}{9} & 0 & \Cdots& & \\
 0 & \frac{5}{9} & \frac{4}{9} & \Ddots & & \\
 \Vdots& \Ddots& \frac{16}{27} & \frac{11}{27} & & \\[5pt]
 & & &\frac{11}{18} & \frac{7}{18}& \\
 & & & & \Ddots & \Ddots
 \end{NiceMatrix}\right),& P_{I,1}^U&= \left(\begin{NiceMatrix}[columns-width =auto]
 \frac{1}{3} &\frac{2}{3} & 0 & \Cdots& & \\
 0 & \frac{8}{15}&\frac{7}{15}& \Ddots & & \\
 \Vdots& \Ddots& \frac{7}{12} & \frac{5}{12} & & \\[5pt]
 & & &\frac{20}{33} & \frac{13}{33}& \\
 & & & & \Ddots & \Ddots
 \end{NiceMatrix}\right).
\end{align*}

\paragraph{\textbf{The semi-stochastic uniform tuples} $\Big(\frac 2 3, \frac 4 3, \frac 3 2 ,2\Big)$ and $\Big( \frac 4 3, \frac 2 3, \frac 3 2 ,2 \Big)$}

In this case the Jacobi matrix and semi-stochastic matrices are
\begin{align*}
 \hspace*{-1cm}\begin{aligned}
 J & =
 \left(\begin{NiceMatrix}%[columns-width = auto,margin]
 % [columns-width = 0.5cm]
 2\kappa& 1 & 0 & \Cdots & & \\[-5pt]
 3\kappa^2&3\kappa& 1 & \Ddots& & \\ 
 \kappa^3& 3\kappa^2& 3\kappa& 1 &&\\ 
 0& \kappa^3&3\kappa^2& 3\kappa& 1 & \\
 \Vdots&\Ddots&\Ddots&\Ddots&\Ddots&\Ddots
 \end{NiceMatrix}\right),&
 P_I&= \left(\begin{NiceMatrix}[columns-width =auto]
 \frac{8}{27} &\frac{6}{27} &\frac{1}{27}& 0 & \Cdots& & \\
 \frac{8}{27} & \frac{12}{27} &\frac{6}{27} &\frac{1}{27} & \Ddots & & \\
 0& \frac{8}{27} & \frac{12}{27} &\frac{6}{27} &\frac{1}{27} & \Ddots & \\
 \Vdots& \Ddots& \frac{8}{27} & \frac{12}{27} &\frac{6}{27} &\frac{1}{27} & \\
 & & & \Ddots& \Ddots & \Ddots&\Ddots
 \end{NiceMatrix}
 \right), & P_{\text{II}} 
 & = \left(\begin{NiceMatrix}
 %[columns-width = auto,margin]%[columns-width = 0.5cm]
 \frac{8}{27}& \frac{8}{27} & 0 & \Cdots& & \\ 
 \frac {6} {27}& \frac{12}{27}& \frac{8}{27} & \Ddots& & \\[5pt]
 \frac{1}{27} & \frac {6} {27}& \frac{12}{27}& \frac{8}{27} &&\\ 
 0& \frac{1}{27} &\frac {6} {27}& \frac{12}{27}& \frac{8}{27} & \Ddots\\ 
 \Vdots&\Ddots&\Ddots&\Ddots&\Ddots&\Ddots
 \end{NiceMatrix}\right),
 \end{aligned}
\end{align*}
with corresponding diagrams 
\begin{center}
 %{
 \tikzset{decorate sep/.style 2 args={decorate,decoration={shape backgrounds,shape=circle,shape size=#1,shape sep=#2}}}
 %}
 \begin{tikzpicture}[start chain = going right,
 -latex, every loop/.append style = {-latex}]\small
 \foreach \i in {0,...,5}
 \node[state, on chain,fill=gray!50!white] (\i) {\i};
 \foreach
 \i/\txt in {0/$\frac{6}{27}$,1/$\frac{6}{27}$,2/$\frac{6}{27}$,3/$\frac{6}{27}$,4/$\frac{6}{27}$}
 \draw let \n1 = { int(\i+1) } in
 (\i) edge[line width=.22 mm,bend left,"\txt",below,color=Periwinkle] (\n1);
 %\draw (0) edge[line width=.15 mm,bend left,color=Periwinkle,"$\frac{4}{27}$"](1);

 \foreach
 \i/\txt in {0/$\frac{8}{27}$,1/$\frac{8}{27}$/,2/$\frac{8}{27}$,3/$\frac{8}{27}$,4/$\frac{8}{27}$}
 \draw let \n1 = { int(\i+1) } in
 (\n1) edge[line width=.3 mm,bend left,below, "\txt",color=Mahogany,auto=right] (\i);
 
 \foreach
 \i/\txt in {0/$\frac{1}{27}$,1/$\frac{1}{27}$,1/$\frac{1}{27}$/,2/$\frac{1}{27}$,3/$\frac{1}{27}$}
 \draw let \n1 = { int(\i+2) } in
 (\i) edge[line width=.04 mm,bend left=65,color=MidnightBlue,"\txt"](\n1); 
 % \draw (0) edge[line width=.11 mm,bend left=65,color=MidnightBlue,"$\frac{3}{27}$"](2);
 
 \foreach \i/\txt in {1/$\frac{12}{27}$,2/$\frac{12}{27}$/,3/$\frac{12}{27}$,4/$\frac{12}{27}$,5/$\frac{12}{27}$}
 \draw (\i) edge[line width=.44 mm,loop below, color=NavyBlue,"\txt"] (\i);
 \draw (0) edge[line width=.3mm,loop left, color=NavyBlue,"$\frac{8}{27}$"] (0);
 
 \draw[decorate sep={1mm}{4mm},fill] (10,0) -- (12,0);
 \end{tikzpicture}
 \begin{tikzpicture}
 \draw (4,-1.8) node
 {\begin{minipage}{0.8\textwidth}
 \begin{center}\small
 \textbf{Uniform type I Markov chain with one sink}
 \end{center}
 \end{minipage}};
 \end{tikzpicture}
 \begin{tikzpicture}[start chain = going right,
 -latex, every loop/.append style = {-latex}]\small
 \foreach \i in {0,...,5}
 \node[state, on chain,fill=gray!50!white] (\i) {\i};
 \foreach
 \i/\txt in {0/$\frac{8}{27}$,1/$\frac{8}{27}$/,2/$\frac{8}{27}$,3/$\frac{8}{27}$,4/$\frac{8}{27}$}
 \draw let \n1 = { int(\i+1) } in
 (\i) edge[line width=.3 mm,bend left,"\txt",color=Periwinkle] (\n1);
 \foreach
 \i/\txt in {0/$\frac{6}{27}$,1/$\frac{6}{27}$,2/$\frac{6}{27}$,3/$\frac{6}{27}$,4/$\frac{6}{27}$}
 \draw let \n1 = { int(\i+1) } in
 (\n1) edge[line width=.22 mm,bend left=50,above, "\txt",color=Mahogany,auto=right] (\i);
 % \draw (1) edge[line width=.15 mm,bend left=50,above, auto=right,color=Mahogany,"$\frac{4}{27}$"] (0);
 
 \foreach
 \i/\txt in {0/$\frac{1}{27}$,1/$\frac{1}{27}$/,2/$\frac{1}{27}$,3/$\frac{1}{27}$}
 \draw let \n1 = { int(\i+2) } in
 (\n1) edge[line width=.04 mm,bend left=60,color=RawSienna,"\txt"] (\i);
 % \draw (2) edge[line width=.12 mm,bend left=50,above,color=RawSienna,"$\frac{12}{27}$"] (0);
 
 \foreach \i/\txt in {1/$\frac{12}{27}$,2/$\frac{12}{27}$/,3/$\frac{12}{27}$,4/$\frac{12}{27}$,5/$\frac{12}{27}$}
 \draw (\i) edge[line width=.44 mm,loop above,color=NavyBlue, "\txt"] (\i);
 \draw (0) edge[line width=.3 mm,loop left, color=NavyBlue,"$\frac{8}{27}$"] (0);
 
 \draw[decorate sep={1mm}{4mm},fill] (10,0) -- (12,0);
 \end{tikzpicture}
 \begin{tikzpicture}
 \draw (4,-1.8) node
 {\begin{minipage}{0.8\textwidth}
 \begin{center}\small
 \textbf{Uniform type II Markov chain with two sinks}
 \end{center}
 \end{minipage}};
 \end{tikzpicture}
\end{center}
The systems of weights $(W_1,W_2,\d x)$ and $(W_1,\hat W_2,\d x)$ corresponding to the semi-stochastic uniform tuples $\big(\frac 2 3, \frac 4 3, \frac 3 2 ,2\big)$ and $\big( \frac 4 3, \frac 2 3, \frac 3 2 ,2 \big)$ are, respectively, 
\begin{align*}
 W_1 (x) & = \frac{9\sqrt{3} (\vartheta_+^2(x)-\vartheta_-^2(x))}{8\pi \sqrt[\leftroot{2}\uproot{2} 3]{x} },& 
 W_2 (x) & = \frac{81\sqrt{3} (\vartheta_+^5(x)-\vartheta_-^5(x))}{160 \pi \sqrt[\leftroot{2}\uproot{2} 3]{x} } , &
 \hat W_2 (x) & = \frac{81 \sqrt{3} \sqrt[\leftroot{2}\uproot{2} 3]{x}(\vartheta_+(x)-\vartheta_-(x))}{16\pi } .
\end{align*}
From \eqref{eq:rho}
we find that $ \alpha = \frac{9}{10}$ and $\beta = \frac{1}{10}$.
We get
\begin{align*}
 \frac{9}{10} W_1 +\frac{1}{10} \hat W_2 & =81\sqrt{3} \,
 \frac{(\vartheta_+(x)-\vartheta_-(x))\big( \sqrt[\leftroot{2}\uproot{2} 3]{x}+2 (\vartheta_+(x)+\vartheta_-(x))\big) }{160 \pi \sqrt[\leftroot{2}\uproot{2} 3]{x} } ,
\end{align*}
but the numerator can be written as
$ ( \vartheta_- - \vartheta_+) ((\vartheta_+ + \vartheta_-) (\vartheta_+^2 - \vartheta_+ \vartheta_- + 
\vartheta_-^2) (\vartheta_- + \vartheta_+) + (\vartheta_+ \vartheta_-)^2)$,
and, consequently, we obtain 
\begin{align*}
 \frac{9}{10} W_1 +\frac{1}{10} \hat W_2 & = \frac{81 \sqrt{3}}{160 \pi \sqrt[\leftroot{2}\uproot{2} 3]{x}} \big( \vartheta_+^5- \vartheta_-^5\big) = W_2,
\end{align*}
i.e., $\hat W_2=-9W_1+10W_2$, and both sets of weights are in the same \emph{gauge} class.

The type II multiple orthogonal polynomials are
\begin{align}\label{eq:B_3F2_5}
 B^{(n)}(x)= (n+1) \left( -\kappa \right)^n
 \tensor[_3]{F}{_2}\hspace*{-3pt}\left[{\begin{NiceArray}{c}[small]-n,\; \frac{n+3}{2} , \; \frac{n+2}{2} \\[3pt]
 \frac{2}{3},\;\frac{4}{3}\end{NiceArray}};x\right] , && n \in \N . 
\end{align}
Following similar arguments as for the deduction of \eqref{eq:Bn(1)}, we get the following values at unity of the type II multiple orthogonal polynomials and generalized hypergeometric functions 
\begin{align*}
	B^{(n)}(1)&= \frac{4 (9 n+7)8^n+(-1)^{n+1} }{27^{n+1}}, & 
	\tensor[_3]{F}{_2}\hspace*{-3pt}\left[{\begin{NiceArray}{c}[small]-n,\; \frac{n+3}{2} , \; \frac{n+2}{2} \\[3pt]
			\frac{2}{3},\;\frac{4}{3}\end{NiceArray}};1\right]&
	=\frac{ 4 (9 n+7) (-8)^n -1 }{27 (n+1) 4^{n}}, &
	n&\in\N.
\end{align*}
The stochastic factorization of the type I Markov matrix $P_I=P_{I}^L P_{I,2}^U P_{I,1}^U$ is not uniform
\begin{align*}
\hspace{-.5cm}
P_{I}^L&=\left(
 \begin{NiceMatrix}[columns-width = 0.1cm]
 1 & 0 &\Cdots & & \\
 \frac{2}{5} & \frac{3}{5} & \Ddots & & \\[5pt]
 0 & \frac{1}{2} & \frac{1}{2} & & \\
 \Vdots & \Ddots & \frac{6}{11} & \frac{5}{11} & \\
 &&\Ddots&\Ddots&\Ddots
 \end{NiceMatrix}
 \right), & P_{I,2}^U&= \left(\begin{NiceMatrix}[columns-width =auto]
 \frac{2}{3} & \frac{1}{3} & 0 & \Cdots& & \\
 0 & \frac{2}{3} & \frac{1}{3}& \Ddots & & \\
 \Vdots& \Ddots& \frac{2}{3} & \frac{1}{3}& & \\[5pt]
 & & &\frac{2}{3} & \frac{1}{3}& \\
 & & & & \Ddots & \Ddots
 \end{NiceMatrix}\right),& P_{I,1}^U&= \left(\begin{NiceMatrix}[columns-width =auto]
 \frac{4}{9} & \frac{5}{9} & 0 & \Cdots& & \\
 0 & \frac{5}{9} & \frac{4}{9}& \Ddots & & \\
 \Vdots& \Ddots& \frac{16}{27} & \frac{11}{27}& & \\[5pt]
 & & & \frac{11}{18} & \frac{7}{18} & \\
 & & & & \Ddots & \Ddots
 \end{NiceMatrix}\right).
\end{align*}
Notice that there is a uniform factor.

\paragraph{\textbf{The semi-stochastic uniform tuples} $\Big(\frac 4 3, \frac 5 3, 2, \frac 5 2 \Big)$ and 
 $\Big( \frac 5 3, \frac 4 3, 2, \frac5 2 \Big)$}

Notice that $\big(\frac 4 3, \frac 5 3, 2, \frac 5 2 \big)$ is the uniform case discussed in \cite{lima_loureiro}. The Jacobi matrix and semi-stochastic matrices are
\begin{align*}
 \hspace*{-.85cm}\begin{aligned}
 J & =
 \left(\begin{NiceMatrix}%[columns-width = auto,margin]
 % [columns-width = 0.5cm]
 3\kappa& 1 & 0 & \Cdots & & \\[-5pt]
 3\kappa^2&3\kappa& 1 & \Ddots& & \\ 
 \kappa^3& 3\kappa^2& 3\kappa& 1 &&\\ 
 0& \kappa^3&3\kappa^2& 3\kappa& 1 & \\
 \Vdots&\Ddots&\Ddots&\Ddots&\Ddots&\Ddots
 \end{NiceMatrix}\right),&
 P_I&= \left(\begin{NiceMatrix}[columns-width =auto]
 \frac{12}{27} &\frac{6}{27} &\frac{1}{27}& 0 & \Cdots& & \\
 \frac{8}{27} & \frac{12}{27} &\frac{6}{27} &\frac{1}{27} & \Ddots & & \\
 0& \frac{8}{27} & \frac{12}{27} &\frac{6}{27} &\frac{1}{27} & \Ddots & \\
 \Vdots& \Ddots& \frac{8}{27} & \frac{12}{27} &\frac{6}{27} &\frac{1}{27} & \\
 & & & \Ddots& \Ddots & \Ddots&\Ddots
 \end{NiceMatrix}
 \right), & P_{\text{II}} 
 & = \left(\begin{NiceMatrix}
 %[columns-width = auto,margin]%[columns-width = 0.5cm]
 \frac{12}{27}& \frac{8}{27} & 0 & \Cdots& & \\ 
 \frac {6} {27}& \frac{12}{27}& \frac{8}{27} & \Ddots& & \\[5pt]
 \frac{1}{27} & \frac {6} {27}& \frac{12}{27}& \frac{8}{27} &&\\ 
 0& \frac{1}{27} &\frac {6} {27}& \frac{12}{27}& \frac{8}{27} & \Ddots\\ 
 \Vdots&\Ddots&\Ddots&\Ddots&\Ddots&\Ddots
 \end{NiceMatrix}\right),
 \end{aligned}
\end{align*}
with corresponding diagrams 
\begin{center}
 %{
 \tikzset{decorate sep/.style 2 args={decorate,decoration={shape backgrounds,shape=circle,shape size=#1,shape sep=#2}}}
 %}
 \begin{tikzpicture}[start chain = going right,
 -latex, every loop/.append style = {-latex}]\small
 \foreach \i in {0,...,5}
 \node[state, on chain,fill=gray!50!white] (\i) {\i};
 \foreach
 \i/\txt in {1/$\frac{6}{27}$/,2/$\frac{6}{27}$,3/$\frac{6}{27}$,4/$\frac{6}{27}$}
 \draw let \n1 = { int(\i+1) } in
 (\i) edge[line width=.22 mm,bend left,"\txt",below,color=Periwinkle] (\n1);
 
 \draw (0) edge[line width=.15 mm,bend left,"$\frac{4}{27}$",below,color=Periwinkle] (1);
 
 \foreach
 \i/\txt in {0/$\frac{8}{27}$,1/$\frac{8}{27}$/,2/$\frac{8}{27}$,3/$\frac{8}{27}$,4/$\frac{8}{27}$}
 \draw let \n1 = { int(\i+1) } in
 (\n1) edge[line width=.3 mm,bend left,below, "\txt",color=Mahogany,auto=right] (\i);
 
 \foreach
 \i/\txt in {0/$\frac{1}{27}$,1/$\frac{1}{27}$/,2/$\frac{1}{27}$,3/$\frac{1}{27}$}
 \draw let \n1 = { int(\i+2) } in
 (\i) edge[line width=.04 mm,bend left=65,color=MidnightBlue,"\txt"](\n1);
 
 \foreach \i/\txt in {1/$\frac{12}{27}$,2/$\frac{12}{27}$/,3/$\frac{12}{27}$,4/$\frac{12}{27}$,5/$\frac{12}{27}$}
 \draw (\i) edge[line width=.44 mm,loop below, color=NavyBlue,"\txt"] (\i);
 \draw (0) edge[line width=.15 mm,loop left, color=NavyBlue,"$\frac{4}{27}$"] (0);
 
 \draw[decorate sep={1mm}{4mm},fill] (10,0) -- (12,0);
 \end{tikzpicture}
 \begin{tikzpicture}
 \draw (4,-1.8) node
 {\begin{minipage}{0.8\textwidth}
 \begin{center}\small
 \textbf{Uniform type I Markov chain with one sink}
 \end{center}
 \end{minipage}};
 \end{tikzpicture}
 \begin{tikzpicture}[start chain = going right,
 -latex, every loop/.append style = {-latex}]\small
 \foreach \i in {0,...,5}
 \node[state, on chain,fill=gray!50!white] (\i) {\i};
 \foreach
 \i/\txt in {0/$\frac{8}{27}$,1/$\frac{8}{27}$/,2/$\frac{8}{27}$,3/$\frac{8}{27}$,4/$\frac{8}{27}$}
 \draw let \n1 = { int(\i+1) } in
 (\i) edge[line width=.3 mm,bend left,"\txt",color=Periwinkle] (\n1);
 \foreach
 \i/\txt in {0/$\frac{6}{27}$,1/$\frac{6}{27}$/,2/$\frac{6}{27}$,3/$\frac{6}{27}$,4/$\frac{6}{27}$}
 \draw let \n1 = { int(\i+1) } in
 (\n1) edge[line width=.22 mm,bend left=50,above, "\txt",color=Mahogany,auto=right] (\i);
 % \draw (1) edge[line width=.15mm,bend left=50,above,color=Mahogany,auto=right,"$\frac{4}{27}$"] (0);
 
 \foreach
 \i/\txt in {0/$\frac{1}{27}$,1/$\frac{1}{27}$/,2/$\frac{1}{27}$,3/$\frac{1}{27}$}
 \draw let \n1 = { int(\i+2) } in
 (\n1) edge[line width=.04 mm,bend left=60,color=RawSienna,"\txt"] (\i);
 
 \foreach \i/\txt in {1/$\frac{12}{27}$,2/$\frac{12}{27}$/,3/$\frac{12}{27}$,4/$\frac{12}{27}$,5/$\frac{12}{27}$}
 \draw (\i) edge[line width=.44 mm,loop above,color=NavyBlue, "\txt"] (\i);
 \draw (0) edge[line width=.44 mm,loop left, color=NavyBlue,"$\frac{12}{27}$"] (0);
 
 \draw[decorate sep={1mm}{4mm},fill] (10,0) -- (12,0);
 \end{tikzpicture}
 \begin{tikzpicture}
 \draw (4,-1.8) node
 {\begin{minipage}{0.8\textwidth}
 \begin{center}\small
 \textbf{Uniform type II Markov chain with two sinks}
 \end{center}
 \end{minipage}};
 \end{tikzpicture}

\end{center}
The systems of weights $(W_1,W_2,\d x)$ and $(W_1,\hat W_2,\d x)$ corresponding to the semi-stochastic uniform tuples 
$\big(\frac 4 3, \frac 5 3, 2, \frac 5 2 \big)$ and $\big( \frac 5 3, \frac 4 3, 2, \frac5 2 \big)$ are, respectively, 
\begin{align*}
 W_1 (x) & = \frac{81\sqrt{3} \sqrt[\leftroot{2}\uproot{2} 3]{x}(\vartheta_+(x)-\vartheta_-(x))}{16\pi },& 
 W_2 (x) & = \frac{243\sqrt{3} \sqrt[\leftroot{2}\uproot{2} 3]{x}(\vartheta_+^4(x)-\vartheta_-^4(x))}{160 \pi } , &
 \hat W_2 (x) & = \frac{243 \sqrt{3} \sqrt[\leftroot{2}\uproot{2} 3]{x^2}( (\vartheta_+^2(x)-\vartheta_-^2(x)))}{64\pi } .
\end{align*}
Solving \eqref{eq:rho}
we find that $ \alpha = \frac{3}{5}$ and $\beta = \frac{2}{5}$.
After some clearing we obtain
\begin{align*}
 \frac{3}{5} W_1 +\frac{2}{5} \hat W_2 & =243\sqrt{3} \sqrt[\leftroot{2}\uproot{2} 3]{x}\,
 \frac{(\vartheta_+^3(x)-\vartheta_-^3(x))\big( 2 +\sqrt[\leftroot{2}\uproot{2} 3]{x}(\vartheta_+^3(x)+\vartheta_-^3(x)))\big) }{160 \pi \sqrt[\leftroot{2}\uproot{2} 3]{x} } ,
\end{align*}
but the numerator can be written as
$ ( \vartheta_- - \vartheta_+) ((\vartheta_+ + \vartheta_-) (\vartheta_+^2 - \vartheta_+ \vartheta_- + 
\vartheta_-^2) + (\vartheta_- + \vartheta_+) (\vartheta_+ \vartheta_-)) (\vartheta_+ \vartheta_-) 
%- (\vartheta_-^4 - \vartheta_+^4) (\vartheta_+ \vartheta_-) 
$,
and, consequently, we obtain 
\begin{align*}
 \frac{3}{5} W_1 +\frac{2}{5} \hat W_2 & =\frac{243 \sqrt{3}}{160 \pi } \vartheta_+ \vartheta_- \big( \vartheta_+^4 -\vartheta_-^4 \big) = W_2 
\end{align*}
i.e., $\hat W_2=-\frac{3}{2}W_1+\frac{5}{2}W_2$, and both sets of weights are in the same \emph{gauge} class.

The type II multiple orthogonal polynomials are \cite{lima_loureiro}
\begin{align}\label{eq:B_3F2_6}
 B^{(n)}(x)= \frac{(n+2) (n+1) }{2} 
 \left( -\kappa \right)^n
 \tensor[_3]{F}{_2}\hspace*{-3pt}\left[{\begin{NiceArray}{c}[small]-n,\; \frac{n+4}{2} , \; \frac{n+3}{2} \\[3pt]\frac{4}{3},\;\frac{5}{3}\end{NiceArray}};x\right] , && n \in \N . 
\end{align}
Following similar arguments as for the deduction of \eqref{eq:Bn(1)}, we get the following values at unity of the type II multiple orthogonal polynomials and generalized hypergeometric functions 
\begin{align*}
	B^{(n)}(1)&= \frac{8^{n+1} (9 n+10)+(-1)^n}{3 \times 27^{ n+1} }, & 
	\tensor[_3]{F}{_2}\hspace*{-3pt}\left[{\begin{NiceArray}{c}[small]-n,\; \frac{n+4}{2} , \; \frac{n+3}{2} \\[3pt]\frac{4}{3},\;\frac{5}{3}\end{NiceArray}};1\right]&
	=\frac{2(1- (9 n+10)(-8)^{n+1})}{81 (n+1) (n+2)4^n} &
	n&\in\N.
\end{align*}
The stochastic factorization of the type I Markov matrix $P_I=P_{I}^L P_{I,2}^U P_{I,1}^U$ is
\begin{align*}P_{I}^L&=\left(
 \begin{NiceMatrix}[columns-width = 0.1cm]
 1 & 0 &\Cdots & & \\
 \frac{1}{3} & \frac{2}{3} & \Ddots & & \\[5pt]
 0 & \frac{4}{9} & \frac{5}{9} & & \\
 \Vdots & \Ddots & \frac{1}{2} & \frac{1}{2} & \\
 &&\Ddots&\Ddots&\Ddots
 \end{NiceMatrix}
 \right), & P_{I,2}^U&= P_{I,1}^U= \left(\begin{NiceMatrix}[columns-width =auto]
 \frac{2}{3} & \frac{1}{3} & 0 & \Cdots& & \\
 0 & \frac{2}{3} & \frac{1}{3}& \Ddots & & \\
 \Vdots& \Ddots& \frac{2}{3} & \frac{1}{3}& & \\[5pt]
 & & &\frac{2}{3} & \frac{1}{3}& \\
 & & & & \Ddots & \Ddots
 \end{NiceMatrix}\right),
\end{align*}
with two uniform pure birth factors.

\subsection{Chain of Christoffel transformations. Contiguous relations for $_3F_2$}
Using the scaling transformations in Remark \ref{rem:scaling} and the Christoffel transformations described in Theorem \ref{teo:Christoffel} we consider the permuting Christoffel transformations 
$ (W_1,W_2)\xrightarrow[]{ C_\alpha} (W_2, \alpha x W_1)$,
among the stochastic uniform weights and the semi-stochastic uniform weights.
We will also consider the square of these transformations, that we called basic Christoffel transformations, that is 
$ (W_1,W_2)\xrightarrow[]{ C_{\alpha_1,\alpha_2}} (\alpha_1 x W_1,\alpha_2 x W_2 )$.
By inspection of the previous results we~get:
\begin{teo}[Christoffel chains for uniform tuples]\label{teo:Christoffel chains for uniform tuples}
 \begin{enumerate}
 \item The stochastic uniform weights are related through the following chain of permuting Christoffel transformations
 \begin{align}\label{eq:Christoffel_chain_1}
 \Big(\frac{2}{3},\frac{1}{3}, \frac{1}{2},1\Big)\xrightarrow[]{ C_{\frac{9}{4}}} 
 \Big(\frac{4}{3},\frac{2}{3}, 1,\frac{3}{2}\Big)\xrightarrow[]{ C_{\frac{27}{16}}} 
 \Big(\frac{5}{3},\frac{4}{3}, \frac{3}{2},{2}\Big).
 \end{align}
 \item The semi-stochastic uniform weights are connected by the following chain of permuting Christoffel transformations
 \begin{align}\label{eq:Christoffel_chain_2}
 \Big(\frac{2}{3},\frac{1}{3}, 1,\frac{3}{2}\Big)\xrightarrow[]{ C_{\frac{27}{4}}} 
 \Big(\frac{4}{3},\frac{2}{3}, \frac{3}{2},2\Big)\xrightarrow[]{ C_{\frac{27}{8}}} 
 \Big(\frac{5}{3},\frac{4}{3}, 
 2,\frac{5}{2}\Big).
 \end{align}
\item 
For the stochastic uniform weights we have the basic Christoffel transformations
\begin{subequations}\label{eq:Christoffel_basic_stochastic}
	 \begin{gather}\label{eq:Christoffel_basic_1}
 \Big(\frac{2}{3},\frac{1}{3}, \frac{1}{2},1\Big)\xrightarrow[]{ C_{\frac{9}{4},\frac{27}{16}}} 
 \Big(\frac{5}{3},\frac{4}{3}, \frac{3}{2},{2}\Big), \\\label{eq:Christoffel_basic_2}
\Big(\frac{1}{3},\frac{2}{3}, \frac{1}{2},1\Big)\xrightarrow[]{ C_{\frac{9}{4},\frac{27}{10}}} 
\Big(\frac{4}{3},\frac{5}{3}, \frac{3}{2},{2}\Big).
\end{gather}
\end{subequations}
\item 
For the semi-stochastic uniform weights we have the basic Christoffel transformations
\begin{subequations}\label{eq:Christoffel_basic_semistochastic}
	\begin{gather}\label{eq:Christoffel_basic_3}
 \Big(\frac{2}{3},\frac{1}{3}, 1, \frac{3}{2}\Big)\xrightarrow[]{ C_{\frac{27}{4},\frac{27}{8}}} 
 \Big(\frac{5}{3},\frac{4}{3}, 2,\frac{5}{2}\Big),\\\label{eq:Christoffel_basic_4}
 \Big(\frac{2}{3},\frac{1}{3}, 1, \frac{3}{2}\Big)\xrightarrow[]{ C_{\frac{27}{4},\frac{27}{5}}} 
 \Big(\frac{4}{3},\frac{5}{3}, 2,\frac{5}{2}\Big). 
\end{gather}
\end{subequations}
 \end{enumerate}
\end{teo}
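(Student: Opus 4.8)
The plan is to verify each arrow by a direct substitution into the explicit weight systems already recorded, tuple by tuple, in \S\ref{S:stochastic_uniform} and \S\ref{S:semi-stochastic uniform tuples}; the assertion is entirely about these weight systems, so once the weight identities are in place, Theorem~\ref{teo:Christoffel}, Theorem~\ref{teo:Basic Christoffel formulas} and the rescaling invariance of Remark~\ref{rem:scaling} transport them automatically to the corresponding statements for $\{B^{(n)}\}$, $\{Q^{(n)}\}$, the Jacobi matrix and the $H_n$. Concretely, a permuting Christoffel transformation acts on weights by $(W_1,W_2)\mapsto(W_2,xW_1)$ and a basic one by $(W_1,W_2)\mapsto(xW_1,xW_2)$; after absorbing the scalar factors into the weights (Remark~\ref{rem:scaling}) the maps $C_\alpha$ and $C_{\alpha_1,\alpha_2}$ of the statement act as $(W_1,W_2)\mapsto(W_2,\alpha\,xW_1)$ and $(W_1,W_2)\mapsto(\alpha_1\,xW_1,\alpha_2\,xW_2)$. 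Finally, for a fixed first weight $W_1$, Theorem~\ref{teo:gauge_freedom_0} tells us that two admissible choices of second weight differing by a nontrivial affine combination with $W_1$ yield the same $\{B^{(n)}\}$ and $\{Q^{(n)}\}$; this is the mechanism through which the permutation $a\leftrightarrow b$ operates within each uniform pair, and it is what we invoke when an image weight agrees with the target's listed weight only up to such a combination.

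First I would dispatch the two permuting chains \eqref{eq:Christoffel_chain_1} and \eqref{eq:Christoffel_chain_2}. For the left-hand tuple of each arrow I take its tabulated pair $(W_1,\hat W_2)$ and form $(\hat W_2,\alpha\,xW_1)$. In every one of the six cases the first entry $\hat W_2$ of the source is, verbatim, the first weight of the right-hand tuple, which is immediate from the $\vartheta_\pm$ expressions tabulated on both sides. The second entry reduces, using only the elementary cancellations $x/\sqrt[3]{x^{2}}=\sqrt[3]{x}$ and $x/\sqrt[3]{x}=\sqrt[3]{x^{2}}$, to a scalar multiple of the second weight of the right-hand tuple, and matching the scalars forces $\alpha=cd/(ab)$ evaluated at the left-hand tuple; this reproduces $\tfrac94,\tfrac{27}{16}$ in \eqref{eq:Christoffel_chain_1} and $\tfrac{27}{4},\tfrac{27}{8}$ in \eqref{eq:Christoffel_chain_2}. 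The stochastic chain ($\delta=\tfrac12$) and the semi-stochastic chain ($\delta=\tfrac32$) differ only in that $\vartheta_+^k+\vartheta_-^k$ is everywhere replaced by $\vartheta_+^k-\vartheta_-^k$, which is immaterial to these manipulations.

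Next I would treat the basic transformations \eqref{eq:Christoffel_basic_stochastic} and \eqref{eq:Christoffel_basic_semistochastic}. Since $C_{\alpha_1,\alpha_2}$ is, up to the scalings of Remark~\ref{rem:scaling}, the square of a permuting Christoffel transformation, its weight action is $(W_1,W_2)\mapsto(\alpha_1 xW_1,\alpha_2 xW_2)$; I would substitute the tabulated $\vartheta_\pm$ weights of the source tuple and simplify as above. In each case $\alpha_1 xW_1$ equals the first weight of the target tuple, forcing $\alpha_1=cd/(ab)$ at the source, i.e.\ $\tfrac94$ or $\tfrac{27}{4}$. For \eqref{eq:Christoffel_basic_1}, \eqref{eq:Christoffel_basic_2} and \eqref{eq:Christoffel_basic_3} the second entry $\alpha_2 xW_2$ likewise coincides verbatim with the second weight of the target, which fixes $\alpha_2=(c+1)d/\big(a(b+1)\big)$ at the source, namely $\tfrac{27}{16}$, $\tfrac{27}{10}$ and $\tfrac{27}{8}$. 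In the remaining case \eqref{eq:Christoffel_basic_4} the image second weight is only proportional to the $\hat W_2$ tabulated for $\big(\tfrac53,\tfrac43,2,\tfrac52\big)$, hence is a genuine affine combination of the $W_1$ and $W_2$ of $\big(\tfrac43,\tfrac53,2,\tfrac52\big)$ through the gauge relation already established for that pair; this pins the deviating value $\alpha_2=\tfrac{27}{5}$, and Theorem~\ref{teo:gauge_freedom_0} (together with \eqref{eq:algebraic_functions2}, which underlies that gauge relation) closes the identification.

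I do not expect a conceptual obstacle: the whole proof is the short, finite checklist just described --- essentially the ``inspection'' announced before the statement. The only place where care is needed is bookkeeping: for each target tuple deciding which of its two gauge representatives the transformation lands on, and confirming that the scalar produced by the matching is precisely the one printed in \eqref{eq:Christoffel_chain_1}--\eqref{eq:Christoffel_basic_semistochastic}; as with Theorem~\ref{teo:12}, this verification is most safely carried out with a computer algebra system. Two optional cross-checks strengthen confidence: feeding the weight identities into the transformation rule for the $H_n$ in Theorem~\ref{teo:Christoffel} must carry the almost asymptotic uniform Jacobi matrix of one uniform tuple to that of the next (consistent since all twelve tuples share that Toeplitz matrix up to finitely many columns), and feeding them into \eqref{eq:permuting_Christoffel} together with the explicit $\tensor[_3]{F}{_2}$ representations \eqref{eq:B_3F2_1}--\eqref{eq:B_3F2_6} yields --- as a byproduct --- the three- and four-term contiguous relations for $\tensor[_3]{F}{_2}$ promised in the heading of this subsection.
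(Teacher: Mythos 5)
The paper offers no argument for this theorem beyond the phrase ``by inspection of the previous results'', so your checklist \emph{is} the paper's proof, spelled out: for seven of the eight arrows your verification is correct and complete. The scalar formulas you extract, $\alpha=cd/(ab)$ for the first slot and $\alpha_2=(c+1)d/\big(a(b+1)\big)$ for the second, are exactly the reciprocals of the first moments furnished by \eqref{eq:moments}, and substituting the tabulated $\vartheta_\pm$ expressions does reproduce $\tfrac94,\tfrac{27}{16},\tfrac{27}{4},\tfrac{27}{8},\tfrac{27}{10}$ verbatim in \eqref{eq:Christoffel_chain_1}, \eqref{eq:Christoffel_chain_2}, \eqref{eq:Christoffel_basic_1}, \eqref{eq:Christoffel_basic_2} and \eqref{eq:Christoffel_basic_3}.

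Your treatment of \eqref{eq:Christoffel_basic_4}, however, contains a genuine flaw. You argue that the image second weight is proportional to $\hat W_2$ of $\big(\tfrac53,\tfrac43,2,\tfrac52\big)$, hence gauge-equivalent to the pair of $\big(\tfrac43,\tfrac53,2,\tfrac52\big)$, and that this ``pins'' $\alpha_2=\tfrac{27}{5}$. It cannot: by Remark~\ref{rem:scaling} a rescaling of the second weight is itself a gauge transformation leaving $\{B^{(n)}\}$, $\{Q^{(n)}\}$ and $J$ untouched, so \emph{every} choice of $\alpha_2$ lands in the same gauge class and gauge equivalence selects nothing. Worse, the image $\alpha_2\,x\,\hat W_2\big|_{(2/3,1/3,1,3/2)}\propto x^{2/3}\big(\vartheta_+^2-\vartheta_-^2\big)$ is never literally equal to $W_2\big|_{(4/3,5/3,2,5/2)}\propto x^{1/3}\big(\vartheta_+^4-\vartheta_-^4\big)$, and the normalization $\int_0^1\alpha_2\,x\,\hat W_2\,\d x=1$ would give $\alpha_2=\tfrac{27}{8}$, i.e.\ arrow \eqref{eq:Christoffel_basic_3} again, not $\tfrac{27}{5}$. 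The printed value is recovered by running your own verbatim matching from the \emph{other} gauge representative of the source: take $\big(\tfrac13,\tfrac23,1,\tfrac32\big)$ with its pair $(W_1,W_2)$; then $\alpha_2\,x\,W_2\propto x^{1/3}\big(\vartheta_+^4-\vartheta_-^4\big)$ coincides exactly with $W_2$ of $\big(\tfrac43,\tfrac53,2,\tfrac52\big)$ for $\alpha_2=\frac{(c+1)d}{a(b+1)}\big|_{(1/3,2/3,1,3/2)}=\tfrac{27}{5}$, in precise parallel with \eqref{eq:Christoffel_basic_2}, whose source is likewise the $a<b$ representative. Either read the source tuple of \eqref{eq:Christoffel_basic_4} as $\big(\tfrac13,\tfrac23,1,\tfrac32\big)$ or state explicitly that the arrow is asserted only modulo the $a\leftrightarrow b$ gauge identification; as written, your argument does not derive $\tfrac{27}{5}$.
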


\begin{rem}
\begin{enumerate}
 \item Notice that if compose the permuting Christoffel transformations in \eqref{eq:Christoffel_chain_1} we get \eqref{eq:Christoffel_basic_1}, and composing \eqref{eq:Christoffel_chain_2} we get \eqref{eq:Christoffel_basic_3}.
 \item The basic Christoffel transformations as in \eqref{eq:Christoffel_basic_2} and \eqref{eq:Christoffel_basic_4} connect uniform tuples subject in the same \emph{gauge} class as the tuples in \eqref{eq:Christoffel_basic_1} and \eqref{eq:Christoffel_basic_3}, respectively. However, we have not been able to find the corresponding permuting Christoffel chains.
 \item Using the \emph{gauge} transformations and the Christoffel chains we are able to connect all the stochastic uniform tuples, the same happens for the semi--stochastic tuples.
 \item The chains connect the stochastic uniform tuples and semi-stochastic uniform tuples separately. So far, we have not been able to see a connection between these two sets of uniform tuples.
 \item The Christoffel formulas in Theorems \ref{teo:Christoffel} and \ref{teo:Basic Christoffel formulas} can be used to connect the multiple orthogonal polynomials corresponding to different uniform tuples.
\end{enumerate}
\end{rem}

In the theory of hypergeometric functions relations among hypergeometric functions with shifted parameters by integers are known as contiguous relations, see \cite{Andrews,Rainville,Bailey}, and particularly \cite[\S 7]{Rainville0}.
From the previous Christoffel transformations and using the explicit form of the type II multiple orthogonal polynomials we get some non trivial relations among different instances of the generalized hypergeometric function $\tensor[_3]{F}{_2}$. As we will see, the six relations happens to be contiguous relations for $\tensor[_3]{F}{_2}$.

First we get contiguous relations with three hypergeometric functions.
\begin{pro}[Three terms $_3F_2$ contiguous relations]
	For $n\in\N$, the following relations are fulfilled
	\begin{align*}
\frac{ (n+1) (3 n+2)x}{6\kappa}
\, \tensor[_3]{F}{_2}\hspace*{-3pt}\left[{\begin{NiceArray}{c}[small]-n,\; \frac{n+3}{2} ,\; \frac{n+2}{2} \\[3pt]
		\frac{4}{3},\;\frac{5}{3}\end{NiceArray}};x\right]+
\tensor[_3]{F}{_2}\hspace*{-3pt}\left[{\begin{NiceArray}{c}[small]-n-1,\; \frac{n+2}{2} , \;\frac{n+1}{2} \\[3pt]
		\frac{1}{3},\;\frac{2}{3}\end{NiceArray}};x\right] -
		\tensor[_3]{F}{_2}\hspace*{-3pt}\left[{\begin{NiceArray}{c}[small]-n,\; \frac{n+1}{2} , \;\frac{n}{2} \\[3pt]
				\frac{1}{3},\;\frac{2}{3}\end{NiceArray}};x\right] &=0,\\
\frac{	(n+1) ( n+2)x}{2\kappa}
\tensor[_3]{F}{_2}\hspace*{-3pt}\left[{\begin{NiceArray}{c}[small]-n,\; \frac{n+4}{2} , \; \frac{n+3}{2} \\[3pt]\frac{4}{3},\;\frac{5}{3}\end{NiceArray}};x\right]+
\tensor[_3]{F}{_2}\hspace*{-3pt}\left[{\begin{NiceArray}{c}[small]-n-1,\; \frac{n+2}{2} , \; \frac{n+3}{2} \\[3pt]
		\frac{1}{3},\;\frac{2}{3}\end{NiceArray}};x\right]-
	\tensor[_3]{F}{_2}\hspace*{-3pt}\left[{\begin{NiceArray}{c}[small]-n,\; \frac{n+1}{2} , \; \frac{n+2}{2} \\[3pt]
		\frac{1}{3},\;\frac{2}{3}\end{NiceArray}};x\right]&=0.
\end{align*}
\end{pro}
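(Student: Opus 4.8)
The plan is to read both identities directly off the basic Christoffel transformations collected in Theorem~\ref{teo:Christoffel chains for uniform tuples}, by inserting into them the explicit hypergeometric forms \eqref{eq:B_3F2_1}--\eqref{eq:B_3F2_6} of the type~II polynomials of the relevant uniform tuples. First I would recall from Theorem~\ref{teo:Basic Christoffel formulas} that for $\underline{\vec w}=x\vec w$ one has
\begin{align*}
 B^{(n)}_{\underline{\vec w}}(x)=\frac1x\bigg(B^{(n+1)}_{\vec w}(x)-\frac{B^{(n+1)}_{\vec w}(0)}{B^{(n)}_{\vec w}(0)}\,B^{(n)}_{\vec w}(x)\bigg),
\end{align*}
and, since by Remark~\ref{rem:scaling} a rescaling $w_i\mapsto\alpha_i w_i$ leaves the sequence $\{B^{(n)}\}$ untouched, this formula applies verbatim to the transformations $C_{\alpha_1,\alpha_2}\colon(W_1,W_2)\mapsto(\alpha_1xW_1,\alpha_2xW_2)$. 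For every uniform tuple, $B^{(n)}(x)$ is a constant (in $x$) times $(-\kappa)^n\,{}_3F_2[\cdots;x]$, so $B^{(n)}(0)$ is that same constant times $(-\kappa)^n$; when the constant is independent of $n$ --- which happens for the two ``source'' tuples used below --- the quotient $B^{(n+1)}_{\vec w}(0)/B^{(n)}_{\vec w}(0)$ collapses to $-\kappa$.

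For the first identity I would apply the formula to \eqref{eq:Christoffel_basic_2}, $(\tfrac13,\tfrac23,\tfrac12,1)\xrightarrow{C_{9/4,\,27/10}}(\tfrac43,\tfrac53,\tfrac32,2)$. By \eqref{eq:B_3F2_1} the source obeys $B^{(n)}_{\vec w}(x)=3(-\kappa)^n\,{}_3F_2[-n,\tfrac{n+1}{2},\tfrac{n}{2};\tfrac13,\tfrac23;x]$, so the quotient is $-\kappa$ and $B^{(n+1)}_{\vec w}$ carries the shifted parameters $\tfrac{n+2}{2}$ and $\tfrac{n+1}{2}$; by \eqref{eq:B_3F2_3} the target obeys $B^{(n)}_{\underline{\vec w}}(x)=\tfrac{(n+1)(3n+2)}{2}(-\kappa)^n\,{}_3F_2[-n,\tfrac{n+3}{2},\tfrac{n+2}{2};\tfrac43,\tfrac53;x]$. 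Substituting, multiplying through by $x$ and dividing by $3\kappa(-\kappa)^n$ gives exactly the first stated relation. For the second identity I would instead use \eqref{eq:Christoffel_basic_4}, $(\tfrac23,\tfrac13,1,\tfrac32)\xrightarrow{C_{27/4,\,27/5}}(\tfrac43,\tfrac53,2,\tfrac52)$; by the permutation \emph{gauge} symmetry $(a,b,c,d)\leftrightarrow(b,a,c,d)$ (Theorem~\ref{teo:gauge_freedom}) the source tuple has the same $B^{(n)}$ as $(\tfrac13,\tfrac23,1,\tfrac32)$, for which \eqref{eq:B_3F2_4} gives $B^{(n)}_{\vec w}(x)=(-\kappa)^n\,{}_3F_2[-n,\tfrac{n+1}{2},\tfrac{n+2}{2};\tfrac13,\tfrac23;x]$ (again quotient $-\kappa$), while \eqref{eq:B_3F2_6} gives $B^{(n)}_{\underline{\vec w}}(x)=\tfrac{(n+1)(n+2)}{2}(-\kappa)^n\,{}_3F_2[-n,\tfrac{n+4}{2},\tfrac{n+3}{2};\tfrac43,\tfrac53;x]$. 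The same manipulation, now dividing by $\kappa(-\kappa)^n$, produces the second relation.

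The whole argument is bookkeeping, so the only points requiring attention are: keeping the powers $(-\kappa)^n$ and the numerical prefactors ($3$, resp.\ $1$) aligned so that the cancellation is manifest rather than merely plausible; correctly shifting the half-integer upper parameters when passing from $B^{(n)}_{\vec w}$ to $B^{(n+1)}_{\vec w}$; and noting that \eqref{eq:B_3F2_1}--\eqref{eq:B_3F2_6} hold for $n\geq1$ while $B^{(0)}\equiv1$, so that Theorem~\ref{teo:Basic Christoffel formulas} may be invoked for every $n\in\N$. There is no genuine obstacle; a one-line check at $n=1$ confirms both identities.
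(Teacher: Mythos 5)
Your proof is correct and follows essentially the same route as the paper: both insert the explicit ${}_3F_2$ forms \eqref{eq:B_3F2_1}, \eqref{eq:B_3F2_3}, \eqref{eq:B_3F2_4}, \eqref{eq:B_3F2_6} into the basic Christoffel identity $xB^{(n)}_{\underline{\vec w}}=B^{(n+1)}_{\vec w}-\frac{B^{(n+1)}_{\vec w}(0)}{B^{(n)}_{\vec w}(0)}B^{(n)}_{\vec w}$, using that $B^{(n)}_{\vec w}(0)$ is a constant times $(-\kappa)^n$ so the quotient is $-\kappa$. The only (immaterial) difference is that you invoke the gauge-partner transformations \eqref{eq:Christoffel_basic_2} and \eqref{eq:Christoffel_basic_4} where the paper uses \eqref{eq:Christoffel_basic_1} and \eqref{eq:Christoffel_basic_3}; since the type~II sequences coincide within each gauge class, the computation and the resulting identities are identical.
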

\begin{proof}
	From the Equations \eqref{eq:B_3F2_1} and \eqref{eq:B_3F2_4} we get
	for the uniform tuples $ \big(\frac{2}{3},\frac{1}{3}, \frac{1}{2},1\big)$ 
	and $ \big(\frac{2}{3},\frac{1}{3}, 1,\frac{3}{2}\big)$ that
$		B^{(n)}(0)=3(-\kappa)^n$ and $B^{(n)}(0)=(-\kappa)^n$, 
	respectively. Then, from \eqref{teo:Basic Christoffel formulas}, i.e. $x	 B^{(n)}_{\underline{\vec w}}(x)=B_{\vec w}^{(n+1)}(x) 
		-\frac{B_{\vec w}^{(n+1)}(0)}{B_{\vec w}^{(n)}(0)}B_{\vec w}^{(n)}(x)$
and Equations \eqref{eq:B_3F2_3} and \eqref{eq:B_3F2_6} give the result.
\end{proof}
Second, we obtain contiguous relations with four hypergeometric functions.
\begin{pro}[Four terms $_3F_2$ contiguous relations]
For $n\in\N$, the following relations are satisfied
\begin{gather}	\label{eq:hyper_relation_4_1}
			\frac{(3 n+1)x }{\kappa}
		\, \tensor[_3]{F}{_2}\hspace*{-3pt}\left[{\begin{NiceArray}{c}[small]-n,\; \frac{n+1}{2} , \;\frac{n+2}{2} \\[3pt]
				\frac{2}{3},\;\frac{4}{3}\end{NiceArray}};x\right]
		+ 3 \, \tensor[_3]{F}{_2}\hspace*{-3pt}\left[{\begin{NiceArray}{c}[small]-n-1,\; \frac{n+2}{2} , \;\frac{n+1}{2} \\[3pt]
				\frac{1}{3},\;\frac{2}{3}\end{NiceArray}};x\right]
		-6\,\tensor[_3]{F}{_2}\hspace*{-3pt}\left[{\begin{NiceArray}{c}[small]-n,\; \frac{n+1}{2} , \;\frac{n}{2} \\[3pt]
				\frac{1}{3},\;\frac{2}{3}\end{NiceArray}};x\right]
		+3\,\tensor[_3]{F}{_2}\hspace*{-3pt}\left[{\begin{NiceArray}{c}[small]-n+1,\; \frac{n}{2} , \;\frac{n-1}{2} \\[3pt]
				\frac{1}{3},\;\frac{2}{3}\end{NiceArray}};x\right]=0,\\
	\label{eq:hyper_relation_4_2}	\begin{multlined}[t][.9\textwidth]
		\frac{(n+1) (3 n+2)x}{2\kappa} 
		\, \tensor[_3]{F}{_2}\hspace*{-3pt}\left[{\begin{NiceArray}{c}[small]-n,\; \frac{n+3}{2} ,\; \frac{n+2}{2} \\[3pt]
				\frac{4}{3},\;\frac{5}{3}\end{NiceArray}};x\right]
		+(3 n+4) 
		\, \tensor[_3]{F}{_2}\hspace*{-3pt}\left[{\begin{NiceArray}{c}[small]-n-1,\; \frac{n+2}{2} , \;\frac{n+3}{2} \\[3pt]
				\frac{2}{3},\;\frac{4}{3}\end{NiceArray}};x\right]\\-2	(3 n+1) 
		\, \tensor[_3]{F}{_2}\hspace*{-3pt}\left[{\begin{NiceArray}{c}[small]-n,\; \frac{n+1}{2} , \;\frac{n+2}{2} \\[3pt]
				\frac{2}{3},\;\frac{4}{3}\end{NiceArray}};x\right]	+ (3 n-2) 
		\, \tensor[_3]{F}{_2}\hspace*{-3pt}\left[{\begin{NiceArray}{c}[small]-n+1,\; \frac{n}{2} , \;\frac{n+1}{2} \\[3pt]
				\frac{2}{3},\;\frac{4}{3}\end{NiceArray}};x\right]=0,
	\end{multlined}\\
	\label{eq:hyper_relation_4_3}	\frac{(n+1) x}{\kappa}
	\tensor[_3]{F}{_2}\hspace*{-3pt}\left[{\begin{NiceArray}{c}[small]-n,\; \frac{n+3}{2} , \; \frac{n+2}{2} \\[3pt]
			\frac{2}{3},\;\frac{4}{3}\end{NiceArray}};x\right] 
	+
	\tensor[_3]{F}{_2}\hspace*{-3pt}\left[{\begin{NiceArray}{c}[small]-n-1,\; \frac{n+2}{2} , \; \frac{n+3}{2} \\[3pt]
			\frac{1}{3},\;\frac{2}{3}\end{NiceArray}};x\right] -2 \,	
	\tensor[_3]{F}{_2}\hspace*{-3pt}\left[{\begin{NiceArray}{c}[small]-n,\; \frac{n+1}{2} , \; \frac{n+2}{2} \\[3pt]
			\frac{1}{3},\;\frac{2}{3}\end{NiceArray}};x\right] +
	\tensor[_3]{F}{_2}\hspace*{-3pt}\left[{\begin{NiceArray}{c}[small]-n+1,\; \frac{n}{2} , \; \frac{n+1}{2} \\[3pt]
			\frac{1}{3},\;\frac{2}{3}\end{NiceArray}};x\right] =0,
	\\
	\label{eq:hyper_relation_4_4}
\begin{multlined}[t][.9\textwidth]
	\frac{(n+1)(n+2) x }{2\kappa} 
	\tensor[_3]{F}{_2}\hspace*{-3pt}\left[{\begin{NiceArray}{c}[small]-n,\; \frac{n+4}{2} , \; \frac{n+3}{2} \\[3pt]\frac{4}{3},\;\frac{5}{3}\end{NiceArray}};x\right]
	+(n+2) \,
	\tensor[_3]{F}{_2}\hspace*{-3pt}\left[{\begin{NiceArray}{c}[small]-n-1,\; \frac{n+4}{2} , \; \frac{n+3}{2} \\[3pt]
			\frac{2}{3},\;\frac{4}{3}\end{NiceArray}};x\right]\\	-2 (n+1) \,
	\tensor[_3]{F}{_2}\hspace*{-3pt}\left[{\begin{NiceArray}{c}[small]-n,\; \frac{n+3}{2} , \; \frac{n+2}{2} \\[3pt]
			\frac{2}{3},\;\frac{4}{3}\end{NiceArray}};x\right]	+ n \,
	\tensor[_3]{F}{_2}\hspace*{-3pt}\left[{\begin{NiceArray}{c}[small]-n+1,\; \frac{n+2}{2} , \; \frac{n+1}{2} \\[3pt]
			\frac{2}{3},\;\frac{4}{3}\end{NiceArray}};x\right]=0.
\end{multlined}
\end{gather}
\end{pro}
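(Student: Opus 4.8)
The plan is to read each of the four identities \eqref{eq:hyper_relation_4_1}--\eqref{eq:hyper_relation_4_4} backwards through the dictionary provided by \eqref{eq:B_3F2_1}--\eqref{eq:B_3F2_6}. First I would substitute, for each relation, the explicit $\tensor[_3]{F}{_2}$-expressions of the type II multiple orthogonal polynomials of the two uniform tuples that occur: \eqref{eq:B_3F2_1} and \eqref{eq:B_3F2_2} for \eqref{eq:hyper_relation_4_1}; \eqref{eq:B_3F2_2} and \eqref{eq:B_3F2_3} for \eqref{eq:hyper_relation_4_2}; \eqref{eq:B_3F2_4} and \eqref{eq:B_3F2_5} for \eqref{eq:hyper_relation_4_3}; and \eqref{eq:B_3F2_5} and \eqref{eq:B_3F2_6} for \eqref{eq:hyper_relation_4_4}. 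After clearing the powers of $-\kappa$ and the scalar prefactors (and using the \emph{gauge} identification $a\leftrightarrow b$ so that the tuples in \eqref{eq:B_3F2_*} match the chain representatives), each of the four statements is recognized to be equivalent to an identity of the form
\begin{equation*}
x\,B^{(n)}_{\underline{\vec w}}(x)=B^{(n+1)}_{\vec w}(x)+c_n\,B^{(n)}_{\vec w}(x)+d_n\,B^{(n-1)}_{\vec w}(x),
\end{equation*}
where $\vec w\to\underline{\vec w}$ is exactly one elementary step of the permuting Christoffel chains of Theorem \ref{teo:Christoffel chains for uniform tuples}: the two steps of \eqref{eq:Christoffel_chain_1} correspond to \eqref{eq:hyper_relation_4_1} and \eqref{eq:hyper_relation_4_2}, and the two steps of \eqref{eq:Christoffel_chain_2} correspond to \eqref{eq:hyper_relation_4_3} and \eqref{eq:hyper_relation_4_4}.

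Second, such a relation is precisely the type II permuting Christoffel formula \eqref{eq:permuting_Christoffel} of Theorem \ref{teo:Christoffel}: since Theorem \ref{teo:Christoffel chains for uniform tuples} certifies that these steps are genuine permuting Christoffel transformations, \eqref{eq:permuting_Christoffel} applies and gives the displayed relation with $c_n=\frac{A_{1,\vec w}^{(n-1)}(0)}{A_{1,\vec w}^{(n)}(0)}+J_{n,n}$ and $d_n=-\frac{A_{1,\vec w}^{(n+1)}(0)}{A_{1,\vec w}^{(n)}(0)}J_{n+1,n-1}$, the $J$'s being entries of the (almost Toeplitz) Jacobi matrix of the source tuple. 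It remains only to evaluate the two constants $c_n$ and $d_n$.

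Third, rather than computing the type I forms at the origin, I would pin down $c_n$ and $d_n$ by comparing the top coefficients in the displayed polynomial identity. For a uniform tuple one has $B^{(m)}(x)=x^m+s^{(m)}_{m-1}x^{m-1}+s^{(m)}_{m-2}x^{m-2}+\cdots$ with $s^{(m)}_{m-1}=-\bigl(\beta_0+(m-1)\,3\kappa\bigr)$, the value $\beta_0$ being read off directly from the Jacobi matrices displayed in \S\ref{S:stochastic_uniform} and \S\ref{S:semi-stochastic uniform tuples}. Matching the coefficient of $x^n$ gives $c_n=s^{(n)}_{n-1}(\underline{\vec w})-s^{(n+1)}_{n}(\vec w)=\beta_0(\vec w)-\beta_0(\underline{\vec w})+3\kappa$; since along each chain step $\beta_0$ increases exactly by $\kappa$, this yields $c_n=2\kappa$ in all four cases. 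Matching the coefficient of $x^{n-1}$ (which in addition needs $s^{(m)}_{m-2}$, again a low-order polynomial in $m$ built from $\beta_0$, $3\kappa$ and $3\kappa^2$) gives $d_n=\kappa^2$. Substituting the constants $c_n=2\kappa$, $d_n=\kappa^2$ back through \eqref{eq:B_3F2_*} reproduces \eqref{eq:hyper_relation_4_1}--\eqref{eq:hyper_relation_4_4} verbatim, the visible $n$-dependence of the scalar coefficients (the $(3n+1)$, $\frac{(n+1)(3n+2)}{2}$, $(n+1)$, $\frac{(n+1)(n+2)}{2}$ factors) being forced entirely by the non-constant prefactors in \eqref{eq:B_3F2_2}, \eqref{eq:B_3F2_3}, \eqref{eq:B_3F2_5}, \eqref{eq:B_3F2_6}.

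The main obstacle is bookkeeping rather than anything conceptual: one must track correctly, through each of the four steps, the three shifted upper parameters of the $\tensor[_3]{F}{_2}$'s and the normalizing constants, and one must handle the base case $n=1$ separately, since \eqref{eq:B_3F2_*} are valid only for $n\in\N$ whereas the relations involve $B^{(n-1)}$; at $n=1$ the term $B^{(0)}\equiv1$ breaks the prefactor pattern, which is exactly why the last summand of \eqref{eq:hyper_relation_4_1} carries the bare coefficient $3$ instead of what its general-$n$ template would predict. An equally valid if more computational route bypasses Theorem \ref{teo:Christoffel} altogether and verifies the displayed relation (equivalently, each of \eqref{eq:hyper_relation_4_1}--\eqref{eq:hyper_relation_4_4}) directly as a polynomial identity in $x$, by matching the coefficient of each $x^k$ through a short Pochhammer manipulation; the two approaches have identical content.
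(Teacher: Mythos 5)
Your proposal has the same skeleton as the paper's proof: both recognize each of the four identities as the permuting Christoffel relation $x\,B^{(n)}_{\underline{\vec w}}=B^{(n+1)}_{\vec w}+2\kappa B^{(n)}_{\vec w}+\kappa^2 B^{(n-1)}_{\vec w}$ attached to one step of the chains in Theorem \ref{teo:Christoffel chains for uniform tuples}, and then translate through the ${}_3F_2$ representations \eqref{eq:B_3F2_1}--\eqref{eq:B_3F2_6}. The only divergence is how the two constants are pinned down: the paper evaluates $A_{1}^{(n)}(0)=(-\kappa)^{-n}$ for the source tuples (running the dual uniform recurrence at the origin) and substitutes into \eqref{eq:permuting_Christoffel}, whereas you match the coefficients of $x^{n}$ and $x^{n-1}$ in the monic polynomial identity, reading the subleading coefficients off the almost-Toeplitz Jacobi matrices. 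Your variant is legitimate and spares the type~I computation, at the modest cost of extracting $s^{(m)}_{m-2}$ from the recurrence.

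One remark on the base case, which you rightly flag but misdiagnose. The coefficient $3$ in front of the last ${}_3F_2$ in \eqref{eq:hyper_relation_4_1} is not a $n=1$ anomaly: for every $n\ge 2$ it equals $\kappa^{2}\cdot 3(-\kappa)^{n-1}/(-\kappa)^{n+1}=3$, i.e.\ it \emph{is} the general template. What actually happens at $n=1$ is that the uniform relation $xB^{(1)}_{\underline{\vec w}}=B^{(2)}_{\vec w}+2\kappa B^{(1)}_{\vec w}+\kappa^{2}B^{(0)}_{\vec w}$ is \emph{false} for the first chain step: the source tuple $\big(\tfrac23,\tfrac13,\tfrac12,1\big)$ has $\gamma_{1}=3\kappa^{3}$ rather than $\kappa^{3}$, so the correct coefficient of $B^{(0)}$ is $3\kappa^{2}$ (a quick check of the constant terms, $\tfrac{16}{243}-\tfrac{8}{27}\cdot\tfrac49+\kappa^{2}\neq 0$ but $\tfrac{16}{243}-\tfrac{8}{27}\cdot\tfrac49+3\kappa^{2}=0$, confirms this). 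That extra factor $3$ is then exactly absorbed by the mismatch between $B^{(0)}=1$ and the extension $3(-\kappa)^{0}=3$ of the prefactor of \eqref{eq:B_3F2_1} to $n=0$, which is why \eqref{eq:hyper_relation_4_1} still holds verbatim at $n=1$. Your coefficient-matching, if carried out at $n=1$ with the true first column of the Jacobi matrix, produces $d_{1}=3\kappa^{2}$ and lands on the right identity; just do not assert $d_{n}=\kappa^{2}$ uniformly down to $n=1$ for that chain. For the other three relations the source tuples have $\gamma_{1}=\kappa^{3}$ and the extended prefactors equal $1$ at $n=0$, so no such care is needed.
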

\begin{proof}
For the uniform tuples $\big(\frac{2}{3},\frac{1}{3}, \frac{1}{2},1\big)$, $\big(\frac{4}{3},\frac{2}{3}, 1,\frac{3}{2}\big)$,
$\big(\frac{2}{3},\frac{1}{3}, 1,\frac{3}{2}\big)$ and $\big(\frac{4}{3},\frac{2}{3}, \frac{3}{2},2\big)$, proceeding similarly as we did
for \eqref{eq:Bn(1)}, but instead of unity the origin, instead of the type II polynomials the type I, and using dual recurrences, we get
 \begin{align}\label{eq:A10}
 	A_1^{(n)}(0) &= \frac{1}{(-\kappa)^n},
 \end{align}
From Equations \eqref{eq:permuting_Christoffel} and \eqref{eq:Jacobi_Jacobi_Piñeiro_uniform} we get
 \begin{align}
x	B^{(n)}_{\underline{\vec w}}(x)
	=B_{\vec w}^{(n+1)}(x)
+\bigg(\frac{A_{1,\vec w}^{(n-1)}(0)}{A^{(n)}_{1,\vec w}(0)} +3\kappa\bigg)B_{\vec w}^{(n)}(x)-\frac{A_{1,\vec w}^{(n+1)}(0)}{A^{(n)}_{1,\vec w}(0)}\kappa^3B_{\vec w}^{(n-1)}(x).
\end{align}
and \eqref{eq:A10} leads to
 \begin{align}\label{eq:permuting_Christoffel_uniform}
	x	B^{(n)}_{\underline{\vec w}}(x)
= B_{\vec w}^{(n+1)}(x)	+2\kappa	B_{\vec w}^{(n)}(x)	+\kappa^2 B_{\vec w}^{(n-1)}(x).
\end{align}
From \eqref{eq:permuting_Christoffel_uniform}, \eqref{eq:B_3F2_1} and \eqref{eq:B_3F2_2} we conclude \eqref{eq:hyper_relation_4_1}, Equations 
\eqref{eq:permuting_Christoffel_uniform}, \eqref{eq:B_3F2_2} and \eqref{eq:B_3F2_3} leads to \eqref{eq:hyper_relation_4_2}, 
recalling \eqref{eq:permuting_Christoffel_uniform}, \eqref{eq:B_3F2_4} and \eqref{eq:B_3F2_5} we have \eqref{eq:hyper_relation_4_3}, and finally, from \eqref{eq:permuting_Christoffel_uniform}, \eqref{eq:B_3F2_5} and \eqref{eq:B_3F2_6} we get \eqref{eq:hyper_relation_4_4}.
\end{proof}
\subsection{Type I multiple orthogonal polynomials. Explicit expressions}
In contrast with the type II multiple orthogonal polynomials, for which we have expressions in terms of the  generalized hypergeometric function $\tensor[_3]{F}{_2}$, the best expression we have for the type I multiple orthogonal polynomials is provided in Corollary \ref{cor:expressions_type_I}.  These  are computationally rather more involved than the mentioned hypergeometric ones for the type II.
Fortunately, as we will show now, for the uniform tuples much  simpler expressions do exist for these hypergeometric type I multiple orthogonal polynomials. The idea is to use generating functions for constant coefficient recurrence relations.  
The uniform  order four  homogeneous linear recurrence relation satisfied 
 by  type I multiple orthogonal polynomials discussed here reads as follows
	\begin{align}
	v_n+3\kappa v_{n+1}+3\kappa^2 v_{n+2}+\kappa^3v_{n+3}&= x v_{n+1}, && n \in \mathbb N_0.
\label{eq:uniform_recurrenceI}
\end{align}
Notice that, if we denote 
\begin{align*}
	\kappa_*&:=\frac{1}{\kappa}, &x_*&:=\kappa_*^3x,
\end{align*}
  recurrence \eqref{eq:uniform_recurrenceI} can be written as
\begin{align}\label{eq:uniform_recurrenceI'}
	\kappa_*^3	v_n+3\kappa_*^2 v_{n+1}+3\kappa_*v_{n+2}+v_{n+3}&= x_* v_{n+1}, && n \in \mathbb N_0 .
\end{align}
Let us consider the corresponding generating function
\begin{align*}	V(t,x)&:=\sum_{n=0}^\infty v_n(x)t^n.
	\end{align*}
\begin{pro}\label{pro:V}
	The generating function is explicitly given by the rational function
		\begin{align}
\label{eq:V}
		V&=\frac{(v_2+3\kappa_*v_1+(3\kappa_*^2-x_*)v_0)t^2+(v_1+3\kappa_* v_0)t+v_0}{(1+\kappa_* t)^3-x_*t^2}.
	\end{align}
\end{pro}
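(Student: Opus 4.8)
The plan is to derive the closed form for $V(t,x)$ directly from the recurrence \eqref{eq:uniform_recurrenceI'}, using the standard generating-function bookkeeping for constant-coefficient linear recurrences. First I would multiply both sides of \eqref{eq:uniform_recurrenceI'} by $t^{n+3}$ and sum over $n\in\mathbb N_0$. The left-hand side becomes $\kappa_*^3\sum_{n\ge0}v_nt^{n+3}+3\kappa_*^2\sum_{n\ge0}v_{n+1}t^{n+3}+3\kappa_*\sum_{n\ge0}v_{n+2}t^{n+3}+\sum_{n\ge0}v_{n+3}t^{n+3}$, and each of these four sums I would re-express in terms of $V(t,x)=\sum_{n\ge0}v_n(x)t^n$ by shifting the summation index: $\sum_{n\ge0}v_nt^{n+3}=t^3V$, $\sum_{n\ge0}v_{n+1}t^{n+3}=t^2(V-v_0)$, $\sum_{n\ge0}v_{n+2}t^{n+3}=t(V-v_0-v_1t)$, and $\sum_{n\ge0}v_{n+3}t^{n+3}=V-v_0-v_1t-v_2t^2$. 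Likewise the right-hand side $x_*\sum_{n\ge0}v_{n+1}t^{n+3}=x_*t^2(V-v_0)$.

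Next I would collect the coefficient of $V$ on the left after moving the $x_*$ term over: it is $\kappa_*^3t^3+3\kappa_*^2t^2+3\kappa_*t+1-x_*t^2=(1+\kappa_*t)^3-x_*t^2$, which is exactly the denominator appearing in \eqref{eq:V}. The remaining terms, all involving only the initial data $v_0,v_1,v_2$, I would gather according to powers of $t$; after simplification the constant term is $v_0$, the coefficient of $t$ is $v_1+3\kappa_*v_0$, and the coefficient of $t^2$ is $v_2+3\kappa_*v_1+3\kappa_*^2v_0-x_*v_0=v_2+3\kappa_*v_1+(3\kappa_*^2-x_*)v_0$. Dividing through by $(1+\kappa_*t)^3-x_*t^2$ then yields precisely \eqref{eq:V}. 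I should also note that the formal manipulations are justified because, being a polynomial sequence indexed by $n$ with $v_n(x)$ a polynomial of bounded-in-$n$-relative degree, the series $V(t,x)$ converges for $t$ in a neighborhood of $0$ (or one may simply work in the ring of formal power series in $t$, where the identity is exact).

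There is essentially no hard obstacle here; the only point requiring a little care is the arithmetic of the index shifts and the collection of the initial-condition terms into the numerator, in particular tracking the $-x_*v_0$ contribution coming from the right-hand side into the coefficient of $t^2$. Everything else is routine. One stylistic remark: since the paper states \eqref{eq:V} with $v_0,v_1,v_2$ as free parameters, the identity holds for the generating function of \emph{any} solution of \eqref{eq:uniform_recurrenceI'}, and in particular for each of the type I families $A_1^{(n)}$, $A_2^{(n)}$, $q^{(n)}$ once their first three members are substituted; no positivity or orthogonality input is needed for Proposition \ref{pro:V} itself.
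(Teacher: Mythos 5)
Your proposal is correct and follows essentially the same route as the paper: multiply \eqref{eq:uniform_recurrenceI'} by $t^{n+3}$, sum over $n$, re-express the shifted sums in terms of $V$ minus the initial terms, and solve for $V$, with the coefficient of $V$ assembling into $(1+\kappa_* t)^3-x_*t^2$. The bookkeeping of the numerator coefficients, including the $-x_*v_0$ contribution to the $t^2$ term, matches the paper's computation exactly.
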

\begin{proof}
For \eqref{eq:uniform_recurrenceI} we 	multiply \eqref{eq:uniform_recurrenceI'} by $t^{n+3}$ so that
\begin{align*}%\label{eq:uniform_recurrences}
(\kappa_* t)^3 t^n	v_n+3 (\kappa_* t)^2 t^{n+1} v_{n+1} + 3 (\kappa_* t) t^{n+2}v_{n+2} + t^{n+3}v_{n+3} & = x_*t^2 t^{n+1}v_{n+1}, 
\end{align*}
and sum up, for the type I generating function we get 
\begin{align*}%\label{eq:uniform_recurrences}
	(\kappa_* t)^3 V+3	(\kappa_* t)^2 (V-v_0)+3	(\kappa_* t) (V-v_0-v_1t)+V-v_0-v_1t-v_2t^2&= x_*t^2 (V-v_0), 
\end{align*}
so that \eqref{eq:V} immediately follows. 
\end{proof}

To get the $v_n(x)$ back we need to expand Equation \eqref{eq:V} in power series in $t$.

\begin{lemma}[Power series expansions]\label{lem:trinomial}
	For $|3\kappa t+3(\kappa^2-x) t^2+\kappa^3t^3|<1$, we find
	\begin{align*}
	\frac{1}{{(1+\kappa_* t)^3-x_*t^2}}&=\sum_{n=0}^\infty 
	e_n(x)t^n , &e_n(x)&:=(-\kappa)^{-n}\hspace*{-11pt}\sum_{m_1+2m_2+3m_3=n}	\hspace*{-10pt}(-1)^{m_2} \binom{m_1+m_2+m_3}{m_1,m_2,m_3}3^{m_1+m_2}\Big(1-\frac{ x}{3\kappa}\Big)^{m_2}.
\end{align*}
	Here $\binom{k}{k_1,k_2,k_3}=\frac{k!}{k_1!k_2!k_3!}$ is the trinomial coefficient.
\end{lemma}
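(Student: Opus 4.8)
The plan is to expand the rational function $\frac{1}{(1+\kappa_*t)^3 - x_* t^2}$ as a geometric series and then re-expand the inner polynomial via the multinomial theorem. First I would write the denominator as $1 - u(t)$ where $u(t) := x_* t^2 - 3\kappa_* t - 3\kappa_*^2 t^2 - \kappa_*^3 t^3 = -3\kappa_* t - (3\kappa_*^2 - x_*) t^2 - \kappa_*^3 t^3$, so that for $|u(t)| < 1$ we have $\frac{1}{1-u(t)} = \sum_{j=0}^\infty u(t)^j$. Note that in terms of the original variables, $\kappa_* = 1/\kappa$ and $x_* = \kappa_*^3 x$, so $-3\kappa_* t - (3\kappa_*^2 - x_*) t^2 - \kappa_*^3 t^3 = -\kappa_*\big(3t + (3\kappa_* - \kappa_*^2 x) t^2 + \kappa_*^2 t^3\big)$; factoring $\kappa_*^{-1} = \kappa$ appropriately, one checks the convergence condition stated in the lemma, $|3\kappa t + 3(\kappa^2 - x)t^2 + \kappa^3 t^3| < 1$, matches $|u(t)|<1$ after the substitution $t \mapsto t$ is tracked through the $\kappa_*$ scalings; this bookkeeping is the first thing to get right.

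Next I would apply the multinomial theorem to each power $u(t)^j$. Writing $u(t) = \kappa_*\big(-3t\big) + \kappa_*^2\big(-(3 - \kappa_* x)/\kappa_* \cdot t^2\big)$ — better, keep it as a sum of three monomials $A_1 t + A_2 t^2 + A_3 t^3$ with $A_1 = -3\kappa_*$, $A_2 = -(3\kappa_*^2 - x_*) = -3\kappa_*^2 + \kappa_*^3 x$, $A_3 = -\kappa_*^3$ — we get
\begin{align*}
u(t)^j = \sum_{m_1 + m_2 + m_3 = j} \binom{j}{m_1,m_2,m_3} A_1^{m_1} A_2^{m_2} A_3^{m_3}\, t^{m_1 + 2m_2 + 3m_3}.
\end{align*}
Summing over $j \geq 0$ and collecting the coefficient of $t^n$ amounts to summing over all $(m_1,m_2,m_3)$ with $m_1 + 2m_2 + 3m_3 = n$ (and then $j = m_1 + m_2 + m_3$ is determined). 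This gives $e_n(x) = \sum_{m_1 + 2m_2 + 3m_3 = n} \binom{m_1+m_2+m_3}{m_1,m_2,m_3} A_1^{m_1} A_2^{m_2} A_3^{m_3}$.

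The remaining task is purely algebraic simplification of $A_1^{m_1} A_2^{m_2} A_3^{m_3}$. We have $A_1^{m_1} = (-3)^{m_1}\kappa_*^{m_1} = (-1)^{m_1} 3^{m_1}(-\kappa)^{-m_1}\cdot(-1)^{m_1}$ — let me instead pull out signs and powers of $\kappa_*$ cleanly: $A_1^{m_1}A_2^{m_2}A_3^{m_3} = (-1)^{m_1} 3^{m_1}\kappa_*^{m_1}\cdot \kappa_*^{2m_2}(-1)^{m_2}(3 - \kappa_* x)^{m_2}\cdot(-1)^{m_3}\kappa_*^{3m_3}$, using $A_2 = -\kappa_*^2(3 - \kappa_* x)$ and $3 - \kappa_* x = 3 - x/\kappa = 3(1 - x/(3\kappa))$. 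Thus $(3-\kappa_* x)^{m_2} = 3^{m_2}(1 - x/(3\kappa))^{m_2}$, contributing the $3^{m_2}$ and $(1 - x/(3\kappa))^{m_2}$ factors. The total power of $\kappa_*$ is $m_1 + 2m_2 + 3m_3 = n$, so $\kappa_*^n = \kappa^{-n} = (-1)^n(-\kappa)^{-n}$; combined with the sign $(-1)^{m_1 + m_2 + m_3}$ and noting $m_1 + m_2 + m_3 \equiv m_1 + 3m_2 + 3m_3 \equiv m_1 + 2m_2 + 3m_3 + m_2 \equiv n + m_2 \pmod 2$ is not quite what we want — rather $(-1)^n \kappa_*^n = (-\kappa)^{-n}\cdot(-1)^{2n} = (-\kappa)^{-n}$ after absorbing one $(-1)^n$, and then the leftover sign from $(-1)^{m_1+m_2+m_3}(-1)^n$ needs to reduce to $(-1)^{m_2}$. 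Checking: $(-1)^{m_1+m_2+m_3+n}$ with $n = m_1 + 2m_2 + 3m_3$ gives exponent $2m_1 + 3m_2 + 4m_3 \equiv m_2 \pmod 2$, confirming the $(-1)^{m_2}$. Collecting $3^{m_1}\cdot 3^{m_2} = 3^{m_1+m_2}$, we arrive at the stated formula. The main obstacle is not any single step but the sign-and-scaling bookkeeping through the $\kappa \leftrightarrow \kappa_*$ and $x \leftrightarrow x_*$ substitutions; I would do that reduction carefully once and present it, since a sign error there would be the only thing that could go wrong.
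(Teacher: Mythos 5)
Your proposal is correct and follows essentially the same route as the paper: write the denominator as $1-u(t)$ with $u(t)=-3\kappa_* t-(3\kappa_*^2-x_*)t^2-\kappa_*^3 t^3$, expand the geometric series, and apply the multinomial theorem to each power, collecting the coefficient of $t^n$ over the solutions of $m_1+2m_2+3m_3=n$. The paper leaves the final sign-and-scaling reduction implicit ("the multinomial theorem gives result"), whereas you carry it out explicitly and correctly, including the verification that $(-1)^{m_1+m_2+m_3}\kappa_*^{n}=(-\kappa)^{-n}(-1)^{m_2}$.
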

\begin{proof}
As
	\begin{align*}
		{(1+\kappa_* t)^3-x_*t^2}= 1+3\kappa_* t+3\Big(1-\frac{x_*}{3\kappa_*^2}\Big) \kappa_*^2t^2+\kappa_*^3t^3,
	\end{align*}
	for $t,x$ such that $|3\kappa_* t+3(\kappa_*^2-x_*) t^2+\kappa_*^3t^3|<1$, we can write 
	\begin{align*}
		\frac{1}{(1+\kappa_* t)^3-x_*t^2}= \frac{1}{1+3\kappa_* t+3\big(1-\frac{x_*}{3\kappa_*^2}\big)\kappa_*^2 t^2+\kappa_*^3t^3}=\sum_{m=0}^\infty \Big(-3\kappa_* t-3\Big(1-\frac{x_*}{3\kappa_*^2}\Big)\kappa_*^2t^2-\kappa_*^3t^3\Big)^m,
	\end{align*}
	and he multinomial theorem gives result.
\end{proof}

Using the theory of linear Diophantine equations we find:
\begin{pro}\label{pro:the polynomials e_n}
The polynomials $e_n$, for $n=3p,3p+1,3p+2$ with $p$ a nonnegative integer, are given by
\begin{align*}
	e_{3p}&:=		\frac{1}{(-\kappa)^{3p}}\sum_{l=0}^p 27^l\sum_{k=0}^{\lfloor\frac{3l}{2}\rfloor}\frac{(-1)^k}{3^{k}} \binom{p+2l-k}{3l-2k,k,p-l}\Big(1-\frac{ x}{3\kappa}\Big)^{k},\\
	e_{3p+1}&:=\frac{3}{(-\kappa)^{3p+1}}	\sum_{l=0}^p27^l\sum_{k=0}^{\lfloor\frac{3l+1}{2}\rfloor}\frac{(-1)^k}{3^{k}} \binom{p+2l+1-k}{3l+1-2k,k,p-l}\Big(1-\frac{ x}{3\kappa}\Big)^{k},\\
	e_{3p+2}&:=\frac{9}{(-\kappa)^{3p+2}}	\sum_{l=0}^p27^l\sum_{k=0}^{\lfloor\frac{3l}{2}\rfloor+1}\frac{(-1)^k}{3^{k}} \binom{p+2l+2-k}{3l+2-2k,k,p-l}\Big(1-\frac{ x}{3\kappa}\Big)^{k}.
\end{align*}
\end{pro}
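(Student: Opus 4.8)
The plan is to start from Lemma~\ref{lem:trinomial}, which expresses $e_n(x)$ as a sum over nonnegative integer solutions $(m_1,m_2,m_3)$ of the linear Diophantine equation $m_1+2m_2+3m_3=n$, and to solve that equation explicitly. First I would fix the residue of $n$ modulo~$3$, writing $n=3p$, $n=3p+1$ or $n=3p+2$, and parametrise the solution set. The key observation is that once one sets $m_3=p-l$ (so $l$ ranges over $0,\dots,p$), the remaining constraint is $m_1+2m_2=3l+(\text{residue})$, which has the one-parameter family of solutions obtained by letting $m_2=k$ range over $0,\dots,\lfloor (3l+\text{residue})/2\rfloor$ and setting $m_1=3l+\text{residue}-2k$. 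This is exactly the double sum appearing in the statement, so the combinatorial core is just a careful bookkeeping of which triples $(m_1,m_2,m_3)$ occur.

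Next I would substitute this parametrisation into the formula for $e_n$ from Lemma~\ref{lem:trinomial}. One has $m_1+m_2+m_3 = (3l+\text{residue}-2k)+k+(p-l) = p+2l+\text{residue}-k$, giving the upper row of the trinomial coefficient, and the lower row entries are precisely $m_1=3l+\text{residue}-2k$, $m_2=k$, $m_3=p-l$. The power of $3$ in Lemma~\ref{lem:trinomial} is $3^{m_1+m_2}=3^{3l+\text{residue}-2k+k}=3^{3l+\text{residue}-k}=27^l\cdot 3^{\text{residue}}\cdot 3^{-k}$, which accounts for the overall prefactors $1$, $3$, $9$ for residues $0$, $1$, $2$ respectively, for the factor $27^l$ inside the $l$-sum, and for the $3^{-k}=1/3^k$ attached to each term. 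The sign is $(-1)^{m_2}=(-1)^k$, and the factor $(-\kappa)^{-n}$ is unchanged, yielding the claimed $1/(-\kappa)^{3p}$, $3/(-\kappa)^{3p+1}$, $9/(-\kappa)^{3p+2}$. Finally $\big(1-\tfrac{x}{3\kappa}\big)^{m_2}=\big(1-\tfrac{x}{3\kappa}\big)^k$, completing the identification. Assembling these pieces term by term gives the three displayed formulas.

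The step I expect to require the most care is verifying the ranges of summation, in particular that $k$ runs exactly up to $\lfloor 3l/2\rfloor$ (for $n=3p$), $\lfloor (3l+1)/2\rfloor$ (for $n=3p+1$) and $\lfloor 3l/2\rfloor+1$ (for $n=3p+2$), and that no solutions are double-counted or omitted. For $n=3p$ the equation $m_1+2m_2=3l$ with $m_1\ge 0$ forces $2k=2m_2\le 3l$, i.e. $k\le\lfloor 3l/2\rfloor$; for $n=3p+2$ one has $m_1+2m_2=3l+2$, so $2k\le 3l+2$, i.e. $k\le\lfloor(3l+2)/2\rfloor=\lfloor 3l/2\rfloor+1$; the $n=3p+1$ case is analogous. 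One also needs $m_3=p-l\ge 0$, which is why $l\le p$, and there is no lower obstruction since $m_1$ absorbs whatever parity is needed. A small point worth a sentence is that the convergence hypothesis of Lemma~\ref{lem:trinomial} guarantees the series is a genuine power-series identity, so matching coefficients of $t^n$ is legitimate; once that is granted, the proof is a purely formal rearrangement.

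\begin{proof}
By Lemma~\ref{lem:trinomial}, for admissible $(t,x)$ we have $e_n(x)=(-\kappa)^{-n}\sum (-1)^{m_2}\binom{m_1+m_2+m_3}{m_1,m_2,m_3}3^{m_1+m_2}\big(1-\tfrac{x}{3\kappa}\big)^{m_2}$, the sum over nonnegative integers with $m_1+2m_2+3m_3=n$. Write $n=3p+r$ with $r\in\{0,1,2\}$. Setting $m_3=p-l$ with $0\le l\le p$, the constraint becomes $m_1+2m_2=3l+r$; its nonnegative solutions are $m_2=k$, $m_1=3l+r-2k$ with $0\le k\le\big\lfloor\tfrac{3l+r}{2}\big\rfloor$, and this parametrisation is a bijection onto the solution set. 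Then $m_1+m_2+m_3=p+2l+r-k$ and $3^{m_1+m_2}=3^{3l+r-k}=27^l\,3^{r}\,3^{-k}$. Substituting, $e_n(x)=\dfrac{3^{r}}{(-\kappa)^{n}}\sum_{l=0}^{p}27^l\sum_{k=0}^{\lfloor(3l+r)/2\rfloor}\dfrac{(-1)^k}{3^k}\binom{p+2l+r-k}{\,3l+r-2k,\;k,\;p-l\,}\big(1-\tfrac{x}{3\kappa}\big)^k$. For $r=0$ one has $\lfloor 3l/2\rfloor$ and prefactor $1$; for $r=1$, $\lfloor(3l+1)/2\rfloor$ and prefactor $3$; for $r=2$, $\lfloor(3l+2)/2\rfloor=\lfloor 3l/2\rfloor+1$ and prefactor $9$. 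These are exactly the three displayed expressions for $e_{3p}$, $e_{3p+1}$ and $e_{3p+2}$.
\end{proof}
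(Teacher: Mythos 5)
Your proposal is correct and follows essentially the same route as the paper: the paper's proof consists precisely of the parametrisation $(m_1,m_2,m_3)=(3l+r-2k,\,k,\,p-l)$ of the Diophantine constraint $m_1+2m_2+3m_3=n$ by residue class, with the same ranges $0\le l\le p$ and $0\le k\le\lfloor(3l+r)/2\rfloor$. You additionally write out the substitution into Lemma~\ref{lem:trinomial} (the identities $m_1+m_2+m_3=p+2l+r-k$ and $3^{m_1+m_2}=27^l\,3^r\,3^{-k}$) that the paper leaves implicit, which is a welcome completion of the argument.
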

\begin{proof}
The restriction in the sum
$	m_1+2m_2+3m_3=n$
can be understood as a linear Diophantine equation for three unknown positive integers $(m_1,m_2,m_3)$. As $\text{gcd}(1,2,3)=1$, this can be solved as follows, depending whether $n=3p,3p+1,3p+2$ with $p$ a nonnegative integer:
\begin{align*}
	n&=3p: &\begin{pNiceMatrix}
		m_1\\m_2\\m_3
	\end{pNiceMatrix}&=\begin{pNiceMatrix}
		3l-2k\\k\\p-l
	\end{pNiceMatrix}, & l&\in\{0,1,\dots,p\}, & k\in\Big\{0,1,\dots,\Big\lfloor\frac{3l}{2}\Big\rfloor\Big\},\\
	n&=3p+1: &\begin{pNiceMatrix}
		m_1\\m_2\\m_3
	\end{pNiceMatrix}&=\begin{pNiceMatrix}
	3l+1-2k\\k\\p-l
	\end{pNiceMatrix}, & l&\in\{0,1,\dots,p\}, & k\in\Big\{0,1,\dots,\Big\lfloor\frac{3l+1}{2}\Big\rfloor\Big\},\\
	n&=3p+2: &\begin{pNiceMatrix}
		m_1\\m_2\\m_3
	\end{pNiceMatrix}&=\begin{pNiceMatrix}
		3l+2-2k\\k\\p-l
	\end{pNiceMatrix}, & l&\in\{0,1,\dots,p\}, & k\in\Big\{0,1,\dots,\Big\lfloor\frac{3l}{2}\Big\rfloor+1\Big\}.
\end{align*}
\end{proof}

\begin{rem}
	The first twelve polynomials are
\begin{gather*}
\begin{aligned}
	e_0 &=1,& e_1&=-\frac{3}{\kappa},& e_2&=\frac{6 \kappa+x}{\kappa^3},&e_3&=-\frac{2 (5 \kappa+3 x)}{\kappa^4},&
e_4&=\frac{15 \kappa^2+21\kappa x +x^2}{\kappa^6},&e_5&=-\frac{21 \kappa^2+56\kappa x +9 x^2}{\kappa^7},
\end{aligned}\\
\begin{aligned}
	e_6&=\frac{28 \kappa^3+126 \kappa^2 x+45 \kappa x^2 +x^3}{\kappa^9},&e_7&=-\frac{3 \left(12 \kappa^3+84 \kappa^2 x+55 \kappa x^2+4 x^3\right)}{\kappa^{10}},&e_8&=\frac{45 \kappa^4+462 \kappa^3x +495 \kappa^2 x^2+78 \kappa x^3 +x^4}{\kappa^{12}},
\end{aligned}\\
\begin{aligned}
	e_9&=-\frac{55 \kappa^4+792 \kappa^3 x^3 +1287 \kappa^2 x^2 +364 \kappa x^3 +15 x^4}{\kappa^{13}},&
e_{10}&=\frac{66 \kappa^5+1287 \kappa^4 x+3003 \kappa^3 x^2+1365 \kappa^2 x^3+120 \kappa x^4 +x^5}{\kappa^{15}},
\end{aligned}\\
e_{11}=-\frac{78 \kappa^5+2002 \kappa^4 x+6435\kappa^3 x^2 +4368 \kappa^2 x^3+680\kappa x^4 +18 x^5}{\kappa^{16}}.
%\frac{91 \kappa^6+3003 x \kappa^5+12870 x^2 \kappa^4+12376 x^3 \kappa^3+3060 x^4 \kappa^2+171 x^5 \kappa+x^6}{\kappa^{18}},-\frac{105 \kappa^6+4368 x \kappa^5+24310 x^2 \kappa^4+31824 x^3 \kappa^3+11628 x^4 \kappa^2+1140 x^5 \kappa+21 x^6}{\kappa^{19}},\frac{120 \kappa^7+6188 x \kappa^6+43758 x^2 \kappa^5+75582 x^3 \kappa^4+38760 x^4 \kappa^3+5985 x^5 \kappa^2+231 x^6 \kappa+x^7}{\kappa^{21}},-\frac{136 \kappa^7+8568 x \kappa^6+75582 x^2 \kappa^5+167960 x^3 \kappa^4+116280 x^4 \kappa^3+26334 x^5 \kappa^2+1771 x^6 \kappa+24 x^7}{\kappa^{22}},\frac{153 \kappa^8+11628 x \kappa^7+125970 x^2 \kappa^6+352716 x^3 \kappa^5+319770 x^4 \kappa^4+100947 x^5 \kappa^3+10626 x^6 \kappa^2+300 x^7 \kappa+x^8}{\kappa^{24}},-\frac{171 \kappa^8+15504 x \kappa^7+203490 x^2 \kappa^6+705432 x^3 \kappa^5+817190 x^4 \kappa^4+346104 x^5 \kappa^3+53130 x^6 \kappa^2+2600 x^7 \kappa+27 x^8}{\kappa^{25}},\frac{190 \kappa^9+20349 x \kappa^8+319770 x^2 \kappa^7+1352078 x^3 \kappa^6+1961256 x^4 \kappa^5+1081575 x^5 \kappa^4+230230 x^6 \kappa^3+17550 x^7 \kappa^2+378 x^8 \kappa+x^9}{\kappa^{27}},-\frac{210 \kappa^9+26334 x \kappa^8+490314 x^2 \kappa^7+2496144 x^3 \kappa^6+4457400 x^4 \kappa^5+3124550 x^5 \kappa^4+888030 x^6 \kappa^3+98280 x^7 \kappa^2+3654 x^8 \kappa+30 x^9}{\kappa^{28}},\frac{231 \kappa^{10}+33649 x \kappa^9+735471 x^2 \kappa^8+4457400 x^3 \kappa^7+9657700 x^4 \kappa^6+8436285 x^5 \kappa^5+3108105 x^6 \kappa^4+475020 x^7 \kappa^3+27405 x^8 \kappa^2+465 x^9 \kappa+x^{10}}{\kappa^{30}}
\end{gather*}

\end{rem}

\begin{lemma}\label{lem:A}
For $n\in\N$, the multiple orthogonal polynomials are
	\begin{align*}
		A_1^{(n+2)}(x)&=\Big(A_1^{(2)}(x)+\frac{3}{\kappa} A_1^{(1)}+\frac{3}{\kappa^2} -\frac{x}{\kappa^3}\Big)e_n(x)+\Big(A_1^{(1)}+\frac{3}{\kappa}\Big)e_{n+1}(x)+e_{n+2}(x),\\
	A_2^{(n+2)}(x)&=\Big(A_2^{(2)}+\frac{3}{\kappa} A_2^{(1)}\Big)e_n(x)+A_2^{(1)}e_{n+1}(x),
	\end{align*}
with initial conditions given by
\begin{center}
	\begin{NiceTabular}{|c|c|c|}[rules/color=[Gray]{0.75},
	code-before = \cellcolor{Gray!15}{7-2,8-2}\cellcolor{MidnightBlue!15}{5-2,9-2,11-2}\cellcolor{Maroon!15}{3-3,9-3,2-3,8-3}
	\cellcolor{Emerald!15}{6-3,12-3,10-3}	\cellcolor{Plum!15}{7-3,11-3,13-3}	\cellcolor{RedOrange!15}{4-3,5-3}
		]\toprule
		$(a,b,c,d)$ \rule[-2ex]{0pt}{0pt}
		&
		$\left\{A_{1}^{(0)},A_1^{(1)},A_1^{(2)}\right\}$ 
		& 
		$\left\{A_{2}^{(0)},A_2^{(1)},A_2^{(2)}\right\}$ \\\toprule
		$\left(\frac{1}{3},\frac{2}{3},\frac{1}{2},1\right\} $&
		$\left\{1,\frac{27}{2},\frac{6561 x}{64}-\frac{3645}{16}\right\} $&
		$\left\{0,-\frac{27}{2},\frac{729}{4} \right\} $\\\midrule
		$\left(\frac{2}{3},\frac{1}{3},\frac{1}{2},1\right) $&$\left\{1,-\frac{27}{4},\frac{6561 x}{64}+\frac{729}{16}\right\}$ &$\left\{0,\frac{27}{4},-\frac{729}{8}\right\} $\\\midrule
		$ \left(\frac{2}{3},\frac{4}{3},1,\frac{3}{2}\right) $ 
		& $\left\{1,\frac{27}{2},\frac{19683 x}{64}-\frac{5103}{8}\right\}$ & $ \left\{0,-\frac{27}{2},\frac{3645}{8}\right\} $\\\midrule
		$\left(\frac{4}{3},\frac{2}{3},1,\frac{3}{2}\right)$ & $\left\{1,-\frac{27}{4},\frac{19683 x}{64}+\frac{729}{16}\right\} $ &
		$\left\{0,\frac{27}{4},-\frac{3645}{16}\right\} $\\\midrule
		$	\left(\frac{4}{3},\frac{5}{3},\frac{3}{2},2\right) $ & 
		$	\left\{1,\frac{135}{4},\frac{19683 x}{64}-\frac{3645}{4}\right\}$ & $	\left\{0,-\frac{135}{4},\frac{10935}{16}\right\} $\\\midrule
		$	\left(\frac{5}{3},\frac{4}{3},\frac{3}{2},2\right) $ &$	\left\{1,-27,\frac{19683 x}{64}+\frac{5103}{16}\right\}$ &
		$ \left\{0,27,-\frac{2187}{4}\right\} $\\
	\arrayrulecolor{black!90}	\midrule
		$\left(\frac{1}{3},\frac{2}{3},1,\frac{3}{2}\right)$ 
		& $	\left\{1,-27,\frac{19683 x}{64}+\frac{5103}{16}\right\}$ & $ \left\{0,27,-\frac{729}{2}\right\} $\\
			\arrayrulecolor{gray!75}\midrule
		$\left(\frac{2}{3},\frac{1}{3},1,\frac{3}{2}\right) $ & $\left\{1,-\frac{27}{4},\frac{19683 x}{64}+\frac{729}{16}\right\} $ &
		$\left\{0,\frac{27}{4},-\frac{729}{8}\right\} $\\
		\midrule
		$\left(\frac{2}{3},\frac{4}{3},\frac{3}{2},2\right)$
		&$\left\{1,-\frac{135}{2},\frac{19683 x}{64}+\frac{5103}{4}\right\} $
		&$\left\{0,\frac{135}{2},-\frac{10935}{8}\right\}$\\\midrule
		$\left(\frac{4}{3},\frac{2}{3},\frac{3}{2},2\right) $ &$\left\{1,-\frac{27}{4},\frac{19683 x}{64}+\frac{729}{16}\right\}$ &
		$\left\{0,\frac{27}{4},-\frac{2187}{16}\right\} $\\\midrule
		$\left(\frac{4}{3},\frac{5}{3},2,\frac{5}{2}\right)$ & $\left\{1,-\frac{135}{4},\frac{19683 x}{64}+\frac{2187}{4}\right\}$ &
		$\left\{0,\frac{135}{4},-\frac{10935}{16}\right\}$\\ \midrule
		$ \left(\frac{5}{3},\frac{4}{3},2,\frac{5}{2}\right)$ & $\left\{1,-\frac{27}{2},\frac{19683 x}{64}+\frac{2187}{16}\right\} $ &
		$\left\{0,\frac{27}{2},-\frac{2187}{8}\right\}$
		\\\bottomrule
	\end{NiceTabular}
\end{center}
\end{lemma}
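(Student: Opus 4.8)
The plan is to identify the uniform type~I sequences $\{A_1^{(n)}\}_{n\ge0}$, $\{A_2^{(n)}\}_{n\ge0}$ as solutions of the constant--coefficient recurrence \eqref{eq:uniform_recurrenceI} and then to solve that recurrence by means of the generating function of Proposition~\ref{pro:V}. First I would check that, for every uniform tuple in \eqref{eq:almost uniform tuples}, the type~I polynomials do satisfy \eqref{eq:uniform_recurrenceI}: the dual eigenvalue property $J^\top A_i=xA_i$ of \eqref{eq:eigen_value} reads componentwise $A_i^{(n-1)}+\beta_nA_i^{(n)}+\alpha_{n+1}A_i^{(n+1)}+\gamma_{n+1}A_i^{(n+2)}=xA_i^{(n)}$, and by Theorem~\ref{teo:12} one has $\beta_n=3\kappa$, $\alpha_{n+1}=3\kappa^2$, $\gamma_{n+1}=\kappa^3$ for all $n\in\N$; hence for $n\ge1$ the relation becomes $A_i^{(n-1)}+3\kappa A_i^{(n)}+3\kappa^2A_i^{(n+1)}+\kappa^3A_i^{(n+2)}=xA_i^{(n)}$, and reindexing $n\mapsto n+1$ gives exactly \eqref{eq:uniform_recurrenceI} with $v_m=A_i^{(m)}$, valid for all $m\in\N_0$. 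The anomalous first column of the Jacobi matrix does no harm, since the $n=0$ instance of \eqref{eq:uniform_recurrenceI} corresponds to the already uniform $n=1$ instance of the dual recurrence; thus $v_0=A_i^{(0)}$, $v_1=A_i^{(1)}$, $v_2=A_i^{(2)}$ play the role of the free initial data.

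Next I would invoke Proposition~\ref{pro:V} with $v_m=A_i^{(m)}$, so that $\sum_{m\ge0}A_i^{(m)}(x)t^m$ equals the rational function \eqref{eq:V}; multiplying that expression by the power series $\sum_{m\ge0}e_m(x)t^m$ of Lemma~\ref{lem:trinomial}, whose coefficients are the polynomials made explicit in Proposition~\ref{pro:the polynomials e_n}, and reading off the coefficient of $t^{n+2}$ yields
\[
v_{n+2}=\Big(v_2+\tfrac3\kappa v_1+\big(\tfrac3{\kappa^2}-\tfrac x{\kappa^3}\big)v_0\Big)e_n+\Big(v_1+\tfrac3\kappa v_0\Big)e_{n+1}+v_0\,e_{n+2}.
\]
Putting $v_m=A_1^{(m)}$ (so that $v_0=A_1^{(0)}=1$) gives the first asserted identity, and putting $v_m=A_2^{(m)}$ (so that $v_0=A_2^{(0)}=0$, by \eqref{eq:A12'}) gives the second.

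The remaining and, I expect, main obstacle is to compute the twelve pairs of initial triples $\{A_1^{(0)},A_1^{(1)},A_1^{(2)}\}$ and $\{A_2^{(0)},A_2^{(1)},A_2^{(2)}\}$ displayed in the table, since the type~I polynomials have no compact hypergeometric form --- only Corollary~\ref{cor:expressions_type_I} is available. I would extract them from the Rodrigues-type formula \eqref{eq:LL_Rodrigues}, equivalently from Corollary~\ref{cor:expressions_type_I}, evaluated for $n=0,1,2$; since $\deg A_1^{(2)}=1$ and $\deg A_2^{(2)}=\deg A_1^{(1)}=\deg A_2^{(1)}=0$, only a bounded number of derivatives needs to be expanded, and the splitting of each linear form into its $w_1$- and $w_2$-parts is prescribed by \eqref{eq:A12}--\eqref{eq:A12'}. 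Throughout one must keep the normalisation consistent with \S\ref{S:stochastic_uniform}--\S\ref{S:semi-stochastic uniform tuples}, i.e. the rescaling $\sigma_I=\sigma_{II}^{-1}$ of \eqref{eq:norma} (equivalently $Q^{(n)}(1)=(2\kappa)^{-n}$ for the stochastic tuples). For the $a\leftrightarrow b$ partners it is faster to use the gauge relations of Theorem~\ref{teo:gauge_freedom_0}, $\hat A_1^{(m)}=A_1^{(m)}-\tfrac\gamma\alpha A_2^{(m)}$ and $\hat A_2^{(m)}=-\tfrac\beta\alpha A_2^{(m)}$, with $(\gamma,\alpha,\beta)$ the constants already produced when comparing $W_2$ with $\hat W_2$ in each subsection, or else to propagate the data along the Christoffel chains of Theorem~\ref{teo:Christoffel chains for uniform tuples} using Theorem~\ref{teo:Christoffel}. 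A convenient independent check is that the computed triples must satisfy $A_1^{(n)}(0)=(-\kappa)^{-n}$ for $n=1,2$ in the cases where this was obtained in \eqref{eq:A10}.
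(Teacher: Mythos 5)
Your proposal follows the paper's argument: the paper likewise derives the master formula $v_{n+2}=\big(v_2+\tfrac3\kappa v_1+(\tfrac3{\kappa^2}-\tfrac x{\kappa^3})v_0\big)e_n+\big(v_1+\tfrac3\kappa v_0\big)e_{n+1}+v_0\,e_{n+2}$ from Proposition~\ref{pro:V} and Lemma~\ref{lem:trinomial} and then specializes it to $v_m=A_1^{(m)}$ and $v_m=A_2^{(m)}$. The only (immaterial) difference is that the paper obtains the twelve initial triples directly from the Gauss--Borel factorization rather than via the Rodrigues-type formula or the gauge/Christoffel relations you suggest, while your explicit verification that the dual recurrence is uniform from the relevant index onward (so that \eqref{eq:uniform_recurrenceI} really applies to the type~I polynomials despite the modified first column of $J$) is a detail the paper leaves implicit.
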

\begin{proof}
	A direct consequence of Proposition \ref{pro:V} and Lemma \ref{lem:trinomial} is
		\begin{align*}
v_{n+2}(x)&=\Big(v_2(x)+\frac{3}{\kappa} v_1(x)+\Big(\frac{3}{\kappa^2} -\frac{x}{\kappa^3}\Big)v_0(x)\Big)e_n(x)+\Big(v_1(x)+\frac{3}{\kappa} v_0(x)\Big)e_{n+1}(x)+v_0(x)e_{n+2}(x).
	\end{align*}
Hence, applying it to the multiple orthogonal polynomials we get the result.The initial conditions are gotten directly from the Gauss--Borel factorization problem.
\end{proof}
\begin{rem}
These relations for $n=0$ are identities.
\end{rem}

\begin{rem}
A remarkable fact is that, having different sets of weights there are sequences of type~I multiple orthogonal polynomials that coincide, or are multiples.
In the above table, we have identified such classes with the same color.
Namely, $\left(\frac{4}{3},\frac{2}{3},1,\frac{3}{2}\right)$, 
$\left(\frac{2}{3},\frac{1}{3},1,\frac{3}{2}\right) $
and
$\left(\frac{4}{3},\frac{2}{3},\frac{3}{2},2\right) $
have the same sequence
$\big\{A_1^{(n)}\big\}_{n=0}^\infty$, the same happens with
$\left(\frac{5}{3},\frac{4}{3},\frac{3}{2},2\right) $
and
$\left(\frac{1}{3},\frac{2}{3},1,\frac{3}{2}\right)$.
For
$\left(\frac{2}{3},\frac{1}{3},\frac{1}{2},1\right) $
and
$\left(\frac{2}{3},\frac{1}{3},1,\frac{3}{2}\right) $
we have the same sequence
$\big\{A_2^{(n)}\big\}_{n=0}^\infty$
and
$\left(\frac{1}{3},\frac{2}{3},1,\frac{3}{2}\right)$
has the same sequence multiplied by $4$, and
$\left(\frac{1}{3},\frac{2}{3},\frac{1}{2},1\right) $
the same sequence multiplied by $-2$.
For
$\left(\frac{4}{3},\frac{5}{3},\frac{3}{2},2\right) $
and
$\left(\frac{4}{3},\frac{5}{3},2,\frac{5}{2}\right)$
we have opposite sequences
$\big\{A_2^{(n)}\big\}_{n=0}^\infty$,
moreover the sequence for
$\left(\frac{2}{3},\frac{4}{3},\frac{3}{2},2\right)$
is a half of the previous one.
Similarly, the sequences
$\big\{A_2^{(n)}\big\}_{n=0}^\infty$
for 
$\left(\frac{4}{3},\frac{2}{3},\frac{3}{2},2\right) $,
$ \left(\frac{5}{3},\frac{4}{3},2,\frac{5}{2}\right)$
and
$\left(\frac{5}{3},\frac{4}{3},\frac{3}{2},2\right) $,
are obtained by multiplying by two the previous one.
Finally, the~$A_2$ sequence for
$\left(\frac{4}{3},\frac{2}{3},1,\frac{3}{2}\right)$
is $-2$ times the $A_2$ sequence for 	
$ \left(\frac{2}{3},\frac{4}{3},1,\frac{3}{2}\right) $.
All these symmetries reduce the number essentially different sequences
$\big\{A_2^{(n)}\big\}_{n=0}^\infty$ to $4$
types.
\end{rem}

\begin{rem}
A similar (but slightly more complicated) construction holds for the type II multiple orthogonal polynomials. However, such a construction provides more involved expressions than the $\tensor[_2]{F}{_3}$ hypergeometric expressions found in \cite{lima_loureiro}. 
\end{rem}

\begin{teo}\label{teo:uniform_type_I}
For $n\in\N_0$,	 the type I multiple orthogonal polynomials corresponding to uniform tuples are given in terms of the polynomials $\{e_n\}_{n=0}^\infty$ in Proposition \ref{pro:the polynomials e_n} in the following table
\begin{center}
	%\NiceMatrixOptions{cell-space-limits = 1pt}
	\begin{NiceTabular}{|c|c|c|}[rules/color=[Gray]{0.75},
		code-before = \cellcolor{Gray!15}{7-2,8-2}\cellcolor{MidnightBlue!15}{5-2,9-2,11-2}\cellcolor{Maroon!15}{3-3,9-3,2-3,8-3}
		%	\cellcolor{Emerald!15}{6-3,12-3,10-3}	
		\cellcolor{Emerald!65!Plum!15}{6-3,12-3,10-3,7-3,11-3,13-3}	\cellcolor{RedOrange!15}{4-3,5-3}%\cellcolor{Orchid!30}{12-2,13-2}
		]\toprule
		$(a,b,c,d)$\rule[-1.2ex]{0pt}{0pt}
		&
		$A_1^{(n+2)}$ 
		& 
				$A_2^{(n+2)}$  \\   \toprule
		$\left(\frac{1}{3},\frac{2}{3},\frac{1}{2},1\right) $&
		$\big(\frac{729}{4}-\frac{6561 x}{32}\big)e_n+\frac{135}{4}e_{n+1}+e_{n+2}$&
		$ -\frac{729}{8}e_ n -\frac{27}{2}e_{n+1} $\\\midrule
		$\left(\frac{2}{3},\frac{1}{3},\frac{1}{2},1\right) $&
		$\big(\frac{729}{16}-\frac{6561 x}{32}\big)e_n+\frac{27}{2}e_{n+1}+e_{n+2}$ &
		$\frac{729}{16}e_n+\frac{27}{4}e_{n+1}$\\\midrule
		$ \left(\frac{2}{3},\frac{4}{3},1,\frac{3}{2}\right) $ &
		$-\frac{3645}{16} e_n+\frac{135}{4} e_{n+1}+e_{n+2}$ & 
		$ \frac{729}{4}e_n -\frac{27}{2}e_{n+1}$\\\midrule
		$\left(\frac{4}{3},\frac{2}{3},1,\frac{3}{2}\right)$ & 
		$\frac{729}{16}e_n+ \frac{27}{2} e_{n+1}+e_{n+2}$ &
		$-\frac{729}{8} e_n+\frac{27}{4} e_{n+1} $\\\midrule
		$	\left(\frac{4}{3},\frac{5}{3},\frac{3}{2},2\right) $ & 
		$	-\frac{729}{8}e_n+ 54e_{n+1}+e_{n+2}$ &
		$-\frac{135}{4}e_{n+1} $\\\midrule
		$	\left(\frac{5}{3},\frac{4}{3},\frac{3}{2},2\right) $ &
		$	 -\frac{729}{8}e_n -\frac{27}{4}e_{n+1}+e_{n+2}$ &
		$ 27e_{n+1}$\\
		\arrayrulecolor{black!90}	\midrule
		$\left(\frac{1}{3},\frac{2}{3},1,\frac{3}{2}\right)$&
		$	-\frac{729}{8}e_n -\frac{27}{4}e_{n+1}+e_{n+2}$ & 
		$ \frac{729}{4}e_n+ 27e_{n+1} $\\
		\arrayrulecolor{gray!75}\midrule
		$\left(\frac{2}{3},\frac{1}{3},1,\frac{3}{2}\right) $ &
		$\frac{729}{16}e_n+\frac{27}{2} e_{n+1}+e_{n+2} $ &
		$\frac{729}{16} e_n+\frac{27}{4}e_{n+1}$\\
		\midrule
		$\left(\frac{2}{3},\frac{4}{3},\frac{3}{2},2\right)$&
		$ \frac{729}{16}e_n-\frac{189}{4}e_{n+1} +e_{n+2}$&
		$\frac{135}{2} e_{n+1}$\\\midrule
		$\left(\frac{4}{3},\frac{2}{3},\frac{3}{2},2\right) $ &
		$\frac{729}{16} e_n+\frac{27}{2}e_{n+1}+e_{n+2}$ &
		$\frac{27}{4} e_{n+1}$\\\midrule
		$\left(\frac{4}{3},\frac{5}{3},2,\frac{5}{2}\right)$ & 
		$-\frac{27}{2}e_{n+1}+e_{n+2}$ &
		$\frac{135}{4} e_{n+1}$\\ \midrule
		$ \left(\frac{5}{3},\frac{4}{3},2,\frac{5}{2}\right)$ & 
		$\frac{27}{4} e_{n+1}+e_{n+2}$ &
		$\frac{27}{2} e_{n+1}$
		\\\bottomrule
	\end{NiceTabular}
\end{center}
\end{teo}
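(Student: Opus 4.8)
The plan is to combine Lemma \ref{lem:A} with the explicit initial conditions collected in its table, simplifying the resulting coefficients of $e_n$, $e_{n+1}$, $e_{n+2}$ case by case. Recall from Lemma \ref{lem:A} that for all twelve uniform tuples
\begin{align*}
 A_1^{(n+2)}(x)&=\Big(A_1^{(2)}(x)+\tfrac{3}{\kappa} A_1^{(1)}+\tfrac{3}{\kappa^2} -\tfrac{x}{\kappa^3}\Big)e_n(x)+\Big(A_1^{(1)}+\tfrac{3}{\kappa}\Big)e_{n+1}(x)+e_{n+2}(x),\\
 A_2^{(n+2)}(x)&=\Big(A_2^{(2)}+\tfrac{3}{\kappa} A_2^{(1)}\Big)e_n(x)+A_2^{(1)}e_{n+1}(x),
\end{align*}
so the entire content of the theorem is to substitute $\kappa=\tfrac{4}{27}$ and the tuple-dependent triples $\{A_1^{(1)},A_1^{(2)}\}$, $\{A_2^{(1)},A_2^{(2)}\}$ from the table in Lemma \ref{lem:A}, and carry out the arithmetic. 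First I would record the constants $\tfrac3\kappa=\tfrac{81}{4}$, $\tfrac3{\kappa^2}=\tfrac{2187}{16}$, $\tfrac1{\kappa^3}=\tfrac{19683}{64}$, since these recur in every row.

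The key step is then a purely mechanical reduction, one row at a time. For the $A_2^{(n+2)}$ column the coefficient of $e_{n+1}$ is simply $A_2^{(1)}$ (already tabulated), and the coefficient of $e_n$ is $A_2^{(2)}+\tfrac{81}{4}A_2^{(1)}$; for instance, for $\big(\tfrac13,\tfrac23,\tfrac12,1\big)$ one gets $\tfrac{729}{4}+\tfrac{81}{4}\cdot(-\tfrac{27}{2})=\tfrac{729}{4}-\tfrac{2187}{8}=-\tfrac{729}{8}$, matching the claimed $-\tfrac{729}{8}e_n-\tfrac{27}{2}e_{n+1}$. One verifies in the same way that several rows collapse to a single term $c\,e_{n+1}$ (the entries where $A_2^{(2)}+\tfrac{81}{4}A_2^{(1)}=0$), which explains why the $A_2$-sequences coincide up to scalars across the color classes noted in the remark after Lemma \ref{lem:A}. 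For the $A_1^{(n+2)}$ column the $e_{n+2}$ coefficient is always $1$, the $e_{n+1}$ coefficient is $A_1^{(1)}+\tfrac{81}{4}$, and the $e_n$ coefficient is $A_1^{(2)}(x)+\tfrac{81}{4}A_1^{(1)}+\tfrac{2187}{16}-\tfrac{19683}{64}x$; since every tabulated $A_1^{(2)}(x)$ has the form $\tfrac{19683}{64}x+(\text{const})$ (or $\tfrac{6561}{64}x+(\text{const})$ in the first two rows — note $\tfrac{6561}{64}=\tfrac{19683}{192}$, which after adding $-\tfrac{19683}{64}x$ yields the $-\tfrac{6561x}{32}$ appearing in the theorem), the $x$-terms partially cancel and one is left with the linear polynomials displayed.

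The main obstacle — really the only one — is bookkeeping: there are twelve rows, each requiring two or three exact rational-arithmetic simplifications, and it is easy to slip a factor of two or a sign. I would organize the verification by first treating the six stochastic uniform tuples (rows 1–6, where $c\in\{\tfrac12,1,\tfrac32\}$) and then the six semi-stochastic ones (rows 7–12), and within each block use the gauge pairing $(a,b,c,d)\leftrightarrow(b,a,c,d)$ together with the scaling relations in Remark \ref{rem:scaling} as a consistency check: tuples in the same gauge class must produce $A_1$-sequences that are equal and $A_2$-sequences that differ by the explicit scalar $-\beta/\alpha$ read off from the $\hat W_2=\alpha W_1+\beta W_2$ relations established earlier in \S\ref{S:stochastic_uniform} and \S\ref{S:semi-stochastic uniform tuples}. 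A final independent sanity check is to evaluate both sides at a small $n$, say $n=0$, using the known $A_1^{(2)},A_2^{(2)}$ and the polynomials $e_0=1$, $e_1=-\tfrac3\kappa=-\tfrac{81}{4}$, $e_2=\tfrac{6\kappa+x}{\kappa^3}$ from the remark after Proposition \ref{pro:the polynomials e_n}; agreement there, combined with the structural identities above, pins down every coefficient.
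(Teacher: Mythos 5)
Your proposal is correct and coincides with the paper's (implicit) argument: the theorem is stated without a separate proof precisely because it is the direct substitution of $\kappa=\frac{4}{27}$ and the tabulated initial data $\{A_i^{(1)},A_i^{(2)}\}$ into the recursion of Lemma \ref{lem:A}, followed by rational arithmetic — exactly what you describe, and your sample computations (e.g.\ $\frac{729}{4}+\frac{81}{4}\cdot(-\frac{27}{2})=-\frac{729}{8}$ and the cancellation of the $x$-terms via $\frac{6561}{64}x-\frac{19683}{64}x=-\frac{6561}{32}x$) check out.
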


\section{Summation formulas}\label{S:Summations}
In this section we assume \eqref{eq:region_parameters_pochhammer_perfect}, so that the system of weights is perfect, and the moments are given by~\eqref{eq:moments}. For $m\in\N_0$, we study the following moments of the type II polynomials, that we call type II generalized moments
\begin{align*}
 \eta^{(2n)}_{m,1}&:= \int_0^1B^{(2n)}(x)x^{n+m}w_1(x)\d\mu(x),&
 \eta^{(2n+1)}_{m,1}&:= \int_0^1B^{(2n+1)}(x)x^{n+m}w_1(x)\d\mu(x),\\
 \eta^{(2n)}_{m,2}&:= \int_0^1B^{(2n)}(x)x^{n+m}w_2(x)\d\mu(x),&
 \eta^{(2n+1)}_{m,2}&:= \int_0^1B^{(2n+1)}(x)x^{n+m}w_2(x)\d\mu(x).
\end{align*}
These coefficients are key objects in the type II, Hermite--Padé approximation problem. In fact,
departing from $\{ B^{(n)} \}$, multiple orthogonal polynomials of type II in the step-line, with respect to a system of weights $w_1$, $w_2$, on $[0,1]$, we define the system of functions of second kind by
\begin{align*}
f^{(2n)}_1 (z) & = \int_{0}^1 \frac{B^{(2n)} (x)}{z-x} w_1 (x) \, \d x
= \sum_{k=0}^\infty \frac{\eta_{k,1}^{(2n)}}{z^{n+k}} , &
f^{(2n+1)}_1 (z) & = \int_{0}^1 \frac{B^{(2n+1)} (x)}{z-x} w_1 (x) \, \d x
= \sum_{k=0}^\infty \frac{\eta_{k+1,1}^{(2n+1)}}{z^{n+k+1}} , \\
f^{(2n)}_2 (z) & = \int_{0}^1 \frac{B^{(2n)} (x)}{z-x} w_2 (x) \, \operatorname{d} x= \sum_{k=0}^\infty \frac{\eta_{k,2}^{(2n)}}{z^{n +k}} , &
f^{(2n+1)}_2 (z) & = \int_{0}^1 \frac{B^{(2n+1)} (x)}{z-x} w_2 (x) \, \operatorname{d} x= \sum_{k=0}^\infty \frac{\eta_{k,2}^{(2n)}}{z^{n +k}} ,
\end{align*}
with the series converging uniformly on compact sets of $\mathbb C \setminus [0,1]$. In the previous set of equations $n\in\N_0$.
% but for the last one where $n\in\N_0$. 
For $n=0$ these functions are 
\begin{align*} 
\mathscr S_{w_1}(z) = f^{(0)}_1 (z) & = \int_{0}^1 \frac{ w_1 (x)}{z-x} \, \d x ,&
\mathscr S_{w_2}(z) = f^{(0)}_2 (z) & = \int_{0}^1 \frac{ w_2 (x)}{z-x} \, \d x,
\end{align*}
the Stieltjes--Markov transforms of the weights. We can also introduce
 $\{ P^{(n) }_j \} $, $j =1,2$ 
\begin{align*}
 P^{(n)}_1 (z) & = \int_{0}^1 \frac{B^{(n)} (z)-B^{(n)} (x)}{z-x} w_1 (x) \, \d x , & n &\in \mathbb N _0,&
 P^{(n)}_2 (z) & = \int_{0}^1 \frac{B^{(n)} (z)-B^{(n)} (x)}{z-x} w_2 (x) \, \d x , & n &\in \mathbb N ,
\end{align*}
which are polynomials with $\deg P^{(n)}<n$, called the associated polynomials with respect to $\{ B^{(n)} \}$ and the system of weights $w_1$, $w_2$ (cf. \cite{nikishin_sorokin}).
%Notice that, from the above representation, we get
%\begin{align*}
%B^{(n)} (z) \int_{0}^1 \frac{w_1 (x)}{z-x} \, \operatorname{d} x & = \int_{0}^1 \frac{B^{(n)} (z)-B^{(n)} (x)}{z-x} w_1 (x) \, \operatorname{d} x +f^{(n)}_1 (z) , \\
%B^{(n)} (z) \int_{0}^1 \frac{w_2 (x)}{z-x} \, \operatorname{d} x & = \int_{0}^1 \frac{B^{(n)} (z)-B^{(n)} (x)}{z-x} w_2 (x) \, \operatorname{d} x +f^{(n)}_2 (z) ,
%\end{align*}
Then, for the Stieltjes--Markov transforms of the weights we have the following representation, or Hermite--Padé approximation problem of type~II.
\begin{align*}
B^{(n)} (z) \mathscr S_{w_1}(z) - P^{(n) }_1 (z) & = f^{(n)}_1 (z) , &
B^{(n)} (z) \mathscr S_{w_2}(z)- P^{(n) }_2 (z) & = f^{(n)}_2 (z).
\end{align*}
Thus we have simultaneous rational approximants $\frac{P^{(n) }_j}{B^{(n)}}$ to $\mathscr S_{w_j}(z)$ for $j=1,2$.
Hence, $f^{(n)}_j$ are the remainders of the interpolation conditions defining the type II Hermite--Padé approximants to the Markov functions. 

We have already studied the first type II generalized moments, in fact
\begin{align}\label{eq:H_eta}
 \eta_{0,1}^{(2n)}&=H_{2n}, & \eta^{(2n+1)}_{0,2}&=H_{2n+1}.
\end{align}
Notice that by orthogonality relations $\eta^{(2n+1)}_{0,1}=0$.
The nontrivial first two moments can be evaluated by means of a Karlsson--Minton formula, see \cite{lima_loureiro}; i.e.,
\begin{align}\label{eq:eta_h_pochhammer}
 \eta_{0,1}^{(2n)}&=\frac{(2n)!(a)_{2n}(b)_{2n}(d-a)_{n}(d-b)_n}{(c)_{3n}(d)_{3n}(d+n-1)_{2n}}, &
 \eta_{0,2}^{(2n+1)}&=\frac{(2n+1)!(a)_{2n+1}(b+1)_{2n}(c-a+1)_n(c-b)_{n+1}}{(c+1)_{3n+1}(c+n)_{2n+1}(d)_{3n+1}},
\end{align}

Now, using a recent extension of the Karlsson--Minton summation formula due to Karp and Prilepkina we can perform a similar evaluation for all the remaining nontrivial type II generalized moments.
\begin{pro}[Type II moment summations]\label{pro:summations_Padé}
Let us assume that $c-d\not\in\Z$. Then, the following relations hold~true
\begin{enumerate}
 \item 
% For $m\in\N$ and $(\beta_1,\ldots,\beta_{2m+1})=(c+n,c+n+1,\ldots,c+n+m-1,d+n-1,d+n,\ldots,d+n+m-1)$:
 For $m\in\N$ and $(\theta_1,\ldots,\theta_{2m+1})=(c,c+1,\ldots,c+m-1,d-1,d,\ldots,d+m-1)$:
 \begin{align}\label{eq:summation_product_2n_k_1}
\eta^{(2n)}_{m,1}= 
% \frac{(2n)!(a)_{2n}(b)_{2n}(c+n)_{m}(d+n-1)_{m+1}}
% {(c+n)_{2n}(d+n-1)_{2n} (c)_{n+m}(d)_{n+m}}
 \frac{(2n)!(a)_{2n}(b)_{2n}}{(c)_{3n}(d)_{3n-1} }
 \,
% \sum_{l=1}^{2m+1}\frac{(a-\beta_l)_{n+m}(b-\beta_l)_{n+m}}{\prod_{i\ne l}(\beta_i-\beta_l) \beta_l(\beta_l+1)_{2n}}.
 \sum_{l=1}^{2m+1}\frac{(\theta_l-a-m+1)_{n+m}(\theta_l-b-m+1)_{n+m}}{\prod_{i\ne l}(\theta_i-\theta_l) (\theta_l+n)(\theta_l+n+1)_{2n}}.
 \end{align}
\item For $m\in\N$ and $(\theta_1,\ldots,\theta_{2m})=(c,c+1,\ldots,c+m-1,d,d+1,\ldots,d+m-1)$:
\begin{align}\label{eq:summation_product_2n+1_k_1}
\eta^{(2n+1)}_{m,1}= - \frac{(2n+1)!(a)_{2n+1}(b)_{2n+1}}{(c)_{3n+1}(d)_{3n+1}}
% \frac{(2n+1)!(a)_{2n+1}(b)_{2n+1}(c+n)_{m}(d+n)_{m}}{(c+n)_{2n+1}(d+n)_{2n+1} (c)_{n+m}(d)_{n+m}}\,
 \sum_{l=1}^{2m}\frac{(\theta_l-a-m+1)_{n+m}(\theta_l-b-m+1)_{n+m}}{\prod_{i\ne l}(\theta_i-\theta_l) (\theta_l+n)(\theta_l+n+1)_{2n+1}}.
\end{align}
\item For $m\in\N_0$ and $(\theta_1,\ldots,\theta_{2m+2})=(c,c+1,\ldots,c+m,d-1,d,\ldots,d+m-1)$
\begin{align}\label{eq:summation_product_2n_k_2}
\eta^{(2n)}_{m,2}= 
- \frac{c}{b}\frac{(2n)!(a)_{2n}(b)_{2n}}{(c)_{3n}(d)_{3n-1}}
% \frac{(2n)!(a)_{2n}(b)_{2n}(c+n)_{m+1}(d+n-1)_{m+1}}{b(c+n)_{2n}(d+n-1)_{2n} (c+1)_{n+m}(d)_{n+m}}
 \sum_{l=1}^{2m+2}\frac{(\theta_l-a-m+1)_{n+m}(\theta_l-b-m)_{n+m+1}}{\prod_{i\ne l}(\theta_i-\theta_l)(\theta_l+n)(\theta_l+n+1)_{2n}}
\end{align}
\item For $m\in\N$ and $(\theta_1,\ldots,\theta_{2m+1})=(c,c+1,\ldots,c+m,d,d+1,\ldots,d+m-1)$
\begin{align}\label{eq:summation_product_2n+1_k_2}
\eta^{(2n+1)}_{m,2}= 
\frac{c}{b} \frac{(2n+1)!(a)_{2n+1}(b)_{2n+1}}{(c)_{3n}(d)_{3n+1}}
%- \frac{(2n+1)!(a)_{2n+1}(b)_{2n+1}(c+n)_{m+1}(d+n)_{m}}{b(c+n)_{2n+1}(d+n)_{2n+1} (c+1)_{n+m}(d)_{n+m}} 
\sum_{l=1}^{2m+1}\frac{(\theta_l-a-m+1)_{n+m}(\theta_l-b-m)_{n+m+1}}{\prod_{i\ne l}(\theta_i-\theta_l)(\theta_l+n)(\theta_l+n+1)_{2n+1}}.
\end{align}
\end{enumerate}
\end{pro}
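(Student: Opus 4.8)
The plan is to evaluate each generalized moment $\eta^{(n)}_{m,a}$ by writing the type II polynomial $B^{(n)}$ via its explicit hypergeometric representation \eqref{eq:polinomiohipergeometrico}, multiplying by the appropriate power $x^{n+m}$ and weight, and integrating term by term using the moment formula \eqref{eq:moments}. This converts each $\eta^{(n)}_{m,a}$ into a terminating ${}_{p+2}F_{p+1}$ at unity whose numerator parameters include the negative integer coming from the $-n$ in $B^{(n)}$, and whose remaining upper parameters are of the form $\theta_i + m_i$ relative to lower parameters $\theta_i$, i.e. exactly the Karlsson--Minton shape. The multiplication by $x^{n+m}$ shifts the Pochhammer symbols in the moment formula, which is what produces the clusters $(c,c+1,\dots,c+m-1)$ and $(d-1,d,\dots)$ or $(d,d+1,\dots)$ appearing in the statement; the precise bookkeeping of which cluster gets the extra $-1$ shift (and why the $2n$ vs $2n+1$ cases differ) follows from the floor functions $\lfloor n/2\rfloor$, $\lfloor (n-1)/2\rfloor$ in \eqref{eq:polinomiohipergeometrico}.

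The key technical input is the Karp--Prilepkina summation formula \cite{Karp_Prilepkina}, which generalizes Karlsson--Minton by allowing the integral shifts $m_i$ to be collected and expressed as a \emph{sum} over the parameters rather than a single product; this is precisely why the answers in \eqref{eq:summation_product_2n_k_1}--\eqref{eq:summation_product_2n+1_k_2} are sums $\sum_l$ with denominators $\prod_{i\ne l}(\theta_i-\theta_l)$, the hallmark of a partial-fraction/residue decomposition over the $\theta$-cluster. So the steps I would carry out are: (i) substitute \eqref{eq:polinomiohipergeometrico} into the definition of $\eta^{(n)}_{m,a}$ and apply \eqref{eq:moments} to get a terminating hypergeometric sum at $1$; (ii) identify the free parameter $\beta$, the clusters $f_i$ and their integer shifts $m_i$ so that the sum matches the hypothesis of the Karp--Prilepkina theorem — here the hypothesis $c-d\notin\Z$ guarantees the $\theta_i$ are distinct so the partial-fraction denominators do not vanish; (iii) apply the summation formula and simplify the resulting Pochhammer prefactors, absorbing the $(a)_{2n}(b)_{2n}/(c)_{3n}(d)_{3n-1}$ type constants; (iv) repeat for the four cases, each distinguished by parity of $n$ and by $a=1$ vs $a=2$ (which changes $w_a$ and hence whether $c-b$ or $d-b$ type parameters enter, and shifts $b\to b+1$, giving the $c/b$ factors and the $(\theta_l-b-m)_{n+m+1}$ with its extra $+1$ in the Pochhammer length).

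The main obstacle will be the accurate combinatorial matching in step (ii): one must track simultaneously the floor-function shifts in $B^{(n)}$, the extra $x^{n+m}$ translating into shifted Pochhammers in \eqref{eq:moments}, and the slightly different roles of $w_1$ versus $w_2$ (the latter carrying the ${}_2F_1$ with $d-b-1$ and the $c/b$ prefactor from \eqref{eq:measures}), so that the collection of upper parameters genuinely has the ``$\theta_i$ with $\theta_i$-shifted clusters'' structure the Karp--Prilepkina formula demands. Getting the cluster $(\theta_1,\dots,\theta_{2m+1})$ versus $(\theta_1,\dots,\theta_{2m})$ versus $(\theta_1,\dots,\theta_{2m+2})$ correct — including which endpoint is $d-1$ and which is $d+m-1$, and the ranges of the summation index $l$ — is the delicate part; once the identification is pinned down, the rest is a mechanical simplification of ratios of Gamma functions. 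A useful consistency check along the way is that setting $m=0$ in \eqref{eq:summation_product_2n_k_1} and \eqref{eq:summation_product_2n+1_k_2} must reproduce the known evaluations \eqref{eq:eta_h_pochhammer} of $\eta^{(2n)}_{0,1}=H_{2n}$ and $\eta^{(2n+1)}_{0,2}=H_{2n+1}$, and that the $m=0$ instance of \eqref{eq:summation_product_2n+1_k_1} must vanish by orthogonality, which constrains the choice of clusters.
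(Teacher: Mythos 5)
Your plan coincides with the paper's proof: the paper likewise reduces each generalized moment to a terminating $\tensor[_5]{F}{_4}$ at unity (it simply quotes this representation from equation (3.6) of Lima--Loureiro rather than re-deriving it from \eqref{eq:polinomiohipergeometrico} and \eqref{eq:moments}, but that is the same term-by-term integration) and then applies the Karp--Prilepkina extension of Karlsson--Minton with exactly the parameter identification you outline, checking the admissibility condition $p_1+p_2+N-m_1-m_2=1>0$ and using $c-d\not\in\Z$ to keep the $\theta_i$ distinct. No gap.
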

\begin{proof}
According to (3.6) in \cite{lima_loureiro} we have
\begin{align}\label{eq:product_2n_k_1}
% \begin{multlined}[t][.9\textwidth]
 \int_0^1B^{(2n)}(x)x^kw_1(x)\d\mu(x)&= 
 \frac{(a)_{2n}(b)_{2n}(a)_k(b)_k}{(c+n)_{2n}(d+n-1)_{2n} (c)_k(d)_k}\,
\tensor[_5]{F}{_4}\hspace*{-3pt}\left[{\begin{NiceArray}{c}[small]-2n,\;a+k,\;b+k,\; c+n,\;d+n-1 \\a,\;b,\;c+k,\;d+k\end{NiceArray}};1\right],
 %\end{multlined}
 \\\notag
%\label{eq:product_2n+1_k_1}
% \begin{multlined}[t][.9\textwidth]
 \int_0^1B^{(2n+1)}(x)x^kw_1(x)\d\mu(x)&= -
 \frac{(a)_{2n+1}(b)_{2n+1}(a)_k(b)_k}{(c+n)_{2n+1}(d+n)_{2n+1} (c)_k(d)_k}\,
\tensor[_5]{F}{_4}\hspace*{-3pt}\left[{\begin{NiceArray}{c}[small]-2n-1,\;a+k,\;b+k,\; c+n,\;d+n \\a,\;b,\;c+k,\;d+k\end{NiceArray}};1\right],
 %\end{multlined}
 \\
 \notag
%\label{eq:product_2n_k_2}
% \begin{multlined}[t][.9\textwidth]
 \int_0^1B^{(2n)}(x)x^kw_2(x)\d\mu(x)&= 
 \frac{(a)_{2n}(b)_{2n}(a)_k(b+1)_k}{(c+n)_{2n}(d+n-1)_{2n} (c+1)_k(d)_k}\,
\tensor[_5]{F}{_4}\hspace*{-3pt}\left[{\begin{NiceArray}{c}[small]-2n,\;a+k,\;b+k+1,\; c+n,\;d+n-1 \\a,\;b,\;c+k+1,\;d+k\end{NiceArray}};1\right],
 %\end{multlined}
 \\
 \notag
%\label{eq:product_2n+1_k_2}
 %\begin{multlined}[t][.9\textwidth]
 \int_0^1B^{(2n+1)}(x)x^kw_2(x)\d\mu(x)&= -
 \frac{(a)_{2n+1}(b)_{2n+1}(a)_k(b+1)_k}{(c+n)_{2n+1}(d+n)_{2n+1} (c+1)_k(d)_k}\,
\tensor[_5]{F}{_4}\hspace*{-3pt}\left[{\begin{NiceArray}{c}[small]-2n-1,\;a+k,\;b+k+1,\; c+n,\;d+n \\a,\;b,\;c+k+1,\;d+k\end{NiceArray}};1\right],
 %\end{multlined}
\end{align}

To evaluate these generalized hypergeometrical functions at unity we use the Karp--Prilepkina extension on the Karlsson--Minton summation formulas. For $b_1,b_2\in\C$, $N,p_1,p_2,m_1,m_2\in\N$, let 
\begin{align*}
 \boldsymbol{\beta}=(b_1,b_1+1,\cdots, b_1+p_1-1,b_2+b_2+1,\ldots, b_2+p_2-1)=(\beta_1,\ldots,\beta_{p_1+p_2})
\end{align*}
with $\beta_i\neq\beta_j$ for $i\neq j$, and assume that $p_1+p_2+N-m_1-m_2>0$. Then, 
a particular instance of \cite[Theorem~2.2]{Karp_Prilepkina} gives the following summation formula
\begin{align*}
{}_{5}F_{4}\hspace*{-3pt}\left[{\begin{NiceArray}{c}[small]-N,\;b_1,\;b_2,\; f_1+m_1,\;f_2+m_2 \\b_1+p_1,\;b_2+p_2,\;f_1\;f_2\end{NiceArray}};1\right]=
N!\frac{(b_1)_{p_1}(b_2)_{p_2}}{(f_1)_{m_1}(f_2)_{m_2}}\sum_{l=1}^{p_1+p_2}\frac{(f_1-\beta_l)_{m_1}(f_2-\beta_l)_{m_2}}{\prod_{i\ne l}(\beta_i-\beta_l) \beta_l(\beta_l+1)_{N}}.
\end{align*}
For \eqref{eq:product_2n_k_1} we take $m_1=m_2=k$, $f_1=a$, $f_2=b$, $N=2n$, $b_1=c+n$, $b_2=d+n-1$, $p_1=k-n$ and $p_2=k-n+1$; with
$
 \boldsymbol{\beta}=(c+n,c+n+1,\ldots,c+k-1,d+n-1,d+n,\ldots,d+k-1)$.
If we assume $c-d\ne \Z$ all the $\beta$'s are different, but this is not the only case with all these coefficients distinct. In this case
$
p_1+p_2+N-m_1-m_2=2(k-n)+1+2n-2k=1>0
$
and the Karp--Prilepkina summation formula applies:
\begin{align*}
\tensor[_5]{F}{_{4}}\hspace*{-3pt}\left[{\begin{NiceArray}{c}[small]-2n,\;a+k,\;b+k,\; c+n,\;d+n-1 \\a,\;b,\;c+k,\;d+k\end{NiceArray}};1\right]=
 (2n)!
 \frac{(c+n)_{k-n}(d+n-1)_{k-n+1}}{(a)_{k}(b)_k}
 \sum_{l=1}^{2(k-n)+1}\frac{(a-\beta_l)_{k}(b-\beta_l)_{k}}{\prod_{i\ne l}(\beta_i-\beta_l) \beta_l(\beta_l+1)_{2n}},
\end{align*}
and taking $m=k-n$, Equation \eqref{eq:summation_product_2n_k_1} follows.

The remaining equations follow similarly.
\end{proof}

\begin{rem}
As examples of the previous summations we have
 \begin{enumerate}
 \item For $m=0$ and $(\theta_1,\theta_2)=(c,d-1)$:
 \begin{align*}
 \eta^{(2n)}_{0,2}= -\frac{c}{b}\frac{(2n)!(a)_{2n}(b)_{2n}}{(c)_{3n}(d)_{3n-1}(d-c-1)}\Bigg(
 \frac{(c-a+1)_n(c-b)_{n+1}}{(c+n)(c+n+1)_{2n}}- \frac{(d-a)_n(d-1-b)_{n+1}}{(d-1-c)(d-n)_{2n}}
 \Bigg).
 \end{align*}
 \item For $m=0$ and $(\theta_1,\theta_2)=(c,d)$:
 \begin{align*}
 \eta^{(2n+1)}_{1,1}=-
 \frac{(2n+1)!(a)_{2n+1}(b)_{2n+1}}{(c)_{3n+1}(d)_{3n+1}(d-c)}\Bigg(
 \frac{(c-a)_{n+1}(c-b)_{n+1}}{(c+n)(c+n+1)_{2n+1}}- \frac{(d-a)_{n+1}(d-b)_{n+1}}{(d+n)(d+n+1)_{2n+1}}
 \Bigg).
 \end{align*}
 
 \item For $m=1$ and $(\theta_1,\theta_2,\theta_{3})=(c,d-1,d)$:
\begin{align*}%\label{eq:summation_product_2n_k_1}
\begin{multlined}[t][0.85\textwidth]
 \eta^{(2n)}_{1,1}
% &= \frac{(2n)!(a)_{2n}(b)_{2n}}{(c)_{3n}(d)_{3n-1} }
%\sum_{l=1}^{3}\frac{(\theta_l-a)_{n+1}(\theta_l-b)_{n+1}}{\prod_{i\ne l}(\theta_i-\theta_l) (\theta_l+n)(\theta_l+n+1)_{2n}}
%\\
= \frac{(2n)!(a)_{2n}(b)_{2n}}{(c)_{3n}(d)_{3n-1} }\Bigg(
 \frac{(c-a)_{n+1}(c-b)_{n+1}}{(d-c)(d-1-c) (c+n)(c+n+1)_{2n}}\\+
 \frac{(d-1-a)_{n+1}(d-1-b)_{n+1}}{(c-d+1)(d-1+n)(d+n)_{2n}}-
 \frac{(d-a)_{n+1}(d-b)_{n+1}}{(c-d)(d+n)(d+n+1)_{2n}}\Bigg).
 \end{multlined}
\end{align*}

\item For $m=1$ and $(\theta_1,\theta_2,\theta_3)=(c,c+1,d)$
\begin{align*}%\label{eq:summation_product_2n+1_k_2}
\begin{multlined}[t][0.85\textwidth]
 \eta^{(2n+1)}_{1,2} = 
% \frac{c}{b} \frac{(2n+1)!(a)_{2n+1}(b)_{2n+1}}{(c)_{3n}(d)_{3n+1}}
% \sum_{l=1}^{3}\frac{(\theta_l-a)_{n+1}(\theta_l-b-1)_{n+2}}{\prod_{i\ne l}(\theta_i-\theta_l)(\theta_l+n)(\theta_l+n+1)_{2n+1}}\\
% &=
 \frac{c}{b} \frac{(2n+1)!(a)_{2n+1}(b)_{2n+1}}{(c)_{3n}(d)_{3n+1}}
\Bigg( 
 \frac{(c-a)_{n+1}(c-b-1)_{n+2}}{(d-c)(c+n)(c+n+1)_{2n+1}}-
 \frac{(c+1-a)_{n+1}(c-b)_{n+2}}{(d-c-1)(c+1+n)(c+n+2)_{2n+1}}\\+
 \frac{(d-a)_{n+1}(d-b-1)_{n+2}}{(c-d)(c+1-d)(d+n)(d+n+1)_{2n+1}}
 \Bigg).
\end{multlined}
\end{align*}
 \end{enumerate}
\end{rem}

\section*{Conclusions and outlook}

Since the 1930's it is known the relation between stochastic processes and orthogonal polynomials. Karlin--McGregor representation formula is a landmark in this respect. This formula gives the iterated probabilities and first passage probabilities in terms of an integral involving orthogonal polynomials determined by the measure fixed by the Markov matrix of the chain~\cite{KmcG}. Some authors have been able to go beyond the birth and death Markov chains and processes (with tridiagonal stochastic matrices)~\cite{Grunbaum1,Kovchegov}. In \cite{bfmaf} we shew how the theory of multiple orthogonal polynomials of types I and II supplies a framework to extend these connections to Markov chains beyond birth and death chains (with multidiagonal stochastic matrices) and illustrated the method for constructing random walks associated with Jacobi--Piñeiro multiple orthogonal polynomials. In this paper we have given a new interesting example of this fruitful relation, proving that the recently found hypergeometric multiple orthogonal polynomials \cite{lima_loureiro} are random walk polynomials, indeed.

This circle of ideas may be applied to other examples. If we look to mixed multiple orthogonal polynomials we will get banded stochastic matrices, with $N$ subdiagonals and~$M$ superdiagonals, describing more general Markov chains that the ones connected with non mixed multiple orthogonality. In this respect, the results on the zeros of mixed systems in~\cite{fidalgo} will allow to apply our normalization technique whenever the Jacobi matrix is nonnegative and weights are of~AT type. We are studying such mixed multiple Jacobi type orthogonal polynomials on the step line. We are also working in the discrete multiple case \cite{Arvesu_Coussment_Coussment_VanAssche}.
Also multivariate orthogonal polynomials, see~\cite{multivariate,ariznabarreta_manas,ariznabarreta_manas2} and Chapter~2 in~\cite{Ismail2} by Yuan Xu, can be associated with random walks whenever a nonnegative Jacobi matrix is provided. See also
 \cite{KmcG_multivariate}.

\section*{Acknowledgments}

We would like to thank FA~Grünbaum for inspiring conversations regarding the Karlin--McGregor representation formula and its possible extensions. We thanks JMR~Parrondo for illuminating comments and encouragement.


\begin{thebibliography}{99}
\bibliographystyle{%amsalpha
amsplain}
%\bibliography{mybib}


%\bibitem{adler_van moerbeke_wang}
%Mark Adler, Pierre van Moerbeke, and Dong Wang,
%\emph{Random matrix minor processes related to percolation theory},
%Random Matrices: Theory and Applications \textbf{02} (2013) 1350008.

%\bibitem{adler-vanmoerbeke 5} Mark Adler, Pierre van Moerbeke and Pol Vanhaecke, \emph{Moment matrices and multi-component
% KP, with applications to random matrix theory}, Communications in Mathematical Physic. \textbf{286} (2009) 1-38.



\bibitem{afm}
Carlos Álvarez-Fernández, Ulises Fidalgo, and Manuel Mañas,
\emph{Multiple orthogonal polynomials of mixed type: Gauss-Borel factorization and the multi-component 2D Toda hierarchy},
Advances in Mathematics~\textbf{227} (2011) 1451–1525.

 \bibitem{aagmm} Carlos Álvarez-Fernández, Gerardo Ariznabarreta, Juan C. García-Ardila, Manuel Mañas, and Francisco Marcellán, \emph{Christoffel transformations for matrix orthogonal polynomials in the real line and the non-Abelian 2D Toda lattice hierarchy}, International Mathematics Research Notices \textbf{2017} n\textsuperscript{o}5 (2017) 1285-1341, \hyperref{https://doi.org/10.1093/imrn/rnw027}{}{}{DOI:10.1093/imrn/rnw027}.

\bibitem{Andrews} George E. Andrews, Richard Askey, and Ranjan Roy, \emph{Special Functions}, Encyclopedia of Mathematics and its Applications \textbf{71}, Cambridge University Press, 1999.

%\bibitem{angelesco}
%Aurel Angelescu, \emph{Sur deux extensions des fractions continues algébriques}, 
%Comptes rendus de la Académie des Sciences de Paris \textbf{168} (1919), 262–265.
 
%\bibitem{Ap}
%Roger Apery, \emph{Irrationalite de $\zeta(2)$ et $\zeta(3)$}, Astèrisque \textbf{61} (1979) 11-13.


\bibitem{abv}
Alexander I. Aptekarev, Amílcar Branquinho, and Walter Van Assche,
\emph{Multiple orthogonal polynomials for classical weights},
Transactions of the American Mathematical Society \textbf{355} (2003) 3887–3914.




\bibitem{matrix}
Gerardo Ariznabarreta, Juan C. García-Ardila, Manuel Mañas, and Francisco Marcellán, \emph{Non-Abelian integrable hierarchies: matrix biorthogonal polynomials and perturbations}, Journal of Physics A: Mathematical and Theoretical \textbf{51} (2018) 205204 (46pp).



\bibitem{matrix2}
----------,
\emph{Matrix biorthogonal polynomials on the real line: Geronimus transformations},
Bulletin of Mathematical Sciences \textbf{9} (2019) 195007 (68 pages)
DOI: 10.1142/S1664360719500073 (World Scientic) or DOI:10.1007/s13373-018-0128-y
(Springer-Verlag).



\bibitem{CD}
Gerardo Ariznabarreta and Manuel Mañas,
\emph{A Jacobi type Christoffel--Darboux formula for multiple orthogonal polynomials of mixed type},
Linear Algebra and its Applications \textbf{468} (2015) 154–170.


\bibitem{ariznabarreta_manas}
----------,
\emph{Multivariate orthogonal polynomials and integrable systems},
Advances in Mathematics \textbf{302} (2016) 628–739.


\bibitem{ariznabarreta_manas2}
----------, \emph{Christoffel transformations for multivariate orthogonal polynomials},
Journal of Approximation Theory \textbf{225} (2018) 242–283.

\bibitem{Bailey} Wilfrid N. Bailey, \emph{Generalized Hypergeometric Series}, Cambridge Tracts in Mathematics and Mathematical Physics \textbf{32}, Cambridge University Press, 1935.


\bibitem{Arvesu_Coussment_Coussment_VanAssche} Jorge Arvesú, Jonathan Coussement, and Walter Van Assche, \emph{Some discrete multiple orthogonal polynomials}, Journal of Computational and Applied Mathematics \textbf{153} (2003).

\bibitem{Barrios_Branquinho_Foulquie}
Dolores Barrios Rolanía, Amílcar Branquinho, and Ana Foulquié Moreno, 
\emph{On the full Kostant–Toda system and the discrete Korteweg-de Vries equations},
Journal of Mathematical Analysis and Applications \textbf{401} (2013) 811–820.

\bibitem{Beardon_Minda}
Alan Beardon and C. David Minda,
\emph{On the Pointwise Limit of Complex Analytic Functions},
The American Mathematical Monthly (2003) \textbf{110} 289-297.

%\bibitem{Be}
%Fritz Beukers, \emph{Padé approximation in number theory,} Lecture Notes in Mathematics \textbf{888}, Springer Verlag, Berlin, 1981, 90-99.
%
%\bibitem{BleKui}
%Pavel M. Bleher and Arno B. J. Kuijlaars, \emph{ Random matrices with external source and multiple orthogonal polynomials}, International Mathematics Research Notices \textbf{2004} (2004) 109-129.




%\bibitem{Boukas_feinsilver_Fellouris}
%Andreas Boukas, Philip Feinsilver, and Anargyros Fellouris,
%\emph{On Infinite Stochastic and Related Matrices},
%Communications on Stochastic Analysis \textbf{11} (2017) 245–264.
%
%

\bibitem{bfm}
Amílcar Branquinho, Ana Foulquié-Moreno, and Manuel Mañas, 
\emph{Multiple orthogonal polynomials on the step-line}, \hyperref{https://arxiv.org/abs/2106.12707}{}{}{\texttt{arXiv:2106.12707 [CA]}}.

%\bibitem{bfm1}
%Amílcar Branquinho, Ana Foulquié-Moreno, and Manuel Mañas, 
%\emph{Hypergeometric multiple orthogonal polynomials and random walks},
%\hyperref{https://arxiv.org/abs/2103.13715}{}{}{\texttt{arXiv:2103.13715 [math.CA]}}.

\bibitem{bfmaf}
Amílcar Branquinho, Ana Foulquié-Moreno, Manuel Mañas, Carlos Álvarez-Fernández, and Juan E. Fernández-Díaz,
\emph{Multiple orthogonal polynomials and random walks},
\hyperref{https://arxiv.org/abs/2103.13715}{}{}{\texttt{arXiv:2103.13715 [math.CA]}}.



\bibitem{Bremaud}
Pierre Brémaud,
\emph{Markov Chains},
Second Edition, Text in Applied Mathematics \textbf{31}, Springer, 2020.

%
%\bibitem{mirta}
%Mirta M. Castro and F. Alberto Grünbaum,
%\emph{On a Seminal Paper by Karlin and McGregor},
%Symmetry, Integrability and Geometry: Methods and Applications SIGMA \textbf{9} (2013), 020, 11 pages.
%
%
%
%\bibitem{Charris}
%Jairo Charris and Mourad E. H. Ismail,
%\emph{On sieved orthogonal polynomials II: random walk polynomials},
%Canadian Journal of Mathematics \textbf{38} (1986) 397–415.


%\bibitem{Ben}
%Youssef Ben Cheikh and Nelia Ben Romdhane,
%\emph{d-Orthogonal polynomial sets of Chebyshev type}
% Difference Equations, Special Functions and Orthogonal Polynomials, Proceedings of the International Conference, Munich, Germany, 25 – 30 July 2005, (2007) 100-111. World Scientific.


%\bibitem{Chiara}
%Theodore, S. Chihara and Mourad E. H. Ismail,
%\emph{Orthogonal polynomials suggested by a queueing model},
%Advances In Applied Mathematics \textbf{3} (1982) 441–462.
%%The transition probabilities for the queueing model where potential customers are discouraged by queue length can be determined by orthogonal polynomials whose orthogonality relations have been found recently by E. A. van Doom. Certain features of these polynomials are suggestiveof the random walk polynomials studied by Karlin and McGregor. In this paper, we will study a class of orthogonal polynomials which includes the above random walk polynomials as well as the ``duals'' of the polynomials studied by van Doom. The latter will yield a generalization of the van Doom polynomials. Orthogonality relations will be obtained for these polynomials and the recurrence formulas in birth and death process form will be noted.



%
%\bibitem{Co}
%John Coates,
%\emph{On the algebraic approximation of functions~I,~II,~III},
%Indagationes Mathematicae \textbf{28} (1966) 421–461.
%
%
%
%
%
%\bibitem{Coolen}
%Pauline Coolen-Schrijner and Erik A. van Doom,
%\emph{Analysis of random walks using orthogonal polynomials},
%Journal of Computational and Applied Mathematics \textbf{99} (1998) 387–399.
%% We discuss some aspects of discrete-time birth-death processes or random walks, highlighting the role played by orthogonal polynomials. In particular we will show there how one can establish whether a given sequence of or- thogonal polynomials is a sequence of random-walk polynomials, and whether a given random-walk measure corresponds to a unique random walk.
%


\bibitem{tesis} Luís %Manuel da Silva
Cotrim,
\emph{Ortogonalidade Múltipla para Sistemas de Multi-Índices QuaseDiagonais}.
\href{http://www.mat.uc.pt/~ajplb/Tese_PhD_LCotrim.pdf}{PhD Thesis, Universidade de Coimbra}, 2008.
%(in Portuguese).



\bibitem{Coussement_Coussment_VanAssche}
Els Coussement, Jonathan Coussement, and Walter Van Assche,
\emph{Asymptotic zero distribution for a class of multiple orthogonal polynomials},
Transactions of the American Mathematical Society, \textbf{360} (2008) 5571–5588.




\bibitem{Coussement__VanAssche}
Jonathan Coussement and Walter Van Assche,
\emph{Gaussian quadrature for multiple orthogonal polynomials},
Journal of Computational and Applied Mathematics \textbf{178} (2005) 131–145.



\bibitem{daems-kuijlaars}
Evi Daems and Arno B. J. Kuijlaars,
\emph{A Christoffel--Darboux formula for multiple orthogonal polynomials},
Journal of Approximation Theory \textbf{130} (2004) 188–200.



\bibitem{daems-kuijlaars2}
----------, %Eve Daems and Arno B.J. Kuijlaars,
\emph{Multiple orthogonal polynomials of mixed type and non-intersecting Brownian motions},
Journal of Approximation Theory \textbf{146} (2007) 91–114.



%
%\bibitem{Diaconis}
%Persi Diaconis and Sandy Zabell,
%\emph{Closed form summation for classical distributions: Variations on a theme of de Moivre}, Statistical Science \textbf{6} (1991) 284–302.
%%
%%\bibitem{Douak}
%%Khalfa Douak and Pascal Maroni,
%%\emph{On d-orthogonal Tchebychev polynomials, I},
%%Applied Numerical Mathematics \textbf{24} (1997) 23–53.
%
%
%

\bibitem{multivariate}
Charles F. Dunkl and Yan Xu,
\emph{Orthogonal Polynomials of Several Variables},
Second Edition, Cambridge University Press, 2014.

%
%

\bibitem{Elaydi}
Saber Elaydi,
\emph{An Introduction to Difference Equations},
Third Edition, Undergraduate Texts in Mathematics, Springer-Verlag New York, 2005.

%
%
% \bibitem{Engel}
% David D. Engel,
% \emph{The Multiple Stochastic Integral},
% Memoires of the American Mathematical Society \textbf{265}, AMS, Providence, 1982.
%
%
%
%\bibitem{feller}
%William Feller,
%\emph{An introduction to probability theory and its applications, Volume I},
%John Wiley \& Sons, 1967.
%
%
%
%\bibitem{filg}
%Ulises Fidalgo Prieto, J. Illán, and Guillermo López-Lagomasino,
%\emph{Hermite--Padé approximation and simultaneous quadrature formulas},
%Journal of Approximation Theory \textbf{126} (2004) 171–197.
%
%
%\bibitem{Fernandez-de_la_Iglesia} Lidia Fernández and Manuel D. de la Iglesia, 
%\emph{Quasi-birth-and-death processes and multivariate orthogonal polynomials},
%Journal of Mathematical Analysis and Applications \textbf{499} (2021) 125029.
%

\bibitem{fidalgo}
Ulises Fidalgo, Sergio Medina Peralta, and Judit Mínguez Ceniceros,
\emph{Mixed type multiple orthogonal polynomials: Perfectness and interlacing properties of zeros},
Linear Algebra and its Applications \textbf{438} (2013) 1229–1239.

%
%%\bibitem{fokas} Athanassios S. Fokas, Alexander R. Its and Alexander V. Kitaev, \emph{The isomonodromy approach to matrix models in 2D quantum gravity}, Communcations in Mathematical Physics \textbf{147} (1992) 395-430.
%
%
%\bibitem{Frobenius}
%Georg Frobenius,
%\emph{Über Matrizen aus positiven Elementen, 1},
%Sitzungsberichte der Königlich Preussischen Akademie der Wissenschaften (1908) 471–476.
%
%
%%----
%\bibitem{Frobenius2}
%----------, %Georg Frobenius,
%\emph{Über Matrizen aus positiven Elementen, 2},
%Sitzungsberichte der Königlich Preussischen Akademie der Wissenschaften (1909) 514–518.
%
%
%%----
%\bibitem{Frobenius3}
%----------, %Georg Frobenius,
%\emph{Über Matrizen aus nicht negativen Elementen},
%Sitzungsberichte der Königlich Preussischen Akademie der Wissenschaften (1912) 456–477.
%
%
%

\bibitem{Gallager}
Robert G. Gallager,
\emph{Stochastic Processes. Theory for Applications},
Cambridge University Press, 2013.

%

\bibitem{Grunbaum1}
F. Alberto Grünbaum,
\emph{Random walks and orthogonal polynomials: some challenges},
Probability, Geometry and Integrable Systems MSRI Publications \textbf{55} (2007) 241–260.

%
%
%\bibitem{Grunbaum11}----------,
%\emph{The Karlin--McGregor formula for a variant of a discrete version of Walsh's spider},
%Journal of Physics~A: Mathematical and Theoretical \textbf{42} (2009) 454010.
%
%
%
%\bibitem{Grunbaum2}
%----------, %F. Aberto Grünbaum,
%\emph{An urn model associated with Jacobi polynomials},
%Communications in Applied Mathematics and Computational Science \textbf{5} (2010) 55–63.
%%We consider an urn model leading to a random walk that can be solved explicitly in terms of the well-known Jacobi polynomials.

\bibitem{grunbaum_de la iglesia}
F. Alberto Grünbaum and Manuel D. de la Iglesia,
\emph{Stochastic LU factorizations, Darboux transformations and urn models},
Journal of Applied Probability \textbf{55} (2018) 862–886.

\bibitem{grunbaum_de la iglesia2}
 ----------, \emph{An urn model for the Jacobi--Piñeiro polynomials}, 
\hyperref{https://arxiv.org/abs/2105.00614v1}{}{}{\texttt{arXiv:2105.00614 [math.PR]}}.

%
%\bibitem{Grunbaum3}
%F. Aberto Grünbaum, Ines Pacharoni, and Juan Tirao,
%\emph{Two stochastic models of a random walk in the $U(n)$-spherical duals of $ U(n + 1)$}, Annali di Matematica Pura ed Applicata \textbf{192} (2013) 447–473.
%%The random walk to be considered takes place in the δ-spherical dual of the group U(n + 1), for a fixed finite dimensional irreducible representation δ of U(n). The transition matrix comes from the three-term recursion relation satisfied by a sequence of matrix valued orthogonal polynomials built up from the irreducible spherical functions of type δ of SU(n + 1). One of the stochastic models is an urn model and the other is a Young diagram model.
%
%
%\bibitem{HVa}
%Maciej Haneczok and Walter Van Assche,
%\emph{Interlacing properties of zeros of multiple orthogonal polynomials},
%Journal of Mathematical Analysis and Applications \textbf{389} (2012) 429–438.
%
%
%
%\bibitem{He}
%Charles Hermite,
%\emph{Sur la fonction exponentielle},
%Comptes rendus de la Académie des Sciences de Paris \textbf{77} (1873), 18–24, 74–79,
%226–233, 285–293; reprinted in his \emph{Oeuvres, Tome III}, Gauthier-Villars, Paris, 1912, 150–181.
% 
% 
 
\bibitem{Ismail}
Mourad E. H. Ismail and Walter Van Assche,
\emph{ Encyclopedia of Special Functions: The Askey--Bateman Project. Volume I: Univariate Orthogonal Polynomials},
Edited by Mourad Ismail, Cambridge University Press, 2020.
 
 
 
 \bibitem{Ismail2} ----------,
 \emph{ Encyclopedia of Special Functions: The Askey--Bateman Project. Volume II: Multivariable Special Functions},
 Edited by Tom H. Koorwinder and Jasper V. Stokman, Cambridge University Press, 2021.

%
%
%\bibitem{Ito}
%Kiyoshi Itō,
%\emph{Multiple Wiener integral},
%Journal of the Mathematical Society of Japan \textbf{3} (1951) 157–169.
%
%
%
%\bibitem{Ja}
%Hendrik Jager,
%\emph{A simultaneous generalization of the Padé table, I-VI},
%Indagationes Mathematicae \textbf{26} (1964), 193–249.
%
%
%
%\bibitem{Karlin}
%Samuel Karlin, \emph{Positive Operators},
%Journal of Mathematics and Mechanics \textbf{8} (1959) 907–937.



\bibitem{KmcG1957-1}
Samuel Karlin and James McGregor,
\emph{The differential equations of birth-and-death processes, and the Stieltjes moment problem},
Transactions of the American Mathematical Society \textbf{85} (1957) 489–546.
%ntegral representation using orthogonal polynomials



\bibitem{KmcG1957-2}
----------, \emph{The classification of birth and death processes},
Transactions of the American Mathematical Society \textbf{86} (1957) 333-400.
% Orthogonal polynomoals are applie in the classification of birth and death Markov processes


\bibitem{KmcG}
 ----------,
\emph{Random walks},
llinois Journal of Mathematics \textbf{3} (1) (1959) 66–81.



\bibitem{KmcG_multivariate}
----------,
\emph{Linear growth models with many types and multidimensional Hahn polynomials},
in: R.A. Askey (Ed.), Theory and Application of Special Functions, in: Publication of the Mathematics Research Center (University of Wisconsin), vol. 35, Academic Press, 1975, pp. 261–288.

%
%
%
%\bibitem{Karlin-Taylor1}
%Samuel Karlin and Howard M. Taylor,
%\emph{A first course in stochastic processes},
%Academic Press, 1975.
%
%
%
%\bibitem{Karlin-Taylor2}
% ----------, \emph{A second course in stochastic processes},
% Academic Press, 1981.
%

\bibitem{karlsson}
Per W. Karlsson,
\emph{Hypergeometric functions with integral parameter differences},
Journal of Mathematical Physics \textbf{12} (1971) 270-271.

\bibitem{Karp_Prilepkina}
Dimitri B. Karp and Elena G. Prilepkina,
\emph{Extensions of Karlsson–Minton summation theorem and some consequences of the first Miller–Paris transformation},
Integral Transforms and Special Functions \textbf{29} (2018) Issue 12. \hyperref{https://doi.org/10.1080/10652469.2018.1526793}{}{}{DOI: 10.1080/10652469.2018.1526793}.


%
%\bibitem{Kendall}
%David G. Kendall,
%\emph{Unitary dilations of one-parameter semigroups of Markov transition operators, and the corresponding integral representation for Markov processes with a countable infinity of states}, Proceedings of the London Mathematical Society~\textbf{3} (1959) 417–431.
%
%
%
%\bibitem{Kendall2}
%David G. Kendall and G. E. Harry Reuter,
%\emph{The calculation of the ergodic projection for Markov chains and processes with a countable infinity of states},
%Acta Mathematica \textbf{97} (1959) 103–144.
%
%
%
%\bibitem{k}
%David Kershaw,
%\emph{A note on orthogonal polynomials},
%Proceedings of the Edinburgh Mathematical Society \textbf{17} (1970) 83–93.
%
%
%\bibitem{Kijima}
%Masaaki Kijima, M. Gopalan Nair, Philip K. Pollett, and Erik A. Van Doorn,
%\emph{Limiting conditional distributions for birth-death processes},
%Advances in Applied Probability \textbf{29} (1997) 185–204.
%
%

%\bibitem{Kooman-Tijdeman} R. J. Kooman and R. Tijdeman, \emph{Convergence properties of linear recurrence sequences}, Nieuw Archief voor Wiskunde \textbf{4.4} (1990) 13-25.

%
%

\bibitem{Kovchegov}
Yevgeniy Kovchegov, 
\emph{Orthogonality and probability: beyond nearest neighbor transitions},
Electronic Communications in Probability \textbf{14} (2009) 90–103.


\bibitem{Krantz}
Steven G. Krantz,
\emph{On Limits of sequences of holomorphic functions},
The Rocky Mountain Journal of Mathematics \textbf{43} (2013) 273-283.
 
%
%\bibitem{Krein_Rutman}
%Mark G. Kreĭn and Mark Rutman,
%\emph{Linear operators leaving invariant a cone in a Banach space},
%Uspekhi Matematicheskikh Nauk \textbf{3} (1948) 1-95 (in Russian),
%American Mathematical Society Translations \textbf{10} (1950) 199–235.
%
%%\bibitem{arno} Arno B. J. Kuijlaars, \emph{ Multiple orthogonal polynomial ensembles}, Contemporary Mathematics \textbf{246}: \emph{Recent Trends in Orthogonal Polynomials and Approximation Theory} (2010).
%
%
%
%\bibitem{Ledermann}
%Walter Ledermann and G. E. Harry Reuter,
%\emph{Spectral theory for the differential equations of simple birth and death processes},
%Philosophical Transactions of the Royal Society of London, Series A \textbf{246} (1954) 321–369.
%
%
%
%\bibitem{leurs_vanassche}
%Marjolein Leurs and Walter Van Assche,
%\emph{Jacobi--Angelesco Multiple Orthogonal Polynomials on an r-Star},
%Constructive Approximation \textbf{51} (2020) 353–381.
%


\bibitem{lima_loureiro}
Hélder Lima and Ana Loureiro,
\emph{Multiple orthogonal polynomials with respect to Gauss’ hypergeometric function},
\hyperref{https://arxiv.org/abs/2012.13913}{}{}{\texttt{arXiv:2012.13913 [math.CA]}}.


%
%
%
%
%
%\bibitem{abey2}
%Abey López-García and Guillermo López-Lagomasino,
%\emph{Nikishin systems on star-like sets: Ratio asymptotics of the associated multiple orthogonal polynomials},
%Journal of Approximation Theory \textbf{225} (2018) 1–40.
%
%
%
%\bibitem{abey}
%Abey López-García and Erwin Miña Díaz,
%\emph{Nikishin systems on star-like sets: algebraic properties and weak asymptotics of the associated multiple orthogonal polynomials},
%Russian Academy of Sciences Sbornik: Mathematics \textbf{209} (7) (2018) 1051.
%
%
%
%\bibitem{Ma}
%Kurt Mahler, 
%\emph{Perfect systems},
%Compositio Mathematica \textbf{19} (1968), 95–166.
%
%
%
%\bibitem{Mate_Nevai}
%Attila Máté and Paul Nevai,
%\emph{A generalization of Poincaré’s theorem for recurrence equations},
%Journal of Approximation Theory \textbf{63} (1990) 92–97.
%
%




%
%\bibitem{Milch}
%Paul R. Milch,
%\emph{A multi-dimensional linear growth birth and death process},
%Annals of Mathematical Statistics \textbf{39} (1968) 727–754.
%

\bibitem{miller}
Allen R. Miller and H. M. Srivastava,
\emph{Karlsson–Minton summation theorems
for the generalized hypergeometric series of unit argument},
Integral Transforms and Special Functions \textbf{21} (2010) 603–612.


\bibitem{minton}
Barry M. Minton,
\emph{Generalized Hypergeometric Function of Unit Argument}, 
Journal of Mathematical Physics \textbf{11} (1970) 1375-1376.

\bibitem{montel}
Paul Montel,
\emph{Leçons sur les récurrences et leurs applications},
Gauthier-Villars,~1957.

%
%
%\bibitem {Neuschel_ Van Assche}
%Thorsten Neuschel and Walter Van Assche,
%\emph{Asymptotic zero distribution of Jacobi-Piñeiro and multiple Laguerre polynomials},
%Journal of Approximation Theory \textbf{205} (2016) 114–132.
%
%
%
\bibitem{nikishin_sorokin}
Evgenii M. Nikishin and Vladimir N. Sorokin,
\emph{Rational Approximations and Orthogonality},
Translations of Mathematical Monographs, \textbf{92}, American Mathematical Society, Providence, 1991.
%

\bibitem{Norlund}
Niels Erik Nörlund,
\emph{Voresungen über differenzenrechnung},
Verlag von Julius Springer, 1924.

%
%\bibitem{Obata}
%Nobuaki Obata,
%\emph{ The Karlin-McGregor formula for paths connected with a clique}, Probability
%and Mathematical Statistics \textbf{33} (2013) 451–466.
%
%
%
%\bibitem{Ogura}
%Hideki Ogura,
%\emph{Orthogonal functionals of the Poisson process},
%IEEE Transactions on Information Theory \textbf{IT-18} (1972) 474–481.
%


\bibitem{Osgood}
William F. Osgood,
\emph{Note on the functions defined by infinite series whose terms are analytic functions of a complex variable; with corresponding theorems for definite integrals},
Annals of Mathematics \textbf{2} III (1901) 25-34.


%
%\bibitem{perron1}
%Oskar Perron,
%\emph{Zur Theorie der Matrizen},
%Mathematische Annalen \textbf{64} (1907) 248–263.
%
%
%
%\bibitem{perron2}
%----------, % Oskar Perron,
%\emph{Über Summengleichungen und Poincarésche Differenzengleichungen},
%Mathematische Annalen \textbf{84} (1921) 1–15.
%
%

\bibitem{pineiro}
Luis R. Piñeiro,
\emph{On simultaneous approximations for a collection of Markov functions},
Vestnik Moskovskogo Universiteta, Seriya I (2) (1987) 67–70 (in Russian);
translated in Moscow University Mathematical Bulletin 42 (2) (1987) 52–55.

%
%
\bibitem{poincare}
Henri Poincaré,
\emph{Sur les Equations Linéaires aux Différentielles Ordinaires et aux Différences Finies},
American Journal of Mathematics~\textbf{7} (1885) 203–258.


\bibitem{Prudnikov}
Anatoli P. Prudnikov, Yury A. Brychkov, and Oleg I. Marichev,
\emph{ Integrals and Series,Volume 3, More Special Functions},
Gordon and Breach Science Publishers, New York, 1986.

\bibitem{Rainville0} Earl D. Rainville, The contiguous function relations for $\tensor[_2]{F}{_3}$
with application to Batemean’s $J_n^{u,\nu}$ and Rice’s $H_n(\zeta,p,\nu)$, Bulletin of the American Mathematical Society \textbf{51} (1945) 714-723.

\bibitem{Rainville} Earl D. Rainville, \emph{Special Functions}, The MacMillan Company, 1960.


%
% \bibitem{Schoutens}
% Wim Schoutens,
% \emph{Stochastic Processes and Orthogonal Polynomials},
% Lecture Notes in Statistics \textbf{146}, Springer -Verlag, 2000.
%
%

\bibitem{simon}
Barry Simon,
\emph{The Christoffel--Darboux kernel}, Proceedings of Symposia in Pure Mathematics \textbf{79}: \emph{Perspectives in Partial Differential Equations, Harmonic Analysis and Applications: A Volume in Honor of Vladimir G. Maz’ya’s 70th Birthday}, 295–336,
American Mathematical Society, 2008.

%
%%\bibitem{So} Vladimir N. Sorokin, \emph{On simultaneous approximation of several linear forms}, Vestnik Mosk Univ., Ser
%% Matem \textbf{1} (1983) 44-47.
%


\bibitem{sorokin} Vladimir N. Sorokin and Jeannette Van Iseghem,
\emph{Algebraic Aspects of Matrix Orthogonality for Vector Polynomials},
Journal of Approximation Theory \textbf{90} (1997) 97–116.



%\bibitem{Tricomi_Erdelyi}
%Francesco G. Tricomi and Arthur Erdélyi,
%\emph{ The asymptotic expansion of a ratio of Gamma functions},
%Pacific Journal of Mathematics~\textbf{1} (1951) 133-142.

%\bibitem{vanassche0}
%Walter Van Assche,
%\emph{Compact Jacobi matrices: from Stieltjes to Krein and $M(a, b)$},
%Annales de la faculté des sciences de Toulouse \textbf{6e série, tome S5} (1996) 195–215.
%
%
%\bibitem{vanassche_ismail}
%----------, % Walter Van Assche,
%\emph{Multiple Orthogonal Polynomials}
%in Mourad E. H. Ismail,
%\emph{Classical and Quantum Orthogonal Polynomials in one Variable},
%Encyclopedia of Mathematics and its Applications, \textbf{98} Cambridge University Press, 2005.
%
%
%\bibitem{vanassche}
%----------, % Walter Van Assche,
%\emph{Ratio asymptotics for multiple orthogonal polynomials},
%Contemporary Mathematics \textbf{661} (2016) 73–85.
%
%
%%\bibitem{vanassche2} Walter Van Assche, \emph{Mehler--Heine asymptotics for multiple orthogonal polynomials}
%%Proceedings of the American Mathematical Society \textbf{145} (2017) 303-314.
%
%\bibitem{coussement}
%Walter Van Assche and Els Coussement,
%\emph{Some classical multiple orthogonal polynomials},
%Journal of Computational and Applied Mathematics \textbf{127} (2001) 317–347.
%
%%\bibitem{assche} Walter Van Assche, Jeff S. Geromino and Arno B. J. Kuijlaars, \emph{Riemann--Hilbert problems for multiple
%%orthogonal polynomials} in: Bustoz et al (eds.), Special Functions 2000: Current Perspectives and Future
%%Directions, Kluwer Academic Publishers, Dordrecht, 2001, pp 23-59.
%
%
%\bibitem{Van Doorn0}
%Erik A. Van Doorn,
%\emph{The transient state probabilities for a queueing model where potential customers are discouraged by queue length},
%Journal of Applied Probability \textbf{18} (1981), 499–506.
%
%
%\bibitem{Van Doorn}
%Erik A. Van Doorn and Pauline Schrijner,
%\emph{Random walk polynomials and random walk measures},
%Journal of Computational and Applied Mathematics \textbf{49} (1993) 289–296.
% 
%%
%%Random walk polynomials and random walk measures play a prominent role in the analysis of a class of Markov chains called random walks. Without any reference to random walks, however, a random walk polynomial sequence can be defined (and will be defined in this paper) as a polynomial sequence {P,(x)) which is orthogonal with respect to a measure on [- 1, l] and which is such that the parameters a, in the recurrence relations P,,+ ,(x> = (x - (Y,)P,(x> - /3,P,, _ ,(x) are nonnegative. Any measure with respect to which a random walk polynomial sequence is orthogonal is a random walk measure. We collect some properties of random walk measures and polynomials, and use these findings to obtain a limit theorem for random walk measures which is of interest in the study of random walks. We conclude with a conjecture on random walk measures involving Christoffel functions.
%
%
%\bibitem{Whitehurst} 
%Thomas Whitehurst, 
%\emph{An application of orthogonal polynomials to random walks}, 
%Pacific Journal of Mathematics \textbf{99} (1982) 205–213.
%
%
%
%\bibitem{wiener}
%Norbert Wiener,
%\emph{The homogeneous chaos},
%American Journal of Mathematics \textbf{60} (1930) 897–936.

%

\end{thebibliography}
\end{document}